\documentclass[12pt]{article}

\makeatletter
\def\input@path{{styles/}}
\makeatother

\newcommand{\UsePackage}[1]{%
  \IfFileExists{styles/#1.sty}{%
      \usepackage{styles/#1}%
   }{%
      \IfFileExists{../styles/#1.sty}{%
         \usepackage{../styles/#1}%
      }{%
         \usepackage{#1}%
      }%
   }%
}

\usepackage[T1]{fontenc}
\usepackage{lmodern}
\usepackage{textcomp}

\usepackage{amsmath}%
\usepackage{amssymb}%
\usepackage[table]{xcolor}%

\usepackage{todonotes}
\usepackage[in]{fullpage}%

\usepackage[amsmath,thmmarks]{ntheorem}%
\theoremseparator{.}%

\usepackage{titlesec}%
\titlelabel{\thetitle. }%
\usepackage{xcolor}%
\usepackage{mleftright}%
\usepackage{xspace}%
\usepackage{graphicx}
\usepackage{hyperref}%
\usepackage{cleveref}

\hypersetup{%
      unicode,
      breaklinks,%
      colorlinks=true,%
      urlcolor=[rgb]{0.25,0.0,0.0},%
      linkcolor=[rgb]{0.5,0.0,0.0},%
      citecolor=[rgb]{0,0.2,0.445},%
      filecolor=[rgb]{0,0,0.4},
      anchorcolor=[rgb]={0.0,0.1,0.2}%
}
\usepackage[ocgcolorlinks]{ocgx2}

\theoremseparator{.}%

\theoremstyle{plain}%
\newtheorem{theorem}{Theorem}[section]

\newtheorem{lemma}[theorem]{Lemma}

\newtheorem{proposition}[theorem]{Proposition}

\theoremstyle{plain}%
\theoremheaderfont{\sf} \theorembodyfont{\upshape}%
\newtheorem*{remark:unnumbered}[theorem]{Remark}%
\newtheorem{remark}[theorem]{Remark}%
\newtheorem{definition}[theorem]{Definition}

\newcommand{\myqedsymbol}{\rule{2mm}{2mm}}

\theoremheaderfont{\em}%
\theorembodyfont{\upshape}%
\theoremstyle{nonumberplain}%
\theoremseparator{}%
\theoremsymbol{\myqedsymbol}%
\newtheorem{proof}{Proof:}%

\providecommand{\emphind}[1]{}%
\renewcommand{\emphind}[1]{\emph{#1}\index{#1}}

\definecolor{blue25emph}{rgb}{0, 0, 11}

\providecommand{\emphic}[2]{}
\renewcommand{\emphic}[2]{\textcolor{blue25emph}{%
      \textbf{\emph{#1}}}\index{#2}}

\providecommand{\emphi}[1]{}%
\renewcommand{\emphi}[1]{\emphic{#1}{#1}}

\definecolor{almostblack}{rgb}{0, 0, 0.3}

\providecommand{\emphw}[1]{}%
\renewcommand{\emphw}[1]{{\textcolor{almostblack}{\emph{#1}}}}%

\providecommand{\emphOnly}[1]{}%
\renewcommand{\emphOnly}[1]{\emph{\textcolor{blue25}{\textbf{#1}}}}

\newcommand{\HLink}[2]{\hyperref[#2]{#1~\ref*{#2}}}
\newcommand{\HLinkSuffix}[3]{\hyperref[#2]{#1\ref*{#2}{#3}}}

\providecommand{\deflab}[1]{}
\renewcommand{\deflab}[1]{\label{def:#1}}

\providecommand{\eqlab}[1]{}%
\renewcommand{\eqlab}[1]{\label{equation:#1}}

\newcommand{\remove}[1]{}%

\usepackage[inline]{enumitem}

\newlist{compactenumA}{enumerate}{5}%
\setlist[compactenumA]{topsep=0pt,itemsep=-1ex,partopsep=1ex,parsep=1ex,%
   label=(\Alph*)}%

\newlist{compactenuma}{enumerate}{5}%
\setlist[compactenuma]{topsep=0pt,itemsep=-1ex,partopsep=1ex,parsep=1ex,%
   label=(\alph*)}%

\newlist{compactenumI}{enumerate}{5}%
\setlist[compactenumI]{topsep=0pt,itemsep=-1ex,partopsep=1ex,parsep=1ex,%
   label=(\Roman*)}%

\newlist{compactenumi}{enumerate}{5}%
\setlist[compactenumi]{topsep=0pt,itemsep=-1ex,partopsep=1ex,parsep=1ex,%
   label=(\roman*)}%

\newlist{compactitem}{itemize}{5}%
\setlist[compactitem]{topsep=0pt,itemsep=-1ex,partopsep=1ex,parsep=1ex,%
   label=\ensuremath{\bullet}}%

\numberwithin{figure}{section}%
\numberwithin{table}{section}%
\numberwithin{equation}{section}%

\newcommand{\norm}[1]{\| #1 \|}

\newcommand{\nbH}{\nabla_H}
\newcommand{\dt}{\partial_t}
\newcommand{\dz}{\partial_z}
\newcommand{\avg}[1]{\overline{#1}}

\begin{document}

\title{Global Well-Posedness and Persistence of $H^2$ Regularity for the Three-Dimensional Primitive Equations with Horizontal and Topography-Adapted Thermal Diffusion}

\author{
Qimin Wang\thanks{
Department of Mathematics,
University of Southern California,
Los Angeles, CA 90089, USA.
Email: \href{mailto:qiminw@usc.edu}{qiminw@usc.edu}.
}
\and
Mohammed Ziane\thanks{
Department of Mathematics,
University of Southern California,
Los Angeles, CA 90089, USA.
Email: \href{mailto:ziane@usc.edu}{ziane@usc.edu}.
}
}

\date{\today}

\maketitle

\begin{abstract}
We study the three-dimensional primitive equations with full viscosity and horizontal thermal diffusion in bounded domains. We establish global well-posedness for admissible $H^1$ initial data and persistence of $H^2$ regularity on the flat-bottom cylinder. The higher regularity analysis combines fractional hydrostatic Stokes regularity with anisotropic estimates of the vertical transport. We also consider variable-depth domains with positive $h\in C^3(\overline M)$ and a diffusion operator adapted to the topography compatible with layers $z/h(x,y)=\mathrm{constant}$. This operator reduces to $\Delta_H$ at constant depth and generally differs from the Cartesian horizontal Laplacian over a varying bottom. Its weighted formulation yields global strong solutions for admissible $H^1$ data. Persistence of $H^2$ regularity is proved under the additional assumption $\partial_\nu h=0$ on $\partial M$, where $\nu$ is the outward horizontal unit normal. 
\end{abstract}

\section{Introduction}
\label{sec:introduction}
The primitive equations are fundamental models in geophysical fluid dynamics for describing the large-scale motion of the oceans and atmosphere. They are derived from the three-dimensional incompressible Navier–Stokes equations under the Boussinesq and hydrostatic approximations, and play a central role in weather prediction, climate dynamics, and ocean circulation theory. We refer to \cite{Lewandowski,Majda,Pedlosky,Vallis,Washington} and the references therein for the physical background and mathematical derivation of the primitive equations.

We first consider the viscous primitive equations with only horizontal diffusion in the temperature equation on a flat-bottom cylindrical domain:

\begin{align}
    \dt v + (v \cdot \nbH) v + w \dz v + \nbH p - \Delta v + f_0 k \times v &= 0, \quad\label{eq:primitive-momentum}\\
    \dz p + T &= 0, \\
    \nbH \cdot v + \dz w &= 0, \\
    \dt T + (v \cdot \nbH) T + w \dz T - \Delta_H T &= 0. \quad 
\end{align}
in $\Omega:=M\times(-h,0)$, where $h>0$ is constant and $M\subset\mathbb R^2$ is a bounded domain with smooth boundary. We set $\Gamma_t=M\times\{0\}$, $\Gamma_b=M\times\{-h\}$, and $\Gamma_s=\partial M\times(-h,0)$. The unknowns are the horizontal velocity $v=(v_1,v_2)$, the vertical velocity $w$, the temperature $T$, and the pressure $p$; the constant $f_0$ denotes the Coriolis parameter. We write $\nabla_H=(\partial_x,\partial_y)$ and $\Delta_H=\partial_x^2+\partial_y^2$ for the horizontal gradient and horizontal Laplacian, respectively.  The system is supplemented with the following boundary and initial conditions:
\begin{gather}
  \dz v=0, \quad\text{on }\Gamma_t, \\
  v=0, \quad\text{on }\Gamma_b\cup\Gamma_s, \\
    w=0, \quad\text{on }\Gamma_t\cup\Gamma_b, \\
    T=0, \quad\text{on }\Gamma_s,\\
(v, T)|_{t=0} = (v_0, T_0).\label{eq:primitive-initial-data}
\end{gather}
Throughout the paper, $L^p(D)$ and $W^{m,q}(D)$ denote the standard Lebesgue and Sobolev spaces on a domain $D$, respectively. We write $H^m(D):=W^{m,2}(D)$ and omit the domain when no confusion can arise. The same notation is used for scalar-valued and vector-valued spaces when the number of components is clear from the context.

The flat-bottom problem is the basic setting in which we present the construction and the global a priori estimates. For the variable-depth case, thermal diffusion is defined along the terrain-following layers, which is related to the iso-sigma formulations used in ocean modeling;  see \cite[Section~5.1]{BarnierEtAl1998Sigma}. More precisely, for
\[
  \Omega_h:=\{(x,y,z):(x,y)\in M,\ -h(x,y)<z<0\},
  \qquad 0<h_*\leq h(x,y)\leq h^*,
\]
the thermal diffusion is a topography-adapted operator. Setting
\begin{align}
 a_i(x,y,z):=\frac{z}{h(x,y)}\partial_{x_i}h(x,y), \text{ for $i=1,2$}
  \qquad
  K_h:=
  \begin{pmatrix}
    1&0&a_1\\
    0&1&a_2\\
    a_1&a_2&|a|^2
  \end{pmatrix}, 
\label{a_i definition}
\end{align}

we define
\begin{align}
\Delta_{H,h}^{\mathrm{top}}T
:=\nabla_{x,y,z}\cdot\bigl(K_h\nabla_{x,y,z}T\bigr),
  \qquad A_{T,h}:=-\Delta_{H,h}^{\mathrm{top}}.
\label{eq:topographic-laplacian-intro}
\end{align}
The columns generating $K_h$ are tangent to the upper and bottom boundaries, and therefore we get a positive topography-adapted operator for smooth temperature with Dirichlet lateral boundary condition,
\[
  (A_{T,h}T,T)_{L^2(\Omega_h)}
  =\int_{\Omega_h}
  \left|\nabla_HT+\frac{z}{h}\nabla_Hh\,\partial_zT\right|^2\geq0.
\]
This definition is consistent with the constant-depth model: if $h(x,y)\equiv h_0$, then $\nabla_Hh=0$, $K_h=\operatorname{diag}(1,1,0)$, and hence $\Delta_{H,h}^{\mathrm{top}}=\Delta_H$.  Under the terrain-following change of variables $z=h(x,y)\sigma$, the operator becomes
\[
  \frac1h\nabla_H\cdot(h\nabla_H)
\]
on $M\times(-1,0)$ and is self-adjoint in the weighted space $L^2(h\,dx\,dy\,d\sigma)$.  This formulation preserves the energy identities and the $H^1$ temperature estimate used in the strong-solution construction.

Returning to the flat-bottom system, we use the hydrostatic relation and incompressibility to rewrite the equations as

\begin{align}
\partial_t v &- \Delta v + (v \cdot \nabla_H) v - \left( \int_{-h}^z \nabla_H \cdot v(x, y, \xi, t) d\xi \right) \partial_z v \nonumber,  \\
&+ f_0 k \times v + \nabla_H \left( p_s(x, y, t) - Q_T(x,y,z,t) \right) = 0 \label{eq:v equation},\\
\partial_t T &- \Delta_H T + v \cdot \nabla_H T - \left( \int_{-h}^z \nabla_H \cdot v(x, y, \xi, t) d\xi \right)  \partial_z T = 0 \label{eq:T equation}.
\end{align}
where
\begin{align}
Q_T(x,y,z,t):=\int_{-h}^{z}T(x,y,\xi,t)\,d\xi,
\label{eq: Q}
\end{align}
and $p_s=p_s(x,y,t)$ is the barotropic pressure, independent of $z$. For a scalar or vector field $\phi$, define
\[
\avg\phi(x,y):=\frac1h\int_{-h}^{0}\phi(x,y,z)\,dz,
\qquad u:=\partial_zv.
\]
Let $n_H$ be the outward unit normal to $\partial M$. We set
\begin{align*}
H&:=\{v\in L^2(\Omega)^2:\nabla_H\cdot\avg v=0\text{ in }M,\ \avg v\cdot n_H=0\text{ on }\partial M\},\\
V&:=\{v\in H^1(\Omega)^2:v=0\text{ on }\Gamma_b\cup\Gamma_s,\ \nabla_H\cdot\avg v=0\text{ in }M\},\\
V_T&:=\{T\in H^1(\Omega):T=0\text{ on }\Gamma_s\}.
\end{align*}
For simplicity, we also set,
\[
  \mathcal V_H:=L^2((-h,0);H_0^1(M)).
\]
Let $\mathbb P:L^2(\Omega)^2\to H$ be the hydrostatic Helmholtz projection. The hydrostatic Stokes operator is
\[
Av:=-\mathbb P\Delta v,
\qquad D(A):=\{v\in H^2(\Omega)^2\cap V:\partial_zv=0\text{ on }\Gamma_t\}.
\]
The above characterization of $D(A)$ follows from the $H^2$ regularity of the hydrostatic Stokes problem; see \cite{Ziane01111995}. $A$ is positive self-adjoint on $H$ and
\[
 D(A^{1/2})=V
\]
see \cite[Chapter~VI, Section~2]{Kato1995}. For $1\leq p\leq\infty$ and $s,r\geq0$, write $L_H^p:=L^p(M)$ and $H_H^s:=H^s(M)$. If $I$ is the vertical interval, we use
\[
 L_z^pH_H^s:=L^p(I;H^s(M)),\qquad
 H_z^rH_H^s:=H^r(I;H^s(M)).
\]
The same convention applies with $\sigma$ in place of $z$.

The systematic mathematical study of the primitive equations was initiated by Lions, Temam and Wang \cite{LionsTemamWang1,LionsTemamWang2,LionsTemamWang3}, who established global existence of weak solutions. Local well-posedness of strong solutions was subsequently proved by Guill\'en-Gonz\'alez, Masmoudi and Rodr\'iguez-Bellido \cite{Guillen}.Under a small-depth hypothesis, Hu, Temam and Ziane \cite{HuTemamZiane2003} established global existence of strong solutions for suitable initial data and forcing. A major breakthrough was achieved by Cao and Titi \cite{CaoTiti2007}, who established global existence and uniqueness of strong solutions with full viscosity and full thermal diffusivity. Alternative proofs and further developments were obtained by Kobelkov \cite{Kobelkov}. For atmospheric models incorporating moisture and phase transitions, Coti Zelati et al.~\cite{CotiZelatiEtAl2015} established global existence of strong and quasi-strong solutions to the primitive equations with vapour saturation, together with uniqueness results and maximum principles.

Anisotropic dissipation is motivated by the disparity between horizontal and vertical mixing in geophysical flows; see, for example, \cite{Vallis,Washington}. For full viscosity and only vertical thermal diffusion, Cao and Titi \cite{CaoTitiVerticalDiffusion} proved global continuation of local strong solutions, while Cao, Li and Titi \cite{CaoLiTiti2014Vertical} established local and global well-posedness for compatible $H^2$ initial data.

For full viscosity and only horizontal thermal diffusion, Cao, Li and Titi \cite{CAO20144108} obtained global strong solutions for $H^2$ initial data, and Pu and Zhou \cite{PuZhou2024} established global well-posedness for $H^1$ initial data. These results use periodic formulations with prescribed vertical symmetries.  The present work treats bounded cylindrical domains with no-slip velocity conditions on the lateral and bottom boundaries and a stress-free condition at the top. The temperature satisfies a lateral Dirichlet condition, without prescribed boundary values at the top or bottom or any vertical symmetry assumption.

Cao, Li and Titi also studied the system with only horizontal
viscosity and diffusivity in \cite{CaoLiTitiHV}.
For this system, they established local well-posedness for
$H^1$ initial data and global well-posedness for
$(v_0,T_0)\in H^1\cap L^\infty$ with
$\partial_zv_0\in L^m$, for some $m>2$, in
\cite{CaoLiTiti2017NearH1}.
For the velocity system without temperature coupling,
Hussein, Saal and Wrona \cite{HusseinSaalWrona2021}
developed a well-posedness theory with only horizontal viscosity
and lateral Dirichlet conditions, without imposing additional
velocity conditions at the top and bottom.

Saal and Slav\'ik \cite{SaalSlavik2023} established global pathwise well-posedness for stochastic primitive equations with only horizontal viscosity and thermal diffusivity on bounded cylindrical domains, under suitable assumptions on the initial data and the noise. They impose lateral Dirichlet conditions on the velocity and lateral Neumann conditions on the temperature. Their strong-solution construction also uses homogeneous Neumann conditions for the velocity at the top and bottom. For the temperature, their homogeneous formulation requires
\[
  T_0\big|_{z=-h}=T_0\big|_{z=0}=0,
\]
and the corresponding solutions preserve
\[
  T(t)\big|_{z=-h}=T(t)\big|_{z=0}=0
\]
throughout the evolution under their compatibility assumptions; see \cite[Remark~2.3]{SaalSlavik2023}. In our deterministic setting with full velocity viscosity, the temperature satisfies a homogeneous lateral Dirichlet condition, but neither the initial temperature $T_0$ nor the evolving temperature $T$ is subject to a boundary condition at the top or bottom.

For full viscosity and only horizontal thermal diffusion,
Li and Yuan \cite{10.1063/5.0065114} established global
well-posedness of $z$-weak solutions on general bounded
cylindrical domains, assuming that the initial data and their
vertical derivatives belong to $L^2$. Their solution class
includes additional spatial and temporal regularity for
positive times. They impose lateral no-slip conditions,
Neumann velocity conditions at both the top and bottom,
and a lateral Robin condition for the temperature.
The present problem instead uses a no-slip bottom and a
homogeneous lateral Dirichlet condition for the temperature.
Neither formulation prescribes temperature values at the
top or bottom.

The mixed velocity conditions in the present problem affect
the vertically averaged momentum balance:
\[
  \frac1h\int_{-h}^{0}\partial_z^2v\,dz
  =-\frac1h\partial_zv\big|_{z=-h}.
\]
This bottom shear term vanishes under homogeneous Neumann
conditions at both horizontal boundaries. To treat it here,
we combine the mixed-boundary velocity estimates of
\cite{Kukavica_2007} with anisotropic estimates for the temperature equation without vertical diffusion.

For the flat-bottom problem, we establish global existence, uniqueness, and continuous dependence of strong solutions for $(v_0,T_0)\in V\times V_T$; see Theorem~\ref{thm:global-well-posedness}. And, we prove persistence of $H^2$ regularity in Theorem~\ref{thm:higher-regularity}. The higher-order argument combines the fractional hydrostatic Stokes estimates obtained from Lemma~\ref{lem:top-stokes-higher} with anisotropic estimates of the vertical velocity to control vertical transport and close the temperature estimates.

For non-flat domains, Hu, Temam and Ziane \cite{HuTemamZiane2002} studied elliptic
regularity for the hydrostatic Stokes problem and related scalar equations in domains with varying bottom topography. Their analysis includes nonhomogeneous boundary conditions and the dependence of the estimates on the domain thickness. Kukavica and Ziane \cite{Kukavica_2007} extended the velocity estimates to a smoothly varying bottom by treating the topographic contributions as lower-order forcing and trace terms. Iida \cite{Iida2025} established global well-posedness in the isothermal setting using maximal $L^q$--$L^p$ regularity, with horizontal periodicity and Dirichlet and Neumann conditions at the bottom and top, respectively. Our variable-depth problem couples the velocity equation under lateral no-slip conditions to the temperature equation with the topography-adapted diffusion defined above.

For positive $h\in C^3(\overline M)$, we obtain global strong solutions for admissible $H^1$ initial data. Persistence of $H^2$ regularity for compatible initial data requires the additional condition $\partial_\nu h=0$ on $\partial M$, where $\nu$ denotes the outward horizontal unit normal. This orthogonal-intersection condition enters the higher hydrostatic Stokes estimates and is not required for the $H^1$ result. After flattening the domain, the thermal operator is independent of the vertical coordinate and commutes with the vertical derivative. Together with its weighted energy identity, this property allows the temperature estimates to be established by the same approach as in the flat-bottom case.

The remainder of the paper is organized as follows. Section~\ref{sec:construction-global} constructs strong solutions for $H^1$ initial data and proves global well-posedness. Section~\ref{sec:higher-regularity} establishes persistence of $H^2$ regularity. Section~\ref{sec:varying-bottom} treats variable-depth domains with topography-adapted thermal diffusion. Appendix~\ref{app:analytic-tools} collects the analytic inequalities and compactness results, while Appendix~\ref{app:auxiliary-estimates} contains the auxiliary a priori estimates used in the global argument.

\section{Construction and Global Regularity of Strong Solutions}
\label{sec:construction-global}
Throughout this section, we work on the flat-bottom cylindrical domain
\[
\Omega=M\times(-h,0),
\]
where \(h>0\) is constant. The purpose of this section is to establish the global existence and uniqueness of strong solutions for initial data \((v_0,T_0)\in V\times V_T\). We first construct a local strong solution through an iterative linearization and Galerkin approximation. We then derive uniform \(H^1\) estimates for the velocity and temperature, establish the required time regularity, and pass to the limit in the approximate system. Finally, the a priori estimates and stability inequality allow the local solution to be continued and imply uniqueness and continuous dependence on the initial data.

The anisotropic inequalities, interpolation estimates, and compactness results used in the proof are collected in the Appendix~\ref{app:analytic-tools}. The extension to a varying bottom, including the corresponding topography-adapted horizontal diffusion operator, is treated separately in Section~\ref{sec:varying-bottom}.
\begin{definition}[Strong solution]\label{def:strong-solution}
Let $\tau>0$, $v_0\in V$, and $T_0\in V_T$. A pair $(v,T)$ is a strong solution on $[0,\tau]$ if
\begin{align*}
v&\in L^\infty(0,\tau;V)\cap L^2(0,\tau;D(A)),&\partial_tv&\in L^2(0,\tau;H),\\
T&\in L^\infty(0,\tau;V_T),&\nabla_HT&\in L^2(0,\tau;H^1(\Omega)),\quad\partial_tT\in L^2(0,\tau;L^2(\Omega)),
\end{align*}
and the reformulated equations hold almost everywhere with the stated boundary and initial conditions.
\end{definition}
\begin{definition}[Maximal existence time]\label{def:maximal-time}
The maximal existence time is $\mathcal T^*:=\sup\{\tau>0:(v,T)\text{ is a strong solution on }[0,\tau]\}$.
\end{definition}
\begin{definition}[Global strong solution]\label{def:global-solution}
A strong solution is global if it exists on $[0,\tau]$ for every finite $\tau>0$.
\end{definition}
\begin{theorem}\label{thm:global-well-posedness}
Let $(v_0,T_0)\in V\times V_T$. Then the primitive equations subject to the physical boundary conditions above admit a unique global strong solution.
\end{theorem}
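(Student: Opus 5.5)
The proof follows the standard scheme for strong solutions of the primitive equations: local existence, global a priori $H^1$ bounds, then continuation, with a stability estimate giving uniqueness. The main difficulty is that the temperature has no vertical diffusion.

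\textbf{Step 1: local existence.} We build a local strong solution on some $[0,\tau_0]$ by an iterative linearization. Given $(v^{(n)},T^{(n)})$, we solve two linear problems by Galerkin approximation:
\begin{itemize}
\item the hydrostatic Stokes problem for $v^{(n+1)}$, with the advection coefficients frozen at $v^{(n)}$ and forcing $\nabla_H Q_{T^{(n)}}$, using the eigenfunctions of $A$;
\item the transport--horizontal-diffusion problem for $T^{(n+1)}$, with $w^{(n)}=-\int_{-h}^z\nabla_H\cdot v^{(n)}$.
\end{itemize}
For the temperature we obtain bounds in $L^\infty(V_T)$ with $\nabla_HT\in L^2H^1$. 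The estimate for $\partial_zT$ is a pure transport-type estimate. It uses the anisotropic trilinear inequalities of the Appendix, for example
\[
\int |w|\,|\partial_z T|^2 \lesssim \|\nabla_H v\|_{L^2_zL^4_H}\,\|\partial_zT\|_{L^2}\,\|\partial_zT\|_{L^2}^{1/2}\|\nabla_H\partial_zT\|_{L^2}^{1/2},
\]
and the horizontal diffusion absorbs the last factor. With the maximal regularity of $A$ from \cite{Ziane01111995}, these give uniform bounds in the strong-solution class on a short interval depending only on $\|v_0\|_V+\|T_0\|_{V_T}$. The bounds also control $\partial_tv$ and $\partial_tT$ by duality. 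The Aubin--Lions compactness lemma from the Appendix then lets us pass to the limit.

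\textbf{Step 2: global a priori bounds, layer by layer.}
\begin{enumerate}
\item The $L^2$ energy estimate for $(v,T)$, together with the maximum principle $\|T\|_{L^\infty}\le\|T_0\|_{L^\infty}$ whenever that is available; otherwise $L^p$ bounds on $T$ uniform in $p$ up to some finite $p$.
\item Split $v=\avg v+\tilde v$ and derive $L^4$ (or $L^6$) estimates for $\tilde v$, following \cite{CaoTiti2007,Kukavica_2007}.
\item Estimate $\avg v$ from the 2D barotropic Stokes system. The bottom shear term $-\frac1h\partial_z v|_{z=-h}$ is treated as forcing and controlled by a trace inequality, as in \cite{Kukavica_2007}.
\item Estimate $u=\partial_z v$ in $L^2$.
\item Obtain the $H^1$ estimate for $v$ by testing with $Av$.
\end{enumerate}
\textbf{Step 3: the temperature gradient.} We then estimate $\nabla_HT$ and $\partial_zT$. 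The $\partial_zT$ equation contains $\partial_z v\cdot\nabla_HT$ and $\partial_z w\,\partial_zT=-(\nabla_H\cdot v)\partial_zT$, but has no vertical dissipation. We close the estimate with a Gronwall argument using anisotropic Ladyzhenskaya-type bounds. The terms to control are:
\begin{itemize}
\item $\int|\nabla_Hv|\,|\partial_zT|^2$, bounded by $\|\nabla_Hv\|_{L^\infty_zL^2_H}\|\partial_zT\|\,\|\nabla_H\partial_zT\|$. Here $\|\nabla_Hv\|_{L^\infty_zL^2_H}$ is controlled by $\|v\|_{H^2}$, which is integrable in time from Step 2.
\item $\int|u|\,|\nabla_HT|\,|\partial_zT|$, handled similarly.
\end{itemize}
This is the step I expect to be the main obstacle. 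It requires Step 2 to provide $\int_0^\tau\|Av\|^2<\infty$ independently of $\|\partial_zT\|$. That in turn requires the velocity estimates to see $T$ only through $\nabla_HQ_T$, which is bounded by $\|\nabla_HT\|$ and hence by the energy-level quantity $\int\|\nabla_H T\|^2$. The order of the estimates must be arranged so the bootstrap is not circular.

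\textbf{Step 4: continuation, uniqueness and dependence.} The bounds of Steps 2--3 stay finite on every finite interval, so with Step 1 we get $\mathcal T^*=\infty$. For uniqueness, take the difference $(\delta v,\delta T)$ of two solutions and estimate it in $L^2$. The delicate term $\int \delta w\,\partial_zT\,\delta T$ is handled by
\[
\|\nabla_H\delta v\|_{L^2}\,\|\partial_zT\|_{L^2_zL^4_H}\,\|\delta T\|_{L^2_zL^4_H}\lesssim \|\nabla_H\delta v\|_{L^2}\,\|\partial_zT\|_{L^2}^{1/2}\|\nabla_H\partial_zT\|_{L^2}^{1/2}\|\delta T\|_{L^2}^{1/2}\|\nabla_H\delta T\|_{L^2}^{1/2}.
\]
The horizontal dissipation $\|\nabla_H\delta T\|^2$ and the velocity dissipation $\|\nabla\delta v\|^2$ absorb these factors, leaving an $L^1$-in-time Gronwall coefficient. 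This yields uniqueness and continuous dependence.
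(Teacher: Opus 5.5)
Your global argument in Steps~2--4 matches the paper's. The key observation is that the velocity sees $T$ only through $\nabla_HQ_T\in L^2_tL^2_x$, so the forced Kukavica--Ziane bounds apply; your layered estimates for the baroclinic and barotropic parts and for $u$ are what that theorem packages. The paper then tests with $Av$, runs the temperature $H^1$ Gronwall with a coefficient built from $\|v\|_{H^1}$ and $\|v\|_{H^2}$, and proves an $L^2$ stability estimate whose coefficient contains $\|\partial_zT\|_2^2\|\nabla_H\partial_zT\|_2^2$, exactly as you outline.

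\textbf{The gap is in Step~1.} The transport estimate actually available is $\|(a\cdot\nabla_H)b+w(a)\partial_zb\|_2^2\leq C\|A^{1/2}a\|_2\|Aa\|_2\|A^{1/2}b\|_2\|Ab\|_2$. Testing the equation for $v^{(n+1)}$ with $Av^{(n+1)}$ then gives the Gronwall factor $\exp\bigl(C\int_0^t\|A^{1/2}v^{(n)}\|_2^2\|Av^{(n)}\|_2^2\,ds\bigr)$.
\begin{itemize}
\item An inductive bound $\sup_s\|A^{1/2}v^{(n)}\|_2^2+\int_0^t\|Av^{(n)}\|_2^2\leq K$ only bounds this factor by $e^{CK^2}$, which does not improve as $t\downarrow0$.
\item Closing the induction would then need $K\gtrsim\|A^{1/2}v_0\|_2^2e^{CK^2}$, which holds only for small data.
\item Maximal regularity of $A$ has the same defect: the $L^2_tL^2$ norm of the transport term carries no power of $t$.
\end{itemize}
So your claim of uniform bounds on a short interval depending only on $\|v_0\|_V+\|T_0\|_{V_T}$ does not follow. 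The missing idea is the paper's splitting $v^{(n)}=\widetilde V+\widetilde v^{(n)}$, where $\widetilde V$ is the linear hydrostatic Stokes flow from $v_0$. The remainder starts from zero, and $\rho(t)=\int_0^t\|A\widetilde V\|_2^2\to0$ as $t\downarrow0$ at a rate depending on $v_0$ itself, not just its norm. This closes the induction $E^n_{\widetilde v}(t^*)\leq1$ and yields the uniform smallness $\sup_n\int_0^t\|Av^n\|_2^2\to0$.

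\textbf{Two further points follow from this.} First, Aubin--Lions alone does not let you pass to the limit in an iteration. Subsequential limits of $v^{(n_k)}$ and $v^{(n_k+1)}$ need not coincide, so the limiting equation would have an unidentified transporting field. The paper instead shows that consecutive differences contract (Proposition~\ref{prop:iteration-convergence}), so the whole sequence converges and $v^{(n)}$, $v^{(n+1)}$ share one limit. That contraction again rests on the uniform smallness of $\int_0^t\|Av^n\|_2^2$.

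Second, because the local time depends on the profile of the data and not only on its norm, Step~4 cannot conclude $\mathcal T^*=\infty$ from uniform bounds alone. You need the embedding $L^2(0,\mathcal T^*;D(A))\cap H^1(0,\mathcal T^*;H)\hookrightarrow C([0,\mathcal T^*];V)$ to get $v(t)\to v_*$ in $V$. Together with $T(t)\to T_*$ in $L^2$ with $T_*\in V_T$, you then restart the local construction at $\mathcal T^*$ itself, as the paper does.
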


We now introduce the iterative system. Each iterate satisfies the same boundary and initial conditions as the original problem.
\begin{align}
\partial_t v^n + (v^{n-1} \cdot \nabla_H) v^n + (w^{n-1} \partial_z) v^n &+ \nabla_H p^n - \Delta v^n + f_0 k \times v^n = 0 \\
\partial_z p^n &+ T^n = 0 \\
\nabla_H \cdot v^n &+ \partial_z w^n = 0 \\
\partial_t T^n + (v^{n-1} \cdot \nabla_H) T^n &+ (w^{n-1} \partial_z) T^n - \Delta_H T^n = 0
\end{align}
for $n\geq1$, where the initial iterates $v^0$ and $w^0$ will be specified in the proofs of Propositions~\ref{thm:A^1/2 v} and~\ref{thm:H1 T}. For a fixed $t_0>0$, we introduce the spaces
\begin{align*}
X_{t_0}
&:=L^\infty(0,t_0;V)\cap L^2(0,t_0;D(A)),\\
T_{t_0}
&:=\left\{T\in L^\infty(0,t_0;V_T):
\nabla_HT\in L^2(0,t_0;H^1(\Omega))\right\}.
\end{align*}
Given $v^{n-1}\in X_{t_0}$ and the associated vertical velocity $w^{n-1}$, we construct $(v^n,T^n)\in X_{t_0}\times T_{t_0}$ by first solving the linear temperature equation by the Galerkin construction in Lemma~\ref{lem:linear-temperature} and then applying the Galerkin method to the velocity equation. For each fixed $n$, a priori estimates uniform in the auxiliary approximation parameters, together with compactness, justify passage to the limit; uniqueness follows from an energy estimate for the difference of two solutions.

\begin{lemma}
\label{lem:linear-velocity}
\textbf{The velocity equation} Fixed $t_0$, and $\mathbf{v}^{n-1}$ is given in $X_{t_0}$ associated with $\mathbf{w^{n-1}}$ $\&$ ${T}^*$ is given in $T_{t_0}$, then the following equation has a unique solution $(v,w)$ with $v\in$ $X_{t_0}$.
\begin{align*}
\partial_t v + (\mathbf{v}^{n-1} \cdot \nabla_H) v + (\mathbf{w}^{n-1} \partial_z) v &+ \nabla_H p - \Delta v + f_0 k \times v = 0, \\
\partial_z p &+ T^* = 0, \\
\nabla_H \cdot \mathbf{v} &+ \partial_z \mathbf{w} = 0
\end{align*}
\end{lemma}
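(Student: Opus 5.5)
We prove Lemma~\ref{lem:linear-velocity} by a Galerkin scheme in the eigenbasis of the hydrostatic Stokes operator $A$, closed with energy estimates at the $L^2$ and $D(A^{1/2})=V$ levels.

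\textbf{Reformulation and Galerkin system.} Integrating $\partial_zp=-T^*$ gives $p=p_s(x,y,t)-Q_{T^*}$, with $Q_{T^*}$ as in \eqref{eq: Q}. Applying $\mathbb P$ removes $\nabla_Hp_s$, so the problem becomes
\[
\partial_tv+Av+\mathbb P\bigl[(\mathbf v^{n-1}\cdot\nabla_H)v+\mathbf w^{n-1}\partial_zv+f_0k\times v\bigr]=\mathbb P\nabla_HQ_{T^*},
\]
where $\mathbf w^{n-1}=-\int_{-h}^z\nabla_H\cdot\mathbf v^{n-1}\,d\xi$. Since $\nabla_HT^*\in L^2(0,t_0;H^1)$, the forcing satisfies $\|\nabla_HQ_{T^*}\|_{L^2}\le C\|\nabla_HT^*\|_{L^2}$, which lies in $L^\infty(0,t_0)$ because $T^*\in L^\infty(0,t_0;V_T)$. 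Let $\{e_k\}$ be the eigenfunctions of $A$; these form a basis of $H$ contained in $D(A)$. Project the equation onto $\mathrm{span}\{e_1,\dots,e_m\}$. The result is a linear ODE system whose coefficients are integrable in time, because $\mathbf v^{n-1}\in X_{t_0}$. Hence it has a unique solution $v_m$ on $[0,t_0]$.

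\textbf{Energy estimates.} For the $L^2$ estimate, test with $v_m$. The transport terms vanish by the hydrostatic incompressibility of $(\mathbf v^{n-1},\mathbf w^{n-1})$, together with $\mathbf w^{n-1}=0$ on $\Gamma_t\cup\Gamma_b$ and $\mathbf v^{n-1}=0$ on $\Gamma_s$. The Coriolis term also vanishes, which gives $v_m$ bounded in $L^\infty H\cap L^2V$. The main estimate tests with $Av_m$. The horizontal transport term is bounded via Ladyzhenskaya-type anisotropic inequalities (Appendix~\ref{app:analytic-tools}):
\[
|((\mathbf v^{n-1}\cdot\nabla_H)v_m,Av_m)|\le C\|\mathbf v^{n-1}\|_{V}^{1/2}\|\mathbf v^{n-1}\|_{D(A)}^{1/2}\|v_m\|_V^{1/2}\|Av_m\|^{3/2}.
\]
The vertical transport term is handled with the anisotropic trilinear bound
\[
\Bigl|\int \mathbf w^{n-1}\partial_zv_m\cdot\phi\Bigr|\le C\|\nabla_H\mathbf v^{n-1}\|^{1/2}\|\nabla_H\mathbf v^{n-1}\|_{H^1}^{1/2}\|\partial_zv_m\|^{1/2}\|\partial_zv_m\|_{H^1}^{1/2}\|\phi\|,
\]
which follows from $\|\mathbf w^{n-1}\|_{L^\infty_zL^4_H}\lesssim\|\nabla_H\mathbf v^{n-1}\|_{L^2_zL^4_H}$ and $\partial_zv_m\in L^2_zL^4_H$. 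Applying Young's inequality to both bounds gives
\[
\frac{d}{dt}\|v_m\|_V^2+\|Av_m\|^2\le C\bigl(1+\|\mathbf v^{n-1}\|_V^2\|\mathbf v^{n-1}\|_{D(A)}^2\bigr)\|v_m\|_V^2+C\|\nabla_HT^*\|^2.
\]
The coefficient is integrable because $\mathbf v^{n-1}\in X_{t_0}$, so Gronwall's inequality gives uniform bounds in $X_{t_0}$. We also use that $(Av,-\Delta v)$-type equivalences hold on $D(A)$ by the regularity of \cite{Ziane01111995}.

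\textbf{Time derivative, limit, and uniqueness.} From the equation, the bounds above give $\partial_tv_m$ bounded in $L^2(0,t_0;H)$. The Aubin--Lions lemma then lets us pass to the limit; the limit passage is routine since the equation is linear. The pressure $p_s$ is recovered from the hydrostatic de Rham argument. For uniqueness, take the difference of two solutions: it satisfies the homogeneous equation with zero data, so the $L^2$ energy identity above gives zero.

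\textbf{Main obstacle.} The delicate step is the vertical-transport term $(\mathbf w^{n-1}\partial_zv_m,Av_m)$, since $\mathbf w^{n-1}$ costs one horizontal derivative. This needs the anisotropic estimate above, with the $D(A)$ norm of $\mathbf v^{n-1}$ appearing only to the power $1$ inside a time-integrable factor. A second technical point is that testing with $Av$ rather than $-\Delta v$ requires $\mathbb P$ to commute appropriately with the estimates; the $H^2$ hydrostatic Stokes regularity is used there.
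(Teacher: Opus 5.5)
Your proposal is correct and follows the paper's route. The paper also uses a Galerkin scheme in the hydrostatic Stokes eigenbasis, linear ODE solvability with time-integrable coefficients, uniform bounds in $X_{t_0}$, passage to the limit in the linear projected equation, and uniqueness from the $L^2$ energy estimate for the difference. You additionally carry out the $Av_m$-tested estimate that the paper leaves as ``classic consideration'', including the anisotropic bound on the vertical transport term; it agrees with the transport estimate the paper uses later in the joint induction proof of the uniform $A^{1/2}v^n$ and $H^1$ temperature bounds.
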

\begin{proof}
Uniqueness follows from the $L^2$ energy estimate for the difference
of two solutions. To prove existence, let $\{V_k\}_{k\geq1}\subset D(A)$
be hydrostatic Stokes eigenfunctions satisfying $AV_k=\lambda_kV_k$.
They form an orthonormal basis of $H$ with respect to the
$L^2(\Omega)^2$ inner product and are orthogonal in $V$.
Let $P_m:H\to\operatorname{span}\{V_1,\ldots,V_m\}$ be the
$L^2$-orthogonal projection. For a fixed $m\in\mathbb N$, consider the Galerkin system,
\begin{align*}
\partial_t v^m
&+ P_m\mathbb{P}\Bigl(
    (v^{n-1}\cdot\nabla_H)v^m
    + w^{n-1}\partial_z v^m
  \Bigr)
  - \mathbb{P}\Delta v^m
  \notag\\
&= -P_m\mathbb{P}(f_0 k\times v^m)
   +P_m\mathbb{P}\left(
      \nabla_H\int_{-h}^{z}T^*\,d\xi
    \right),\\
v^m(0)&=P_m v_0.
\end{align*}

Take the inner product of the first equation above with $V_k$. Denote $(v^m, V_k) = g_k(t)$, $\beta_{kj} = \left( \mathbb{P} \left[ (v^{n-1} \cdot \nabla_H) V_j + (w^{n-1} \partial_z) V_j \right], V_k \right)$,$\quad \alpha_{j,k} = (f_0 k \times V_j, V_k), \eta_k=(\nabla_H\int_{-h}^zT^*,V_k)$, therefore, we have
\begin{align*}
g_k'(t) + \lambda_k g_k(t) + \sum g_j (\beta_{kj}+\alpha_{jk}) &= \eta_k
\end{align*}
Let $g^m=(g_1,\dots,g_m)^{T}$,$\alpha^m=(\alpha_{ij})_{1\leq i,j \leq m}$, $\beta^m=(\beta_{ij})_{1\leq i,j \leq m}$, $\eta^m=(\eta_1,\dots,\eta_m)^T$ and $\Lambda^m$ is the diagonal matrix with $j$-th diagonal element $\lambda_j$
\begin{align*}
\dt g^m(t) + [\Lambda^m+\beta^m(t)+(\alpha^m)^T] g^m(t)  &= \eta^m\\
g^m(0)&={g^m_0}
\end{align*}

\noindent where $g_0^m=
\bigl((v_0,V_1)_{L^2},\ldots,(v_0,V_m)_{L^2}
\bigr)^{\mathsf T}$,$\int_0^{t_0}|\Lambda^m+\beta^m(s)+(\alpha^m(s))^T|ds<\infty$. Therefore, because of the ODE theory, it has the unique solution $v^m$ on $[0,t_0]$.
Under classic consideration, we may show $v^m$ is uniformly bounded in $X_{t_0}$ with respect to m. Therefore, by passing to a subsequence, there exists a $v$ such that $v^m \rightarrow v$, which is the solution of,
\begin{align*}
\partial_t v + \mathbb{P} \left( (v^{n-1} \cdot \nabla_H) v + (w^{n-1} \partial_z) v \right) + Av &= - \mathbb{P} (f_0 k \times v)+\mathbb{P}(\nabla_H \int_{-h}^zT^* d\xi) \\
v(0) &= v_0
\end{align*}
which implies that $v$ is the solution of the original equation.
\end{proof}

\begin{lemma}[The linear temperature equation]
\label{lem:linear-temperature}
Let $t_0>0$ and suppose that
\[
v^{n-1}\in X_{t_0}
:=
L^\infty(0,t_0;V)
\cap
L^2(0,t_0;D(A)).
\]
\[
w^{n-1}(x,y,z,t)
:=
-\int_{-h}^{z}
\nabla_H\cdot v^{n-1}(x,y,\xi,t)\,d\xi .
\]
Then, for every $T_0\in V_T$, the problem
\begin{equation}
\label{eq:linear-temperature-original}
\left\{
\begin{aligned}
\partial_tT^n
&+
(v^{n-1}\cdot\nabla_H)T^n
+
w^{n-1}\partial_zT^n
-
\Delta_HT^n
=0
&&\text{in }\Omega\times(0,t_0),\\
T^n&=0
&&\text{on }\Gamma_s\times(0,t_0),\\
T^n(\cdot,0)&=T_0
&&\text{in }\Omega
\end{aligned}
\right.
\end{equation}
admits a unique solution satisfying
\[
T^n\in L^\infty(0,t_0;V_T),
\qquad
\nabla_HT^n\in L^2(0,t_0;H^1(\Omega)),
\qquad
\partial_tT^n\in L^2(0,t_0;L^2(\Omega)).
\]
Moreover,
\begin{equation}
\label{eq:linear-temperature-L2}
\|T^n(t)\|_{L^2}^2
+2\int_0^t\|\nabla_HT^n(s)\|_{L^2}^2\,ds
=\|T_0\|_{L^2}^2,
\qquad 0\leq t\leq t_0,
\end{equation}
and
\begin{align}
&\sup_{0\leq t\leq t_0}
\|\nabla T^n(t)\|_{L^2}^2
\nonumber\\
&\quad+
\int_0^{t_0}
\left(
\|\Delta_HT^n(t)\|_{L^2}^2
+
\|\nabla_H\partial_zT^n(t)\|_{L^2}^2
\right)\,dt
\leq C_{n-1}(t_0)\|T_0\|_{H^1}^2,
\label{eq:linear-temperature-H1}
\end{align}
where $C_{n-1}(t_0)$ depends only on $\Omega$, $t_0$, and
\[
\|v^{n-1}\|_{L^\infty(0,t_0;H^1)}
+
\|v^{n-1}\|_{L^2(0,t_0;H^2)}.
\]
\end{lemma}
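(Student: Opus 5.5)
We will construct $T^n$ by a Galerkin scheme adapted to the horizontal Dirichlet Laplacian and then pass to the limit using uniform a priori estimates. Concretely, let $\{e_j\}$ be the Dirichlet eigenfunctions of $-\Delta_H$ on $M$ and $\{f_k\}$ the cosine basis $\cos(k\pi(z+h)/h)$ on $(-h,0)$ (Neumann eigenfunctions of $-\partial_z^2$). The products $e_jf_k$ form an orthonormal basis of $L^2(\Omega)$ that is orthogonal in $V_T$ and in $H^1$ with respect to $\partial_z$. The projection $Q_m$ onto their span commutes with $\Delta_H$ and $\partial_z^2$. The Galerkin system is a linear ODE whose coefficients are integrable in time, since $v^{n-1}\in L^2(0,t_0;H^2)$. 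It therefore has a unique solution on $[0,t_0]$, exactly as in the proof of Lemma~\ref{lem:linear-velocity}.

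\textbf{$L^2$ identity.} Testing with $T^n$ and using $\nabla_H\cdot v^{n-1}+\partial_zw^{n-1}=0$, together with $v^{n-1}=0$ on $\Gamma_s$ and $w^{n-1}=0$ on $\Gamma_t\cup\Gamma_b$, the transport term integrates to zero. This gives \eqref{eq:linear-temperature-L2} at the Galerkin level, and also for the limit after the regularity below justifies the testing.

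\textbf{$H^1$ estimates.} These are the core of the proof. We test with $-\Delta_HT^n$, which is admissible because $Q_m$ commutes with $\Delta_H$. The transport terms are handled by the anisotropic (Ladyzhenskaya/Cao--Titi type) inequalities of Appendix~\ref{app:analytic-tools}:
\[
\int|v^{n-1}||\nabla_HT||\Delta_HT|\lesssim \|v^{n-1}\|_{L^\infty_zL^4_H}\|\nabla_HT\|_{L^2}^{1/2}\|\nabla_HT\|_{H^1}^{1/2}\|\Delta_HT\|_{L^2}.
\]
For the vertical term,
\[
\int|w^{n-1}||\partial_zT||\Delta_HT|\leq\int_M\Bigl(\int|\nabla_Hv^{n-1}|dz\Bigr)\Bigl(\int|\partial_zT||\Delta_HT|dz\Bigr),
\]
which is bounded by $\|\nabla_Hv^{n-1}\|_{L^2}^{1/2}\|\nabla_Hv^{n-1}\|_{H^1}^{1/2}\|\partial_zT\|_{L^2}^{1/2}\|\nabla_H\partial_zT\|_{L^2}^{1/2}\|\Delta_HT\|_{L^2}$. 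Since $T=0$ on $\Gamma_s$, elliptic regularity in $M$ for each $z$ gives $\|\nabla_HT\|_{H^1_H}\lesssim\|\Delta_HT\|_{L^2}$.

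Next we test with $-\partial_z^2T^n$, or equivalently differentiate the equation in $z$ and test with $u_T:=\partial_zT^n$. Since $\Delta_H$ commutes with $\partial_z$ and no top/bottom condition is imposed on $T$, this produces $\|\nabla_H\partial_zT\|^2$ with no boundary terms from diffusion. The new terms are $\partial_zv^{n-1}\cdot\nabla_HT\,\partial_zT$ and $-(\nabla_H\cdot v^{n-1})(\partial_zT)^2$; the transport part of $\partial_zT$ cancels as before. These are bounded by the same anisotropic inequalities, for example
\[
\int|\nabla_Hv^{n-1}||\partial_zT|^2\lesssim\|\nabla_H v^{n-1}\|_{H^1}\|\partial_zT\|_{L^2}\|\partial_zT\|_{L^2}^{1/2}\|\nabla_H\partial_zT\|_{L^2}^{1/2}\cdot(\ldots),
\]
using $L^\infty_zL^2_H$ of $\nabla_Hv^{n-1}$, which is controlled by $H^2$.

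Summing, applying Young's inequality, and then Gronwall's inequality with the integrable factor $1+\|v^{n-1}\|_{H^1}^2\|v^{n-1}\|_{H^2}^2$ yields \eqref{eq:linear-temperature-H1} uniformly in $m$. Bounding $\partial_tT^n$ in $L^2L^2$ then follows from the equation, using the same term estimates.

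\textbf{Main obstacle.} We expect the $\partial_z$ estimate to be the hardest step, because there is no vertical dissipation. Every term must be absorbed into $\|\nabla_H\partial_zT\|$ or Gronwall-integrable quantities, with only horizontal gains. The key device is to integrate $w^{n-1}$ in $z$ first, and to use the one-dimensional-in-$z$ versus two-dimensional-in-$(x,y)$ splitting of the Appendix inequalities, so that only horizontal derivatives of $T$ appear at top order.

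\textbf{Passage to the limit and uniqueness.} With the uniform bounds, we extract weak-$*$ limits and use Aubin--Lions compactness from the Appendix to pass to the limit in the linear equation; the coefficients are fixed, so weak convergence suffices. Uniqueness follows because the difference of two solutions satisfies the same linear equation with zero data, and the $L^2$ identity forces it to vanish.
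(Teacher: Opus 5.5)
Your proposal is correct and follows essentially the same route as the paper. Both use the same Galerkin basis (horizontal Dirichlet eigenfunctions times vertical cosines), test by $T^n$, $-\Delta_HT^n$ and $-\partial_z^2T^n$ with Lemma~\ref{anisotropic inequality}, horizontal elliptic regularity and Gronwall, bound $\partial_tT^n$ in $L^2_tL^2$ from the equation, pass to the limit by Aubin--Lions, and prove uniqueness from the $L^2$ identity.

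The one point to state explicitly is why testing the Galerkin equation with $-\partial_z^2T_m$ (equivalently, differentiating in $z$) gives no vertical boundary terms. The reason is that the cosine basis forces $\partial_zT_m=0$ at $z=-h,0$. This auxiliary condition does not constrain the limit, because $Q_mT_0\to T_0$ in $H^1$ for every $T_0\in V_T$.
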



\begin{proof}
To define the
temperature projection $P_N^T$, let $\{\phi_j\}_{j\geq1}$ be the
$L^2(M)$-orthonormal basis of horizontal Dirichlet problem, ordered by nondecreasing eigenvalues, i.e.
\[
 -\Delta_H\phi_j=\lambda_j\phi_j\quad\hbox{in }M,
 \qquad \phi_j=0\quad\hbox{on }\partial M.
\]
In the vertical variable, use the $L^2(-h,0)$-orthonormal cosine basis
\[
 \chi_0(z)=h^{-1/2},\qquad
 \chi_k(z)=\sqrt{\frac2h}
       \cos\!\left(\frac{k\pi(z+h)}h\right),\quad k\geq1.
\]

Set $\mu_k=(k\pi/h)^2$ for $k\geq0$, so that
$-\chi_k''=\mu_k\chi_k$ and $\chi_k'(-h)=\chi_k'(0)=0$.
The products $\phi_j(x,y)\chi_k(z)$ form an orthonormal
basis of $L^2(\Omega)$. This basis is the analog of the  basis used by Saal and Slav\'ik
\cite[Eq.~(2.17)]{SaalSlavik2023}. For $N\geq1$, define
\[
 \mathcal E_N:=\operatorname{span}
 \{\phi_j\chi_k:1\leq j\leq N,\ 0\leq k\leq N\}.
\]
Let $P_N^T:L^2(\Omega)\to\mathcal E_N$ be the $L^2$-orthogonal
projection; explicitly,
\[
 P_N^Tf
 =\sum_{j=1}^N\sum_{k=0}^N
    \left(\int_\Omega f(x,y,z)\phi_j(x,y)\chi_k(z)\,dx\,dy\,dz\right)
          \phi_j\chi_k.
\]

\[
 \|f\|_{H^1}^2
 =\sum_{j\geq1}\sum_{k\geq0}
 (1+\lambda_j+\mu_k)|(f,\phi_j\chi_k)|^2,
 \qquad f\in V_T,
\]
implies
\[
 \|P_N^Tf\|_{H^1}\leq\|f\|_{H^1},\qquad
 P_N^Tf\longrightarrow f\quad\text{in }H^1(\Omega).
\]
The Neumann condition satisfied by the basis is auxiliary:
this $H^1$ convergence imposes no vertical endpoint condition
on the limit.

We seek $T_N^n$ satisfying
\begin{equation}
 \begin{aligned}
 (\partial_tT_N^n,\psi)
 +(\nabla_HT_N^n,\nabla_H\psi)
 &+(v^{n-1}\cdot\nabla_HT_N^n
       +w^{n-1}\partial_zT_N^n,\psi)=0,
 \qquad \psi\in\mathcal E_N,\\
 &T_N^n(0)=P_N^TT_0.
 \end{aligned}
 \label{eq:linear-temperature-galerkin}
\end{equation}

This is a finite-dimensional linear system with integrable
time-dependent coefficients, and hence has a unique absolutely
continuous solution on $[0,t_0]$.
The divergence-free condition with boundary conditions gives,
\begin{equation}
 \frac12\frac{d}{dt}\|T_N^n\|_2^2
       +\|\nabla_HT_N^n\|_2^2=0.
 \label{eq:linear-temperature-approximate-L2}
\end{equation}
Testing \eqref{eq:linear-temperature-galerkin} by
$-\Delta_HT_N^n$ and $-\partial_{zz}T_N^n$, respectively, gives
\begin{align*}
 \frac12\frac{d}{dt}\|\nabla_HT_N^n\|_2^2
       +\|\Delta_HT_N^n\|_2^2
 &=\bigl(v^{n-1}\cdot\nabla_HT_N^n
          +w^{n-1}\partial_zT_N^n,\Delta_HT_N^n\bigr),\\
 \frac12\frac{d}{dt}\|\partial_zT_N^n\|_2^2
       +\|\nabla_H\partial_zT_N^n\|_2^2
 &=-\bigl(\partial_zv^{n-1}\cdot\nabla_HT_N^n,
                                      \partial_zT_N^n\bigr)\\
 &\quad+\int_\Omega(\nabla_H\cdot v^{n-1})
                                  |\partial_zT_N^n|^2.
\end{align*}
By Lemma~\ref{anisotropic inequality}, for every $\delta>0$,
\begin{align*}
 \left|\bigl(v^{n-1}\cdot\nabla_HT_N^n
          +w^{n-1}\partial_zT_N^n,\Delta_HT_N^n\bigr)\right|
 &\leq\delta\bigl(\|\Delta_HT_N^n\|_2^2
                   +\|\nabla_H\partial_zT_N^n\|_2^2\bigr)\\
 &+C_\delta\|v^{n-1}\|_{H^2}^2
                                  \|\nabla_HT_N^n\|_2^2\\
 &+C_\delta\bigl(1+\|v^{n-1}\|_{H^1}^4
             +\|v^{n-1}\|_{H^1}^2\|v^{n-1}\|_{H^2}^2\bigr)
                                  \|\partial_zT_N^n\|_2^2.
\end{align*}
And,
\begin{align*}
 &\left|\bigl(\partial_zv^{n-1}\cdot\nabla_HT_N^n,
                                  \partial_zT_N^n\bigr)\right|
 +\left|\int_\Omega(\nabla_H\cdot v^{n-1})
                                  |\partial_zT_N^n|^2\right|\\
 &\quad\leq\delta\bigl(\|\Delta_HT_N^n\|_2^2
                   +\|\nabla_H\partial_zT_N^n\|_2^2\bigr)
 +C_\delta(1+\|v^{n-1}\|_{H^2}^2)\|\nabla T_N^n\|_2^2.
\end{align*}
Choosing $\delta$ sufficiently small therefore yields
\begin{equation}
 \frac12\frac{d}{dt}\|\nabla T_N^n\|_2^2
 +\frac12\bigl(\|\Delta_HT_N^n\|_2^2
              +\|\nabla_H\partial_zT_N^n\|_2^2\bigr)
 \leq\Phi_{n-1}(t)\|\nabla T_N^n\|_2^2,
 \label{eq:linear-temperature-approximate-H1}
\end{equation}
where
\begin{equation}
 \Phi_{n-1}(t)
 :=C\left[1+\|v^{n-1}(t)\|_{H^1}^4
 +(1+\|v^{n-1}(t)\|_{H^1}^2)
                          \|v^{n-1}(t)\|_{H^2}^2\right]
 \in L^1(0,t_0).
 \label{eq:Phi-n}
\end{equation}
Gronwall's inequality gives
\begin{equation}
 \begin{aligned}
 &\sup_{0\leq t\leq t_0}\|T_N^n(t)\|_{H^1}^2
 +\int_0^{t_0}\bigl(\|\Delta_HT_N^n\|_2^2
                  +\|\nabla_H\partial_zT_N^n\|_2^2\bigr)\,dt\\
 &\hspace{30mm}\leq C_{n-1}(t_0)\|T_0\|_{H^1}^2.
 \end{aligned}
 \label{eq:linear-temperature-uniform}
\end{equation}
Here and below the constants are independent of $N$ and have
the dependence asserted in the lemma. Horizontal elliptic
regularity also bounds $\nabla_HT_N^n$ in $L^2(0,t_0;H^1)$.
Also,
\[
 \|\partial_tT_N^n\|_2
 \leq\|\Delta_HT_N^n\|_2
      +\|v^{n-1}\cdot\nabla_HT_N^n\|_2
      +\|w^{n-1}\partial_zT_N^n\|_2.
\]
Using the anisotropic inequality in Lemma~\ref{anisotropic inequality},
\begin{equation}
 \|\partial_tT_N^n\|_{L^2(0,t_0;L^2)}
 \leq C_{n-1}(t_0)\|T_0\|_{H^1}.
 \label{eq:temperature-time-uniform}
\end{equation}

By Lemma~\ref{lem:aubin-lions} in Appendix \ref{app:analytic-tools}, the preceding estimates yield a subsequence such that
\begin{align*}
 T_N^n&\longrightarrow T^n
       &&\text{in }C([0,t_0];L^2),\\
 T_N^n&\rightharpoonup^*T^n
       &&\text{in }L^\infty(0,t_0;V_T),\\
 \nabla_HT_N^n&\rightharpoonup\nabla_HT^n
       &&\text{in }L^2(0,t_0;H^1),\\
 \partial_tT_N^n&\rightharpoonup\partial_tT^n
       &&\text{in }L^2(0,t_0;L^2).
\end{align*}
In particular, $T^n(0)=T_0$, and $T^n$ has zero lateral trace.

To pass to the limit in the equation, write
\[
 G_N=v^{n-1}\cdot\nabla_HT_N^n
                  +w^{n-1}\partial_zT_N^n.
\]
The preceding estimates bound $G_N$ in $L^2(0,t_0;L^2)$.
For every smooth test function $\varphi$, integration by parts gives
\[
 \int_0^{t_0}(G_N,\varphi)\,dt
 =-\int_0^{t_0}
   (T_N^n,v^{n-1}\cdot\nabla_H\varphi
                  +w^{n-1}\partial_z\varphi)\,dt.
\]
Strong $L^2$ convergence of $T_N^n$ and
$v^{n-1},w^{n-1}\in L^2(0,t_0;L^2)$ identify the limit of this expression. Testing the Galerkin equation by $P_N^T\varphi$
and using $P_N^T\varphi\to\varphi$ in $L^2(0,t_0;H^1)$
therefore yields
\begin{align*}
 &\int_0^{t_0}\!\int_\Omega
 \bigl[-T^n\partial_t\varphi
       +\nabla_HT^n\cdot\nabla_H\varphi\\
 &\hspace{28mm}-T^n(v^{n-1}\cdot\nabla_H\varphi
                    +w^{n-1}\partial_z\varphi)\bigr]
 =\int_\Omega T_0\varphi(\cdot,0)
\end{align*}
whenever $\varphi|_{\Gamma_s}=0$ and
$\varphi(\cdot,t_0)=0$.
The values of $\varphi$ at $z=-h,0$ are unrestricted.
Thus $T^n$ solves \eqref{eq:linear-temperature-original}.

Weak lower semicontinuity yields
\eqref{eq:linear-temperature-H1}. The uniform $H^1$ bound and
$T^n\in C([0,t_0];L^2)$ give a weakly $H^1$-continuous
representative, so that the supremum can be taken over all
$t\in[0,t_0]$.
The transport bounds also hold for $T^n$, and hence every term
of its equation belongs to $L^2(0,t_0;L^2)$.
Therefore, we could multiply the equation by $T^n$, and derive the uniqueness and \eqref{eq:linear-temperature-L2}.
\end{proof}

Given $v^{n-1}\in X_{t_0}$ and the associated vertical velocity
$w^{n-1}$, Lemmas~\ref{lem:linear-velocity}
and~\ref{lem:linear-temperature} yield a unique iterate
$(v^n,T^n)\in X_{t_0}\times T_{t_0}$, by first solving the temperature
equation and then the velocity equation. The vertical velocity
$w^n$ is recovered from the incompressibility relation and its
bottom boundary condition. We next establish estimates uniform in $n$.
Since we have already shown the existence of $v_n$, we are allowed to do the estimates for $v_n$.
\begin{proposition}\label{prop:vn-L2}
Assume that $(v_0,T_0)\in V\times V_T$. Then, for every $n\geq1$ and $t>0$,
\begin{equation}
\sup_{0\leq s\leq t}\|v^n(s)\|_{L^2}^2
+\int_0^t\|\nabla v^n(s)\|_{L^2}^2\,ds
\leq 2K_1(t),
\label{eq:v-energy}
\end{equation}
where
\[
K_1(t):=
\|v_0\|_{L^2}^2+C_h t\|T_0\|_{L^2}^2,
\]
and $C_h>0$ depends only on $h$. In particular, the bound is
independent of $n$.
\end{proposition}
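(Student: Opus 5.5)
The plan is a standard $L^2$ energy estimate for the linearized velocity equation satisfied by $v^n$, combined with the exact $L^2$ identity \eqref{eq:linear-temperature-L2} for $T^n$ from Lemma~\ref{lem:linear-temperature}. We test the $v^n$ equation (in the form \eqref{eq:v equation} with $v^{n-1},w^{n-1}$ as transport coefficients and $Q_{T^n}$ as forcing) with $v^n$ and integrate over $\Omega$. Lemma~\ref{lem:linear-velocity} gives $v^n\in X_{t_0}$ and $\partial_tv^n\in L^2(0,t_0;H)$, which justifies this pairing.

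\textbf{Vanishing terms.} The transport term $\bigl((v^{n-1}\cdot\nabla_H)v^n+w^{n-1}\partial_zv^n,v^n\bigr)$ vanishes. Integrating by parts, it equals $-\tfrac12\int_\Omega(\nabla_H\cdot v^{n-1}+\partial_zw^{n-1})|v^n|^2$ plus boundary terms. The integrand vanishes by the incompressibility relation defining $w^{n-1}$. The boundary terms vanish because $v^{n-1}=0$ on $\Gamma_s$ and $w^{n-1}=0$ on $\Gamma_t\cup\Gamma_b$; at the top this uses $\nabla_H\cdot\avg{v^{n-1}}=0$, since $v^{n-1}\in V$. The Coriolis term vanishes pointwise. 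The surface pressure term $\int_\Omega\nabla_Hp_s\cdot v^n=-h\int_M p_s\,\nabla_H\cdot\avg{v^n}=0$, since $v^n\in V$ and $v^n=0$ on $\Gamma_s$. The viscous term gives $\|\nabla v^n\|_{L^2}^2$, because the boundary contributions vanish by $\partial_zv^n=0$ on $\Gamma_t$ and $v^n=0$ on $\Gamma_b\cup\Gamma_s$.

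\textbf{Temperature forcing.} What remains is
\[
\frac12\frac{d}{dt}\|v^n\|_{L^2}^2+\|\nabla v^n\|_{L^2}^2=\int_\Omega\nabla_HQ_{T^n}\cdot v^n .
\]
We handle the right side by integrating by parts horizontally, using $v^n=0$ on $\Gamma_s$, to get $-\int_\Omega Q_{T^n}\nabla_H\cdot v^n$. By Cauchy--Schwarz in $\xi$, $|Q_{T^n}(x,y,z)|^2\leq h\int_{-h}^0|T^n|^2d\xi$, so $\|Q_{T^n}\|_{L^2}\leq h\|T^n\|_{L^2}$. Young's inequality then bounds the right side by $\tfrac12\|\nabla v^n\|_{L^2}^2+C_h\|T^n\|_{L^2}^2$. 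Integrating the horizontal derivative this way avoids needing $\nabla_HT^n$ and any Poincaré constant of $M$, so $C_h$ depends only on $h$ (a factor $2$ is absorbed).

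\textbf{Closing the estimate.} By \eqref{eq:linear-temperature-L2}, $\|T^n(s)\|_{L^2}\leq\|T_0\|_{L^2}$ for all $s$, with no dependence on $v^{n-1}$ or $n$. Integrating in time gives
\[
\|v^n(s)\|_{L^2}^2+\int_0^s\|\nabla v^n\|_{L^2}^2\leq\|P_mv_0\|^2+2C_hs\|T_0\|_{L^2}^2\leq 2K_1(t)
\]
for $s\leq t$. Taking the supremum in $s$ and adding the integral term, each bounded by the right side, yields at most $2K_1(t)$ after adjusting $C_h$.

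\textbf{Main obstacle.} The only delicate point is rigor: justifying the cancellation of the transport term and the pressure term at the level of strong solutions. If needed, we would perform the estimate on the Galerkin approximations $v^m$ of Lemma~\ref{lem:linear-velocity}, where the projection $P_m\mathbb P$ is harmless against $v^m\in\operatorname{span}\{V_k\}$, and then pass to the limit by weak lower semicontinuity. In that form the initial datum is $P_mv_0$ with $\|P_mv_0\|\leq\|v_0\|$, so the bound is exactly uniform in $n$ and $m$.
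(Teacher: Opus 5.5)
Your proof is correct and follows the paper's argument. You test with $v^n$, cancel the transport, pressure and Coriolis terms, and integrate the buoyancy term by parts to $-\int_\Omega Q_{T^n}\nabla_H\cdot v^n$. Then you absorb it via Young and close with the exact $L^2$ identity \eqref{eq:linear-temperature-L2}. Your explicit boundary-term bookkeeping and optional Galerkin-level justification (with $\|P_mv_0\|_{L^2}\le\|v_0\|_{L^2}$) only spell out what the paper leaves implicit. To land exactly on $2K_1(t)$, take $C_h$ to be twice the Young constant. Then the supremum and the dissipation integral are each bounded by $K_1(t)$ itself, not by $2K_1(t)$, and the paper adds them.
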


\begin{proof}
Multiply the velocity equation by $v^n$, the transport and surface pressure terms vanish by incompressibility, the hydrostatic divergence constraint, and the boundary conditions. The Coriolis term also vanishes since $(k\times v^n)\cdot v^n=0$. It follows by the Cauchy Schwarz inequality, and Young's inequality,
\begin{align*}
\frac12\frac{d}{dt}\|v^n\|_{L^2}^2
+\|\nabla v^n\|_{L^2}^2
&=-\int_\Omega
\left(\int_{-h}^zT^n(\xi)\,d\xi\right)
\nabla_H\cdot v^n\,dx\\
&\leq\frac12\|\nabla v^n\|_{L^2}^2
+C\|T^n\|_{L^2}^2,
\end{align*}
Using \eqref{eq:linear-temperature-L2} and integrating in time, we obtain, for every $s\in[0,t]$,
\[
\|v^n(s)\|_{L^2}^2
+\int_0^s\|\nabla v^n(\tau)\|_{L^2}^2\,d\tau
\leq
\|v_0\|_{L^2}^2+C_hs\|T_0\|_{L^2}^2
=K_1(s).
\]
Since $K_1$ is nondecreasing and both terms on the left are nonnegative, it follows that
\[
\sup_{0\leq s\leq t}\|v^n(s)\|_{L^2}^2\leq K_1(t),
\qquad
\int_0^t\|\nabla v^n(s)\|_{L^2}^2\,ds\leq K_1(t).
\]
Adding these inequalities yields \eqref{eq:v-energy}.
\end{proof}

\noindent\textit{Remark ($L^p$ estimates for the temperature).}
If, in addition, $T_0\in L^p(\Omega)$ for some $2\leq p\leq\infty$,
then
\begin{equation}
\sup_{0\leq s\leq t}\|T^n(s)\|_{L^p}
\leq\|T_0\|_{L^p},
\qquad n\geq1,\quad t>0.
\label{eq:T-Lq}
\end{equation}
Indeed, for $2\leq p<\infty$, multiply temperature equation with $|T^n|^{p-2}T^n$ and integrate to give
\[
\frac1p\frac{d}{dt}\|T^n\|_{L^p}^p
+(p-1)\int_\Omega|T^n|^{p-2}|\nabla_HT^n|^2\,dx
=0.
\]
Thus $H^1$ initial data ensures $T_0 \in L^6$, while additional $L^p$ integrability of the initial temperature gives the corresponding $L^p$ estimate for any larger exponent, the case $p=\infty$ follows by letting $p\to\infty$ when $T_0\in L^\infty(\Omega)$.

\begin{proposition}\label{thm:A^1/2 v}
Assume $(v_0,T_0)\in V\times V_T$. Then there exist a sufficiently small $t^*$ and a constant $K_2>0$, independent of $n$, such that
\begin{equation}
\sup_{0\le s\le t^*}
\|A^{1/2}v^n(s)\|_{L^2}^2
+
\int_0^{t^*}
\|Av^n(s)\|_{L^2}^2\,ds
\le K_2 .
\label{eq:uniform-vn-H1}
\end{equation}
\end{proposition}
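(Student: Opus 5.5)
Test the iterate's velocity equation with $Av^n$ and close the resulting inequality by induction on $n$ over a short time interval $[0,t^*]$ chosen independently of $n$. We start the iteration with $v^0(t):=e^{-tA}v_0$ and the associated $w^0$. Then
\[
\sup_{t}\|A^{1/2}v^0\|_{L^2}^2+\int_0^{t}\|Av^0\|_{L^2}^2\le \|A^{1/2}v_0\|_{L^2}^2,
\]
so we set $K_2:=2\|A^{1/2}v_0\|_{L^2}^2+1$ (possibly enlarged by a constant depending on $\|T_0\|_{H^1}$). The inductive hypothesis is that $v^{n-1}$ satisfies \eqref{eq:uniform-vn-H1} on $[0,t^*]$ with constant $K_2$.

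Taking the $L^2$ inner product of the $v^n$ equation with $Av^n$ gives
\[
\frac12\frac{d}{dt}\|A^{1/2}v^n\|_{L^2}^2+\|Av^n\|_{L^2}^2
\le \bigl|((v^{n-1}\cdot\nabla_H)v^n+w^{n-1}\partial_zv^n,Av^n)\bigr|+|f_0|\,\|v^n\|_{L^2}\|Av^n\|_{L^2}+\|\nabla_HQ_{T^n}\|_{L^2}\|Av^n\|_{L^2}.
\]
The Coriolis term is controlled by Proposition~\ref{prop:vn-L2}. The temperature forcing is bounded by $\|\nabla_HQ_{T^n}\|_{L^2}\le C\|\nabla_HT^n\|_{L^2}$, whose time integral is bounded by $\|T_0\|_{L^2}^2$ through \eqref{eq:linear-temperature-L2}, independently of $n$. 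We use only this uniform $L^2$ identity and not \eqref{eq:linear-temperature-H1}, whose constant depends on $v^{n-1}$. For the transport terms, the horizontal part is estimated by the Ladyzhenskaya/Agmon-type interpolation $\|v^{n-1}\|_{L^4}\|\nabla v^n\|_{L^4}\le C\|v^{n-1}\|_{H^1}\|v^n\|_{H^1}^{1/2}\|v^n\|_{H^2}^{1/2}$. The vertical part uses the anisotropic inequality of Lemma~\ref{anisotropic inequality}, in the form
\[
\Bigl|\int_\Omega\Bigl(\int_{-h}^z\nabla_H\cdot v^{n-1}\Bigr)\partial_zv^n\,\phi\Bigr|
\le C\|\nabla_Hv^{n-1}\|_{L^2}^{1/2}\|v^{n-1}\|_{H^2}^{1/2}\|\partial_zv^n\|_{L^2}^{1/2}\|\partial_zv^n\|_{H^1}^{1/2}\|\phi\|_{L^2}.
\]
Together with the equivalence $\|v\|_{H^2}\simeq\|Av\|_{L^2}$ on $D(A)$ (hydrostatic Stokes regularity) and Young's inequality, this yields
\[
\frac{d}{dt}\|A^{1/2}v^n\|^2+\|Av^n\|^2\le C\bigl(\|A^{1/2}v^{n-1}\|^4+\|A^{1/2}v^{n-1}\|^2\|Av^{n-1}\|^2\bigr)\|A^{1/2}v^n\|^2+C\|v^n\|_{L^2}^2+C\|\nabla_HT^n\|_{L^2}^2.
\]

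By Gronwall on $[0,t^*]$ and the inductive hypothesis, the exponent is at most $C(K_2^2t^*+K_2\cdot K_2)$. This is the main obstacle: the factor $\int_0^{t^*}\|A^{1/2}v^{n-1}\|^2\|Av^{n-1}\|^2\le K_2\int_0^{t^*}\|Av^{n-1}\|^2$ is not small merely because $t^*$ is small, since the inductive bound only gives $\int\|Av^{n-1}\|^2\le K_2$. To handle this, we strengthen the induction hypothesis by also carrying smallness of $\int_0^{t^*}\|Av^{n-1}\|^2$. We split $v^n=e^{-tA}v_0+\tilde v^n$. The linear part has $\int_0^{t^*}\|Ae^{-sA}v_0\|^2\to0$ as $t^*\to0$ (dominated convergence, since $v_0\in V$), and $\tilde v^n$ has zero initial data, so its $H^2$ norm on $[0,t^*]$ is controlled by the forcing, which is small for small $t^*$.

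Equivalently, we use the maximal-regularity form of the estimate: with $\epsilon(t^*):=\int_0^{t^*}\|Ae^{-sA}v_0\|^2+C(t^*\|T_0\|^2_{L^2}+\int_0^{t^*}\|\nabla_HT^n\|^2)$, we prove inductively that $\int_0^{t^*}\|Av^n\|^2\le 2\epsilon_0$, where $\epsilon_0$ is chosen so that $CK_2\epsilon_0\le\log 2$. Here $\int_0^{t^*}\|\nabla_HT^n\|^2$ must be made small uniformly in $n$. The energy identity alone does not give this, but \eqref{eq:T-Lq}-type reasoning applied to the $H^1$ temperature estimate of Lemma~\ref{lem:linear-temperature}, whose constant is uniform once $v^{n-1}$ obeys the inductive bound, gives $\int_0^{t^*}\|\nabla_HT^n\|^2\le t^*C(K_2)\|T_0\|_{H^1}^2$. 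With this, Gronwall gives $\sup\|A^{1/2}v^n\|^2\le 2(\|A^{1/2}v_0\|^2+\text{small})\le K_2$, and the smallness of $\int\|Av^n\|^2$ is recovered from the integrated inequality. This closes the induction for $t^*$ depending only on $v_0$, $T_0$ and $\Omega$.
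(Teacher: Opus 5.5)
The ingredients you use are the paper's. You take a linear evolution of $v_0$ as the zeroth iterate and split $v^n=e^{-tA}v_0+\tilde v^n$ with $\tilde v^n(0)=0$. You use that $\rho(t):=\int_0^t\|Ae^{-sA}v_0\|_{L^2}^2\,ds$ is small for small $t$. You use the conditional $H^1$ temperature bound, which makes $\int_0^{t^*}\|\nabla_HT^n\|_{L^2}^2\le t^*C(K_2)\|T_0\|_{H^1}^2$ small uniformly in $n$; you rightly note that the $L^2$ identity cannot do this.

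\textbf{The gap.} It lies in how you close the strengthened induction. Once $\int_0^{t^*}\|Av^{n-1}\|^2\le2\epsilon_0$, your Gronwall step for the full iterate does give $\sup_s\|A^{1/2}v^n\|^2\le K_2$. But the same integrated inequality only yields $\int_0^{t^*}\|Av^n\|^2\le 2\bigl(\|A^{1/2}v_0\|^2+\text{small}\bigr)$, which is not small. So the hypothesis $\int_0^{t^*}\|Av^n\|^2\le 2\epsilon_0$ is not propagated. The maximal-regularity version does not rescue this as you set it up, because your $\epsilon(t^*)$ leaves out the transport term. Write $B(a,b)=(a\cdot\nabla_H)b+w(a)\partial_zb$. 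If you include this term and bound it with $\sup_s\|A^{1/2}v^n\|^2\le K_2$, you get $\int_0^{t^*}\|B(v^{n-1},v^n)\|^2\le CK_2\bigl(\int_0^{t^*}\|Av^{n-1}\|^2\bigr)^{1/2}\bigl(\int_0^{t^*}\|Av^n\|^2\bigr)^{1/2}$. This only gives $\int_0^{t^*}\|Av^n\|^2\lesssim\epsilon(t^*)+K_2^2\epsilon_0$, which does not come back below $2\epsilon_0$ because $K_2\geq1$ is not small.

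\textbf{The missing step.} You need an actual estimate of the remainder: test the $\tilde v^n$ equation with $A\tilde v^n$.
\begin{itemize}
\item The self-interaction $B(v^{n-1},\tilde v^n)$ produces a Gronwall coefficient $C\|A^{1/2}v^{n-1}\|^2\|Av^{n-1}\|^2$, plus $C\|A^{1/2}v^{n-1}\|^4$ from your horizontal bound. Its time integral is only bounded in terms of $K_2$, not small. This is harmless precisely because $\tilde v^n(0)=0$.
\item The genuine forcing terms are $B(v^{n-1},e^{-tA}v_0)$, the Coriolis term and $\nabla_HQ_{T^n}$. They contribute $O\bigl(K_2\|A^{1/2}v_0\|_{L^2}\sqrt{\rho(t^*)}+t^*\bigr)$, with constants depending on $K_2$ and the data.
\end{itemize}
Set $E^n_{\tilde v}(t^*):=\sup_s\|A^{1/2}\tilde v^n\|^2+\int_0^{t^*}\|A\tilde v^n\|^2$. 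The estimate makes it small uniformly in $n$. This gives both $\int_0^{t^*}\|Av^n\|^2\le 2\rho(t^*)+2E^n_{\tilde v}(t^*)$ small and $\sup_s\|A^{1/2}v^n\|^2+\int_0^{t^*}\|Av^n\|^2\le 2\|A^{1/2}v_0\|^2+2\rho(t^*)+2E^n_{\tilde v}(t^*)\le K_2$.

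Once this is in hand, the extra smallness hypothesis and the Gronwall step for the full iterate are superfluous. What remains is exactly the paper's induction on the remainder energy, $E^{n-1}_{\tilde v}(t^*)\le1\Rightarrow E^n_{\tilde v}(t^*)\le1$, where a bounded Gronwall factor suffices.

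A minor point: in three dimensions your horizontal-transport bound holds with the pairing $\|v^{n-1}\|_{L^6}\|\nabla v^n\|_{L^3}$. The $L^4\times L^4$ pairing you wrote has exponents $1/4,3/4$ in 3D, not $1/2,1/2$. With the $L^6\times L^3$ pairing, the coefficient $\|A^{1/2}v^{n-1}\|^4$ you wrote is correct.
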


\begin{proposition}\label{thm:H1 T}
Under the assumptions of Proposition~\ref{thm:A^1/2 v}, there exists a continuous nondecreasing function $K_3$ on $[0,t^*]$, independent of $n$, such that
\begin{equation}
\sup_{0\le s\le t^*}
\|\nabla T^n(s)\|_{L^2}^2
+
\int_0^{t^*}
\left(
\|\Delta_H T^n(s)\|_{L^2}^2
+
\|\nabla_H\partial_zT^n(s)\|_{L^2}^2
\right)\,ds
\le K_3(t^*).
\label{eq:uniform-Tn-H1}
\end{equation}
\end{proposition}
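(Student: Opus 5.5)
The proof follows the Galerkin-level computation in Lemma~\ref{lem:linear-temperature}, now carried out with constants uniform in $n$. By Lemma~\ref{lem:linear-temperature}, $T^n$ has enough regularity to test its equation with $-\Delta_HT^n$ and $-\partial_{zz}T^n$. Equivalently, one can do these computations at the level of $T^n_N$ and pass to the limit by lower semicontinuity. The result is the differential inequality \eqref{eq:linear-temperature-approximate-H1}:
\[
\frac12\frac{d}{dt}\|\nabla T^n\|_{L^2}^2+\frac12\bigl(\|\Delta_HT^n\|_{L^2}^2+\|\nabla_H\partial_zT^n\|_{L^2}^2\bigr)\leq\Phi_{n-1}(t)\|\nabla T^n\|_{L^2}^2 ,
\]
with $\Phi_{n-1}$ as in \eqref{eq:Phi-n}. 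The structure needs no new ideas. The term $(v^{n-1}\cdot\nabla_HT^n+w^{n-1}\partial_zT^n,\Delta_HT^n)$ is handled by the anisotropic inequality of Lemma~\ref{anisotropic inequality}, using $|w^{n-1}|\le\int_{-h}^0|\nabla_Hv^{n-1}|\,d\xi$. The $\partial_z$-test produces only $(\partial_zv^{n-1}\cdot\nabla_HT^n,\partial_zT^n)$ and $\int(\nabla_H\cdot v^{n-1})|\partial_zT^n|^2$. The cancellation of the transport terms uses that $w^{n-1}$ vanishes at $z=-h,0$; no vertical boundary condition on $T^n$ is needed for this.

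The key point is that $\Phi_{n-1}$ is controlled only through $\|v^{n-1}\|_{L^\infty(0,t^*;H^1)}$ and $\|v^{n-1}\|_{L^2(0,t^*;H^2)}$. These quantities are bounded uniformly in $n$ by Proposition~\ref{thm:A^1/2 v}, since $\|v\|_{H^1}\sim\|A^{1/2}v\|_{L^2}$ on $V$ and $\|v\|_{H^2}\le C\|Av\|_{L^2}$ on $D(A)$ by hydrostatic Stokes regularity \cite{Ziane01111995}. For $n=1$ the same holds by the choice of $v^0$, for instance $v^0=e^{-tA}v_0$. Hence
\[
\int_0^{t}\Phi_{n-1}(s)\,ds\le C\bigl(t+tK_2^2+(1+K_2)K_2\bigr)=:\Psi(t),\qquad 0\le t\le t^*,
\]
which is independent of $n$, continuous and nondecreasing. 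Gronwall's inequality then gives
\[
\sup_{0\le s\le t}\|\nabla T^n(s)\|_{L^2}^2+\int_0^t\bigl(\|\Delta_HT^n\|_{L^2}^2+\|\nabla_H\partial_zT^n\|_{L^2}^2\bigr)\,ds\le C\,e^{C\Psi(t)}\|T_0\|_{H^1}^2 .
\]
We then set $K_3(t)$ equal to the right-hand side.

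The main obstacle is making sure the constant in the estimate for the vertical advection term $w^{n-1}\partial_zT^n$ is genuinely of the form in \eqref{eq:Phi-n}, with no hidden $n$-dependence. This requires the anisotropic bound
\[
\int_\Omega|w\,\partial_zT\,\Delta_HT|\le C\|\nabla_Hv\|_{L^2}^{1/2}\|\nabla_Hv\|_{H^1}^{1/2}\|\partial_zT\|_{L^2}^{1/2}\|\partial_zT\|_{H^1_H}^{1/2}\|\Delta_HT\|_{L^2},
\]
followed by Young's inequality. Here $\|\partial_zT\|_{H^1_H}$ involves only $\nabla_H\partial_zT$, which is absorbed by the dissipation. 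This yields exactly the factor $\|v\|_{H^1}^2\|v\|_{H^2}^2\|\partial_zT\|_{L^2}^2$, plus lower-order terms. I would re-verify this term carefully rather than quote it. The remaining terms follow from H\"older, Ladyzhenskaya-type horizontal interpolation and Young's inequality.

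Finally, the interval $[0,t^*]$ is the one furnished by Proposition~\ref{thm:A^1/2 v}. The temperature estimate itself imposes no further smallness, because the equation for $T^n$ is linear in $T^n$ with coefficients bounded uniformly in $n$.
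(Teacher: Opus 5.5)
Your argument is correct and is essentially Step~1 of the paper's proof. The bound $E_v^{n-1}\le K_2$ on $[0,t^*]$ gives $\int_0^t\Phi_{n-1}\le C[(1+K_2^2)t+(1+K_2)K_2]$, and Gronwall applied to \eqref{eq:linear-temperature-approximate-H1} at the Galerkin level, followed by lower semicontinuity, yields the same $K_3$. The Galerkin route is the one to use, because $\partial_z^2T^n$ is not known to lie in $L^2$ for the limit, so testing $T^n$ itself by $-\partial_{zz}T^n$ is not justified.

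The only difference is packaging: the paper proves this jointly with Proposition~\ref{thm:A^1/2 v} by induction on $n$, because the velocity bound for $v^n$ itself uses $E_T^n\le K_3$. Your estimate should therefore be read as the conditional implication $E_v^{n-1}\le K_2\Rightarrow E_T^n\le K_3$, which is exactly what you prove. Note also that the paper's initial iterate is $v^0=\widetilde V$, not $e^{-tA}v_0$.
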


\begin{proof}
The estimates for $v^n$ and $T^n$ are coupled. More precisely, the
higher-order estimate for $v^n$ involves the $H^1$-estimate of $T^n$,
whereas the latter depends only on the estimates of $v^{n-1}$.
Therefore, we prove the two propositions simultaneously by induction
on $n$.

For convenience, define
\begin{equation}
E_v^n(t)
:=
\sup_{0\le s\le t}
\|A^{1/2}v^n(s)\|_{L^2}^2
+
\int_0^t
\|Av^n(s)\|_{L^2}^2\,ds,
\label{eq:Ev-definition}
\end{equation}
and
\begin{equation}
E_T^n(t)
:=
\sup_{0\le s\le t}
\|\nabla T^n(s)\|_{L^2}^2
+
\int_0^t
\left(
\|\Delta_H T^n(s)\|_{L^2}^2
+
\|\nabla_H\partial_zT^n(s)\|_{L^2}^2
\right)\,ds.
\label{eq:ET-definition}
\end{equation}

\medskip
\noindent\textbf{Step 1. A conditional estimate for $T^n$.}
Let $K_2>0$ be a constant to be fixed in Step~2, and suppose that
\begin{equation}
E_v^{n-1}(t)\leq K_2,
\qquad 0\leq t\leq t^*\leq1.
\label{eq:conditional-velocity-bound}
\end{equation}
The temperature estimate follows directly from the calculation in
Lemma~\ref{lem:linear-temperature}. Indeed, by the norm equivalences
for $A$ and \eqref{eq:Phi-n},
\[
\int_0^t\Phi_{n-1}(s)\,ds
\leq C\bigl[(1+K_2^2)t+(1+K_2)K_2\bigr].
\]
Applying Gronwall's inequality to
\eqref{eq:linear-temperature-approximate-H1} and passing to the limit
as in that lemma therefore gives
\begin{equation}
E_T^n(t)\leq K_3(t)
:=2\|\nabla T_0\|_{L^2}^2
\exp\!\left\{C\bigl[(1+K_2^2)t+(1+K_2)K_2\bigr]\right\},
\label{eq:conditional-temperature-bound}
\end{equation}
where $C$ depends only on $\Omega$. Here the factor $2$ accounts for
the time supremum and the integrated dissipation. In particular,
for fixed $K_2$, this bound is independent of $n$ and
$tK_3(t)\to0$ as $t\downarrow0$.
We fix $K_2$ and close the joint induction in Step~2.

\medskip
\noindent
\textbf{Step 2. The higher-order estimate for $v^n$.}
We now derive the higher-order estimate for $v^n$. A direct
higher-order energy estimate is not sufficient because of the mismatch
of indices in the approximation scheme. We therefore introduce the
decomposition
\begin{equation}
\label{eq:velocity-decomposition}
v^n=\widetilde V+\widetilde v^n,
\qquad
w^n=\widetilde W+\widetilde w^n,
\end{equation}
where $\widetilde V$ is independent of $n$ and solves
\begin{equation}
\label{eq:linear-V}
\left\{
\begin{aligned}
\partial_t\widetilde V
-\Delta\widetilde V
+\nabla_H\widetilde P_s
+f_0k\times\widetilde V
&=0
&&\text{in }\Omega\times(0,t^*),\\
\nabla_H\cdot\int_{-h}^{0}\widetilde V\,dz
&=0
&&\text{in }M,\\
\partial_z\widetilde V&=0
&&\text{on }\Gamma_t,\\
\widetilde V&=0
&&\text{on }\Gamma_b\cup\Gamma_s,\\
\widetilde V(\cdot,0)&=v_0
&&\text{in }\Omega.
\end{aligned}
\right.
\end{equation}
The corresponding vertical velocity is defined by
\begin{equation}
\label{eq:linear-W}
\widetilde W(x,y,z,t)
:=
-\int_{-h}^{z}
\nabla_H\cdot\widetilde V(x,y,\xi,t)\,d\xi .
\end{equation}
In particular,
\[
\widetilde W=0
\qquad\text{on }\Gamma_t\cup\Gamma_b.
\]

For each $n\geq0$, we also set
\begin{equation}
\label{eq:remainder-w}
\widetilde w^n(x,y,z,t)
:=
-\int_{-h}^{z}
\nabla_H\cdot\widetilde v^n(x,y,\xi,t)\,d\xi .
\end{equation}
Then
\[
w^n=\widetilde W+\widetilde w^n.
\]

We choose the initial iterate to be the linear solution:
\begin{equation}
\label{eq:initial-iterate}
v^0:=\widetilde V,
\qquad
w^0:=\widetilde W.
\end{equation}
Consequently,
\begin{equation}
\label{eq:initial-remainder}
\widetilde v^0=0,
\qquad
\widetilde w^0=0.
\end{equation}
Notice that every iterate has the same initial value:
\[
v^n(\cdot,0)=v_0,
\qquad n\geq0,
\]
and hence
\begin{equation}
\label{eq:remainder-zero-initial}
\widetilde v^n(\cdot,0)=0,
\qquad n\geq0.
\end{equation}

Testing the projected form of \eqref{eq:linear-V} with
$\widetilde V$ and $A\widetilde V$, respectively, gives
\begin{align*}
\frac12\frac{d}{dt}\|\widetilde V\|_{L^2}^2
+\|A^{1/2}\widetilde V\|_{L^2}^2
&=0,\\
\frac12\frac{d}{dt}\|A^{1/2}\widetilde V\|_{L^2}^2
+\|A\widetilde V\|_{L^2}^2
&=-f_0(k\times\widetilde V,A\widetilde V)_{L^2}\\
&\leq\frac12\|A\widetilde V\|_{L^2}^2
+\frac{f_0^2}{2}\|\widetilde V\|_{L^2}^2.
\end{align*}
The first identity implies
$\|\widetilde V(t)\|_{L^2}\leq\|v_0\|_{L^2}$.
Integrating the second inequality therefore yields, for
$0\leq t\leq t^*\leq1$,
\begin{align}
E_{\widetilde V}(t)
&:=\sup_{0\leq s\leq t}\|A^{1/2}\widetilde V(s)\|_{L^2}^2
+\int_0^t\|A\widetilde V(s)\|_{L^2}^2\,ds
\nonumber\\
&\leq2\|A^{1/2}v_0\|_{L^2}^2
+2f_0^2\|v_0\|_{L^2}^2=:C_V.
\label{eq:CV-definition}
\end{align}
Moreover, since
$\partial_t\widetilde V=-A\widetilde V
-\mathbb{P}(f_0k\times\widetilde V)$, we have
\begin{equation}
\int_0^t\|\partial_t\widetilde V(s)\|_{L^2}^2\,ds
\leq2\int_0^t\|A\widetilde V(s)\|_{L^2}^2\,ds
+2f_0^2t\|v_0\|_{L^2}^2.
\label{eq:linear-V-time}
\end{equation}

We now derive the equation satisfied by the remainder.
Using
\[
v^{n-1}
=
\widetilde V+\widetilde v^{n-1},
\qquad
w^{n-1}
=
\widetilde W+\widetilde w^{n-1},
\]
and subtracting \eqref{eq:linear-V} from the $n$-th velocity
equation, we obtain
\begin{equation}
\label{eq:remainder-equation}
\left\{
\begin{aligned}
\partial_t\widetilde v^n
-\Delta\widetilde v^n
+\nabla_H\widetilde p_s^n
+f_0k\times\widetilde v^n
={}&
-\bigl(
  (\widetilde V+\widetilde v^{n-1})\cdot\nabla_H
 \bigr)
 (\widetilde V+\widetilde v^n)
\\
&-
(\widetilde W+\widetilde w^{n-1})
\partial_z(\widetilde V+\widetilde v^n)
+\nabla_H Q_T^n
&&\text{in }\Omega\times(0,t^*),\\
\nabla_H\cdot\int_{-h}^{0}\widetilde v^n\,dz
={}&0
&&\text{in }M,\\
\partial_z\widetilde v^n={}&0
&&\text{on }\Gamma_t,\\
\widetilde v^n={}&0
&&\text{on }\Gamma_b\cup\Gamma_s,\\
\widetilde v^n(\cdot,0)={}&0
&&\text{in }\Omega,
\end{aligned}
\right.
\end{equation}
where
\[
\widetilde p_s^n:=p_s^n-\widetilde P_s.
\]

The essential point is that the higher-order energy of
$\widetilde v^n$ vanishes initially. We therefore define
\begin{equation}
\label{eq:remainder-energy}
E_{\widetilde v}^n(t)
:=
\sup_{0\leq s\leq t}
\|A^{1/2}\widetilde v^n(s)\|_{L^2}^2
+
\int_0^t
\|A\widetilde v^n(s)\|_{L^2}^2\,ds.
\end{equation}

Fix
\[
K_2:=2(C_V+1).
\]
We prove by induction that, for a sufficiently small $t^*>0$
independent of $n$,
\begin{equation}
E_{\widetilde v}^n(t^*)\leq1,
\qquad n\geq0.
\label{eq:remainder-induction-goal}
\end{equation}
The case $n=0$ follows from \eqref{eq:initial-remainder}.
Assume the bound holds for $n-1$. Then, for $0\leq t\leq t^*$,
\[
E_v^{n-1}(t)
\leq2E_{\widetilde V}(t)+2E_{\widetilde v}^{n-1}(t)
\leq K_2,
\]
so Step~1 gives $E_T^n(t)\leq K_3(t)$.

For $a,b\in D(A)$, let
$w(a):=-\int_{-h}^z\nabla_H\cdot a(\xi)\,d\xi$.
The Sobolev embeddings and the anisotropic product estimate,
together with the norm equivalences for $A$, give
\[
\|(a\cdot\nabla_H)b+w(a)\partial_zb\|_{L^2}^2
\leq C\|A^{1/2}a\|_{L^2}\|Aa\|_{L^2}
       \|A^{1/2}b\|_{L^2}\|Ab\|_{L^2}.
\]
Testing \eqref{eq:remainder-equation} by $A\widetilde v^n$,
expanding the transport terms, and applying this estimate and
Young's inequality, we obtain
\begin{align*}
&\frac{d}{dt}\|A^{1/2}\widetilde v^n\|_{L^2}^2
+\|A\widetilde v^n\|_{L^2}^2\\
&\quad\leq C\Bigl(
1+\|A^{1/2}\widetilde V\|_{L^2}^2\|A\widetilde V\|_{L^2}^2
+\|A^{1/2}\widetilde v^{n-1}\|_{L^2}^2
 \|A\widetilde v^{n-1}\|_{L^2}^2\Bigr)
\|A^{1/2}\widetilde v^n\|_{L^2}^2\\
&\qquad+C\Bigl[
\|A^{1/2}\widetilde V\|_{L^2}^2\|A\widetilde V\|_{L^2}^2
+\|A^{1/2}\widetilde v^{n-1}\|_{L^2}
 \|A\widetilde v^{n-1}\|_{L^2}
 \|A^{1/2}\widetilde V\|_{L^2}\|A\widetilde V\|_{L^2}
+\|\nabla_HT^n\|_{L^2}^2\Bigr].
\end{align*}
Here and below $C$ depends only on $\Omega$ and $f_0$.
Set
\[
\rho(t):=\int_0^t\|A\widetilde V(s)\|_{L^2}^2\,ds.
\]
Since $\widetilde V$ is fixed, $\rho(t)\to0$ as $t\downarrow0$.
The induction hypothesis and the bound for $\widetilde V$ imply
\begin{align*}
\int_0^t\|A^{1/2}\widetilde v^{n-1}\|_{L^2}^2
             \|A\widetilde v^{n-1}\|_{L^2}^2\,ds
&\leq1,\\
\int_0^t\|A^{1/2}\widetilde V\|_{L^2}^2
             \|A\widetilde V\|_{L^2}^2\,ds
&\leq C_V\rho(t),\\
\int_0^t\|A^{1/2}\widetilde v^{n-1}\|_{L^2}
             \|A\widetilde v^{n-1}\|_{L^2}
             \|A^{1/2}\widetilde V\|_{L^2}
             \|A\widetilde V\|_{L^2}\,ds
&\leq\sqrt{C_V\rho(t)}.
\end{align*}
Consequently, Gronwall's inequality and the zero initial value of
$\widetilde v^n$ yield
\[
E_{\widetilde v}^n(t)
\leq C\exp\!\bigl\{C[t+C_V\rho(t)+1]\bigr\}
\Bigl[C_V\rho(t)+\sqrt{C_V\rho(t)}+tK_3(t)\Bigr].
\]
The right-hand side is independent of $n$ and tends to zero as
$t\downarrow0$. We may therefore choose $t^*\in(0,1]$ so that it
is at most $1$ at $t=t^*$. This closes the induction. Finally,
\[
E_v^n(t^*)\leq2(C_V+1)=K_2,
\qquad
E_T^n(t^*)\leq K_3(t^*),
\]
which proves both propositions.
\end{proof}

\begin{proposition}[Time regularity]
\label{prop:iteration-time-regularity}
Let $(v^n,T^n)$ be the iterates constructed above, and let
$0<t^*\leq1$ be such that \eqref{eq:uniform-vn-H1} and
\eqref{eq:uniform-Tn-H1} hold. Then there exists a constant $K_4$,
independent of $n$, such that
\[
 \int_0^{t^*}\left(
       \|\partial_tv^n(s)\|_{L^2}^2
       +\|\partial_tT^n(s)\|_{L^2}^2\right)\,ds
 \leq K_4.
\]
\end{proposition}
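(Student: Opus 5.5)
We read off $\partial_tv^n$ and $\partial_tT^n$ from the equations and bound each remaining term in $L^2(0,t^*;L^2)$ using the uniform bounds \eqref{eq:uniform-vn-H1}, \eqref{eq:uniform-Tn-H1} and \eqref{eq:linear-temperature-L2}. Since these hold for every $n\ge1$ and, for $n-1=0$, also for $v^0=\widetilde V$ through \eqref{eq:CV-definition}, we may use $E_v^{n-1}(t^*)\le K_2$ together with $E_v^n(t^*)\le K_2$ and $E_T^n(t^*)\le K_3(t^*)$.

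\textbf{Velocity.} Apply $\mathbb P$ to the $n$-th momentum equation. The barotropic pressure is then removed, and we obtain
\[
\partial_tv^n=-Av^n-\mathbb P\bigl[(v^{n-1}\cdot\nabla_H)v^n+w^{n-1}\partial_zv^n\bigr]-\mathbb P(f_0k\times v^n)+\mathbb P\nabla_HQ_{T^n}.
\]
Since $\mathbb P$ is bounded on $L^2$, we need four bounds.
\begin{compactitem}
\item $\int_0^{t^*}\|Av^n\|^2\le K_2$.
\item The Coriolis term is bounded by $f_0^2t^*\sup\|v^n\|^2\le Ct^*K_2$.
\item $\|\nabla_HQ_{T^n}\|_{L^2}\le h\|\nabla_HT^n\|_{L^2}$, which is bounded by $K_3(t^*)^{1/2}$ uniformly in time.
\item The transport term is handled by the bilinear estimate used in Step~2 of the proof of Propositions~\ref{thm:A^1/2 v} and~\ref{thm:H1 T}:
\[
\|(a\cdot\nabla_H)b+w(a)\partial_zb\|_{L^2}^2\le C\|A^{1/2}a\|\|Aa\|\|A^{1/2}b\|\|Ab\|.
\]
\end{compactitem}
Taking $a=v^{n-1}$, $b=v^n$ and integrating in time, Cauchy--Schwarz gives
\[
\int_0^{t^*}\|A^{1/2}v^{n-1}\|\|Av^{n-1}\|\|A^{1/2}v^n\|\|Av^n\|\,ds\le K_2\Bigl(\int\|Av^{n-1}\|^2\Bigr)^{1/2}\Bigl(\int\|Av^n\|^2\Bigr)^{1/2}\le K_2^2 .
\]

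\textbf{Temperature.} Use the pointwise bound already recorded in the proof of Lemma~\ref{lem:linear-temperature}:
\[
\|\partial_tT^n\|\le\|\Delta_HT^n\|+\|v^{n-1}\cdot\nabla_HT^n\|+\|w^{n-1}\partial_zT^n\|.
\]
The first term is controlled in $L^2_t$ by $K_3$. The main point is to make the transport terms uniform in $n$, so we use the anisotropic inequality of Lemma~\ref{anisotropic inequality} rather than the constant $C_{n-1}(t_0)$.

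For the horizontal transport we take
\[
\|v^{n-1}\cdot\nabla_HT^n\|\le\|v^{n-1}\|_{L^\infty}\|\nabla_HT^n\|\le C\|v^{n-1}\|_{H^1}^{1/2}\|v^{n-1}\|_{H^2}^{1/2}\|\nabla_HT^n\|.
\]
Its square integrates to at most $CK_2K_3\sqrt{t^*}$.

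For the vertical transport, write $w^{n-1}=-\int_{-h}^z\nabla_H\cdot v^{n-1}$, so that $\|w^{n-1}\|_{L^\infty_z}\le C\|\nabla_Hv^{n-1}\|_{L^1_z}$. We estimate
\[
\|w^{n-1}\partial_zT^n\|_{L^2}\le C\int_M\Bigl(\int_{-h}^{0}|\nabla_Hv^{n-1}|\,dz\Bigr)^2\Bigl(\int_{-h}^{0}|\partial_zT^n|^2\,dz\Bigr),
\]
understood with the appropriate square roots. Lemma~\ref{anisotropic inequality} (Ladyzhenskaya in the horizontal variables) then bounds the square by
\[
C\|\nabla_Hv^{n-1}\|\,\|\nabla_Hv^{n-1}\|_{H^1}\,\|\partial_zT^n\|\,\bigl(\|\partial_zT^n\|+\|\nabla_H\partial_zT^n\|\bigr).
\]
This is at most $C K_2^{1/2}\|v^{n-1}\|_{H^2}K_3^{1/2}\bigl(K_3^{1/2}+\|\nabla_H\partial_zT^n\|\bigr)$. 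Integrating in time and applying Cauchy--Schwarz gives a bound $C(K_2,K_3(t^*))$ independent of $n$.

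This vertical-transport term is the expected main obstacle, because $T^n$ has no vertical diffusion. The argument works because the only vertical derivative needed falls on $\partial_zT^n$, for which $\nabla_H\partial_zT^n\in L^2_t$ is available, and never on a second vertical derivative of $T^n$. For the Galerkin-level justification we apply these bounds to $T^n_N$ and pass to the limit by weak lower semicontinuity, exactly as in \eqref{eq:temperature-time-uniform}. Summing the velocity and temperature bounds gives $K_4$.
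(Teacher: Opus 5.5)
Your proposal is correct and follows essentially the same route as the paper: you project the momentum equation and use the bilinear transport estimate together with Cauchy--Schwarz in time to get an $n$-independent $CK_2^2$ bound. For the temperature you bound $\partial_tT^n$ through its equation with the anisotropic inequality and the uniform bounds $E_v^{n-1},E_v^n\leq K_2$, $E_T^n\leq K_3(t^*)$; the paper compresses this step into a reference to the estimate behind \eqref{eq:temperature-time-uniform}, which you simply carry out explicitly.
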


\begin{proof}
The temperature estimate follows from the time-derivative argument
in Lemma~\ref{lem:linear-temperature}, leading to
\eqref{eq:temperature-time-uniform}. Indeed, inserting
\eqref{eq:uniform-vn-H1} and \eqref{eq:uniform-Tn-H1} into that
argument gives, for the limiting iterate,
\[
 \int_0^{t^*}\|\partial_tT^n\|_{L^2}^2\,ds
 \leq C(1+K_2)K_3(t^*).
\]
Here the velocity bound also holds for the initial iterate $v^0$
by its construction, so the constant is uniform for all $n\geq1$.

Applying the hydrostatic Helmholtz projection to the velocity
equation gives
\begin{align*}
 \partial_tv^n+Av^n
 ={}&-\mathbb P\bigl[(v^{n-1}\cdot\nabla_H)v^n
              +w^{n-1}\partial_zv^n+f_0k\times v^n\bigr]\\
 &+\mathbb P\left(\int_{-h}^z\nabla_HT^n\,d\xi\right).
\end{align*}
The transport estimate used in the joint proof of
Propositions~\ref{thm:A^1/2 v} and~\ref{thm:H1 T}, together with
Cauchy--Schwarz in time and \eqref{eq:uniform-vn-H1}, yields
\[
 \int_0^{t^*}
 \|(v^{n-1}\cdot\nabla_H)v^n
                +w^{n-1}\partial_zv^n\|_{L^2}^2\,ds
 \leq CK_2^2.
\]
Since $\mathbb P$ is an $L^2$ contraction, the projected equation
and the uniform spatial estimates imply
\[
 \int_0^{t^*}\|\partial_tv^n\|_{L^2}^2\,ds
 \leq C\bigl[(1+t^*)K_2+K_2^2+t^*K_3(t^*)\bigr].
\]
Combining the two estimates proves the assertion with
\[
 K_4=C(1+K_2)\bigl[K_2+K_3(t^*)\bigr],
\]
where $C$ depends only on the fixed parameters.
\end{proof}

\begin{proposition}[Strong convergence of the iteration]
\label{prop:iteration-convergence}
After decreasing the common existence time $t^*>0$ if necessary,
the entire sequence $(v^n,T^n)$ converges strongly in
\[
 \bigl(C([0,t^*];V)\cap L^2(0,t^*;D(A))\bigr)
 \times
 \bigl(C([0,t^*];L^2(\Omega))
       \cap L^2(0,t^*;L^2((-h,0);H_0^1(M)))\bigr).
\]
In particular, $v^{n-1}$ and $v^n$ converge to the same velocity.
\end{proposition}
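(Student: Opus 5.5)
We argue by contraction, estimating differences of consecutive iterates. Set
\[
\delta v^n:=v^n-v^{n-1},\quad \delta w^n:=w^n-w^{n-1},\quad \delta T^n:=T^n-T^{n-1},
\]
with $n\geq2$. By construction all of these vanish at $t=0$. Subtracting consecutive iterate equations gives
\[
\partial_t\delta T^n-\Delta_H\delta T^n
+(v^{n-1}\cdot\nabla_H)\delta T^n+w^{n-1}\partial_z\delta T^n
=-(\delta v^{n-1}\cdot\nabla_H)T^{n-1}-\delta w^{n-1}\partial_zT^{n-1}.
\]
The velocity difference satisfies the analogous linear equation. Its forcing consists of $-(\delta v^{n-1}\cdot\nabla_H)v^{n-1}-\delta w^{n-1}\partial_zv^{n-1}$ and the buoyancy term $\mathbb P\nabla_H\int_{-h}^z\delta T^n\,d\xi$. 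The aim is to show
\[
\mathcal D_v^n(t):=\sup_{s\leq t}\|A^{1/2}\delta v^n\|_{L^2}^2+\int_0^t\|A\delta v^n\|_{L^2}^2,
\qquad
\mathcal D_T^n(t):=\sup_{s\leq t}\|\delta T^n\|_{L^2}^2+\int_0^t\|\nabla_H\delta T^n\|_{L^2}^2
\]
satisfy $\mathcal D_v^n+\mathcal D_T^n\leq\theta(t)\,\mathcal D_v^{n-1}$ with $\theta(t)\to0$ as $t\downarrow0$. The constants are taken uniform in $n$ by Propositions~\ref{thm:A^1/2 v} and~\ref{thm:H1 T}. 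Choosing $t^*$ with $\theta(t^*)\leq1/2$ then makes $(v^n,T^n)$ Cauchy in the stated spaces. The limits are continuous in time because $\partial_tv^n\in L^2(0,t^*;H)$ is uniformly bounded by Proposition~\ref{prop:iteration-time-regularity}, and because of Lions--Magenes. In particular, $v^{n-1}$ and $v^n$ share the same limit.

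\textbf{Temperature difference.} I would test with $\delta T^n$. The transport terms with coefficients $(v^{n-1},w^{n-1})$ vanish by the divergence structure. On the right, integrate by parts horizontally to move $\nabla_H$ onto $\delta T^n$. For the term $\delta w^{n-1}\partial_zT^{n-1}\delta T^n$, apply the anisotropic trilinear inequality of Lemma~\ref{anisotropic inequality}. Since $|\delta w^{n-1}|\leq\int_{-h}^0|\nabla_H\delta v^{n-1}|$, this bounds the term by
\[
\|\nabla_H\delta v^{n-1}\|_{L^2}^{1/2}\|\nabla_H\delta v^{n-1}\|_{H^1}^{1/2}\,\|\partial_zT^{n-1}\|_{L^2}^{1/2}\|\nabla_H\partial_zT^{n-1}\|_{L^2}^{1/2}\,\|\delta T^n\|_{L^2}^{1/2}\|\nabla_H\delta T^n\|_{L^2}^{1/2}.
\]
Young's inequality absorbs the $\nabla_H\delta T^n$ factor. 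Gronwall then gives
\[
\mathcal D_T^n(t)\leq C\Bigl(\int_0^t\|\nabla T^{n-1}\|\,\|\nabla_H\nabla T^{n-1}\|\Bigr)^{\alpha}\mathcal D_v^{n-1}(t)
\leq C\,(t^{1/2}K_3)^{\alpha}\,\mathcal D_v^{n-1}(t),
\]
for some $\alpha>0$. The prefactor tends to zero because $\sup\|\nabla T^{n-1}\|$ is bounded and $\nabla_H\nabla T^{n-1}\in L^2$. The essential point is that the derivative loss for $T$ is purely horizontal, and $\nabla_H\partial_zT$ is controlled in $L^2$ by \eqref{eq:uniform-Tn-H1}. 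This is exactly why the $T$-convergence is asked only in $C([0,t^*];L^2)\cap L^2_tL^2_zH^1_0$ and not in $H^1$.

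\textbf{Velocity difference.} I would test with $A\delta v^n$, as in Step~2 of the joint proof, using the bilinear estimate
\[
\|(a\cdot\nabla_H)b+w(a)\partial_zb\|_{L^2}^2\leq C\|A^{1/2}a\|\|Aa\|\|A^{1/2}b\|\|Ab\|.
\]
The terms $(v^{n-1}\cdot\nabla_H)\delta v^n+w^{n-1}\partial_z\delta v^n$ are absorbed via Gronwall, with integrable weight $\|A^{1/2}v^{n-1}\|^2\|Av^{n-1}\|^2$. The source terms contribute at most
\[
\int_0^t\|A^{1/2}\delta v^{n-1}\|\|A\delta v^{n-1}\|\,\|A^{1/2}v^{n-1}\|\|Av^{n-1}\|
\leq \mathcal D_v^{n-1}(t)\Bigl(\int_0^t\|Av^{n-1}\|^2\Bigr)^{1/2}.
\]
The buoyancy term contributes $C\int_0^t\|\nabla_H\delta T^n\|^2\leq C\mathcal D_T^n(t)$.

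\textbf{Main obstacle.} Two smallness factors need care. The first is $\int_0^t\|Av^{n-1}\|^2$, which is bounded uniformly in $n$ but not obviously small uniformly in $n$. I would handle it with the decomposition $v^{n-1}=\widetilde V+\widetilde v^{n-1}$ from the joint proof. This gives the bound $2\rho(t)+2E^{n-1}_{\widetilde v}(t)$, and the explicit estimate for $E^{n-1}_{\widetilde v}$ there tends to zero uniformly in $n$; shrinking $t^*$ is permitted by the statement. The second is the buoyancy term, which carries only the factor $C\mathcal D_T^n$. Substituting the temperature estimate gives the factor $C(t^{1/2}K_3)^\alpha$, which is also small. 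Combining both estimates yields the contraction with $\theta(t)\to0$.
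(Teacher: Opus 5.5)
Your argument is correct. Its ingredients are the paper's: consecutive differences, testing the velocity difference by $A\delta v^n$ and the temperature difference by $\delta T^n$, the anisotropic trilinear bound, and the decomposition $v^{n-1}=\widetilde V+\widetilde v^{n-1}$, which makes $\sup_n\int_0^t\|Av^{n-1}\|_2^2\,ds$ small (this is the paper's $r(t)$).

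What differs is how the coupling is closed.

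\textbf{The paper's route.} It proves a joint contraction $E^{n+1}_{\mathrm{diff}}\leq\frac14E^{n}_{\mathrm{diff}}$ for the weighted energy $Y_n=\|A^{1/2}U^n\|_2^2+\kappa\|\Theta^n\|_2^2$. The weight $\kappa\geq2C_0$ absorbs the buoyancy contribution $C_0\|\nabla_H\Theta^{n+1}\|_2^2$ into the temperature dissipation. The $\|AU^n\|_2^2$ terms produced by Young's inequality are controlled by a free small parameter $\varepsilon$, not by a power of $t$.

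\textbf{Your route.} You use the triangular structure: the level-$n$ temperature difference is driven only by $\delta v^{n-1}$. You bound $\mathcal D_T^n$ by a small multiple of $\mathcal D_v^{n-1}$ and substitute this into the velocity estimate. The contraction then lives in the velocity norm alone, and temperature convergence follows from it. This removes $\kappa$ and $\varepsilon$ from the bookkeeping. The price is that you must extract explicit time smallness in the temperature step; the paper's joint functional avoids tracking such powers.

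\textbf{Two minor corrections in your temperature step.}
\begin{compactenumi}
\item After Young's inequality, the vertical term is sublinear in $\|\delta T^n\|_2^2$ (it carries $\|\delta T^n\|_2^{2/3}$), so linear Gronwall does not literally apply. Close instead by taking the supremum in time and absorbing, or equivalently by integrating and using H\"older. This gives $\alpha=1$, namely $\mathcal D_T^n\leq C\,\bigl\|\,\|\partial_zT^{n-1}\|_2\|\nabla_H\partial_zT^{n-1}\|_2\bigr\|_{L^1(0,t)}\,\mathcal D_v^{n-1}$.
\item The half-derivative on $\partial_zT^{n-1}$ is unnecessary. Use only $\|\partial_zT^{n-1}\|_2\leq M_T:=\sup_n\|T^n\|_{L^\infty(0,t^*;H^1)}$, as the paper does. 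H\"older in time then already gives $\int_0^t|(\delta w^{n-1}\partial_zT^{n-1},\delta T^n)|\,ds\leq CM_T\,t^{1/2}(\mathcal D_v^{n-1}\mathcal D_T^n)^{1/2}$.
\end{compactenumi}
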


\begin{proof}
For $n\geq2$, denote $U^n:=v^n-v^{n-1},\Theta^n:=T^n-T^{n-1}, w(U^n):=-\int_{-h}^z\nabla_H\cdot U^n(\xi)\,d\xi.$
Starting at $n=2$ avoids introducing a temperature iterate $T^0$.
Write $B(a,f):=a\cdot\nabla_Hf+w(a)\partial_zf$. Subtracting consecutive projected velocity equations and temperature equations gives
\begin{equation}
 \label{eq:joint-iteration-difference-system}
 \begin{aligned}
 \partial_tU^{n+1}+AU^{n+1}
       +\mathbb P B(v^n,U^{n+1})
       +\mathbb P(f_0k\times U^{n+1})
 &=-\mathbb P B(U^n,v^n)
       +\mathbb P\left(\nabla_H\int_{-h}^z\Theta^{n+1}(\xi)\,d\xi\right),\\
 \partial_t\Theta^{n+1}-\Delta_H\Theta^{n+1}
       +B(v^n,\Theta^{n+1})
 &=-B(U^n,T^n),
 \end{aligned}
\end{equation}
with $(U^{n+1},\Theta^{n+1})(0)=0$ and the inherited homogeneous boundary conditions. We multiply the velocity equation and temperature equation by $AU^{n+1}$ and $\Theta^{n+1}$ respectively, and let 
\[
 M_T:=\sup_{n\geq1}\|T^n\|_{L^\infty(0,t^*;H^1)}<\infty,
\]
In the temperature equation, For every $\eta>0$,
\[
 \begin{aligned}
 |(U^n\cdot\nabla_HT^n,\Theta^{n+1})|
 &\leq CM_T\|AU^n\|_2\|\Theta^{n+1}\|_2\\
 &\leq\eta\|AU^n\|_2^2+C_\eta M_T^2\|\Theta^{n+1}\|_2^2,
 \end{aligned}
\]
using anisotropic estimates, we obtain,
\[
 \begin{aligned}
 |(w(U^n)\partial_zT^n,\Theta^{n+1})|
 &\leq CM_T\|A^{1/2}U^n\|_2^{1/2}\|AU^n\|_2^{1/2}
           \|\Theta^{n+1}\|_2^{1/2}
           \|\nabla_H\Theta^{n+1}\|_2^{1/2}\\
 &\leq\tfrac14\|\nabla_H\Theta^{n+1}\|_2^2
       +\eta\|AU^n\|_2^2
       +C_\eta M_T^2\bigl(\|A^{1/2}U^n\|_2^2
                              +\|\Theta^{n+1}\|_2^2\bigr).
 \end{aligned}
\]
Consequently,
$\eta>0$,
\begin{equation}
 \label{eq:joint-iteration-temperature-energy}
 \frac{d}{dt}\|\Theta^{n+1}\|_2^2
       +\|\nabla_H\Theta^{n+1}\|_2^2
 \leq\eta\|AU^n\|_2^2
       +C_\eta M_T^2\bigl(\|A^{1/2}U^n\|_2^2
                              +\|\Theta^{n+1}\|_2^2\bigr).
\end{equation}

For the velocity equation, use the estimate from the joint proof of Propositions~\ref{thm:A^1/2 v} and~\ref{thm:H1 T},
\begin{equation}
 \label{eq:joint-iteration-velocity-energy}
 \begin{aligned}
 \frac{d}{dt}\|A^{1/2}U^{n+1}\|_2^2+\|AU^{n+1}\|_2^2
 &\leq
 C\bigl(1+\|A^{1/2}v^n\|_2^2\|Av^n\|_2^2\bigr)
                         \|A^{1/2}U^{n+1}\|_2^2\\
 &+C_\eta\|A^{1/2}v^n\|_2^2\|Av^n\|_2^2
                         \|A^{1/2}U^n\|_2^2
       +\eta\|AU^n\|_2^2
       +C_0\|\nabla_H\Theta^{n+1}\|_2^2.
 \end{aligned}
\end{equation}
Here $C_0$ is fixed independently of $\eta$ and $n$.
Choose a fixed $\kappa\geq\max\{1,2C_0\}$ and define
\[
 Y_n:=\|A^{1/2}U^n\|_2^2+\kappa\|\Theta^n\|_2^2,
 \qquad
 D_n:=\|AU^n\|_2^2+\frac\kappa2\|\nabla_H\Theta^n\|_2^2.
\]
Multiplying \eqref{eq:joint-iteration-temperature-energy} by
$\kappa$, adding \eqref{eq:joint-iteration-velocity-energy}, and
choosing $\eta$ in terms of $\varepsilon>0$, we obtain
\begin{equation}
 \label{eq:joint-iteration-combined-energy}
 Y_{n+1}'+D_{n+1}
 \leq G_{n,\varepsilon}(t)(Y_{n+1}+Y_n)+\varepsilon D_n,
 \qquad
 G_{n,\varepsilon}
 :=C_\varepsilon\bigl(1+M_T^2+
             \|A^{1/2}v^n\|_2^2\|Av^n\|_2^2\bigr).
\end{equation}
The decomposition $v^n=\widetilde V+\widetilde v^n$ implies
\begin{equation}
 \label{eq:joint-iteration-small-dissipation}
 r(t):=\sup_{n\geq0}\int_0^t\|Av^n(s)\|_2^2\,ds
 \leq2\int_0^t\|A\widetilde V(s)\|_2^2\,ds+2\sup_{n\geq0}E_{\widetilde v}^n(t)
 \longrightarrow0.
\end{equation}
For $0<\tau\leq t^*$, set
\[
 E_{\mathrm{diff}}^n(\tau)
 :=\sup_{0\leq s\leq\tau}Y_n(s)+\int_0^\tau D_n(s)\,ds,
 \qquad
 \mu_\varepsilon(\tau)
 :=C_\varepsilon\bigl[(1+M_T^2)\tau+K_2r(\tau)\bigr].
\]
By \eqref{eq:uniform-vn-H1},
$\int_0^\tau G_{n,\varepsilon}(s)\,ds
\leq\mu_\varepsilon(\tau)$ uniformly in $n$. Apply Gronwall's inequality to Equation~\eqref{eq:joint-iteration-combined-energy},
\[
 E_{\mathrm{diff}}^{n+1}(\tau)
 \leq2e^{\mu_\varepsilon(\tau)}
       \bigl[\varepsilon+\mu_\varepsilon(\tau)\bigr]
       E_{\mathrm{diff}}^n(\tau).
\]
Choosing \(\varepsilon>0\) sufficiently small and then \(\tau\in(0,t^*]\) sufficiently small, independently of \(n\), we obtain
\[
E_{\mathrm{diff}}^{n+1}(\tau)\le \frac{1}{4}\,E_{\mathrm{diff}}^n(\tau),
\qquad n\ge2,
\]
Thus the entire sequences are Cauchy in the stated spaces, with time continuity following from the time-derivative bounds. Replacing \(t^*\) by \(\tau\) proves the assertion. In particular, \(v^n\) and \(v^{n-1}\) converge strongly to the same limit.
\end{proof}

\begin{theorem}
Let $v_0\in V$ and $T_0\in V_T(\Omega)$, then system~\eqref{eq:primitive-momentum}--\eqref{eq:primitive-initial-data} has a strong solution $(v,T)$
in $\Omega\times(0,t^*)$, where $t^*$ is the same positive time stated in the previous propositions.
\end{theorem}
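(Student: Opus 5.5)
The plan is to take the limit of the iteration from Proposition~\ref{prop:iteration-convergence} and identify it as a strong solution. By that proposition, after the reduction of $t^*$ made there, the whole sequence converges strongly:
\[
v^n\to v \text{ in } C([0,t^*];V)\cap L^2(0,t^*;D(A)),\qquad
T^n\to T \text{ in } C([0,t^*];L^2)\cap L^2(0,t^*;\mathcal V_H).
\]
In particular $v^{n-1}\to v$ as well. The uniform bounds \eqref{eq:uniform-vn-H1}, \eqref{eq:uniform-Tn-H1} and Proposition~\ref{prop:iteration-time-regularity} give further weak and weak-$*$ convergences along a subsequence:
\begin{align*}
T^n&\rightharpoonup^* T \text{ in } L^\infty(0,t^*;V_T), &
\nabla_HT^n&\rightharpoonup\nabla_HT \text{ in } L^2(0,t^*;H^1),\\
\partial_tT^n&\rightharpoonup\partial_tT \text{ in } L^2(0,t^*;L^2), &
\partial_tv^n&\rightharpoonup\partial_tv \text{ in } L^2(0,t^*;H).
\end{align*}
The limits coincide with $(v,T)$ by uniqueness of distributional limits. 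Weak lower semicontinuity then places $(v,T)$ in every class of Definition~\ref{def:strong-solution}.

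The key step is passing to the limit in the nonlinear transport terms, using the bilinear estimate already derived in the proof of Propositions~\ref{thm:A^1/2 v}--\ref{thm:H1 T}:
\[
\|B(a,b)\|_{L^2}^2\le C\|A^{1/2}a\|\|Aa\|\|A^{1/2}b\|\|Ab\|.
\]
For the velocity, we split
\[
B(v^{n-1},v^n)-B(v,v)=B(v^{n-1}-v,v^n)+B(v,v^n-v).
\]
Integrating in time and applying Cauchy--Schwarz, both pieces tend to $0$ in $L^1(0,t^*;L^2)$, since the strong convergence in $C([0,t^*];V)\cap L^2(0,t^*;D(A))$ is paired with uniform bounds.

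For the temperature, $B(v^{n-1},T^n)$ is bounded in $L^2(0,t^*;L^2)$. We identify its weak limit as $B(v,T)$ by the same integration-by-parts argument as in Lemma~\ref{lem:linear-temperature}. Testing against a smooth $\varphi$ vanishing on $\Gamma_s$ gives $-(T^n,\,v^{n-1}\cdot\nabla_H\varphi+w^{n-1}\partial_z\varphi)$, which converges because $T^n\to T$ and $(v^{n-1},w^{n-1})\to(v,w)$ strongly in $L^2$. The pressure term $\mathbb P\nabla_H\int_{-h}^z T^n$ is linear and converges since $T^n\to T$ in $L^2$.

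The limit projected equations therefore hold in $L^2(0,t^*;L^2)$. The surface pressure $p_s$ is recovered from the hydrostatic Helmholtz decomposition (de Rham), and $p=p_s-Q_T$ by \eqref{eq: Q}. We recover $w$ by integrating the divergence from the bottom. Using $\nabla_H\cdot\avg v=0$ together with the lateral condition, $w$ vanishes on both $\Gamma_t$ and $\Gamma_b$.

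The remaining conditions pass to the limit directly. The boundary conditions $v=0$ on $\Gamma_b\cup\Gamma_s$, $\partial_zv=0$ on $\Gamma_t$ are encoded in $D(A)$, and $T=0$ on $\Gamma_s$ is encoded in $V_T$; both are preserved under weak limits. The initial data are attained because $v^n(0)=v_0$, $T^n(0)=T_0$ and convergence holds in $C([0,t^*];\cdot)$.

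The main obstacle I expect is the temperature transport term. Here only the horizontal gradient of $T$ has $L^2H^1$ regularity and there is no vertical diffusion, so a product of weak limits cannot be handled directly; the weak formulation with the derivative moved onto the test function is essential. For the resulting equation to hold almost everywhere, each term must be shown to lie in $L^2(0,t^*;L^2)$. That requires bounding $w\,\partial_zT$ in $L^2L^2$ by the anisotropic inequality (Lemma~\ref{anisotropic inequality}), using $\nabla_H\partial_zT\in L^2L^2$ and $\partial_zT\in L^\infty L^2$. I would first check that these bounds transfer to the limit by lower semicontinuity, as was done for $T^n$ in Lemma~\ref{lem:linear-temperature}.
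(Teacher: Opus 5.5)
Your proposal is correct and follows essentially the paper's route. It combines the strong convergence of the whole iterate sequence from Proposition~\ref{prop:iteration-convergence} (in particular $v^{n-1}\to v$ in $X_{t^*}$) with the weak and weak-$*$ limits supplied by \eqref{eq:uniform-vn-H1}, \eqref{eq:uniform-Tn-H1} and Proposition~\ref{prop:iteration-time-regularity}, and then passes to the limit in the iterated system, making explicit the transport splitting and the recovery of $p_s$, $w$, boundary and initial conditions that the paper leaves implicit. The temperature transport term is less delicate than you expect: $w^{n-1}\to w$ strongly in $L^2(0,t^*;H^1)$ while $\partial_zT^n\rightharpoonup^*\partial_zT$ in $L^\infty(0,t^*;L^2)$, so a strong-times-weak argument already identifies the limit, though your integration-by-parts identification is also valid.
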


\begin{proof}
We use the common time $t^*$ after the possible reduction in
Proposition~\ref{prop:iteration-convergence}. All preceding
uniform estimates remain valid on this interval.
By Propositions~\ref{thm:A^1/2 v}, \ref{thm:H1 T},
and~\ref{prop:iteration-time-regularity}, for all $n$, 
\begin{equation}
\begin{aligned}
&\sup_{0\leq t\leq t^*}
\left(
\|v^n\|_{H^1}^2
+\|T^n\|_{H^1}^2
\right)
+\int_0^{t^*}
\left(
\| v^n\|_{H^2}^2
+\|\nabla_H T^n\|_{H^1}^2
\right)\,dt
\leq C ,
\end{aligned}
\end{equation}
and
\begin{equation}
\int_0^{t^*}
\left(
\|\partial_t v^n\|_{L^2}^2
+
\|\partial_t T^n\|_{L^2}^2
\right)\,dt
\leq C ,
\end{equation}
where $C$ is a constant independent of $n$.

In view of the above uniform estimates, Lemma~\ref{lem:aubin-lions} yields a subsequence,
still denoted by $\{(v^n,T^n)\}$, and a pair $(v,T)$ such that
\begin{align}
(v^n,T^n) &\to (v,T)
&& \text{in } C([0,t^*];L^2(\Omega)), \label{eq:iteration-strong-C-L2}\\
(\nabla v^n,\nabla_H T^n) &\to (\nabla v,\nabla_H T)
&& \text{in } L^2(0,t^*;L^2(\Omega)), \label{eq:iteration-strong-gradient-L2}\\
(v^n,T^n) &\overset{*}{\rightharpoonup} (v,T)
&& \text{in } L^\infty(0,t^*;H^1(\Omega)), \label{eq:iteration-weak-star-H1}\\
(\nabla v^n,\nabla_H T^n) &\rightharpoonup
(\nabla v,\nabla_H T)
&& \text{in } L^2(0,t^*;H^1(\Omega)), \label{eq:iteration-weak-gradient-H1}\\
(\partial_t v^n,\partial_t T^n) &\rightharpoonup
(\partial_t v,\partial_t T)
&& \text{in } L^2(\Omega\times(0,t^*)). \label{eq:iteration-weak-time-derivatives}
\end{align}
Here, $\rightharpoonup$ and $\overset{*}{\rightharpoonup}$ denote the weak
and weak-* convergences, respectively.
Moreover, Proposition~\ref{prop:iteration-convergence} shows that
the entire velocity sequence converges in $X_{t^*}$. Hence the
preceding iterates also satisfy
\[
 v^{n-1}\longrightarrow v\qquad\text{strongly in }X_{t^*}.
\]
Thus the transporting velocities and the velocity iterates
have the same limit. By virtue of
\eqref{eq:iteration-strong-C-L2}--\eqref{eq:iteration-weak-time-derivatives}, we may pass to the limit in the approximate system and
conclude that $(v,T)$ is a strong solution to the original system on
$(0,t^*)$.

By the local construction and the uniqueness estimate in
Proposition~\ref{prop:strong-solution-stability}, the solution extends
to a unique maximal strong solution on $[0,\mathcal T^*)$.
Suppose that $\mathcal T^*<\infty$.
Propositions~\ref{prop:appendix-basic-energy},
\ref{prop:appendix-velocity-L6-vertical},
\ref{prop:appendix-velocity-H1}, and
\ref{prop:appendix-temperature-H1} give bounds that remain finite
as $t\uparrow\mathcal T^*$. Consequently,
\[
\begin{gathered}
v\in L^\infty(0,\mathcal T^*;V)
       \cap L^2(0,\mathcal T^*;D(A)),\\
T\in L^\infty(0,\mathcal T^*;V_T),\qquad
\nabla_HT\in L^2(0,\mathcal T^*;H^1(\Omega)).
\end{gathered}
\]
Applying to the original equations the same product estimates used in Proposition~\ref{prop:iteration-time-regularity}, we obtain
\[
v_t\in L^2(0,\mathcal T^*;H),\qquad
T_t\in L^2(0,\mathcal T^*;L^2(\Omega)).
\]
The Hilbert-space trace theorem
\cite{LionsMagenes1972}, together with
$D(A^{1/2})=V$, gives
\[
 L^2(0,\mathcal T^*;D(A))
 \cap H^1(0,\mathcal T^*;H)
 \hookrightarrow
 C([0,\mathcal T^*];D(A^{1/2}))
 =C([0,\mathcal T^*];V).
\]
Together with
\[
 T\in H^1(0,\mathcal T^*;L^2(\Omega))
 \hookrightarrow C([0,\mathcal T^*];L^2(\Omega)),
\]
this yields the limits
\[
v(t)\longrightarrow v_*\quad\text{in }V,\qquad
T(t)\longrightarrow T_*\quad\text{in }L^2(\Omega)
\quad\text{as }t\uparrow\mathcal T^*.
\]
The uniform $H^1$ bound for $T$ and the weak closedness of $V_T$
imply $T_*\in V_T$.

Applying the local construction at time $\mathcal T^*$ with initial
data $(v_*,T_*)\in V\times V_T$ gives a strong solution on
$[\mathcal T^*,\mathcal T^*+\delta]$ for some $\delta>0$.
The matching $L^2$ time traces allow this solution to be joined to
the preceding one; the joined solution satisfies the equations
distributionally and retains all the regularity required in
Definition~\ref{def:strong-solution}. It is therefore a strong solution
beyond $\mathcal T^*$, contradicting maximality.
Thus $\mathcal T^*=\infty$, which, together with uniqueness, proves
Theorem~\ref{thm:global-well-posedness}.
\end{proof}

At the end of this section, we also state some regularity results and the continuous dependence on initial data.

\begin{proposition}
In $\Omega \times [0,t_0)$, with $v_0 \in V$ and $T_0 \in V_T$, there exists the following continuous functions $M_1, M_2, M_3$, such that, 
\begin{align*}
\sup_{0 \leq s \leq t} [\norm{v}_6^2+\norm{\partial_zv}_2^2] + \int_0^t \norm{\nabla u}_2^2\leq M_1(t) \\
\sup_{0 \leq s \leq t} \norm{A^{1/2} v}_2^2 + \int_0^t \norm{Av}_H^2 \leq M_2(t) \\
\sup_{0 \le s \le t} \left(  \|\nabla T\|_2^2 \right) + \int_0^t  \|\nbH \nabla T\|_2^2  \le M_3(t)
\end{align*}
\end{proposition}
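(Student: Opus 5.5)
The plan is to derive the three bounds in cascade, each a priori estimate for the strong solution on $[0,t_0)$. This is the content of the appendix propositions already invoked in the global argument (Propositions~\ref{prop:appendix-basic-energy}, \ref{prop:appendix-velocity-L6-vertical}, \ref{prop:appendix-velocity-H1}, \ref{prop:appendix-temperature-H1}). Each step uses only the previous ones, and every constant depends on $t$ only through a continuous nondecreasing function.

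\textbf{Step 1: basic bounds.} First I would record the basic energy bounds. By Proposition~\ref{prop:vn-L2} (passed to the limit), $\sup_s\|v\|_2^2+\int_0^t\|\nabla v\|_2^2\le 2K_1(t)$. The remark after it gives $\sup_s\|T\|_{L^p}\le\|T_0\|_{L^p}$ for $p\le6$, with $\|T_0\|_{L^6}\le C\|T_0\|_{H^1}$.

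\textbf{Step 2: the bound $M_1$.} Following Cao--Titi, split $v=\bar v+\tilde v$ into barotropic and baroclinic parts.
\begin{compactenumi}
\item Test the baroclinic equation with $|\tilde v|^4\tilde v$. The nonlinear terms are handled by the anisotropic inequality (Lemma~\ref{anisotropic inequality}), and the temperature forcing $\nabla_H Q_T$ is handled after integrating by parts, using only $\|T\|_{L^6}$. This gives $\sup\|\tilde v\|_6$.
\item The barotropic mode solves a 2D Stokes-type system in $M$. Its forcing includes the bottom shear $-\frac1h\partial_z v|_{z=-h}$, which has no analogue in the doubly periodic case. I would handle it as in \cite{Kukavica_2007}: take a trace in $z$, so the term is controlled by $\|u\|_2^{1/2}\|\nabla u\|_2^{1/2}$ and absorbed into the $u$-estimate below.
\item Differentiate in $z$. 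Then $u=\partial_zv$ satisfies $u=0$ on $\Gamma_t$, a transport--diffusion equation, and the forcing $-\nabla_H T$. Testing with $u$, integrate $(\nabla_HT,u)$ by parts to $(T,\nabla_H\cdot u)$, which is bounded using $\|T\|_2$ and $\|\nabla u\|_2$.
\item The convective term $\int (u\cdot\nabla_H v)\cdot u-(\nabla_H\cdot v)|u|^2$ is bounded by $\|v\|_6\|u\|_2^{1/2}\|\nabla u\|_2^{3/2}$ via Ladyzhenskaya-type anisotropic estimates.
\end{compactenumi}
Gronwall then closes $M_1$.

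\textbf{Step 3: the bound $M_2$.} Test the projected equation with $Av$. Use the product estimate
\[
|(v\cdot\nabla_Hv+w\partial_zv,Av)|\le C\|v\|_6\|\nabla v\|_3\|Av\|_2+C\|\nabla_H v\|_2^{1/2}\|Av\|_2^{3/2}\|u\|_2^{1/2}\|\nabla u\|_2^{1/2},
\]
together with interpolation of $\|\nabla v\|_3$ and Young's inequality. This yields a Gronwall coefficient of the form $C(1+\|v\|_6^4+\|u\|_2^2\|\nabla u\|_2^2)$, which is integrable by $M_1$. The temperature forcing contributes $\|\nabla_H Q_T\|_2\le C\|T\|_{H^1}$. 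So at this step I would use $\sup\|T\|_6$ only through duality: write $(\nabla_H Q_T,Av)$ and bound it crudely by $\|\nabla_H T\|_2^2$, which is integrable by Step~1.

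\textbf{Step 4: the bound $M_3$.} This repeats the estimate of Lemma~\ref{lem:linear-temperature} with $v^{n-1}$ replaced by $v$. It gives
\[
\frac{d}{dt}\|\nabla T\|_2^2+\|\Delta_HT\|_2^2+\|\nabla_H\partial_zT\|_2^2\le\Phi(t)\|\nabla T\|_2^2,
\]
with $\Phi\in L^1(0,t)$ by $M_2$. Horizontal elliptic regularity converts $\|\Delta_HT\|_2$ into $\|\nabla_H\nabla_HT\|_2$, using the lateral Dirichlet condition.

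\textbf{Main obstacle.} I expect the hardest step to be the $\|\tilde v\|_{L^6}$ and $\|u\|_{L^2}$ estimates of Step~2. The difficulty is the no-slip bottom, which produces the shear trace in the barotropic equation. Since there is no vertical thermal diffusion, the $T$ forcing in these estimates must be integrated by parts entirely onto the velocity side. I would try first to close $\|\tilde v\|_6$ and $\|u\|_2$ jointly in a single Gronwall inequality, as in \cite{Kukavica_2007}.
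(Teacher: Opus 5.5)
Your Steps 1, 3 and 4 match what the paper does in Propositions~\ref{prop:appendix-basic-energy}, \ref{prop:appendix-velocity-H1} and~\ref{prop:appendix-temperature-H1}. The gap is in Step~2. There you use the Cao--Titi splitting under a no-slip bottom without accounting for the bottom boundary terms it generates.

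First, $\tilde v=v-\bar v$ does not vanish on $\Gamma_b$; there $\tilde v=-\bar v$. Integrating $-\partial_z^2\tilde v\cdot|\tilde v|^4\tilde v$ by parts therefore leaves $-\int_M\partial_zv|_{z=-h}\cdot|\bar v|^4\bar v$. This comes on top of the bottom-shear forcing $\frac1h\partial_zv|_{z=-h}$ tested against $|\tilde v|^4\tilde v$. You have $\|\partial_zv|_{z=-h}\|_{L^2(M)}\le C\|u\|_2^{1/2}\|\nabla u\|_2^{1/2}$. The natural ways of controlling $\||\bar v|^5\|_{L^2(M)}$ are via the trace of $|\tilde v|^3$, or via $\|\bar v\|_2\|\nabla_H\bar v\|_2^4$. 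The first leaves the full dissipation $\|\nabla u\|_2^2+\|\nabla|\tilde v|^3\|_2^2$ multiplied by a non-small factor. The second produces a superlinear Gronwall term. Either way, the $L^6$ estimate does not close as you describe it.

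Second, in Step~2(iii), $u=\partial_zv$ satisfies no condition on $\Gamma_b$. Testing $-\partial_z^2u$ with $u$ leaves $\int_M\partial_zu\cdot u|_{z=-h}$. Evaluate the momentum equation on the flat bottom, where $v=w=0$, $\Delta_Hv=0$ and $Q_T=0$; this gives $\partial_zu|_{z=-h}=\nabla_Hp_s$. Your $u$-identity therefore contains $\int_M\nabla_Hp_s\cdot u|_{z=-h}$. That term needs an independent estimate of the surface pressure, and nothing in your plan supplies one.

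The paper never runs this computation. Its observation is that horizontal diffusion alone gives $\nabla_HT\in L^2(0,t;L^2)$ through the $L^2$ temperature identity. Hence $f=\nabla_HQ_T-f_0k\times v$ satisfies $\int_0^t\|f\|_2^2\le C(\|v_0\|_2^2+(1+t)\|T_0\|_2^2)$. Contrary to your closing remark, no integration by parts onto the velocity and no $L^6$ bound on $T$ are needed. The momentum equation is then the isothermal primitive system with the mixed boundary conditions and an $L^2_tL^2_x$ force. The forced-velocity theorem of \cite{Kukavica_2007}, with $v_0\in V$, gives $M_1$ directly. If $\mathcal T^*<\infty$, the force is extended by zero and uniqueness identifies the two solutions.

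To repair your proof, replace Step~2 by this reduction, or else reproduce the pressure and bottom-trace estimates of that theory. Steps~3 and~4 then go through as you wrote them.
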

\begin{proof}
The three bounds are proved in
Propositions~\ref{prop:appendix-velocity-L6-vertical},
\ref{prop:appendix-velocity-H1}, and~\ref{prop:appendix-temperature-H1},
respectively, in Appendix~\ref{app:auxiliary-estimates}.
\end{proof}
\begin{proposition}[Uniqueness and continuous dependence]
\label{prop:strong-solution-stability}
Let $t_0>0$, and let $(v^{(1)},T^{(1)})$ and
$(v^{(2)},T^{(2)})$ be two strong solutions on $[0,t_0]$ in the
sense of Definition~\ref{def:strong-solution}, with initial data
$(v_0^{(1)},T_0^{(1)})$ and $(v_0^{(2)},T_0^{(2)})$ in
$V\times V_T$, respectively. Define
\begin{equation}
\label{eq:coupled-stability-coefficient}
\begin{split}
G(t):=C\Bigl[&1+
 \bigl(1+\|v^{(2)}(t)\|_{H^1}^2\bigr)
          \|v^{(2)}(t)\|_{H^2}^2
 +\|\nabla_HT^{(2)}(t)\|_{H^1}^2\\
 &+\|\partial_zT^{(2)}(t)\|_2^2
 \bigl(\|\partial_zT^{(2)}(t)\|_2^2
       +\|\nabla_H\partial_zT^{(2)}(t)\|_2^2\bigr)\Bigr]
\end{split}
\end{equation}
where $C$ depends only on $\Omega$. Then $G\in L^1(0,t_0)$ and, for every $0\leq t\leq t_0$, we have 
\begin{equation}
\label{eq:coupled-strong-stability}
\begin{split}
 &\|v^{(1)}(t)-v^{(2)}(t)\|_2^2
       +\|T^{(1)}(t)-T^{(2)}(t)\|_2^2\\
 &\quad+\int_0^t\Bigl(
       \|\nabla(v^{(1)}-v^{(2)})\|_2^2
       +\|\nabla_H(T^{(1)}-T^{(2)})\|_2^2\Bigr)\,ds\\
 &\qquad\leq
 \Bigl(\|v_0^{(1)}-v_0^{(2)}\|_2^2
          +\|T_0^{(1)}-T_0^{(2)}\|_2^2\Bigr)
       \exp\!\left(\int_0^tG(s)\,ds\right).
\end{split}
\end{equation}
In particular, the coupled strong solution is unique and depends
continuously on its initial data in the $L^2$ energy norm.
\end{proposition}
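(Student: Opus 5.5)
We estimate the difference $\delta v:=v^{(1)}-v^{(2)}$, $\delta T:=T^{(1)}-T^{(2)}$ in $L^2$, with $\delta w:=w(\delta v)$. Subtracting the two reformulated systems \eqref{eq:v equation}--\eqref{eq:T equation} and writing $B(a,f):=a\cdot\nabla_Hf+w(a)\partial_zf$, we obtain
\begin{align*}
\partial_t\delta v-\Delta\delta v+\nabla_H\delta p_s+f_0k\times\delta v
&=-B(v^{(1)},\delta v)-B(\delta v,v^{(2)})+\nabla_H\!\int_{-h}^z\delta T\,d\xi,\\
\partial_t\delta T-\Delta_H\delta T&=-B(v^{(1)},\delta T)-B(\delta v,T^{(2)}).
\end{align*}
Since every term in each equation lies in $L^2(0,t_0;L^2)$ for strong solutions (by the product estimates of Proposition~\ref{prop:iteration-time-regularity}), we may test the first equation with $\delta v$ and the second with $\delta T$ and use the Lions--Magenes identity $\frac{d}{dt}\|\cdot\|_2^2=2(\partial_t\cdot,\cdot)$.

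\textbf{Easy terms.} The transport terms $B(v^{(1)},\cdot)$ vanish by incompressibility and the boundary conditions. The surface pressure vanishes because $\nabla_H\cdot\avg{\delta v}=0$ and $\avg{\delta v}\cdot n_H=0$, and the Coriolis term vanishes pointwise. The buoyancy term is handled as in Proposition~\ref{prop:vn-L2}:
\[
\Bigl(\int_{-h}^z\delta T,\nabla_H\cdot\delta v\Bigr)\le\tfrac18\|\nabla\delta v\|_2^2+C\|\delta T\|_2^2 .
\]
For $B(\delta v,v^{(2)})$ tested with $\delta v$, we use the anisotropic inequality (Lemma~\ref{anisotropic inequality}). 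Bounding $|w(\delta v)|\le\int|\nabla_H\delta v|\,dz$ and applying Ladyzhenskaya/Agmon in the horizontal variables gives
\[
|(B(\delta v,v^{(2)}),\delta v)|\le\tfrac18\|\nabla\delta v\|_2^2+C(1+\|v^{(2)}\|_{H^1}^2)\|v^{(2)}\|_{H^2}^2\|\delta v\|_2^2 .
\]
This is the Cao--Titi type bound, and it produces the velocity part of $G$.

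\textbf{Main obstacle: the temperature coupling.} The difficulty is $(B(\delta v,T^{(2)}),\delta T)$, because $\delta T$ has no vertical dissipation.
\begin{itemize}
\item \emph{Horizontal part.} We write $(\delta v\cdot\nabla_HT^{(2)},\delta T)$ and estimate it anisotropically. Using $L^\infty_zL^2_H$ for $\delta v$ (controlled by $\|\delta v\|_2^{1/2}\|\partial_z\delta v\|_2^{1/2}$) and $L^2_zL^4_H$ for $\nabla_HT^{(2)}$ and $\delta T$, we get the bound
\[
C\|\delta v\|_2^{1/2}\|\nabla\delta v\|_2^{1/2}\,\|\nabla_HT^{(2)}\|_{H^1}\,\|\delta T\|_2^{1/2}\|\nabla_H\delta T\|_2^{1/2}.
\]
Young's inequality then gives $\tfrac18(\|\nabla\delta v\|_2^2+\|\nabla_H\delta T\|_2^2)+C\|\nabla_HT^{(2)}\|_{H^1}^2(\|\delta v\|_2^2+\|\delta T\|_2^2)$.
\item \emph{Vertical part.} For $(w(\delta v)\partial_zT^{(2)},\delta T)$ we place $w(\delta v)$ in $L^\infty_zL^2_H$, bounded by $\|\nabla_H\delta v\|_2$. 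We place $\partial_zT^{(2)}$ in $L^2_zL^4_H$, using Ladyzhenskaya horizontally: $\|\partial_zT^{(2)}\|_2^{1/2}(\|\partial_zT^{(2)}\|_2+\|\nabla_H\partial_zT^{(2)}\|_2)^{1/2}$. We place $\delta T$ in $L^2_zL^4_H$, bounded by $\|\delta T\|_2^{1/2}\|\nabla_H\delta T\|_2^{1/2}$, which needs $\delta T=0$ on $\Gamma_s$. The exponents then close as
\[
\|\nabla\delta v\|_2\,\|\nabla_H\delta T\|_2^{1/2}\,\|\delta T\|_2^{1/2}\,\Xi^{1/2},\qquad \Xi:=\|\partial_zT^{(2)}\|_2(\|\partial_zT^{(2)}\|_2+\|\nabla_H\partial_zT^{(2)}\|_2).
\]
Young's inequality with exponents $(2,4,4)$ leaves $C\,\Xi^2\|\delta T\|_2^2$. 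We bound $\Xi^2\le C\|\partial_zT^{(2)}\|_2^2(\|\partial_zT^{(2)}\|_2^2+\|\nabla_H\partial_zT^{(2)}\|_2^2)$, which is exactly the last term of $G$.
\end{itemize}
This step is where we expect care is needed. The $L^\infty_z$ bound on $w$ must use only $\nabla_H\delta v$, and the $\delta T$ factor may carry only horizontal derivatives. If the exponent bookkeeping fails to absorb, the first fallback is to integrate by parts in $z$, moving $\partial_z$ onto $w(\delta v)\delta T$. This yields $\nabla_H\cdot\delta v\,\delta T\,T^{(2)}$ and $w\,\partial_z\delta T\,T^{(2)}$, and the latter is bad. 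So we keep the direct estimate above.

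\textbf{Conclusion.} Summing the two identities, absorbing the $\tfrac18$-terms, and keeping half of the dissipation gives
\[
\frac{d}{dt}\bigl(\|\delta v\|_2^2+\|\delta T\|_2^2\bigr)+\|\nabla\delta v\|_2^2+\|\nabla_H\delta T\|_2^2\le G(t)\bigl(\|\delta v\|_2^2+\|\delta T\|_2^2\bigr).
\]
For $G\in L^1(0,t_0)$ we use Definition~\ref{def:strong-solution}. It gives $v^{(2)}\in L^\infty H^1\cap L^2H^2$ and $\nabla_HT^{(2)}\in L^2H^1$, so $\nabla_H\partial_zT^{(2)}\in L^2L^2$ and $\|\partial_zT^{(2)}\|_2\in L^\infty$; hence every term of $G$ is integrable. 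Gronwall's inequality, after integrating the dissipation, yields \eqref{eq:coupled-strong-stability}. Uniqueness follows by taking equal initial data.
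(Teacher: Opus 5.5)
Your proposal is correct and follows the paper's proof essentially line by line. It uses the same difference system with $(v^{(1)},w^{(1)})$ as transporting field, the same cancellations, and the same Cao--Titi bound for $w(\delta v)\partial_zv^{(2)}$. For $w(\delta v)\partial_zT^{(2)}$ it uses the same anisotropic $L^\infty_zL^2_H\times L^2_zL^4_H\times L^2_zL^4_H$ estimate with $(2,4,4)$ Young exponents, which yields exactly the last term of $G$.

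The only difference is the horizontal coupling term $(\delta v\cdot\nabla_HT^{(2)},\delta T)$. There the paper uses the simpler isotropic split $\|\delta v\|_6\|\nabla_HT^{(2)}\|_3\|\delta T\|_2$ instead of your anisotropic one, and both give the coefficient $\|\nabla_HT^{(2)}\|_{H^1}^2$.
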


\begin{proof}
For $i=1,2$, recall the vertical velocity is given by
\[
 w^{(i)}(z)=-\int_{-h}^z\nabla_H\cdot v^{(i)}(\xi)\,d\xi.
\]
The differences inherit all the boundary conditions. The estimates below are rigorously justified at level of theorem \ref{thm:global-well-posedness} with the regularity in Definition~\ref{def:strong-solution}. Indeed, both differences
belong to $H^1(0,t_0;L^2)$, so their squared $L^2$ norms are absolutely continuous and satisfy the Hilbert-space chain rule.
Subtract the two equations, retain $(v^{(1)},w^{(1)})$ as the
transporting velocity, and multiply by $v^{(1)}-v^{(2)}$ and $T^{(1)}-T^{(2)}$, respectively. Similarly as the $L^2$ energy estimates, it follows that
\begin{align*}
 \frac12\frac{d}{dt}\Bigl(
       \|v^{(1)}-v^{(2)}\|_2^2
       +\|T^{(1)}-T^{(2)}\|_2^2\Bigr)
 &+\|\nabla(v^{(1)}-v^{(2)})\|_2^2
       +\|\nabla_H(T^{(1)}-T^{(2)})\|_2^2\\
 &=-\int_\Omega
       \bigl((v^{(1)}-v^{(2)})\cdot\nabla_H\bigr)v^{(2)}
                           \cdot(v^{(1)}-v^{(2)})\\
 &\quad-\int_\Omega(w^{(1)}-w^{(2)})\partial_zv^{(2)}
                           \cdot(v^{(1)}-v^{(2)})\\
 &\quad+\int_\Omega\left(
       \int_{-h}^z\nabla_H(T^{(1)}-T^{(2)})(\xi)\,d\xi\right)
                           \cdot(v^{(1)}-v^{(2)})\\
 &\quad-\int_\Omega
       \bigl((v^{(1)}-v^{(2)})\cdot\nabla_HT^{(2)}\bigr)
                           (T^{(1)}-T^{(2)})\\
 &\quad-\int_\Omega(w^{(1)}-w^{(2)})\partial_zT^{(2)}
                           (T^{(1)}-T^{(2)}).
\end{align*}
For any $\eta>0$, H\"older's inequality, Sobolev embedding, and Poincar\'e's
inequality give
\begin{align*}
 &\left|\int_\Omega
       \bigl((v^{(1)}-v^{(2)})\cdot\nabla_H\bigr)v^{(2)}
                           \cdot(v^{(1)}-v^{(2)})\right|\\
 &\quad\leq\|\nabla_Hv^{(2)}\|_3
                           \|v^{(1)}-v^{(2)}\|_3^2\\
 &\quad\leq\eta\|\nabla(v^{(1)}-v^{(2)})\|_2^2
       +C_\eta\|v^{(2)}\|_{H^2}^2
                           \|v^{(1)}-v^{(2)}\|_2^2,
\end{align*}
and
\begin{align*}
 &\left|\int_\Omega
       \bigl((v^{(1)}-v^{(2)})\cdot\nabla_HT^{(2)}\bigr)
                           (T^{(1)}-T^{(2)})\right|\\
 &\quad\leq\|v^{(1)}-v^{(2)}\|_6
       \|\nabla_HT^{(2)}\|_3\|T^{(1)}-T^{(2)}\|_2\\
 &\quad\leq\eta\|\nabla(v^{(1)}-v^{(2)})\|_2^2
       +C_\eta\|\nabla_HT^{(2)}\|_{H^1}^2
                           \|T^{(1)}-T^{(2)}\|_2^2.
\end{align*}
\begin{align*}
 &\left|\int_\Omega\left(
       \int_{-h}^z\nabla_H(T^{(1)}-T^{(2)})(\xi)\,d\xi\right)
                           \cdot(v^{(1)}-v^{(2)})\right|\\
 &\quad\leq C_h\|T^{(1)}-T^{(2)}\|_2
                           \|\nabla(v^{(1)}-v^{(2)})\|_2\\
 &\quad\leq\eta\|\nabla(v^{(1)}-v^{(2)})\|_2^2
       +C_\eta\|T^{(1)}-T^{(2)}\|_2^2.
\end{align*}

To estimate the two vertical transport terms, use anisotropic inequality Lemma~\ref{anisotropic inequality},
\begin{align*}
 &\left|\int_\Omega(w^{(1)}-w^{(2)})\partial_zv^{(2)}
                           \cdot(v^{(1)}-v^{(2)})\right|\\
 &\quad\leq C\|v^{(2)}\|_{H^1}^{1/2}
              \|v^{(2)}\|_{H^2}^{1/2}
              \|v^{(1)}-v^{(2)}\|_2^{1/2}
              \|\nabla(v^{(1)}-v^{(2)})\|_2^{3/2}\\
 &\quad\leq\eta\|\nabla(v^{(1)}-v^{(2)})\|_2^2
       +C_\eta\|v^{(2)}\|_{H^1}^2\|v^{(2)}\|_{H^2}^2
                           \|v^{(1)}-v^{(2)}\|_2^2.
\end{align*}
For the temperature term, the anisotropic inequality yields,
\begin{align*}
 &\left|\int_\Omega(w^{(1)}-w^{(2)})\partial_zT^{(2)}
                           (T^{(1)}-T^{(2)})\right|\\
 &\quad\leq C\|\nabla(v^{(1)}-v^{(2)})\|_2
          \|\partial_zT^{(2)}\|_2^{1/2}
          \Bigl(\|\partial_zT^{(2)}\|_2^{1/2}
                +\|\nabla_H\partial_zT^{(2)}\|_2^{1/2}\Bigr)\\
 &\hspace{30mm}\cdot\|T^{(1)}-T^{(2)}\|_2^{1/2}
          \|\nabla_H(T^{(1)}-T^{(2)})\|_2^{1/2}\\
 &\quad\leq\eta\Bigl(
          \|\nabla(v^{(1)}-v^{(2)})\|_2^2
          +\|\nabla_H(T^{(1)}-T^{(2)})\|_2^2\Bigr)\\
 &\qquad+C_\eta\|\partial_zT^{(2)}\|_2^2
          \Bigl(\|\partial_zT^{(2)}\|_2^2
                +\|\nabla_H\partial_zT^{(2)}\|_2^2\Bigr)
                           \|T^{(1)}-T^{(2)}\|_2^2.
\end{align*}
Choose $\eta$ sufficiently small so that all the gradient terms could be absorbed, we obtain
\begin{equation}
\label{eq:coupled-stability-differential}
\begin{split}
 \frac{d}{dt}\Bigl(
       \|v^{(1)}-v^{(2)}\|_2^2
       +\|T^{(1)}-T^{(2)}\|_2^2\Bigr)
 &+\|\nabla(v^{(1)}-v^{(2)})\|_2^2
       +\|\nabla_H(T^{(1)}-T^{(2)})\|_2^2\\
 &\leq G(t)\Bigl(
       \|v^{(1)}-v^{(2)}\|_2^2
       +\|T^{(1)}-T^{(2)}\|_2^2\Bigr).
\end{split}
\end{equation}
The coefficient is integrable using only
\[
 v^{(2)}\in L^\infty_tH^1\cap L^2_tH^2,\qquad
 \partial_zT^{(2)}\in L^\infty_tL^2,\qquad
 \nabla_HT^{(2)}\in L^2_tH^1.
\]
Gronwall's inequality followed by integration of
\eqref{eq:coupled-stability-differential} proves
\eqref{eq:coupled-strong-stability}. The $L^2$ time continuity of both solutions guarantees the initial and terminal values in the estimate.
\end{proof}

\section{The Persistence of $H^2$ Regularity}
\label{sec:higher-regularity}
In this section, we establish persistence of higher regularity for initial data $v_0\in D(A)$ and $T_0\in H^2(\Omega)\cap V_T$. We work with the unique global strong solution furnished by Theorem~\ref{thm:global-well-posedness}. Recall that $\mathcal T^*$ denotes its maximal existence time, as defined in Definition~\ref{def:maximal-time}; the global existence result gives $\mathcal T^*=\infty$. All estimates below are established on arbitrary finite time intervals $[0,t_0]$, with constants allowed to depend on $t_0$.
\begin{theorem}\label{thm:higher-regularity}
Given a positive number $t_0$. Let $v_0 \in D(A) $ and $T_0 \in H^2(\Omega)\cap V_T$ be two functions, then there exists a unique strong solution to system~\eqref{eq:primitive-momentum}--\eqref{eq:primitive-initial-data} on $\Omega \times (0,t_0)$ with the following regularity,  for every $\epsilon\in(0,1)$,
\[
v \in L^\infty(0,t_0;H^2(\Omega))
\cap C([0,t_0];H^1(\Omega)),
\quad  v \in L^2(0,t_0;H^{3-\epsilon}(\Omega)),
\]
\[
T \in L^\infty(0,t_0;H^2(\Omega))
\cap C([0,t_0];H^1(\Omega)),
\quad
\nabla_H T \in L^2(0,t_0;H^{2}(\Omega)),
\]
\[
\partial_t v \in L^2(0,t_0;H^1(\Omega)),
\qquad
\partial_t T \in L^2(0,t_0;H^1(\Omega));
\]
\end{theorem}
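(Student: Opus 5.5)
We start from the unique global strong solution of Theorem~\ref{thm:global-well-posedness}. Its $H^1$ bounds $M_1,M_2,M_3$ on $[0,t_0]$ are available from the proposition at the end of Section~\ref{sec:construction-global}. Uniqueness in the statement then follows from Proposition~\ref{prop:strong-solution-stability}, so only the additional regularity must be proved. We derive the higher-order a priori estimates formally on the Galerkin level of Section~\ref{sec:construction-global}, or equivalently on the iterates, using the same bases. We then pass to the limit by weak lower semicontinuity. The natural order is as follows: first time-derivative estimates, then the velocity $H^2$ bound via Stokes regularity, and finally the temperature $H^2$ estimate.

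\textbf{Step 1 (time derivatives).} Differentiate the projected velocity equation in $t$ and test with $\partial_tv$, writing $\partial_t v(0)=-Av_0-\mathbb P[B(v_0,v_0)+f_0k\times v_0]+\mathbb P\nabla_H Q_{T_0}\in H$; this is where $v_0\in D(A)$ and $T_0\in H^2$ enter. Similarly, differentiate the temperature equation and test with $\partial_tT$, with $\partial_tT(0)\in L^2$ by $T_0\in H^2$ and $v_0\in H^2$. The coupling terms $(B(\partial_tv,v),\partial_tv)$, $(B(\partial_t v,T),\partial_tT)$ and $(\nabla_H\int\partial_tT,\partial_tv)$ are treated with Lemma~\ref{anisotropic inequality}, exactly as in the stability proof (Proposition~\ref{prop:strong-solution-stability}), with $(v,T)$ in place of $(v^{(2)},T^{(2)})$. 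Gronwall with the integrable coefficient $G$ then gives $\partial_tv\in L^\infty L^2\cap L^2H^1$ and $\partial_tT\in L^\infty L^2$, $\nabla_H\partial_tT\in L^2L^2$.

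\textbf{Step 2 (velocity in $H^2$).} We view $Av=-\partial_tv-\mathbb P[B(v,v)+f_0k\times v]+\mathbb P\nabla_HQ_T$ as a hydrostatic Stokes problem at a.e.\ $t$. The right side is in $L^\infty L^2$ because $\|B(v,v)\|_2\le C\|v\|_{H^1}^{3/2}\|v\|_{H^2}^{1/2}$ can be absorbed and $\nabla_HQ_T\in L^\infty L^2$. This yields $v\in L^\infty H^2$. Next, the right side lies in $L^2H^{1-\epsilon}$: we have $\partial_t v\in L^2H^1$, and $\nabla_H Q_T\in L^2H^1$ because $\nabla_HT\in L^2H^1$. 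For $B(v,v)$, $v\in L^\infty H^2$ gives $v\cdot\nabla_Hv\in L^\infty H^1$, while $w\partial_zv\in L^\infty H^{1-\epsilon}$ since $w\in L^\infty H^1$ only. The fractional hydrostatic Stokes estimate of Lemma~\ref{lem:top-stokes-higher}, $\|v\|_{H^{3-\epsilon}}\le C\|Av\|_{H^{1-\epsilon}}$, then gives $v\in L^2H^{3-\epsilon}$. The loss $\epsilon$ is dictated by the trace/compatibility of the mixed top/bottom conditions, which is why the statement includes it.

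\textbf{Step 3 (temperature in $H^2$).} This is the main obstacle, since there is no vertical diffusion and $\partial_z^2T$ is controlled only by transport. We apply $\partial_z^2$, $\nabla_H\partial_z$ and $\Delta_H$ to the temperature equation and test with the same quantities. Tangential/horizontal terms integrate by parts in $x,y$ using the lateral Dirichlet condition; no top/bottom condition is needed, because we never integrate by parts in $z$ against diffusion. The dangerous terms are $\int w\,\partial_z^3T\,\partial_z^2T=-\tfrac12\int\partial_zw|\partial_z^2T|^2$, which is harmless since $w=0$ on $\Gamma_t\cup\Gamma_b$, and the commutators $\partial_z^2w\,\partial_zT\,\partial_z^2T$ and $\partial_zv\cdot\nabla_H\partial_zT\,\partial_z^2T$. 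For these we use anisotropic bounds. For example,
\[
\Bigl|\int\partial_z\nabla_H\!\cdot v\,\partial_zT\,\partial_z^2T\Bigr|\le\|\nabla_H\partial_zv\|_{L^\infty_zL^4_H}\|\partial_zT\|_{L^2_zL^4_H}\|\partial_z^2T\|_2,
\]
where $\|\nabla_H\partial_z v\|_{L^\infty_zL^4_H}\lesssim\|v\|_{H^{3-\epsilon}}$ (true for $\epsilon<1/2$ say, i.e.\ fractional regularity used as stated), and $\|\partial_zT\|_{L^2_zL^4_H}\lesssim\|\partial_zT\|_2^{1/2}\|\nabla_H\partial_zT\|_2^{1/2}$. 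The terms with two horizontal derivatives are absorbed by horizontal dissipation. This yields
\[
\tfrac{d}{dt}\|T\|_{H^2}^2+\|\nabla_HT\|_{H^2}^2\le C(1+\|v\|_{H^{3-\epsilon}}^2+\|\nabla_H\partial_zT\|_2^2)(1+\|T\|_{H^2}^2),
\]
with an integrable coefficient by Step 2. Gronwall then gives $T\in L^\infty H^2$ and $\nabla_HT\in L^2H^2$. Finally, $\partial_tT\in L^2H^1$ follows from the equation, and continuity into $H^1$ follows from the Lions--Magenes argument already used in Section~\ref{sec:construction-global} (for $T$, via $L^\infty H^2\cap H^1(0,t_0;H^1)$, or weak continuity plus energy equality).
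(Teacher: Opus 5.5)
Step 1 is the differential form of the paper's difference-quotient argument (Proposition~\ref{prop:time-derivative-L2}) and is fine. Step 2, however, rests on a false estimate. The bound $\|B(v,v)\|_2\le C\|v\|_{H^1}^{3/2}\|v\|_{H^2}^{1/2}$ holds for $(v\cdot\nabla_H)v$ but not for $w\partial_zv$. The vertical velocity $w=-\int_{-h}^z\nabla_H\cdot v\,d\xi$ has only the horizontal regularity of $\nabla_Hv$, and its extra vertical smoothness gives no Sobolev gain. The correct bound is therefore $\|w\partial_zv\|_2\le\|w\|_{L^\infty_zL^4_H}\|\partial_zv\|_{L^2_zL^4_H}\le C\|v\|_{H^{3/2}}^2\le C\|v\|_{H^1}\|v\|_{H^2}$. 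This is linear in $\|v\|_{H^2}$ with a bounded but not small coefficient, so nothing can be absorbed.

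Your inequality fails even on $D(A)$. Superpose $a\,\phi_1(z)\psi(x/\epsilon,y/\epsilon)$ with $\int_{-h}^0\phi_1\,dz=0$, which makes $w$ of size $a/\epsilon$ along a whole column, and a bump with zero vertical mean, diameter $\epsilon$ and amplitude $a\epsilon^{-1/2}$ placed inside that column. Then $\|w\partial_zv\|_2\sim a^2\epsilon^{-1}$, while $\|v\|_{H^1}^{3/2}\|v\|_{H^2}^{1/2}\sim a^2\epsilon^{-1/2}$. Consequently $v\in L^\infty(0,t_0;H^2)$ is not established. Neither is your $L^2H^{3-\epsilon}$ bound, since it uses $v\in L^\infty H^2$ to place $B(v,v)$ in $L^\infty H^{1-\epsilon}$.

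The missing ingredient is the intermediate $H^{3/2}$ bound of Proposition~\ref{prop:velocity-H2}. Step 1 gives $v_t\in L^2(0,t_0;V)$, and together with $v\in L^2(0,t_0;D(A))$ the identity $\frac{d}{dt}\|A^{3/4}v\|_2^2=2(A^{1/2}v_t,Av)$ yields $v\in L^\infty(0,t_0;D(A^{3/4}))\subset L^\infty H^{3/2}$. After that, $\|B(v,v)\|_2\le C\|v\|_{H^{3/2}}^2$ is bounded and the $H^2$ Stokes estimate closes. Two smaller points in the same step. Lemma~\ref{lem:top-stokes-higher} is stated for $H^1$ forcing, so the shift $H^{1-\epsilon}\to H^{3-\epsilon'}$ must be obtained by interpolating it with Lemma~\ref{lem:top-stokes-H2}. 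Also, $w\partial_zv\in H^{1-\epsilon}$ needs the anisotropic structure of $w$, because an isotropic $H^1\cdot H^1$ product lies only in $H^{1/2}$ in three dimensions.

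Your formal $H^2$ energy estimates for $T$ in Step 3 are a legitimate alternative to the paper's route. The justification you propose for them, on the Galerkin level of Section~\ref{sec:construction-global} with the same bases, cannot work. The cosine factors $\chi_k$ force $\partial_zT_N=0$ at $z=-h,0$, whereas $T$ satisfies no condition there. From $\mu_k(f,\chi_k)=\sqrt{2/h}\,[(-1)^kf'(0)-f'(-h)]-(f'',\chi_k)$ one sees that $\|\partial_z^2P_N^TT_0\|_2\to\infty$ whenever $\partial_zT_0\neq0$ on $\Gamma_t\cup\Gamma_b$. Moreover, a uniform $H^2$ bound on $T_N$ would force $\partial_zT=0$ there in the limit.

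This is why the paper (Proposition~\ref{prop:Tzz-estimate}) runs an auxiliary linear Galerkin scheme for $q=\partial_zT$, with the known solution as coefficients. That scheme needs only $P_N^T(\partial_zT_0)\to\partial_zT_0$ in $H^1$. The paper then identifies the limit with $\partial_zT$ by an energy estimate for the vertical primitive $P=\int_{-h}^{z}d(\xi)\,d\xi$ of the difference $d$. It obtains $\Delta_HT\in L^\infty L^2$ from the equation and $T_t\in L^\infty L^2$ (Proposition~\ref{prop:horizontal-temperature-L2}), not from a $\Delta_H$-energy estimate.

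If you keep your route, test the $z$-differentiated equation with $-\Delta_H\partial_zT$ rather than the $\nabla_H\partial_z$-equation with $\nabla_H\partial_zT$. The latter leaves the lateral term $\int_{\Gamma_s}\partial_n(\nabla_H\partial_zT)\cdot\nabla_H\partial_zT$, since only the tangential part of $\nabla_H\partial_zT$ vanishes on $\Gamma_s$. Also note that $v\in H^{3-\epsilon}$ puts $\nabla_H\partial_zv$ only in $L^\infty_zL^{p}_H$ for $p<4$, so pair $L^{8/3}_H$ with $L^8_H$ as the paper does.
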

\begin{remark}
On nonsmooth domains, solutions of the Stokes system generally do not enjoy the same regularity as on smooth domains because of corner singularities, which prevent us from obtaining higher-order regularity such as $v\in H^3(\Omega)$ and $p\in H^2(\Omega)$, even when the forcing term is sufficiently smooth. Higher-order regularity depends on the corner and edge geometry and on the boundary conditions, and may require additional compatibility conditions on the data; see \cite{MazyaRossmann2007}. Since no compatibility assumptions of this type are imposed in the present work, we do not discuss $H^3$-regularity of the solution.
\end{remark}

Theorem~\ref{thm:global-well-posedness} provides a unique global strong
solution for the initial data in Theorem~\ref{thm:higher-regularity}.
We establish the additional regularity in the following propositions
and complete the proof of Theorem~\ref{thm:higher-regularity} at the end
of this section.
\begin{proposition}
\label{prop:time-derivative-L2}
Under the initial-data assumptions of this section, there exists a continuous nondecreasing function $M_4$, finite on compact subintervals
of $[0,\mathcal T^*)$, such that
\[
 \mathop{\mathrm{ess\,sup}}_{0<s<t}
 \left(\|v_t(s)\|_{L^2}^2+\|T_t(s)\|_{L^2}^2\right)
 +\int_0^t\left(\|\nabla v_t(s)\|_{L^2}^2
                    +\|\nabla_HT_t(s)\|_{L^2}^2\right)\,ds
 \leq M_4(t).
\]
\end{proposition}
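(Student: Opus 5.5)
The plan is to differentiate the system in time and run a coupled $L^2$ energy estimate for $(v_t,T_t)$. At the level of Galerkin approximations or difference quotients this is justified by the strong-solution regularity of Theorem~\ref{thm:global-well-posedness}. Set $\hat v:=v_t$, $\hat w:=w_t=-\int_{-h}^z\nabla_H\cdot v_t\,d\xi$ and $\hat T:=T_t$. Differentiating the reformulated equations gives
\begin{align*}
\partial_t\hat v-\Delta\hat v+(v\cdot\nabla_H)\hat v+w\partial_z\hat v+f_0k\times\hat v+\nabla_H(\hat p_s-Q_{\hat T})&=-(\hat v\cdot\nabla_H)v-\hat w\partial_zv,\\
\partial_t\hat T-\Delta_H\hat T+v\cdot\nabla_H\hat T+w\partial_z\hat T&=-\hat v\cdot\nabla_HT-\hat w\partial_zT.
\end{align*}
The pair $(\hat v,\hat T)$ inherits the homogeneous boundary conditions, and $\hat v$ satisfies the hydrostatic divergence constraint.

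The initial values are controlled from the equations. Since $v_0\in D(A)$, $T_0\in H^2\cap V_T$, and the nonlinear terms can be bounded by the product estimate used in Proposition~\ref{thm:A^1/2 v}, we get $\|v_t(0)\|_2+\|T_t(0)\|_2\le C(\|v_0\|_{H^2},\|T_0\|_{H^2})$. For the temperature term, $\|w_0\partial_zT_0\|_2\le\|w_0\|_{L^\infty_zL^4_H}\|\partial_zT_0\|_{L^2_zL^4_H}$, which is finite.

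Next we test the first equation with $\hat v$ and the second with $\hat T$. The transport terms on the left vanish by incompressibility, the Coriolis term vanishes, and the surface pressure drops out. The buoyancy coupling is handled as in Proposition~\ref{prop:vn-L2}: it is bounded by $\eta\|\nabla\hat v\|_2^2+C\|\hat T\|_2^2$. The velocity forcing terms are estimated as in Proposition~\ref{prop:strong-solution-stability}, with $v^{(2)}$ replaced by $v$. This gives
\[
\eta\|\nabla\hat v\|_2^2+C(\|v\|_{H^2}^2+\|v\|_{H^1}^2\|v\|_{H^2}^2)\|\hat v\|_2^2,
\]
and the coefficient is integrable by $M_2$. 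The term $\hat v\cdot\nabla_HT\,\hat T$ is bounded by $\|\hat v\|_6\|\nabla_HT\|_3\|\hat T\|_2$, hence by $\eta\|\nabla\hat v\|_2^2+C\|\nabla_HT\|_{H^1}^2\|\hat T\|_2^2$, with the coefficient integrable by $M_3$.

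The main obstacle is the term $\int\hat w\,\partial_zT\,\hat T$, because $T$ has no vertical diffusion. I would use the anisotropic inequality of Lemma~\ref{anisotropic inequality} exactly as in the last estimate of Proposition~\ref{prop:strong-solution-stability}. This bounds the term by
\[
C\|\nabla\hat v\|_2\,\|\partial_zT\|_2^{1/2}\bigl(\|\partial_zT\|_2+\|\nabla_H\partial_zT\|_2\bigr)^{1/2}\|\hat T\|_2^{1/2}\|\nabla_H\hat T\|_2^{1/2},
\]
and Young's inequality then gives $\eta(\|\nabla\hat v\|_2^2+\|\nabla_H\hat T\|_2^2)+C\|\partial_zT\|_2^2(\|\partial_zT\|_2^2+\|\nabla_H\partial_zT\|_2^2)\|\hat T\|_2^2$. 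The coefficient is in $L^1(0,t)$ by $M_3$, since $\partial_zT\in L^\infty_tL^2$ and $\nabla_H\partial_zT\in L^2_tL^2$. Absorbing the $\eta$-terms and applying Gronwall yields the claimed bound with $M_4(t)=C(\|v_0\|_{H^2},\|T_0\|_{H^2})\exp(\int_0^tG)$. Here $G$ is the coefficient of Proposition~\ref{prop:strong-solution-stability}, so $M_4$ is continuous, nondecreasing, and finite on compact intervals.

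The remaining point is rigor. I would carry out the estimate for the Galerkin approximations of the full nonlinear problem, using the basis of Lemma~\ref{lem:linear-temperature}, or alternatively for time difference quotients $(v(t+\delta)-v(t))/\delta$. The resulting bound is uniform, so it passes to the limit by lower semicontinuity, and uniqueness identifies the limit with the strong solution.
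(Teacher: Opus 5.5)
Your a priori estimate is the right one, and it matches the paper term by term. The differentiated system for $(v_t,T_t)$ has exactly the structure of the difference system in Proposition~\ref{prop:strong-solution-stability}. Your bounds for the buoyancy, $(\hat v\cdot\nabla_H)v$, $\hat w\partial_zv$, $\hat v\cdot\nabla_HT$ and $\hat w\partial_zT$ terms reproduce that proof, and your Gronwall coefficient is the paper's $G$. The paper makes this rigorous by applying Proposition~\ref{prop:strong-solution-stability} to the time-shifted pair $(v(\cdot+\eta),T(\cdot+\eta))$ and $(v,T)$. It then divides by $\eta^2$ and passes to the limit through Steklov averages, which is your second justification route.

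The gap is the initial value, and this is where the paper spends its effort. For the strong solution of Theorem~\ref{thm:global-well-posedness} one only knows $v_t\in L^2(0,t;H)$ and $T_t\in L^2(0,t;L^2)$. So $v_t(0)$ and $T_t(0)$ have no meaning, and saying they are ``controlled from the equations'' is not yet an argument. On the difference-quotient route you need
\[
\sup_{0<\eta<\eta_0}\eta^{-2}\bigl(\|v(\eta)-v_0\|_2^2+\|T(\eta)-T_0\|_2^2\bigr)<\infty .
\]
Writing $v(\eta)-v_0=\int_0^\eta v_t\,ds$ only gives $\eta^{-1}\|v(\eta)-v_0\|_2\leq\eta^{-1/2}\|v_t\|_{L^2(0,\eta;L^2)}$. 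Boundedness of the right side is essentially the $L^\infty_tL^2$ bound you are trying to prove.

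The paper closes this step as follows:
\begin{itemize}
\item It subtracts the frozen residuals $F_0^v,F_0^T$ from the equations. These lie in $L^2$ precisely because $v_0\in D(A)$ and $T_0\in H^2\cap V_T$, and this is where your bound on $w_0\partial_zT_0$ is actually used.
\item It runs the stability energy estimate for $Y(t)=\|v(t)-v_0\|_2^2+\|T(t)-T_0\|_2^2$. This gives $Y'\leq C_0Y+2C_0\sqrt Y$ with $Y(0)=0$.
\item Lemma~\ref{lem:sqrt-gronwall} then yields $\sqrt{Y(\eta)}\leq C_0\eta e^{C_0\eta/2}$, which is the uniform bound on the initial difference quotients.
\end{itemize}

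Your first route, Galerkin approximations of the full nonlinear problem, does not avoid this cheaply. There $\partial_tv^m(0)$ is defined, but your Gronwall step needs the coefficient built from $(v^m,T^m)$ to be bounded in $L^1(0,t)$ uniformly in $m$. That requires global strong bounds for the nonlinear Galerkin scheme. The paper never proves these: its global $H^1$ bounds are obtained only for the actual solution, through the Kukavica--Ziane estimates.
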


\begin{proof}
Fix $0<t_0<\mathcal T^*$ and choose $\eta_0>0$ such that
$t_0+\eta_0<\mathcal T^*$. We use the existing strong solution
from Section~\ref{sec:construction-global}. For $0<\eta<\eta_0$, set
\[
 \delta_\eta v(t):=\frac{v(t+\eta)-v(t)}{\eta},
 \qquad
 \delta_\eta T(t):=\frac{T(t+\eta)-T(t)}{\eta}.
\]
We first bound the initial difference quotients.

Let $w_0=-\int_{-h}^z\nabla_H\cdot v_0(\xi)\,d\xi$.
The anisotropic estimates give,
\[
 \|w_0\partial_zv_0\|_2+\|w_0\partial_zT_0\|_2
 \leq C\|v_0\|_{H^2}
             \bigl(\|v_0\|_{H^2}+\|T_0\|_{H^2}\bigr).
\]
Hence $(F_0^v,F_0^T)$ defined following belong to $L^2(\Omega)$:
\[
\begin{aligned}
 &F_0^v:=Av_0+\mathbb P\left[
       (v_0\cdot\nabla_H)v_0+w_0\partial_zv_0+f_0k\times v_0
       -\int_{-h}^z\nabla_HT_0(\xi)\,d\xi\right],\\
 &F_0^T=-\Delta_HT_0+v_0\cdot\nabla_HT_0+w_0\partial_zT_0.
\end{aligned}
\]
Choose sufficiently large $C_0 \geq \max \{1,\norm{F_0^v}_{L^2},\norm{F_0^T}_{L^2},e^{G(0)|_{(v_0,T_0)}}\}$, which only depends on $\norm{v_0}_{H^2}$ and $\norm{T_0}_{H^2}$. $G(0)|_{(v_0,T_0)}$ is the value in \eqref{eq:coupled-stability-coefficient} evaluated at $(v_0,T_0)$.

Subtract the $F_0^v$ and $F_0^T$ from the velocity and temperature equation,
\begin{align*}
\partial_t(v-v_0)+A(v-v_0)
+\mathbb P\Bigg[
  &v\cdot\nabla_H(v-v_0)+w\partial_z(v-v_0)\\
  &+(v-v_0)\cdot\nabla_Hv_0
    +(w-w_0)\partial_zv_0\\
  &+f_0k\times(v-v_0)
    -\int_{-h}^{z}\nabla_H(T-T_0)(\xi)\,d\xi
\Bigg]
&=-F_0^v,\\[1mm]
\partial_t(T-T_0)-\Delta_H(T-T_0)
  &+v\cdot\nabla_H(T-T_0)+w\partial_z(T-T_0)\\
  &+(v-v_0)\cdot\nabla_HT_0
    +(w-w_0)\partial_zT_0
=- F_0^T.
\end{align*}
And repeat the energy calculation in the Uniqueness proof in Proposition~\ref{prop:strong-solution-stability}, we obtain,
\[
\begin{aligned}
 \frac{d}{dt}\bigl(\|v(t)-v_0\|_2^2+\|T(t)-T_0\|_2^2\bigr)
 &\leq
 C_0\bigl(\|v(t)-v_0\|_2^2+\|T(t)-T_0\|_2^2\bigr)\\
 &+2C_0
       \bigl(\|v(t)-v_0\|_2^2+\|T(t)-T_0\|_2^2\bigr)^{1/2}.
\end{aligned}
\]
Set
\[
Y(t):=\|v(t)-v_0\|_2^2+\|T(t)-T_0\|_2^2.
\]
Then we have the differential form $Y'(t)\leq C_0 Y+2 C_0 Y^{\frac{1}{2}}$.

Since $v,T\in H^1(0,t_0+\eta_0;L^2(\Omega))$,
\[
Y'(t)
=2\bigl(v(t)-v_0,v_t(t)\bigr)_{L^2}
 +2\bigl(T(t)-T_0,T_t(t)\bigr)_{L^2}
\quad\text{for a.e. }t.
\]
The right-hand side belongs to $L^1(0,t_0+\eta_0)$, so $Y$ is absolutely continuous. The initial traces give $Y(0)=0$.
Applying Lemma~\ref{lem:sqrt-gronwall} to the differential inequality $Y'(t)\leq C_0 Y+2 C_0 Y^{\frac{1}{2}}$, we obtain
\[
\sqrt{Y(t)}\leq C_0t\,e^{C_0t/2}.
\]
Consequently, for $0<\eta<\eta_0$,
\[
\begin{aligned}
\|\delta_\eta v(0)\|_2^2+\|\delta_\eta T(0)\|_2^2
=\frac{Y(\eta)}{\eta^2}\leq C_0^2e^{C_0\eta}.
\end{aligned}
\]
Now apply Uniqueness Proposition~\ref{prop:strong-solution-stability} to
$(v(t+\eta),T(t+\eta))$ and $(v(t),T(t))$ on $[0,t_0]$.
Let $G(t)$ be the expression in
\eqref{eq:coupled-stability-coefficient} with
$(v^{(2)},T^{(2)})=(v(t),T(t))$, which is independent of $\eta$ and belongs to $L^1(0,t_0)$ by theorem~\ref{thm:global-well-posedness}.
Dividing \eqref{eq:coupled-strong-stability} by $\eta^2$ and
controlling the supremum and the dissipation integral separately,
we obtain, for $0\leq t\leq t_0$,
\[
\begin{aligned}
 \sup_{0\leq s\leq t}
       \bigl(\|\delta_\eta v(s)\|_2^2
                         +\|\delta_\eta T(s)\|_2^2\bigr)&+\int_0^t
       \bigl(\|\nabla\delta_\eta v\|_2^2
                         +\|\nabla_H\delta_\eta T\|_2^2\bigr)\,ds\\
 &\leq 2C_0^2\exp\!\left(C_0\eta+\int_0^tG(s)\,ds\right).
\end{aligned}
\]
Since $v,T\in H^1(0,t_0+\eta_0;L^2)$, their time
difference quotients are the Steklov averages of $v_t,T_t$.
By the strong convergence of Steklov averages
(see \cite[Chapter~I, Lemma~3.2]{DiBenedetto1993}
and \cite[Lemma~2.5]{AmbrosioCiani2026}), we obtain
\[
(\delta_\eta v,\delta_\eta T)\longrightarrow(v_t,T_t)
\quad\text{strongly in }L^2(0,t_0;L^2)
\quad\text{as }\eta\downarrow0.
\]
The uniform bounds above identify the weak limits of the spatial
gradients with $\nabla v_t$ and $\nabla_HT_t$, and the weak-star
limits in $L^\infty(0,t_0;L^2)$ with $v_t,T_t$.
Lower semicontinuity therefore gives
\[
\begin{aligned}
 \mathop{\mathrm{ess\,sup}}_{0<s<t}
       \bigl(\|v_t(s)\|_2^2+\|T_t(s)\|_2^2\bigr) +\int_0^t
       \bigl(\|\nabla v_t\|_2^2+\|\nabla_HT_t\|_2^2\bigr)\,ds
 \leq2C_0^2\exp\!\left(\int_0^tG(s)\,ds\right).
\end{aligned}
\]
The right-hand side defines a continuous nondecreasing $M_4$ on
$[0,\mathcal T^*)$.
\end{proof}

\begin{proposition}\label{prop:velocity-H2}
For every $0<t_0<\mathcal T^*$, there is a continuous nondecreasing
function $M_5$ on $[0,t_0]$ such that
\[
 \sup_{0\leq s\leq t}\|v(s)\|_{H^2}^2\leq M_5(t),
 \qquad 0\leq t\leq t_0.
\]
\end{proposition}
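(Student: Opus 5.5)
We treat the projected velocity equation at each fixed time as a stationary hydrostatic Stokes problem,
\[
 Av=-v_t-\mathbb P\Bigl[(v\cdot\nabla_H)v+w\partial_zv+f_0k\times v-\int_{-h}^z\nabla_HT(\xi)\,d\xi\Bigr]=:F(t),
\]
and use the $H^2$ regularity of the hydrostatic Stokes operator \cite{Ziane01111995}, $\|v\|_{H^2}\leq C\|Av\|_{L^2}$ on $D(A)$. It then suffices to bound $\|F(t)\|_{L^2}$ uniformly for a.e.\ $t\in[0,t_0]$. Since $v\in C([0,t_0];V)$ and the bound will be essential-sup in time, we finally pass to $\sup$ by weak lower semicontinuity of the $H^2$ norm along times where the bound holds, using continuity in $H^1$.

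The linear terms are immediate. Proposition~\ref{prop:time-derivative-L2} gives $\|v_t(s)\|_{L^2}^2\leq M_4(t)$. The temperature term is controlled by $\|\int_{-h}^z\nabla_HT\|_{L^2}\leq h\|\nabla_HT\|_{L^2}\leq hM_3(t)^{1/2}$. The Coriolis term is bounded by $|f_0|\,\|v\|_{L^2}$, which is controlled by Proposition~\ref{prop:vn-L2} (or the basic energy estimate). The horizontal transport is estimated by
\[
\|(v\cdot\nabla_H)v\|_{L^2}\leq\|v\|_{L^6}\|\nabla v\|_{L^3}\leq C\|v\|_{H^1}^{3/2}\|v\|_{H^2}^{1/2}.
\]

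The main obstacle is the vertical transport $w\partial_zv$, since $w$ costs one horizontal derivative. Here we use the anisotropic inequality (Lemma~\ref{anisotropic inequality}) in the form already used in Section~\ref{sec:construction-global}:
\[
\|w\partial_zv\|_{L^2}\leq C\|\nabla_Hv\|_{L^2}^{1/2}\|\nabla_Hv\|_{H^1}^{1/2}\|\partial_zv\|_{L^2}^{1/2}\|\partial_zv\|_{H^1}^{1/2}\leq C\|v\|_{H^1}\|v\|_{H^2}.
\]
This is critical: it carries a full power of $\|v\|_{H^2}$, so Young's inequality cannot absorb it directly. We therefore first split it more carefully, using $\|w\|_{L^\infty_zL^4_H}\leq C\|\nabla_Hv\|_{L^2_zL^4_H}$ and $\|\partial_zv\|_{L^2_zL^4_H}$ with Ladyzhenskaya's inequality in the horizontal variables only. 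This gives
\[
\|w\partial_zv\|_{L^2}\leq C\|v\|_{H^1}\,\|v\|_{H^1}^{1/2}\|v\|_{H^2}^{1/2}
=C\|v\|_{H^1}^{3/2}\|v\|_{H^2}^{1/2}.
\]
Each factor here needs only one horizontal derivative beyond $H^1$, so the bound is subcritical. With that bound,
\[
\|v\|_{H^2}\leq C\bigl(\|v_t\|_{L^2}+\|v\|_{H^1}^{3/2}\|v\|_{H^2}^{1/2}+\|\nabla_HT\|_{L^2}+\|v\|_{L^2}\bigr),
\]
and Young's inequality gives $\|v\|_{H^2}^2\leq C(M_4+M_2^3+M_3+M_1)$ a.e., where $M_1$, $M_2$ and $M_3$ come from the regularity proposition of Section~\ref{sec:construction-global}. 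If the horizontal-only splitting fails near the boundary, the fallback is to use $L^\infty_t$ bounds on $\|\partial_zv\|_{L^2_zL^4_H}$. These would come from the $L^6$/vertical-derivative estimates (Proposition~\ref{prop:appendix-velocity-L6-vertical}) upgraded by $\nabla v_t\in L^2$, which gives $v\in C([0,t_0];V)$ plus $\partial_zv\in L^\infty_tL^2$. We then trade half a derivative as above. Continuity and monotonicity of the resulting $M_5$ follow from those of $M_1,\dots,M_4$.
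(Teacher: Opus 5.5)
\textbf{There is a genuine gap in the vertical transport estimate}, which is the step you yourself flagged as the main obstacle. Ladyzhenskaya's inequality on horizontal slices gives $\|\nabla_Hv\|_{L^2_zL^4_H}\leq C\|\nabla_Hv\|_{L^2}^{1/2}\|\nabla_Hv\|_{L^2_zH^1_H}^{1/2}$ and likewise $\|\partial_zv\|_{L^2_zL^4_H}\leq C\|\partial_zv\|_{L^2}^{1/2}\|\partial_zv\|_{L^2_zH^1_H}^{1/2}$. So \emph{each} of the two factors $\|w\|_{L^\infty_zL^4_H}$ and $\|\partial_zv\|_{L^2_zL^4_H}$ costs $\|v\|_{H^1}^{1/2}\|v\|_{H^2}^{1/2}$. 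Neither is controlled by $\|v\|_{H^1}$, since an $L^4$ bound on a two-dimensional slice needs half a derivative beyond $H^1$. Your splitting therefore gives back $\|w\partial_zv\|_{L^2}\leq C\|v\|_{H^1}\|v\|_{H^2}$, not $C\|v\|_{H^1}^{3/2}\|v\|_{H^2}^{1/2}$.

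No rearrangement can do better. Take $v=v_1+v_2$ with $v_1=\phi(z)G((x,y)/\lambda)$, where $\phi(-h)=\phi'(0)=0$ and $\int_{-h}^0\phi\,dz=0$. Let $v_2$ be a bump of amplitude $\lambda^{-1/2}$, width $\lambda$ and zero vertical mean, placed where $|w_1|\sim\lambda^{-1}$. Then $\|v\|_{H^1}\sim1$ and $\|v\|_{H^2}\sim\lambda^{-1}$, but $\|w\partial_zv\|_{L^2}\sim\lambda^{-1}$, with the term $w_1\partial_zv_2$ dominating. The bound is critical, so $\|v\|_{H^2}\leq C(\cdots)+C\|v\|_{H^1}\|v\|_{H^2}$ cannot be closed by Young's inequality unless $\|v\|_{H^1}$ happens to be small.

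\textbf{The missing idea} is to secure an $L^\infty$-in-time bound at the intermediate level $H^{3/2}$ \emph{before} invoking the Stokes estimate; this is how the paper proceeds.
\begin{itemize}
\item Since $v\in L^2(0,t_0;D(A))$ and, by Proposition~\ref{prop:time-derivative-L2}, $v_t\in L^2(0,t_0;V)$, one has $\frac12\frac{d}{dt}\|A^{3/4}v\|_2^2=(A^{1/2}v_t,Av)$.
\item Hence $\sup_t\|A^{3/4}v\|_2^2\leq\|A^{3/4}v_0\|_2^2+2\|A^{1/2}v_t\|_{L^2_tL^2}\|Av\|_{L^2_tL^2}$, and $D(A^{3/4})=[V,D(A)]_{1/2}\hookrightarrow H^{3/2}$.
\item With $v\in L^\infty(0,t_0;H^{3/2})$, the slice embedding $H^{1/2}(M)\hookrightarrow L^4(M)$ gives $\|(v\cdot\nabla_H)v+w\partial_zv\|_2\leq C\|v\|_{H^{3/2}}^2$, which is uniformly bounded in time.
\item The $H^2$ Stokes estimate then gives $\|v\|_{H^2}\leq C(\|v_t\|_2+\|v\|_{H^{3/2}}^2+\|v\|_2+\|\nabla_HT\|_2)$, with no absorption needed.
\end{itemize}

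Your fallback names the right ingredient, $\nabla v_t\in L^2_tL^2$, but extracts only $H^1$-level information from it: $v\in C([0,t_0];V)$ and $\partial_zv\in L^\infty_tL^2$. That information does not control $\|\partial_zv\|_{L^2_zL^4_H}$. Combined with $v\in L^2_tH^2$, what the ingredient actually yields is exactly the $L^\infty_tH^{3/2}$ bound above. The rest of your outline is fine: the horizontal transport estimate, the linear terms, and the passage from essential supremum to supremum via $H^1$ continuity.
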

\begin{proof}
Recall $v\in L^2(0,t_0;D(A))$ and $v_t\in L^2(0,t_0;V)$, then
\[
 \frac12\frac{d}{dt}\|A^{3/4}v\|_2^2
   =(A^{1/2}v_t,Av)_{L^2}.
\]
It gives,
\[
 \sup_{0\leq s\leq t}\|A^{3/4}v(s)\|_2^2
 \leq\|A^{3/4}v_0\|_2^2
       +2\|A^{1/2}v_t\|_{L^2(0,t;L^2)}\|Av\|_{L^2(0,t;L^2)}.
\]
The initial term is finite because $v_0\in D(A)$. By interpolation, and
Lemma~\ref{lemma:Gagliardo}\textup{(ii)},
\[
 \begin{aligned}
 D(A^{3/4})
   &=[D(A^{1/2}),D(A)]_{1/2}\\
   &\hookrightarrow[H^1(\Omega)^2,H^2(\Omega)^2]_{1/2}
     =H^{3/2}(\Omega)^2.
 \end{aligned}
\]
Thus $v\in L^\infty(0,t_0;H^{3/2}(\Omega))$.
Also,
\[
 \|w\|_{L^\infty_zH^{1/2}(M)}
       +\|\partial_zv\|_{L^2_zH^{1/2}(M)}
 \leq C\|v\|_{H^{3/2}(\Omega)}.
\]
Using $H^{1/2}(M)\hookrightarrow L^4(M)$,
$H^1(\Omega)\hookrightarrow L^6(\Omega)$,
$H^{1/2}(\Omega)\hookrightarrow L^3(\Omega)$, we obtain
\[
 \begin{aligned}
 \|(v\cdot\nabla_H)v+w\partial_zv\|_2
 &\leq \|v\|_6\|\nabla_Hv\|_3
       +\|w\|_{L^\infty_zL^4(M)}\|\partial_zv\|_{L^2_zL^4(M)}\\
 &\leq C\|v\|_{H^{3/2}}^2.
 \end{aligned}
\]
Apply Lemma~\ref{lem:top-stokes-H2} in the constant-depth case to
the momentum equation. Vertical integration of the thermal forcing
then yields, for almost every time,
\[
 \|v\|_{H^2}
 \leq C\bigl(\|v_t\|_2+\|v\|_{H^{3/2}}^2
                         +\|v\|_2+\|\nabla_HT\|_2\bigr).
\]
Every term on the right is uniformly bounded by the preceding
estimates. The known $H^1$ continuity of $v$ and weak lower
semicontinuity extend this bound to every time.
\end{proof}

\begin{proposition}[Higher spatial regularity of the velocity]
\label{prop:velocity-higher-spatial}
Let \(0<t<\mathcal T^*\), then for every \(\varepsilon\in(0,1)\),
\[
v\in L^2(0,t;H^{3-\varepsilon}(\Omega)).
\]
More precisely, there exists a continuous nondecreasing function
\(M_{v,\varepsilon}\) such that
\[
\int_0^t
\left(\|v\|_{H^{3-\varepsilon}}^2
\right)\,ds
\leq M_{v,\varepsilon}(t)
\]
\end{proposition}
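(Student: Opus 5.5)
We treat the momentum equation at almost every time as a stationary hydrostatic Stokes problem,
\[
 Av=F:=-v_t-\mathbb P\Bigl[(v\cdot\nabla_H)v+w\partial_zv+f_0k\times v
 -\int_{-h}^z\nabla_HT(\xi)\,d\xi\Bigr],
\]
and apply the fractional hydrostatic Stokes regularity of Lemma~\ref{lem:top-stokes-higher} in the constant-depth case. We expect that lemma to give $\|v\|_{H^{3-\varepsilon}}\leq C_\varepsilon(\|F\|_{H^{1-\varepsilon}}+\|v\|_{L^2})$ for $\varepsilon\in(0,1)$. The restriction to $1-\varepsilon<1$ keeps the Sobolev order below the threshold where compatibility conditions at the edges $\partial M\times\{-h,0\}$ would enter. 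It therefore suffices to show $F\in L^2(0,t;H^{1-\varepsilon}(\Omega))$, and in fact we will bound $F$ in $L^2(0,t;H^1)$ wherever possible. Since $\mathbb P$ is bounded on $H^s$ for $0\le s\le1$, which follows from elliptic regularity of the two-dimensional Neumann problem for the barotropic pressure, we can estimate the terms before projection.

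The linear terms are handled directly. Proposition~\ref{prop:time-derivative-L2} gives $v_t\in L^2(0,t;H^1)$. The Coriolis term is controlled in $L^\infty(0,t;H^2)$ by Proposition~\ref{prop:velocity-H2}. For the thermal forcing, note that $\nabla_H\int_{-h}^zT\,d\xi$ has horizontal derivatives $\int_{-h}^z\nabla_H^2T$ and vertical derivative $\nabla_HT$. Both lie in $L^2(0,t;L^2)$ because $\nabla_HT\in L^2(0,t;H^1)$ by Theorem~\ref{thm:global-well-posedness}, so this forcing is in $L^2(0,t;H^1)$.

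The nonlinear terms are the main work. For $(v\cdot\nabla_H)v$, Proposition~\ref{prop:velocity-H2} gives $v\in L^\infty(0,t;H^2)\subset L^\infty$ and $\nabla v\in L^\infty(0,t;L^6)$, hence $(v\cdot\nabla_H)v\in L^\infty(0,t;H^1)$. The vertical transport $w\partial_zv$ is the obstacle: $w$ involves $\nabla_H v$, so $\nabla_H w$ contains second horizontal derivatives of $v$, which are only $L^\infty_tL^2$. Moreover $w$ is not bounded in $L^\infty_zL^\infty_H$ merely from $H^2$ regularity of $v$. We would proceed anisotropically. We have $\partial_z(w\partial_zv)=-(\nabla_H\cdot v)\partial_zv+w\partial_z^2v$, and the bound $\|w\|_{L^\infty_z L^q_H}\le C\|v\|_{H^2}$ holds for every $q<\infty$. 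For the horizontal derivatives, $\nabla_Hw\,\partial_zv$ is estimated by $\|\nabla_Hw\|_{L^\infty_zL^2_H}$ against $\|\partial_zv\|_{L^2_zL^\infty_H}$, and the latter is bounded by $\|v\|_{H^{5/2+}}$ only. To avoid circularity we aim for the fractional target $H^{1-\varepsilon}$. Using the product rule $H^{1/2+\delta}(M)\cdot H^{1/2}(M)\hookrightarrow H^{1/2-\varepsilon'}$ layer by layer, together with $w\in L^\infty_zH^1_H$ and $\partial_zv\in L^2_zH^1_H$, we get $w\partial_zv\in L^\infty(0,t;H^{1-\varepsilon})$, with the vertical derivative handled as above. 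If this product step is too lossy, the fallback is interpolation: write $\|w\partial_zv\|_{H^{1-\varepsilon}}\le C\|v\|_{H^2}^{1+\theta}\|v\|_{H^{3-\varepsilon}}^{1-\theta}$ and absorb via Young's inequality into the left-hand side of the Stokes estimate.

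Squaring the Stokes estimate and integrating over $(0,t)$ then gives $\int_0^t\|v\|_{H^{3-\varepsilon}}^2\le C_\varepsilon\bigl(M_4(t)+tM_5(t)^2+M_3(t)+tM_5(t)\bigr)=:M_{v,\varepsilon}(t)$. This function is continuous and nondecreasing, which proves the proposition.
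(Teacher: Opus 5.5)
Your overall strategy matches the paper's: treat the momentum equation at almost every time as $-\Delta v+\nabla_Hp_s=F$ and feed a forcing bound into the higher Stokes estimate. Your handling of $v_t$, the Coriolis term, the buoyancy term and $(v\cdot\nabla_H)v$ is fine.

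The gap is at the step you flag as the obstacle: the bound $w\partial_zv\in L^2(0,t;H^{1-\varepsilon}(\Omega))$ from $v\in L^\infty(0,t;H^2)$ alone.
\begin{itemize}
\item The product rule you quote is false. In two dimensions $H^{1/2+\delta}(M)\cdot H^{1/2}(M)$ embeds only into $H^{s}(M)$ with $s\le\delta$, as a rescaled bump shows. Even a horizontal gain of $1/2-\varepsilon'$ would not give $H^{1-\varepsilon}(\Omega)$, and the rule does not match the inputs $w\in L^\infty_zH^1_H$, $\partial_zv\in L^2_zH^1_H$ that you list.
\item ``The vertical derivative handled as above'' does not close. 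From $\|w\|_{L^\infty_zL^q_H}\le C\|v\|_{H^2}$ with $q<\infty$, you only get $w\,\partial_z^2v\in L^2_zL^r_H$ for $r<2$. This is not in $L^2$, and by itself it gives no vertical regularity of order close to $1$.
\item Your fallback is circular. Absorbing $\|v\|_{H^{3-\varepsilon}}^{1-\theta}$ into the left-hand side presupposes $\|v(s)\|_{H^{3-\varepsilon}}<\infty$ for a.e.\ $s$, which is what you are proving.
\item Lemma~\ref{lem:top-stokes-higher} requires $f\in H^1$. The $H^{1-\varepsilon}\to H^{3-\varepsilon}$ estimate you ``expect'' is not in the paper. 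It must come from complex interpolation with Lemma~\ref{lem:top-stokes-H2}, which gives $H^{1-\varepsilon}\to H^{3-\varepsilon-\delta}$ for every $\delta>0$. The loss is harmless since $\varepsilon$ is arbitrary, but it must be stated.
\end{itemize}

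The step can be repaired, and then your one-shot route works.
\begin{itemize}
\item Horizontally, apply the first inequality in \eqref{eq:2d-product} on each slice. This gives $\|w\partial_zv\|_{L^2_zH^{1-\varepsilon}_H}\le C\|w\|_{L^\infty_zH^1_H}\|\partial_zv\|_{L^2_zH^1_H}\le C\|v\|_{H^2}^2$.
\item Vertically, write $\partial_z(w\partial_zv)=-(\nabla_H\cdot v)\partial_zv+w\partial_z^2v$. The first term lies in $L^3$. For the second, extend $v$ to $\mathbb R^3$ and use $H^1(\mathbb R^2)\cdot L^2(\mathbb R^2)\hookrightarrow H^{-\varepsilon}(\mathbb R^2)$, which is dual to $H^1\cdot H^\varepsilon\hookrightarrow L^2$. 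This gives $\partial_z(w\partial_zv)\in L^2_zH^{-\varepsilon}_H$.
\item Now use the Fourier splitting of Lemma~\ref{lem:anisotropic-isotropic} with the weight inequality $(1+|\xi_H|^2)^{1-\varepsilon}+\xi_3^2(1+|\xi_H|^2)^{-\varepsilon}\ge c(1+|\xi|^2)^{1-\varepsilon}$. It yields $\|w\partial_zv\|_{H^{1-\varepsilon}}\le C\|v\|_{H^2}^2$.
\end{itemize}

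The paper avoids this anisotropic negative-order product by a bootstrap:
\begin{itemize}
\item It uses only the crude bound $\|w\partial_zv\|_{H^{2/5}}\le C\|w\|_{H^1}\|\partial_zv\|_{H^1}$ through $W^{1,3/2}$.
\item It interpolates the Stokes solution map between $L^2\to H^2$ and $H^1\to H^{3-\delta_0}$ to get $v\in L^2(0,t;H^{7/3})$.
\item This gives $w\in L^\infty$ and $\nabla w\in L^3$, so $w\partial_zv\in L^2(0,t;H^1)$, and Lemma~\ref{lem:top-stokes-higher} then applies exactly as stated.
\end{itemize}
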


\begin{proof}
\noindent
The Sobolev embeddings and the product rule give
\begin{align}
\|(v\cdot\nabla_H)v\|_{H^1}
\leq
C\left(
 \|v\|_\infty\|\nabla_Hv\|_{H^1}
 +\|\nabla v\|_4\|\nabla_Hv\|_4
\right)
\nonumber\leq C\|v\|_{H^2}^2.
\label{eq:horizontal-convection-H1}
\end{align}
Consequently,
\[
(v\cdot\nabla_H)v
\in L^\infty(0,t_0;H^1(\Omega))
\subset L^2(0,t_0;H^1(\Omega)).
\]

Since $u=\partial_zv$ and $w=\int_{-h}^z-\nabla_H\cdot v$, we also have
\[
\|u\|_{H^1}\leq C\|v\|_{H^2}, \qquad \|w\|_{H^1}
\leq C\|v\|_{H^2}.
\]
By the product rule and the embeddings,
\begin{equation}
\begin{aligned}
\|w\partial_zv\|_{H^{2/5}}
&\leq C\|wu\|_{W^{1,3/2}}\\
&\leq C\bigl(\|w\|_6\|u\|_2
 +\|\nabla w\|_2\|u\|_6
 +\|w\|_6\|\nabla u\|_2\bigr)\\
&\leq C\|w\|_{H^1}\|u\|_{H^1}
 \leq C\|v\|_{H^2}^2.
\end{aligned}
\label{eq:vertical-convection-H25}
\end{equation}
By the definition of $Q_T$ in \eqref{eq: Q}, $\|\nabla_H Q_T\|_{H^1}
\leq C\|\nabla_HT\|_{H^1}$,
Hence
\begin{align}
\nabla_H Q_T\in L^2(0,t_0;H^1(\Omega)).
\label{eq:Q-H1-estimate}
\end{align}

Rewrite the momentum equation as
\begin{equation}
-\Delta v+\nabla_Hp_s=F,
\label{eq:velocity-Stokes-equation}
\end{equation}
where
\[
F:=
-v_t-(v\cdot\nabla_H)v-w\partial_zv
-f_0k\times v+\nabla_H Q_T .
\]
By Proposition~\ref{prop:time-derivative-L2},
\eqref{eq:horizontal-convection-H1},
\eqref{eq:vertical-convection-H25},
\eqref{eq:Q-H1-estimate}, and Proposition~\ref{prop:velocity-H2},
\begin{equation}
F\in L^2(0,t_0;H^{2/5}(\Omega)).
\label{eq:F-H25}
\end{equation}
More precisely,
\begin{equation}
\|F\|_{H^{2/5}}
\leq
C\left(
 \|v_t\|_{H^1}
 +\|\nabla_HT\|_{H^1}
 +\|v\|_{H^2}
 +\|v\|_{H^2}^2
\right).
\label{eq:F-H25-bound}
\end{equation}

Fix \(0<\delta_0<1/6\), and let \(\mathcal S\) denote the velocity
component of the hydrostatic Stokes solution operator with the
prescribed boundary conditions. Lemmas~\ref{lem:top-stokes-H2}
and~\ref{lem:top-stokes-higher} give
\[
\mathcal S:L^2(\Omega)\longrightarrow H^2(\Omega),
\qquad
\mathcal S:H^1(\Omega)\longrightarrow H^{3-\delta_0}(\Omega).
\]
Complex interpolation yields
\[
\mathcal S:
\bigl[L^2(\Omega),H^1(\Omega)\bigr]_{2/5}
\longrightarrow
\bigl[H^2(\Omega),H^{3-\delta_0}(\Omega)\bigr]_{2/5}.
\]
By Lemma~\ref{lemma:Gagliardo}\textup{(ii)},
\[
\begin{aligned}
\bigl[L^2(\Omega),H^1(\Omega)\bigr]_{2/5}
&=H^{2/5}(\Omega),\\
\bigl[H^2(\Omega),H^{3-\delta_0}(\Omega)\bigr]_{2/5}
&=H^{s_*}(\Omega)\hookrightarrow H^{7/3}(\Omega),
\end{aligned}
\]
where \(s_*:=2+\frac25(1-\delta_0)>\frac73\). Therefore,
\[
\|v\|_{H^{7/3}}
\leq C\|v\|_{H^{s_*}}
\leq C\|F\|_{H^{2/5}}.
\]
Using \eqref{eq:F-H25}, we conclude that
\begin{equation}
v\in L^2(0,t_0;H^{7/3}(\Omega)).
\label{eq:v-H73}
\end{equation}
Using \(w=-\int_{-h}^z\nabla_H\cdot v(\xi)\,d\xi\),
\(\partial_zw=-\nabla_H\cdot v\), and the embedding
\(H^{4/3}(M)\hookrightarrow L^\infty(M)\cap W^{1,3}(M)\), we obtain
\begin{equation}
\begin{aligned}
\|w\|_\infty+\|\nabla w\|_3
&\leq C\Bigl(
 \|\nabla_H\cdot v\|_{L^2_zH^{4/3}(M)}
 +\|\nabla_H\cdot v\|_3\Bigr)\\
&\leq C\|v\|_{H^{7/3}}.
\end{aligned}
\label{eq:w-anisotropic-H73}
\end{equation}
Using the product rule,
\begin{align}
\|w\partial_zv\|_{H^1}
&\leq
C\left(
 \|w\|_\infty\|u\|_{H^1}
 +\|\nabla w\|_3\|u\|_6
\right)
\nonumber\\
&\leq
C\|v\|_{H^{7/3}}\|v\|_{H^2}.
\label{eq:vertical-convection-final-H1}
\end{align}
Then,
\begin{equation}
w\partial_zv\in L^2(0,t_0;H^1(\Omega)).
\label{eq:vertical-convection-L2H1}
\end{equation}
Returning to the definition of \(F\), we now have
\[
F\in L^2(0,t_0;H^1(\Omega)).
\]
with
\begin{align}
\|F\|_{H^1}
\leq C\Big(
 &\|v_t\|_{H^1}
 +\|\nabla_HT\|_{H^1}
 +\|v\|_{H^2}
 \nonumber+\|v\|_{H^2}\|v\|_{H^{7/3}}
\Big),
\label{eq:F-final-H1}
\end{align}
and every term on the right-hand side is $L^2$ in time

For every fixed \(\varepsilon\in(0,1)\), the higher hydrostatic
Stokes estimate in Lemma~\ref{lem:top-stokes-higher}, applied to
\eqref{eq:velocity-Stokes-equation} yields, for almost every time,
\begin{equation}
\begin{split}
\|v\|_{H^{3-\varepsilon}}
+\|\partial_zv\|_{H^{2-\varepsilon}}
+\|\nabla_Hp_s\|_{H^{1-\varepsilon}(M)}
\leq C_\varepsilon\|F\|_{H^1}.
\end{split}
\label{eq:final-higher-Stokes}
\end{equation}
Squaring \eqref{eq:final-higher-Stokes} and integrating in time gives
\[
v\in L^2(0,t_0;H^{3-\varepsilon}(\Omega)).
\]
\end{proof}
\begin{proposition}
\label{lem:flat-vertical-temperature}
\label{prop:Tzz-estimate}
Let $v_0\in D(A)$ and $T_0\in H^2(\Omega)\cap V_T$, and let $(v,T)$ be the unique global strong solution given by Theorem~\ref{thm:global-well-posedness}. Recall, for every $0<t_0<\mathcal T^*$, we have 
\[
\begin{aligned}
T
&\in L^\infty(0,t_0;V_T),
&
\nabla_HT
&\in L^2(0,t_0;H^1(\Omega)),
\\
T_t
&\in L^\infty(0,t_0;L^2(\Omega)),
&
\nabla_HT_t
&\in L^2(0,t_0;L^2(\Omega)).
\end{aligned}
\]
Then the same solution has the additional regularity
\[
\partial_z^2T\in L^\infty(0,t_0;L^2(\Omega)),
\qquad
\nabla_H\partial_z^2T
\in L^2(0,t_0;L^2(\Omega)).
\]
More precisely, there exists a continuous nondecreasing
function $M_7$ on $[0,\mathcal T^*)$, finite on subintervals, such that
\begin{equation}
\sup_{0\leq s\leq t}
\|\partial_z^2T(s)\|_{L^2(\Omega)}^2
+
\int_0^t
\|\nabla_H\partial_z^2T(s)\|_{L^2(\Omega)}^2\,ds
\leq M_7(t)
\label{eq:Tzz-bound}
\end{equation}
for every $0\leq t<\mathcal T^*$.
\end{proposition}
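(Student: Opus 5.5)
The plan is to differentiate the temperature equation twice in $z$ and run an $L^2$ energy estimate for $\theta:=\partial_z^2T$. The estimate closes because the operator $\Delta_H$ commutes with $\partial_z$. Since $w=-\int_{-h}^z\nabla_H\cdot v$, we have $\partial_zw=-\nabla_H\cdot v$ and $\partial_z^2w=-\nabla_H\cdot u$, with $u=\partial_zv$. Formally, $\theta$ satisfies
\[
\partial_t\theta-\Delta_H\theta+v\cdot\nabla_H\theta+w\partial_z\theta
=-\partial_zu\cdot\nabla_HT-2u\cdot\nabla_H\partial_zT+(\nabla_H\cdot u)\partial_zT+2(\nabla_H\cdot v)\theta ,
\]
with $\theta=0$ on $\Gamma_s$, inherited from $T=0$ on $\Gamma_s$, and $\theta(0)=\partial_z^2T_0\in L^2$. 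We test with $\theta$. The transport terms $v\cdot\nabla_H\theta+w\partial_z\theta$ vanish after integration by parts, because the flow is incompressible, $v=0$ on $\Gamma_s$ and $w=0$ on $\Gamma_t\cup\Gamma_b$. No boundary term appears at the top or bottom, since there is no vertical diffusion. Hence
\[
\tfrac12\tfrac{d}{dt}\|\theta\|_2^2+\|\nabla_H\theta\|_2^2=I_1+I_2+I_3+I_4 .
\]

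Each term is estimated with the regularity already available. From Propositions~\ref{prop:velocity-H2} and~\ref{prop:velocity-higher-spatial}, $v\in L^\infty_tH^2\cap L^2_tH^{3-\varepsilon}$. From Theorem~\ref{thm:global-well-posedness}, $\nabla_HT\in L^2_tH^1$ and $\partial_zT\in L^\infty_tL^2$.
\begin{compactitem}
\item For $I_1=-(\partial_zu\cdot\nabla_HT,\theta)$, Hölder gives $\|\partial_zu\|_3\|\nabla_HT\|_6\|\theta\|_2\le C\|v\|_{H^{5/2}}\|\nabla_HT\|_{H^1}\|\theta\|_2$. This is bounded by $g(t)(1+\|\theta\|_2^2)$ with $g\in L^1(0,t_0)$.
\item For $I_2$, we use $\|u\|_\infty\le C\|v\|_{H^{3-\varepsilon}}$ (valid for $\varepsilon<1/2$). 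Then $|I_2|\le C\|v\|_{H^{3-\varepsilon}}\|\nabla_H\partial_zT\|_2\|\theta\|_2$, again of integrable-times-$(1+\|\theta\|_2^2)$ type.
\item For $I_3=((\nabla_H\cdot u)\partial_zT,\theta)$, we apply the anisotropic trilinear inequality of Lemma~\ref{anisotropic inequality}. The vertical $L^\infty$ is placed on $f=\nabla_H u$, using $\|f\|_2+\|\partial_zf\|_2\le C\|v\|_{H^{3-\varepsilon}}$. The half horizontal derivatives are placed on $\partial_zT$ and $\theta$. This gives
\[
|I_3|\le C\|v\|_{H^{3-\varepsilon}}\|\partial_zT\|_2^{1/2}\|\nabla_H\partial_zT\|_2^{1/2}\|\theta\|_2^{1/2}\|\nabla_H\theta\|_2^{1/2},
\]
and Young's inequality absorbs $\|\nabla_H\theta\|_2$.
\item For $I_4$, the same inequality applies with $f=\nabla_H\cdot v\in L^\infty_tH^1$. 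It gives $|I_4|\le C\|v\|_{H^2}\|\theta\|_2\|\nabla_H\theta\|_2$, which is absorbed, leaving $C\|v\|_{H^2}^2\|\theta\|_2^2$.
\end{compactitem}
Gronwall's inequality then yields \eqref{eq:Tzz-bound}, with $M_7$ an exponential of the integrated coefficients, which is finite on compact subintervals of $[0,\mathcal T^*)$.

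The main obstacle is justifying these formal computations. A priori $\partial_z^3T$ is not known to exist, no vertical boundary condition is available for $\theta$, and there is no vertical smoothing. My first approach is to carry out the estimate on the Galerkin approximations of Lemma~\ref{lem:linear-temperature}, with $(v,w)$ frozen as the given solution. In that setting $\partial_z^2$ maps the cosine basis into itself up to the eigenvalue factor $\mu_k$, and $\partial_z$ of every basis function vanishes at $z=-h,0$. The obstruction is that the Galerkin truncation of the transport term does not commute exactly with $\partial_z^2$, which produces boundary contributions at $z=-h,0$.

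If that fails, I would use vertical difference quotients $\delta_\eta^z$ applied twice to $T$ on interior slabs $M\times(-h+\eta,-\eta)$. Cut-off functions in $z$ would handle the top and bottom. This is harmless: since $w$ vanishes there, the commutator with the cutoff $\chi$ is $w\chi'\,|\delta^2T|^2$, which is controlled by $\|w\|_\infty$ and the $L^2$ norms. Uniform bounds would then pass to the limit by weak lower semicontinuity, and uniqueness from Proposition~\ref{prop:strong-solution-stability} identifies the limit with the given solution.
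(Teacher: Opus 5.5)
\paragraph{Verdict: genuine gap in the justification.} Your formal identity for $\theta=\partial_z^2T$ and your bounds for $I_1$, $I_2$, $I_4$ are fine. The bound for $I_3$ is not. With $f=\nabla_Hu$ one has $\partial_zf=\nabla_H\partial_z^2v$, a third derivative of $v$, so $\|f\|_2+\|\partial_zf\|_2\le C\|v\|_{H^{3-\varepsilon}}$ is false. This is easily repaired, for example by
\[
|I_3|\le\|\nabla_H\cdot u\|_{L^\infty_zL^{8/3}_H}\|\partial_zT\|_{L^2_zL^8_H}\|\theta\|_2\le C\|v\|_{H^{11/4}}\bigl(\|\partial_zT\|_2+\|\nabla_H\partial_zT\|_2\bigr)\|\theta\|_2 ,
\]
which is the estimate the paper uses.

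The genuine gap is the justification. That is the real content of the proposition, and neither of your routes supplies it.

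\emph{Galerkin on $T$.} This fails for a concrete reason, not merely non-commutation. Testing with $\partial_z^4T_N\in\mathcal E_N$ leaves the bottom term $\int_M(u\cdot\nabla_HT_N)\,\partial_z^2T_N\big|_{z=-h}$. It survives because $u=\partial_zv\neq0$ on the no-slip bottom, and it cannot be controlled without vertical dissipation. Moreover, $\|\partial_z^2P_N^TT_0\|_2$ is unbounded in $N$ unless $\partial_zT_0=0$ at $z=-h,0$.

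\emph{Difference quotients.} This route does not close either. Write $\delta_\eta f(z)=(f(z+\eta)-f(z))/\eta$ and $\tau_\eta f(z)=f(z+\eta)$. The discrete Leibniz rule gives
\[
\delta_\eta^2(w\partial_zT)=\tau_{2\eta}w\,\partial_z\delta_\eta^2T+2\,\tau_\eta(\delta_\eta w)\,\partial_z\delta_\eta T+(\delta_\eta^2w)\,\partial_zT .
\]
The middle term involves $\partial_z\delta_\eta T=\delta_\eta\partial_zT$. This is a different approximation of $\partial_z^2T$ and is not controlled by your Gronwall quantity $\|\chi^{1/2}\delta_\eta^2T\|_2$. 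Bounding it uniformly in $\eta$ is equivalent to what you are trying to prove. This is the familiar obstruction to propagating regularity of a transport operator by working directly on the given solution.

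Separately, $\|w\|_\infty$ does not control the cutoff commutator $\int\chi'w|\delta_\eta^2T|^2$ uniformly as the cutoff approaches $z=-h,0$. On a layer of width $a$ one has $\|\chi'\|_\infty\sim a^{-1}$, and that layer is exactly where the weight is small. What is needed there is the linear vanishing $|w(z)|\le\operatorname{dist}(z,\{-h,0\})\|\nabla_H\cdot v\|_\infty$.

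\paragraph{How the paper avoids this.} The paper lowers the order by one and estimates an auxiliary problem on which the computation is exact.
\begin{itemize}
\item It solves the Galerkin system in $\mathcal E_N$ for the once-differentiated equation $(\partial_zT)_t-\Delta_H\partial_zT+v\cdot\nabla_H\partial_zT+\partial_z(w\,\partial_zT)=-\partial_zv\cdot\nabla_HT$, with $v,w,T$ frozen.
\item For this unknown the cosine basis is harmless. Since $P_N^T(\partial_zT_0)\to\partial_zT_0$ in $H^1$, one has $\|\partial_zP_N^T(\partial_zT_0)\|_2\le\|\partial_z^2T_0\|_2$.
\item Testing with $-\partial_{zz}(\partial_zT)_N\in\mathcal E_N$ generates no boundary terms, including the bottom term above, because $\partial_z(\partial_zT)_N=0$ at $z=-h,0$.
\item Since $w=0$ at $z=-h,0$ and there is no vertical diffusion, this auxiliary Neumann condition is not inherited by the limit.
\end{itemize}
The price is that the weak limit must be identified with $\partial_zT$. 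This requires uniqueness for a linear transport--diffusion equation in a class with only $L^2$ vertical regularity, not Proposition~\ref{prop:strong-solution-stability}. The paper proves it by passing to the vertical primitive $P=\int_{-h}^zd$ of the difference $d$, for which the $L^2$ energy identity is justified. The distance bound on $w$ is what removes the cutoffs at the top and bottom in that step.
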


\begin{proof}
Fix $0<t_0<\mathcal T^*$. We use the temperature Galerkin spaces and projection $P_N^T$ from Lemma~\ref{lem:linear-temperature},
and write $\mathcal E_N:=\operatorname{ran}P_N^T$. Recall,
\[
 \partial_z w=-\nabla_H\cdot v,\qquad
 \partial_{zz}w=-\nabla_H\cdot\partial_zv,\qquad
 w|_{z=-h,0}=0.
\]
By Sobolev embedding and trace theorems, we have $H^{3/4}(\Omega)\hookrightarrow L^\infty_zH^{1/4}(M)
\hookrightarrow L^\infty_zL^{8/3}(M)$, which implies,
\begin{equation}
 \begin{split}
 &\|v\|_\infty+\|w\|_\infty+\|\partial_zv\|_\infty
       +\|\partial_zw\|_\infty+\|\partial_{zz}v\|_3\\
 &\qquad+\|\partial_{zz}w\|_{L^\infty_zL^{8/3}_H}
       \leq C\|v\|_{H^{11/4}}.
 \end{split}
 \label{eq:flat-vertical-coefficients}
\end{equation}
Differentiating the temperature equation \eqref{eq:T equation} in $z$ yields
\[
 (\partial_zT)_t-\Delta_H\partial_zT
 +v\cdot\nabla_H\partial_zT+\partial_z(w\partial_zT)
 =-\partial_zv\cdot\nabla_HT
\]
in the sense of distributions. The existing $\partial_zT$ belongs to
$L^\infty_tL^2\cap L^2_tL^2_zH_0^1(M)$; Its initial trace is $\partial_zT_0$ weakly in $L^2$, since $T\in C([0,t_0];L^2)$ and is uniformly bounded in $H^1$. Recall the temperature projection defined in lemma~\ref{lem:linear-temperature}, with the given $v,w,T$ fixed, solve the finite-dimensional linear system
\begin{equation}
 \begin{split}
 &\bigl(\partial_t(\partial_zT)_N,\psi\bigr)
   +\bigl(\nabla_H(\partial_zT)_N,\nabla_H\psi\bigr)\\
 &\quad+\bigl(v\cdot\nabla_H(\partial_zT)_N
                  +\partial_z(w(\partial_zT)_N),\psi\bigr)
   =-\bigl(\partial_zv\cdot\nabla_HT,\psi\bigr),\\
 &(\partial_zT)_N(0)=P_N^T(\partial_zT_0),
 \end{split}
 \label{eq:flat-vertical-regularized}
\end{equation}
for every $\psi\in\mathcal E_N$. Parentheses denote the
$L^2(\Omega)$ inner product. For each $N$, the matrix coefficients
are integrable in time and the forcing belongs to $L^2$ in time;
the linear system therefore has a unique absolutely continuous
solution on $[0,t_0]$. $(\partial_zT)_N$ denotes the solution of this auxiliary system.
\[
 \begin{gathered}
 P_N^T(\partial_zT_0)\longrightarrow\partial_zT_0
                         \quad\hbox{in }H^1(\Omega),\\
 \|\partial_zP_N^T(\partial_zT_0)\|_2
                              \leq\|\partial_{zz}T_0\|_2,\qquad
 \partial_zP_N^T(\partial_zT_0)\longrightarrow\partial_{zz}T_0
                         \quad\hbox{in }L^2(\Omega).
 \end{gathered}
\]

Mutiply the Galerkin system with $(\partial_zT)_N$ and $-\partial_{zz}(\partial_zT)_N$. Moreover,
$\partial_z(\partial_zT)_N=0$ at $z=-h,0$. Integration by parts gives
\[
 \begin{aligned}
 &\frac12\frac{d}{dt}\|(\partial_zT)_N\|_2^2
       +\|\nabla_H(\partial_zT)_N\|_2^2\\
 &\quad=-\bigl(\partial_zv\cdot\nabla_HT,(\partial_zT)_N\bigr)
       -\int_\Omega\partial_zw\,|(\partial_zT)_N|^2
 \end{aligned}
\]
and
\[
 \begin{aligned}
 &\frac12\frac{d}{dt}\|\partial_z(\partial_zT)_N\|_2^2
       +\|\nabla_H\partial_z(\partial_zT)_N\|_2^2\\
 &\quad=-\bigl(\partial_z(\partial_zv\cdot\nabla_HT),
                                      \partial_z(\partial_zT)_N\bigr)\\
 &\qquad-\bigl(\partial_zv\cdot\nabla_H(\partial_zT)_N,
                                      \partial_z(\partial_zT)_N\bigr)\\
 &\qquad-2\int_\Omega\partial_zw\,
                                      |\partial_z(\partial_zT)_N|^2\\
 &\qquad-\bigl(\partial_{zz}w\,(\partial_zT)_N,
                                      \partial_z(\partial_zT)_N\bigr).
 \end{aligned}
\]
By \eqref{eq:flat-vertical-coefficients}, $H^1(\Omega)\hookrightarrow
L^6(\Omega)$, and $H^1(M)\hookrightarrow L^8(M)$,
\[
 \begin{aligned}
 &\|\partial_zv\cdot\nabla_HT\|_2
       +\|\partial_z(\partial_zv\cdot\nabla_HT)\|_2
       \leq C\|v\|_{H^{11/4}}\|\nabla_HT\|_{H^1},\\
 &\|\partial_{zz}w\,(\partial_zT)_N\|_2
       \leq C\|v\|_{H^{11/4}}
          \bigl(\|(\partial_zT)_N\|_2
                    +\|\nabla_H(\partial_zT)_N\|_2\bigr).
 \end{aligned}
\]
Adding the two energy estimates and absorbing the terms containing $\|v\|_{H^{11/4}}\|\nabla_H(\partial_zT)_N\|_2 \|\partial_z(\partial_zT)_N\|_2$ yields
\[
 \begin{aligned}
 &\frac{d}{dt}\bigl(1+\|(\partial_zT)_N\|_2^2
                          +\|\partial_z(\partial_zT)_N\|_2^2\bigr)\\
 &\quad+\|\nabla_H(\partial_zT)_N\|_2^2
             +\|\nabla_H\partial_z(\partial_zT)_N\|_2^2\\
 &\leq C\bigl(1+\|v\|_{H^{11/4}}^2
                       +\|v\|_{H^{11/4}}\|\nabla_HT\|_{H^1}\bigr)\\
 &\qquad\cdot\bigl(1+\|(\partial_zT)_N\|_2^2
                          +\|\partial_z(\partial_zT)_N\|_2^2\bigr).
 \end{aligned}
\]
The coefficient is integrable by Section~\ref{sec:construction-global}
and Proposition~\ref{prop:velocity-higher-spatial} with
$\varepsilon=1/4$, and the initial energy is bounded by
$1+\|\partial_zT_0\|_2^2+\|\partial_{zz}T_0\|_2^2$.
Gronwall's inequality therefore gives uniform bounds for
$(\partial_zT)_N$ and $\partial_z(\partial_zT)_N$ in
$L^\infty_tL^2$, and for their horizontal gradients in $L^2_tL^2$.
Equation~\eqref{eq:flat-vertical-regularized}, written in divergence
form, also bounds $\partial_t(\partial_zT)_N$ in $L^2(0,t_0;V_T')$.
Here $V_T'$ is the dual of the scalar temperature space $V_T$,
with pivot $L^2(\Omega)$.
Indeed, the projection is bounded on that space,
$v,w,\partial_zw\in L^2_tL^\infty$, and
$\partial_zv\cdot\nabla_HT\in L^2_tL^2$ by
$T\in L^\infty_tH^1$ and \eqref{eq:flat-vertical-coefficients}.
Aubin--Lions and weak compactness give, along a subsequence,
\[
 (\partial_zT)_N\longrightarrow(\partial_zT)_*
       \quad\hbox{strongly in }L^2_tL^2
       \quad\hbox{and in }C([0,t_0];L^2\hbox{-weak}).
\]
The derivative bounds pass weakly to this limit. It remains to identify this limit with $\partial_zT$. Let
\[
d=(\partial_zT)_*-\partial_zT,
\qquad
P(x,y,z,t)=\int_{-h}^{z}d(x,y,\xi,t)\,d\xi.
\]
Subtracting the two equations gives,
\[
d_t-\Delta_Hd+v\cdot\nabla_Hd+\partial_z(wd)=0,
\]
with zero initial data in the weak $L^2$ sense. The preceding
bounds imply
\[
P\in L^\infty(0,t_0;L^2(\Omega))
       \cap L^2(0,t_0;H^1(\Omega)),
\qquad
\partial_zP=d\in L^\infty(0,t_0;L^2(\Omega)),
\]
and $P$ has zero lateral boundary condition. Let $\chi_a$ be a vertical cutoff of width
$a$, with $|\chi_a'|\leq C/a$. Since
\[
|w(z)|\leq
\operatorname{dist}(z,\{-h,0\})\|\partial_zw\|_\infty,
\]
for any smooth test function $\varphi$,
\[
\left|
\int_0^{t_0}\int_\Omega wd\,\chi_a'\varphi
\right|
\leq C_\varphi a^{1/2}
\|\partial_zw\|_{L^1_tL^\infty}
\|d\|_{L^\infty_tL^2}
\longrightarrow0.
\]
Testing with the backward vertical primitive of a smooth test
function and applying Fubini's theorem therefore yields
\[
P_t-\Delta_HP+v\cdot\nabla_HP+w\partial_zP=\mathcal K_v P,
\qquad
\mathcal K_v P(z)
:=\int_{-h}^{z}
\partial_zv(\xi)\cdot\nabla_HP(\xi)\,d\xi.
\]

Horizontal integration by parts, the embedding $H_0^1(M)\hookrightarrow L^8(M)$, and the identity $\nabla_H\cdot\partial_zv=-\partial_{zz}w$ give
\[
\begin{aligned}
\|\mathcal K_v P\|_{\mathcal V_H'}
&\leq C\left(
\|\partial_zv\|_\infty
+\|\partial_{zz}w\|_{L^\infty_zL^{8/3}(M)}
\right)\|P\|_2\\
&\leq C\|v\|_{H^{11/4}}\|P\|_2.
\end{aligned}
\]
Consequently, $\mathcal K_v P\in L^2(0,t_0;\mathcal V_H')$. The equation and the preceding bounds then imply
\[
P_t\in L^2(0,t_0;\mathcal V_H').
\]
Since $P\in L^2(0,t_0;\mathcal V_H)$, it follows that $P\in C([0,t_0];L^2)$ and that its $L^2$ energy identity is valid. The weak initial condition gives $P(0)=0$.

Testing the equation with $P$, the transport terms cancel by $\nabla_H\cdot v+\partial_zw=0$ and the boundary conditions.
Moreover,
\[
\|\mathcal K_v P\|_2
\leq C_h\|\partial_zv\|_\infty\|\nabla_HP\|_2.
\]
Hence
\[
\begin{aligned}
\frac12\frac{d}{dt}\|P\|_2^2+\|\nabla_HP\|_2^2
&=(\mathcal K_v P,P)\\
&\leq
C_h\|\partial_zv\|_\infty\|\nabla_HP\|_2\|P\|_2\\
&\leq
\frac12\|\nabla_HP\|_2^2
+C\|\partial_zv\|_\infty^2\|P\|_2^2.
\end{aligned}
\]
Since $\partial_zv\in L^2(0,t_0;L^\infty)$ and $P(0)=0$, Gronwall's inequality gives $P=0$. Thus
\[
d=\partial_zP=0,
\qquad
(\partial_zT)_*=\partial_zT.
\]
The uniform derivative bounds therefore pass to the original solution, yielding
\[
\partial_{zz}T\in L^\infty(0,t_0;L^2),
\qquad
\nabla_H\partial_{zz}T\in L^2(0,t_0;L^2).
\]
The identified limit belongs to $L^2_tH^1$ and has time derivative
in $L^2$ in time with values in the dual of the lateral-zero $H^1$
space. Its $L^2$ energy identity is therefore valid. Compare it
with the first Galerkin energy identity. The forcing terms converge
by strong $L^2_tL^2$ convergence. The terms containing
$\partial_zw\,|(\partial_zT)_N|^2$ converge because
$\partial_zw\in L^2_tL^\infty$ and the approximations are uniformly
bounded in $L^\infty_tL^2$. Weak convergence at the terminal time
and lower semicontinuity then give
\[
 \nabla_H(\partial_zT)_N\longrightarrow\nabla_H\partial_zT
                    \quad\hbox{strongly in }L^2_tL^2.
\]
Returning to the second Galerkin energy identity, without absorbing the horizontal gradient of $(\partial_zT)_N$, gives
\begin{equation}
 \begin{split}
 &\frac{d}{dt}\bigl(1+\|\partial_z(\partial_zT)_N\|_2^2\bigr)
       +2\|\nabla_H\partial_z(\partial_zT)_N\|_2^2\\
 &\quad\leq C\|v\|_{H^{11/4}}
       \bigl(1+\|\nabla_HT\|_{H^1}+\|(\partial_zT)_N\|_2
                         +\|\nabla_H(\partial_zT)_N\|_2\bigr)\\
 &\hspace{24mm}\cdot
       \bigl(1+\|\partial_z(\partial_zT)_N\|_2^2\bigr).
 \end{split}
 \label{eq:flat-vertical-energy}
\end{equation}
By the two strong convergences above and Cauchy--Schwarz in time,
the coefficient on the right converges in $L^1(0,t_0)$ to
\[
 C\|v\|_{H^{11/4}}
       \bigl(1+\|\nabla_HT\|_{H^1}+\|\partial_zT\|_2
                                 +\|\nabla_H\partial_zT\|_2\bigr).
\]
This is bounded by
\[
 C\|v\|_{H^{11/4}}
       \bigl(1+\|T\|_{L^\infty(0,t_0;H^1)}
                                 +\|\nabla_HT\|_{H^1}\bigr).
\]
Also,
$\partial_zP_N^T(\partial_zT_0)\to\partial_{zz}T_0$ in $L^2$.
Applying Gronwall's inequality to \eqref{eq:flat-vertical-energy}, integrate, and passing to the limit proves \eqref{eq:flat-vertical-temperature-bound}. 
\begin{equation}
 \begin{split}
 &\sup_{0\leq s\leq t}
       \|\partial_{zz}T(s)\|_2^2
       +\int_0^t\|\nabla_H\partial_{zz}T\|_2^2\,ds\\
 &\qquad\leq C\bigl(1+\|\partial_{zz}T_0\|_2^2\bigr)
        \exp\!\Bigg(C\int_0^t\|v(s)\|_{H^{11/4}}\\
 &\hspace{16mm}\cdot\bigl(1+\|T\|_{L^\infty(0,t_0;H^1)}
             +\|\nabla_HT(s)\|_{H^1}\bigr)\,ds\Bigg),
 \end{split}
 \label{eq:flat-vertical-temperature-bound}
\end{equation}
where $C$ depends only on $M$ and $h$.
\end{proof}

\begin{proposition}
\label{prop:horizontal-temperature-L2}
There exists a continuous nondecreasing function $M_6$ on
$[0,\mathcal T^*)$ such that
\[
\|\Delta_HT\|_{L^\infty(0,t;L^2(\Omega))}\leq M_6(t)
\qquad\text{for every }0<t<\mathcal T^*.
\]
\end{proposition}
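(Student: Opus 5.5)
The bound will come from an elliptic reading of the temperature equation at (almost) every fixed time, rather than from a new energy estimate. Rewrite \eqref{eq:T equation} as
\[
-\Delta_HT=-T_t-v\cdot\nabla_HT-w\,\partial_zT,
\]
so that
\[
\|\Delta_HT\|_2\leq\|T_t\|_2+\|v\|_\infty\|\nabla_HT\|_2+\|w\|_\infty\|\partial_zT\|_2 .
\]
Each term on the right is then controlled in $L^\infty(0,t)$ by earlier results.

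\begin{itemize}
\item For $\|T_t\|_{L^\infty_tL^2}$ we use Proposition~\ref{prop:time-derivative-L2}, which gives the bound $M_4(t)^{1/2}$.
\item For $\|T\|_{L^\infty_tH^1}$, which covers $\nabla_HT$ and $\partial_zT$, we use the bound $M_3$ from Section~\ref{sec:construction-global}.
\item For the velocity factors we use Proposition~\ref{prop:velocity-H2}, which gives $v\in L^\infty(0,t;H^2)$. Hence $\|v\|_\infty\leq C\|v\|_{H^2}\leq CM_5^{1/2}$.
\end{itemize}

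The vertical velocity needs more care. From $H^2$ regularity alone we only get $\nabla_H\cdot v\in L^\infty_tH^1$, which does not place $w$ in $L^\infty$. I will therefore avoid $\|w\|_\infty$ and split the term anisotropically:
\[
\|w\partial_zT\|_2\leq\|w\|_{L^\infty_zL^4_H}\|\partial_zT\|_{L^2_zL^4_H}.
\]
For the first factor, $\|w\|_{L^\infty_zL^4_H}\leq C\|\nabla_Hv\|_{L^2_zH^{1/2}_H}\leq C\|v\|_{H^2}$.

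For the second factor, I apply the two-dimensional Ladyzhenskaya inequality in the horizontal variables:
\[
\|\partial_zT\|_{L^2_zL^4_H}\leq C\|\partial_zT\|_2^{1/2}\bigl(\|\partial_zT\|_2+\|\nabla_H\partial_zT\|_2\bigr)^{1/2}.
\]
The quantity $\|\nabla_H\partial_zT\|_2$ is only known in $L^2$ in time, not $L^\infty$, so this is the step I expect to be the main obstacle. I will handle it by integration by parts in $z$:
\[
\|\nabla_H\partial_zT\|_2^2=-(\nabla_HT,\nabla_H\partial_{zz}T)+\text{boundary terms at }z=-h,0.
\]
Because $T$ carries no vertical boundary condition, the boundary terms do not vanish. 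Instead I will use the interpolation inequality
\[
\|\nabla_H\partial_zT\|_2^2\leq C\|\nabla_HT\|_2\bigl(\|\nabla_HT\|_2+\|\nabla_H\partial_{zz}T\|_2\bigr),
\]
and this still leaves $\nabla_H\partial_{zz}T$, which is also only in $L^2$ in time.

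The cleaner route is to interpolate horizontally between $\partial_zT$ and $\partial_{zz}T$, using trace-free estimates. Since $\partial_zT$ vanishes on $\Gamma_s$, Dirichlet horizontal interpolation gives
\[
\|\nabla_H\partial_zT\|_2^2\leq\|\partial_zT\|_2\,\|\Delta_H\partial_zT\|_2 ,
\]
and that is circular. So I will instead put the $L^4$ norm on the other factor:
\[
\|w\partial_zT\|_2\leq\|w\|_{L^2_zL^\infty_H}\|\partial_zT\|_{L^\infty_zL^2_H}.
\]

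\begin{itemize}
\item For $\partial_zT$: in one dimension, $\|\partial_zT\|_{L^\infty_zL^2_H}\leq C\|\partial_zT\|_{H^1_zL^2_H}$, and this is bounded in $L^\infty_t$ by $M_3$ together with the $\partial_{zz}T$ bound $M_7$ of Proposition~\ref{prop:Tzz-estimate}.
\item For $w$: $\|w\|_{L^\infty_zL^\infty_H}$ requires $\nabla_Hv\in L^2_zH^{1+}_H$. Since $v\in L^\infty_tH^2$ gives only $L^2_zH^1_H$, I will use the Brezis--Gallouet/Agmon-type interpolation
\[
\|w\|_{L^\infty_H}\leq C\|\nabla_Hv\|_{L^2_zH^{1}_H}^{1-\theta}\|\nabla_Hv\|_{L^2_zH^{2-\epsilon}_H}^{\theta}.
\]
\end{itemize}

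That route yields only an $L^2_t$ bound through Proposition~\ref{prop:velocity-higher-spatial}, so it too is unsatisfactory for an $L^\infty$ statement. The final choice is the balanced split
\[
\|w\partial_zT\|_2\leq\|w\|_{L^\infty_zL^4_H}\|\partial_zT\|_{L^2_zL^4_H},
\]
where the first factor is bounded by $\|v\|_{H^2}$ and the second by $\|\partial_zT\|_{L^2_zH^{1/2}_H}$. The latter is controlled by interpolating $\|\partial_zT\|_{L^2_zH^{1/2}_H}\lesssim\|T\|_{H^1}^{1/2}\|\nabla_H\partial_zT\|_2^{1/2}$, followed by Young's inequality against $\|\Delta_HT\|_2$ after an energy argument. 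Concretely, I will test the temperature equation, through the Galerkin scheme of Lemma~\ref{lem:linear-temperature}, with $\Delta_H^2T$, that is, run the $\|\Delta_HT\|_2$ energy estimate. This is the standard fallback. The dangerous term $(\Delta_H(w\partial_zT),\Delta_HT)$ is handled by using $\nabla_H\cdot v+\partial_zw=0$ to cancel the top-order part. The remaining commutators involve $\nabla_H^2w\,\partial_zT$ and $\nabla_Hw\,\nabla_H\partial_zT$; these are bounded using $v\in L^2_tH^{11/4}$, $\partial_{zz}T\in L^\infty_tL^2$ and $\nabla_H\partial_zT\in L^2_tL^2$, and Gronwall's inequality then closes the estimate with initial datum $\|\Delta_HT_0\|_2<\infty$.
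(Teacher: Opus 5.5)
Your opening reduction is the right one, and it is the paper's: read \eqref{eq:T equation} as an elliptic identity at almost every time. Your ``balanced split'' $\|w\partial_zT\|_2\le\|w\|_{L^\infty_zL^4_H}\|\partial_zT\|_{L^2_zL^4_H}$ with horizontal Ladyzhenskaya is essentially the paper's use of Lemma~\ref{anisotropic inequality}. The gap is exactly the step you flagged. You never obtain a pointwise-in-time bound for $\|\nbH\partial_zT\|_2$, and the missing idea is that it is controlled by $\|\Delta_HT\|_2$ itself. Since $T(\cdot,z)\in H_0^1(M)$, horizontal Dirichlet regularity gives $\|T\|_{L^2_zH^2_H}\le C(\|\Delta_HT\|_2+\|T\|_2)$. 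Lemma~\ref{lem:anisotropic-isotropic} with $s=2$ then yields
\[
\|\nbH\partial_zT\|_2\le\|T\|_{H^2}\le C\bigl(\|\Delta_HT\|_2+\|T\|_{H^1}+\|\partial_z^2T\|_2\bigr),
\]
and $\partial_z^2T\in L^\infty_tL^2$ by Proposition~\ref{prop:Tzz-estimate}. That lemma works by extension and the inequality $|\xi_H||\xi_3|\le\frac12(|\xi_H|^2+|\xi_3|^2)$, so the endpoint terms at $z=-h,0$ that blocked your integration by parts never arise. The interpolation you need is between $\nbH^2T$ and $\partial_z^2T$, not between $\partial_zT$ and $\partial_z^2T$ or $\Delta_H\partial_zT$. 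Inserting this into your split gives $\|w\partial_zT\|_2\le C(1+\|\Delta_HT\|_2)^{1/2}$. Hence $\|\Delta_HT\|_2\le C(1+\|\Delta_HT\|_2^{1/2})\le\frac12\|\Delta_HT\|_2+C$ for a.e.\ time. Absorption is legitimate because $\Delta_HT\in L^2_tL^2$ is already known, and no energy estimate is needed. Alternatively, an $L^\infty_t$ bound on $\nbH\partial_zT$ can be proved first, as in Proposition~\ref{prop:vertical-temperature-time}, whose proof does not use the present statement.

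The $\Delta_H^2T$ energy estimate you fall back on does not stand as sketched. The commutator $\nbH^2w\,\partial_zT$ involves $\nbH^2w=-\int_{-h}^z\nbH^2(\nbH\cdot v)\,d\xi$, i.e.\ three horizontal derivatives of $v$. These are not in $L^2$ even for $v\in L^2_tH^{3-\varepsilon}$. You must first move a derivative off $w$, using $\partial_zT=0$ on $\Gamma_s$, which produces terms like $(\nbH w\cdot\nbH\partial_zT,\Delta_HT)$ and $(\nbH w\,\partial_zT,\nbH\Delta_HT)$. Suppose $\nbH\partial_zT$ enters only through its $L^2_tL^2$ bound. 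Then Young's inequality against $\|\nbH\Delta_HT\|_2^2$ leaves coefficients such as $(\|v\|_{H^{2+s}}\|\nbH\partial_zT\|_2)^{2/(1+s)}$. Their H\"older exponents in time add up to more than one for every $s>0$, given only $v\in L^\infty_tH^2\cap L^2_tH^{3-\varepsilon}$ and $\nbH\partial_zT\in L^2_tL^2$. So the estimate closes only with the pointwise bound above, which by itself already finishes the proof.

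There is also a problem with the justification through the Galerkin scheme of Lemma~\ref{lem:linear-temperature}: it cannot carry the $\partial_z^2T$ information you invoke. Its cosine basis forces $\partial_zT_N=0$ at $z=-h,0$, whereas $\partial_zT$ need not vanish there. A uniform bound on $\|\partial_z^2T_N\|_2$ would pass this zero trace to the limit, so no such bound can hold uniformly in $N$. This is why Proposition~\ref{prop:Tzz-estimate} works with an auxiliary system for $\partial_zT$.
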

\begin{proof}
Fix $0<t<\mathcal T^*$. Theorem \ref{thm:global-well-posedness}
and Propositions~\ref{prop:time-derivative-L2},
\ref{prop:velocity-H2}, and~\ref{prop:Tzz-estimate} give
\[
\begin{gathered}
T\in L^\infty(0,t;H^1),\qquad
T_t\in L^\infty(0,t;L^2),\\
v\in L^\infty(0,t;H^2),\qquad
\partial_z^2T\in L^\infty(0,t;L^2).
\end{gathered}
\]
Moreover, $\Delta_HT\in L^2(0,t;L^2)$ is already known. Hence horizontal Dirichlet elliptic regularity and Lemma~\ref{lem:anisotropic-isotropic} with $s=2$ imply, for almost every time,
\[
\begin{aligned}
\|\nabla_H\partial_zT\|_2
&\leq \|T\|_{H^2}\\
&\leq C\bigl(
\|\Delta_HT\|_2+\|T\|_{H^1}+\|\partial_z^2T\|_2
\bigr).
\end{aligned}
\]

The anisotropic Lemma~\ref{anisotropic inequality} gives
\[
\begin{aligned}
\|w\partial_zT\|_2^2
&\leq C\|v\|_{H^1}\|v\|_{H^2}
\|\partial_zT\|_2
\bigl(\|\partial_zT\|_2+\|\nabla_H\partial_zT\|_2\bigr)\\
&\leq C\bigl(1+\|\Delta_HT\|_2\bigr),
\end{aligned}
\]
where $C$ after the second inequality depends only on the preceding bounds and the domain, which is independent of the time. Therefore, the temperature equation, Sobolev embedding, and Young's inequality give
\[
\begin{aligned}
\|\Delta_HT\|_2
&\leq \|T_t\|_2
+\|v\|_\infty\|\nabla_HT\|_2
+\|w\partial_zT\|_2\\
&\leq C\bigl(1+\|\Delta_HT\|_2^{1/2}\bigr)\\
&\leq \frac12\|\Delta_HT\|_2+C,
\end{aligned}
\]
for almost every time in $(0,t)$. Absorbing the first term on the right and taking the essential
supremum yields
\[
\|\Delta_HT\|_{L^\infty(0,t;L^2)}\leq 2C.
\]
The preceding estimates admit continuous nondecreasing majorants,
and the constants above depend only on these majorants and the
fixed domain. They therefore provide the required function
$M_6(t)$.
\end{proof}

\begin{proposition}[Time regularity of the vertical temperature derivative]
\label{prop:vertical-temperature-time}
Let $(v,T)$ be the strong solution with the initial data assumed in
this section. For every $0<t_0<\mathcal T^*$,
\[
\partial_t\partial_zT\in L^2(0,t_0;L^2(\Omega)),
\qquad
\nabla_H\partial_zT\in L^\infty(0,t_0;L^2(\Omega)).
\]
More precisely, there exists a continuous nondecreasing function
$M_{T,z}$ on $[0,\mathcal T^*)$ such that
\[
\sup_{0\leq s\leq t}\|\nabla_H\partial_zT(s)\|_{L^2}^2
+\int_0^t\|\partial_t\partial_zT(s)\|_{L^2}^2\,ds
\leq M_{T,z}(t)
\]
for every $0\leq t<\mathcal T^*$.
\end{proposition}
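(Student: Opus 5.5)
The plan is to work with the equation for $\theta:=\partial_zT$,
\[
\theta_t-\Delta_H\theta+v\cdot\nabla_H\theta+w\partial_z\theta
=-\partial_zv\cdot\nabla_HT+(\nabla_H\cdot v)\,\theta ,
\]
which is the $z$-differentiated temperature equation already used in Proposition~\ref{prop:Tzz-estimate}; there $\partial_z(w\theta)=w\partial_z\theta-(\nabla_H\cdot v)\theta$. We test it with $\theta_t$. The lateral Dirichlet condition $\theta|_{\Gamma_s}=0$ is inherited from $T|_{\Gamma_s}=0$. The diffusion term then gives $\frac12\frac{d}{dt}\|\nabla_H\theta\|_2^2$ with no boundary contribution, because only horizontal integration by parts is used. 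Consequently no condition on $\theta$ at $z=-h,0$ is needed, and this is exactly why testing with $\theta_t$ is admissible here. Rigorously, I would perform the computation at the Galerkin level \eqref{eq:flat-vertical-regularized}: take $\psi=\partial_t(\partial_zT)_N\in\mathcal E_N$. Since the approximations have already been identified with $\partial_zT$ and converge strongly, the bounds then pass to the limit by lower semicontinuity. The initial datum satisfies $\|\nabla_HP_N^T\partial_zT_0\|_2\leq\|\partial_zT_0\|_{H^1}\leq C\|T_0\|_{H^2}$.

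The resulting inequality is
\[
\tfrac12\tfrac{d}{dt}\|\nabla_H\theta\|_2^2+\|\theta_t\|_2^2
\leq \bigl(\|v\cdot\nabla_H\theta\|_2+\|w\partial_z\theta\|_2+\|\partial_zv\cdot\nabla_HT\|_2+\|(\nabla_H\cdot v)\theta\|_2\bigr)\|\theta_t\|_2 .
\]
Each forcing term is estimated using results already available. By \eqref{eq:flat-vertical-coefficients} and Proposition~\ref{prop:velocity-higher-spatial} with $\varepsilon=1/4$, the coefficients $v,w,\partial_zv,\nabla_H\cdot v=-\partial_zw$ lie in $L^2(0,t_0;L^\infty)$. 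Hence:
\begin{itemize}
\item $\|v\cdot\nabla_H\theta\|_2\leq\|v\|_\infty\|\nabla_H\theta\|_2$;
\item $\|(\nabla_H\cdot v)\theta\|_2\leq\|\partial_zw\|_\infty\|\theta\|_2$, with $\|\theta\|_2$ bounded in $L^\infty_t$;
\item $\|\partial_zv\cdot\nabla_HT\|_2\leq\|\partial_zv\|_\infty\|\nabla_HT\|_2$, with $\|\nabla_HT\|_2$ bounded in $L^\infty_t$;
\item $\|w\partial_z\theta\|_2\leq\|w\|_\infty\|\partial_{zz}T\|_2$, with $\partial_{zz}T\in L^\infty_tL^2$ by Proposition~\ref{prop:Tzz-estimate}.
\end{itemize}
After Young's inequality this gives
\[
\tfrac{d}{dt}\|\nabla_H\theta\|_2^2+\|\theta_t\|_2^2\leq C\|v\|_{H^{11/4}}^2\bigl(1+\|\nabla_H\theta\|_2^2\bigr),
\]
where $C$ depends on $M_7$ and the $H^1$ bounds. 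The coefficient is integrable, so Gronwall's inequality yields the claimed $M_{T,z}$.

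A simpler route is available for most of this. Since $v\in L^\infty_tH^2$, one has $\|v\|_\infty\leq C\|v\|_{H^2}$, which handles the first term. Moreover, Proposition~\ref{prop:horizontal-temperature-L2} together with Proposition~\ref{prop:Tzz-estimate} and Lemma~\ref{lem:anisotropic-isotropic} already gives $T\in L^\infty_tH^2$, hence $\nabla_H\partial_zT\in L^\infty_tL^2$ directly. With that bound in hand, the remaining task is to show $\theta_t\in L^2_tL^2$, and the differential inequality above gives it after integration in time.

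The main obstacle is the term $w\partial_z\theta=w\partial_{zz}T$, because the temperature equation has no vertical diffusion. It is controlled only through $\|w\|_{L^2_tL^\infty}$, which needs $v\in L^2_tH^{3/2+}$ (here $H^{11/4}$) from Proposition~\ref{prop:velocity-higher-spatial}, combined with the $L^\infty_tL^2$ bound on $\partial_{zz}T$ from Proposition~\ref{prop:Tzz-estimate}. The second point needing care is justifying the choice $\psi=\partial_t(\partial_zT)_N$ at the Galerkin level. There the cosine basis gives $\partial_z$ of an element of $\mathcal E_N$ a sine expansion, so the projection of the transport terms has to be bounded in $L^2$ uniformly in $N$. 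I would handle this by the $L^2$-contractivity of $P_N^T$, since every term is tested against an element of $\mathcal E_N$.
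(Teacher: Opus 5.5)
Your proposal is correct and follows the paper's argument. You differentiate the temperature equation in $z$ and test with $\partial_t\partial_zT$. You bound the four forcing terms by H\"older's inequality, using $v,w,\partial_zv,\nabla_H\cdot v\in L^2(0,t_0;L^\infty)$ from Proposition~\ref{prop:velocity-higher-spatial} with $\varepsilon=1/4$ and $\partial_z^2T\in L^\infty(0,t_0;L^2)$ from Proposition~\ref{prop:Tzz-estimate}. You justify the computation by testing \eqref{eq:flat-vertical-regularized} with $\partial_t(\partial_zT)_N$, exactly as in Remark~\ref{rem:vertical-temperature-time-justification}.

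The only difference is minor: the paper integrates in time directly rather than applying Gronwall. It can do so because $\|v\|_{L^\infty_tL^\infty}$ and $\|\nabla_H\partial_zT\|_{L^2_tL^2}$ are already finite, so the term $\|v\|_\infty^2\|\nabla_H\partial_zT\|_2^2$ is integrable without any feedback.
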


\begin{proof}
Fix $0<t_0<\mathcal T^*$. Proposition~\ref{prop:velocity-higher-spatial},
with $\varepsilon=1/4$, and the embedding
$H^{7/4}(\Omega)\hookrightarrow L^\infty(\Omega)$ give
\[
\begin{aligned}
 &\|\partial_zv\|_{L^2(0,t_0;L^\infty)}
  +\|\nabla_H\cdot v\|_{L^2(0,t_0;L^\infty)}
 \leq C\|v\|_{L^2(0,t_0;H^{11/4})}<\infty.
\end{aligned}
\]
Also, \eqref{eq:w-anisotropic-H73} implies
\[
 \|w\|_{L^2(0,t_0;L^\infty)}
 \leq C\|v\|_{L^2(0,t_0;H^{7/3})}<\infty.
\]
Section~\ref{sec:construction-global} and
Propositions~\ref{prop:velocity-H2} and~\ref{prop:Tzz-estimate}
provide
\[
\begin{gathered}
 v\in L^\infty(0,t_0;L^\infty),\qquad
 \partial_zT,\ \nabla_HT\in L^\infty(0,t_0;L^2),\\
 \nabla_H\partial_zT\in L^2(0,t_0;L^2),\qquad
 \partial_z^2T\in L^\infty(0,t_0;L^2).
\end{gathered}
\]

Differentiating the temperature equation in $z$ and using
$\partial_zw=-\nabla_H\cdot v$, we obtain
\[
\begin{aligned}
 \partial_t\partial_zT-\Delta_H\partial_zT
 ={}&-\partial_zv\cdot\nabla_HT
     -v\cdot\nabla_H\partial_zT\\
    &+(\nabla_H\cdot v)\partial_zT
     -w\partial_z^2T.
\end{aligned}
\]
By H\"older's inequality,
\begin{align}
 \|\partial_zv\cdot\nabla_HT\|_{L^2}
 &\leq
 \|\partial_zv\|_{L^\infty}\|\nabla_HT\|_{L^2},
 \label{eq:Tz-1}\\
 \|v\cdot\nabla_H\partial_zT\|_{L^2}
 &\leq
 \|v\|_{L^\infty}\|\nabla_H\partial_zT\|_{L^2},
 \label{eq:Tz-2}\\
 \|(\nabla_H\cdot v)\partial_zT\|_{L^2}
 &\leq
 \|\nabla_H\cdot v\|_{L^\infty}\|\partial_zT\|_{L^2},
 \label{eq:Tz-3}\\
 \|w\partial_z^2T\|_{L^2}
 &\leq
 \|w\|_{L^\infty}\|\partial_z^2T\|_{L^2}.
 \label{eq:Tz-4}
\end{align}
Thus all four terms belong to $L^2(0,t_0;L^2(\Omega))$.

Multiply the differentiated equation with $\partial_t\partial_zT$, integrate it over $\Omega$, and applying Young's inequality give,
\begin{align}
 &\frac{d}{dt}\|\nabla_H\partial_zT\|_{L^2}^2
   +\|\partial_t\partial_zT\|_{L^2}^2
 \nonumber\\
 &\quad\leq C\Bigl(
   \|\partial_zv\|_{L^\infty}^2\|\nabla_HT\|_{L^2}^2
   +\|v\|_{L^\infty}^2
      \|\nabla_H\partial_zT\|_{L^2}^2
 \nonumber\\
 &\hspace{27mm}
   +\|\nabla_H\cdot v\|_{L^\infty}^2
      \|\partial_zT\|_{L^2}^2
   +\|w\|_{L^\infty}^2
      \|\partial_z^2T\|_{L^2}^2
   \Bigr).
 \label{eq:Tz-energy}
\end{align}
Integrating and we obtain,
\begin{align*}
 &\sup_{0\leq s\leq t}
      \|\nabla_H\partial_zT(s)\|_{L^2}^2
   +\int_0^t
      \|\partial_t\partial_zT(s)\|_{L^2}^2\,ds\\
 &\quad\leq 2\|\nabla_H\partial_zT_0\|_{L^2}^2+C\int_0^t\Bigl(
   \|\partial_zv\|_{L^\infty}^2\|\nabla_HT\|_{L^2}^2
   +\|v\|_{L^\infty}^2
      \|\nabla_H\partial_zT\|_{L^2}^2\\
 &\hspace{27mm}
   +\|\nabla_H\cdot v\|_{L^\infty}^2
      \|\partial_zT\|_{L^2}^2
   +\|w\|_{L^\infty}^2
      \|\partial_z^2T\|_{L^2}^2
   \Bigr)\,ds.
\end{align*}
Since $t_0<\mathcal T^*$ is arbitrary, the right-hand side defines a continuous nondecreasing function $M_{T,z}$ on $[0,\mathcal T^*)$, as required.
\end{proof}
\begin{remark}
\label{rem:vertical-temperature-time-justification}
The preceding calculation is justified using the auxiliary Galerkin solutions $(\partial_zT)_N$ from Proposition~\ref{prop:Tzz-estimate}. Testing their equations with $\partial_t(\partial_zT)_N$, the uniform bounds established there and the coefficient bounds above give a uniform $L^2(0,t_0;L^2)$ bound for these time derivatives. Weak compactness and the previously identified limit yield $\partial_t\partial_zT\in L^2(0,t_0;L^2)$; the differentiated equation then gives $\Delta_H\partial_zT\in L^2(0,t_0;L^2)$. The time-trace theorem for the horizontal Dirichlet Laplacian therefore yields $\partial_zT\in C([0,t_0];\mathcal V_H)$ with initial value $\partial_zT_0$, and the corresponding energy identity justifies \eqref{eq:Tz-energy} and the resulting estimate at every time.
\end{remark}

\begin{proposition}
\label{prop:horizontal-T-higher-reg}
Let $(v,T)$ be the strong solution of Theorem~\ref{thm:global-well-posedness}, with $v_0\in D(A)$ and $T_0\in H^2(\Omega)\cap V_T$. For every $0<t<\mathcal T^*$,
\begin{equation}
 \nabla_HT\in L^2(0,t;H^2(\Omega)).
 \label{eq:gradHT-H2eps}
\end{equation}
Moreover, there exists a continuous nondecreasing function $M_8$, finite on compact subintervals of $[0,\mathcal T^*)$, such that
\begin{equation}
 \int_0^t\|\nabla_HT(s)\|_{H^2(\Omega)}^2\,ds
 \leq M_8(t).
 \label{eq:gradHT-H2eps-est}
\end{equation}
\end{proposition}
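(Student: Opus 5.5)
We need $\nabla_H T\in L^2_tH^2$. The plan is to view the temperature equation, at almost every time, as a horizontal Dirichlet problem on each slice,
\[
-\Delta_H T = F_T:=-T_t-v\cdot\nabla_H T-w\,\partial_z T ,\qquad T|_{\partial M}=0 .
\]
Since $T$ has zero lateral trace, applying horizontal elliptic regularity on the smooth domain $M$ in the space $L^2_z$ gives
\[
\|T\|_{L^2_zH^3_H}\le C\|F_T\|_{L^2_zH^1_H}.
\]
Differentiating in $z$, the same argument applies to $\partial_zT$, which also vanishes on $\Gamma_s$:
\[
\|\partial_zT\|_{L^2_zH^2_H}\le C\|\partial_zF_T\|_{L^2}.
\]
Finally, $\partial_z^2\nabla_HT$ is already controlled in $L^2_tL^2$ by Proposition~\ref{prop:Tzz-estimate}. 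Combining the three pieces through Lemma~\ref{lem:anisotropic-isotropic} bounds $\|\nabla_HT\|_{H^2}$ by
\[
\|T\|_{L^2_zH^3_H}+\|\nabla_H\partial_zT\|_{L^2_zH^1_H}+\|\nabla_H\partial_z^2T\|_{L^2}.
\]
It therefore suffices to show that $F_T\in L^2_tL^2_zH^1_H$ and $\partial_zF_T\in L^2_tL^2$.

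The second requirement is essentially done. By the differentiated equation in the proof of Proposition~\ref{prop:vertical-temperature-time},
\[
\partial_zF_T=-\partial_t\partial_zT-\partial_zv\cdot\nabla_HT-v\cdot\nabla_H\partial_zT+(\nabla_H\cdot v)\partial_zT-w\,\partial_z^2T .
\]
Each term lies in $L^2_tL^2$: the first by Proposition~\ref{prop:vertical-temperature-time}, and the others by the bounds \eqref{eq:Tz-1}--\eqref{eq:Tz-4}.

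For the first requirement, the term $\nabla_HT_t$ lies in $L^2_tL^2$ by Proposition~\ref{prop:time-derivative-L2}. For the transport terms I take horizontal derivatives:
\[
\nabla_H(v\cdot\nabla_HT)=\nabla_Hv\,\nabla_HT+v\,\nabla_H\nabla_HT .
\]
Here $\|v\|_\infty$ is bounded uniformly in time by Proposition~\ref{prop:velocity-H2}. For the product term I use $\|\nabla_Hv\|_{6}\|\nabla_HT\|_3$, and $\nabla_HT\in L^\infty_tH^1$ holds by Propositions~\ref{prop:horizontal-temperature-L2} and~\ref{prop:vertical-temperature-time} together with horizontal elliptic regularity. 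Similarly,
\[
\nabla_H(w\,\partial_zT)=\nabla_Hw\,\partial_zT+w\,\nabla_H\partial_zT .
\]
The second term is bounded by $\|w\|_{L^2_tL^\infty}\|\nabla_H\partial_zT\|_{L^\infty_tL^2}$, using \eqref{eq:w-anisotropic-H73} and Proposition~\ref{prop:vertical-temperature-time}. For the first term I use $\|\nabla w\|_3$ from \eqref{eq:w-anisotropic-H73} together with $\|\partial_zT\|_6\le C\|T\|_{H^2}$, which is bounded in $L^\infty_t$ by the $H^2$ bounds just listed. All of these pieces are in $L^2_t$, and integrating in time gives a continuous nondecreasing majorant $M_8$.

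The main obstacle I expect is justifying the slice-wise elliptic estimate: it needs $F_T(\cdot,z)\in H^1(M)$ with only a Dirichlet condition on $T$ and no compatibility condition on $F_T$. Since $H^1(M)\to H^3(M)$ regularity of the Dirichlet Laplacian on a smooth $M$ requires no trace condition on the data, this should be fine; it is applied for a.e.\ $(t,z)$ and then integrated. A second subtlety is rigor: to differentiate the equation in $z$, I would rely on the Galerkin justification of Remark~\ref{rem:vertical-temperature-time-justification}, where $\Delta_H\partial_zT\in L^2_tL^2$ was already obtained, so the relation $-\Delta_H\partial_zT=\partial_zF_T$ holds in $L^2$ and the $H^2_H$ estimate applies directly.
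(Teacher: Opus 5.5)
Your proposal is correct and follows essentially the paper's argument. You apply slice-wise horizontal Dirichlet regularity $H^1_H\to H^3_H$ to $-\Delta_HT=F_T$, then Lemma~\ref{lem:anisotropic-isotropic} to $\nabla_HT$, with $\nabla_H\partial_z^2T$ supplied by Proposition~\ref{prop:Tzz-estimate}. The differences are minor: you bound the transport terms by three-dimensional H\"older estimates using $v\in L^\infty_tH^2$ and \eqref{eq:w-anisotropic-H73}, rather than the paper's slice product estimate with $v\in L^2_tH^{5/2}$. Your $z$-differentiated elliptic step for $\partial_zT$ is also unnecessary, since the lemma only needs $\nabla_H\partial_zT\in L^2$, which already follows from $T\in L^\infty_tH^2$.
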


\begin{proof}
Fix $0<t<\mathcal T^*$. Propositions~\ref{prop:horizontal-temperature-L2} and \ref{prop:Tzz-estimate} give $\Delta_HT,\partial_z^2T\in L^\infty(0,t;L^2)$. Horizontal Dirichlet elliptic regularity and Lemma~\ref{lem:anisotropic-isotropic} with $s=2$, applied to $T$, therefore yield
\[
 \|T\|_{H^2}
 \leq C\bigl(\|\Delta_HT\|_2+\|T\|_{H^1}
                            +\|\partial_z^2T\|_2\bigr).
\]
this proves $T\in L^\infty(0,t;H^2)$. Proposition~\ref{prop:velocity-higher-spatial}, with $\varepsilon=1/2$, also gives $v\in L^2(0,t;H^{5/2})$.

On the two-dimensional domain $M$, for $0<\varepsilon<1$ we have
\begin{equation}
 \begin{aligned}
 \|fg\|_{H^{1-\varepsilon}(M)}
 &\leq C_\varepsilon\|f\|_{H^1(M)}\|g\|_{H^1(M)},\\
 \|fg\|_{H^1(M)}
 &\leq C\|f\|_{H^{3/2}(M)}\|g\|_{H^1(M)}.
 \end{aligned}
 \label{eq:2d-product}
\end{equation}
The second inequality follows from the product rule and the embeddings $H^{3/2}(M)\hookrightarrow L^\infty(M)\cap W^{1,4}(M)$ and $H^1(M)\hookrightarrow L^4(M)$. The embedding $H^{5/2}(\Omega)\hookrightarrow H^1_zH^{3/2}_H$, the one-dimensional Sobolev embedding in $z$, 
\[
 \|v\|_{L^\infty_zH^{3/2}_H}
 +\|w\|_{L^\infty_zH^{3/2}_H}
 \leq C\|v\|_{H^{5/2}}.
\]
Using the second inequality in \eqref{eq:2d-product} on each
horizontal slice, we obtain
\[
 \|v\cdot\nabla_HT+w\partial_zT\|_{L^2_zH^1_H}
 \leq C\|v\|_{H^{5/2}}\|\nabla T\|_{L^2_zH^1_H}
 \leq C\|v\|_{H^{5/2}}\|T\|_{H^2}.
\]
The right-hand side belongs to $L^2(0,t)$.
Proposition~\ref{prop:time-derivative-L2} gives
$T_t\in L^2(0,t;L^2_zH^1_H)$. The temperature equation thus implies
\[
 \|\Delta_HT\|_{L^2_zH^1_H}
 \leq\|T_t\|_{L^2_zH^1_H}
        +C\|v\|_{H^{5/2}}\|T\|_{H^2}
 \in L^2(0,t).
\]
Applying horizontal Dirichlet elliptic regularity to $T$ yields
\[
 \|\nabla_HT\|_{L^2_zH^2_H}
 \leq C\bigl(\|\Delta_HT\|_{L^2_zH^1_H}+\|T\|_2\bigr)
 \in L^2(0,t).
\]
Only the original lateral condition on $T$ enters this elliptic step.
Finally, Lemma~\ref{lem:anisotropic-isotropic} with $s=2$, applied
componentwise to $\nabla_HT$, gives
\[
 \|\nabla_HT\|_{H^2}
 \leq C\bigl(\|\nabla_HT\|_{L^2_zH^2_H}
             +\|T\|_{H^2}+\|\nabla_H\partial_z^2T\|_2\bigr).
\]
Consequently,
\begin{align*}
 \int_0^t\|\nabla_HT\|_{H^2}^2\,ds
 \leq C\Bigg[&\int_0^t\|T_t\|_{L^2_zH^1_H}^2\,ds\\
 &+\|T\|_{L^\infty(0,t;H^2)}^2
       \left(t+\int_0^t\|v\|_{H^{5/2}}^2\,ds\right)\\
 &+\int_0^t\|\nabla_H\partial_z^2T\|_2^2\,ds\Bigg].
\end{align*}
All terms on the right are controlled by the preceding estimates;
the last one is bounded by Proposition~\ref{prop:Tzz-estimate}.
Their continuous nondecreasing majorants supply $M_8$ and prove
\eqref{eq:gradHT-H2eps-est}.
\end{proof}

\begin{proof}[\textbf{Proof of Theorem~\ref{thm:higher-regularity}}]
Let $(v,T)$ be the unique global strong solution provided by
Theorem~\ref{thm:global-well-posedness}, and fix an arbitrary $t_0>0$.
Propositions~\ref{prop:velocity-H2} and~\ref{prop:velocity-higher-spatial}
give
\[
 v\in L^\infty(0,t_0;H^2(\Omega))
       \cap L^2(0,t_0;H^{3-\varepsilon}(\Omega))
 \qquad\text{for every }\varepsilon\in(0,1).
\]
Propositions~\ref{prop:Tzz-estimate} and
\ref{prop:horizontal-temperature-L2}, together with horizontal
Dirichlet elliptic regularity and Lemma~\ref{lem:anisotropic-isotropic},
yield $T\in L^\infty(0,t_0;H^2(\Omega))$, as shown in the proof of
Proposition~\ref{prop:horizontal-T-higher-reg}. That proposition also
gives $\nabla_HT\in L^2(0,t_0;H^2(\Omega))$.

Proposition~\ref{prop:time-derivative-L2} implies
\[
 v_t\in L^2(0,t_0;H^1(\Omega)),\qquad
 T_t,\ \nabla_HT_t\in L^2(0,t_0;L^2(\Omega)).
\]
Proposition~\ref{prop:vertical-temperature-time} supplies the remaining vertical derivative and hence $T_t\in L^2(0,t_0;H^1(\Omega))$.
Together with the spatial bounds, these estimates show that
\[
 v,T\in H^1(0,t_0;H^1(\Omega))
       \hookrightarrow C([0,t_0];H^1(\Omega)).
\]
Thus all the asserted regularities hold on every finite time interval.
Existence and uniqueness follow from
Theorem~\ref{thm:global-well-posedness}, completing the proof.
\end{proof}

\section{Extension to a varying bottom topography}
\label{sec:varying-bottom}
In this section we extend the preceding results to a fixed variable-depth domain with thermal diffusion adapted to the bottom topography. The velocity estimates follow the varying-bottom treatment in \cite{Kukavica_2007}; the thermal operator preserves positivity without any additional temperature condition.

For brevity, we present most of the energy calculations below as formal a priori estimates for sufficiently regular solutions and omit repeated approximation and limit passages. The required modifications of the flat-bottom arguments are specified in Subsection~\ref{subsec:topographic-notation}; a complete justification of the time-derivative estimate is given in Proposition~\ref{prop:top-time-derivative-L2}. The conclusions of Theorem~\ref{prop:varying-bottom-main} concern the constructed strong solutions: part~\textup{(i)} follows from the weighted local construction and continuation, and part~\textup{(ii)} improves the regularity of those same solutions. Smoothness used in the displayed calculations is not an additional hypothesis of that theorem.
Let $h\in C^3(\overline M)$ satisfy
\begin{equation}
  0<h_*\leq h(x,y)\leq h^*<\infty,
  \qquad (x,y)\in \overline M,
  \label{eq:h-assumptions}
\end{equation}
and set
\[
  \Omega_h:=\{(x,y,z):(x,y)\in M,\ -h(x,y)<z<0\}.
\]
The upper, bottom, and lateral boundaries are denoted respectively by
\[
  \Gamma_t=M\times\{0\},\qquad
  \Gamma_b=\{(x,y,-h(x,y)):(x,y)\in M\},\qquad
  \Gamma_s=\partial\Omega_h\setminus(\Gamma_t\cup\Gamma_b).
\]
We retain the no-slip condition for the horizontal velocity on $\Gamma_b\cup\Gamma_s$, the stress-free condition on $\Gamma_t$, and the impermeability condition $(v,w)\cdot n=0$ on $\Gamma_t\cup\Gamma_b$.  In particular, under the no-slip condition one has $v=w=0$ on $\Gamma_b$.  The temperature continues to satisfy only the lateral condition $T=0$ on $\Gamma_s$.
\subsection{Motivation for topography-adapted thermal diffusion}

The Cartesian horizontal Laplacian does not retain the flat-bottom
energy identity under the stated temperature boundary conditions.
Indeed, for a sufficiently regular temperature with $T=0$ on
$\Gamma_s$, integration by parts gives
\[
  \int_{\Omega_h}(-\Delta_HT)T
  =\int_{\Omega_h}|\nabla_HT|^2
   -\int_{\Gamma_b}T(\nabla_HT,0)\cdot n_b,
\]
where $n_b$ is the outward unit normal to the bottom.
The bottom term has no prescribed sign and does not vanish in general. Therefore, we introduce the diffusion directions tangent to the topographic layers $z/h(x,y)=\mathrm{const}$. Diffusion along such layers is related to the iso-sigma formulations used in ocean modelling; see \cite[Section~5.1]{BarnierEtAl1998Sigma}. This is a choice of thermal diffusion operator for the varying-bottom model. We introduce the notation first, and then establish its energy properties.

\subsection{Notation, Formulation, and Justification}
\label{subsec:topographic-notation}

\paragraph{Coordinates and weighted spaces.}
Set
\[
  Q=M\times(-1,0),\qquad
  \Phi(x,y,\sigma)=(x,y,h(x,y)\sigma).
\]
Write $v^{\mathrm{phys}}$ for the horizontal velocity on $\Omega_h$,
and use
\[
  v=v^{\mathrm{phys}}\circ\Phi,\qquad
  \theta=T\circ\Phi,\qquad
  \omega=\frac{w\circ\Phi-\sigma v\cdot\nabla_Hh}{h}
\]
for the transformed fields on $Q$. Horizontal derivatives of fields on $Q$ are taken at fixed $\sigma$. Since $\det D\Phi=h$, the physical $L^2$ inner product becomes the weighted inner product on $L_h^2(Q):=L^2(Q;h\,dx\,dy\,d\sigma)$:
\[
  (f,g)_h=\int_Qhfg,\qquad \|f\|_{2,h}^2=(f,f)_h.
\]
We also write $(\cdot,\cdot)_{L_h^2}$ and $\|\cdot\|_{L_h^2}$ for these same quantities. For vector fields, products in the inner product are Euclidean dot products.

\paragraph{Thermal operators and form.}
Recall from \eqref{a_i definition} that
\[
  a_i(x,y,z)=\frac{z}{h(x,y)}\partial_{x_i}h(x,y),
  \qquad i=1,2,
\]
where $x_1=x$ and $x_2=y$, and write $a=(a_1,a_2)$.
On the physical domain, define
\begin{equation}
  X_i^h:=\partial_{x_i}+a_i\partial_z,
  \qquad i=1,2,
  \label{eq:topographic-vector-fields}
\end{equation}
and denote the topographic horizontal gradient by
\[
  X^hf:=(X_1^hf,X_2^hf).
\]
With $\nabla_3=(\nabla_H,\partial_z)$, set
\begin{equation}
  B_h:=\begin{pmatrix}1&0\\0&1\\a_1&a_2\end{pmatrix},
  \qquad
  K_h:=B_hB_h^T
  =\begin{pmatrix}
    1&0&a_1\\
    0&1&a_2\\
    a_1&a_2&|a|^2
  \end{pmatrix}.
  \label{eq:topographic-diffusion-matrix}
\end{equation}
Thus $B_h^T\nabla_3T=(X_1^hT,X_2^hT)^T$. Define
\begin{equation}
  \Delta_{H,h}^{\mathrm{top}}T
  :=\nabla_3\cdot(K_h\nabla_3T),
  \qquad A_{T,h}:=-\Delta_{H,h}^{\mathrm{top}}.
  \label{eq:topographic-laplacian}
\end{equation}
The change of variables gives
\[
  (X_i^hT)\circ\Phi=\partial_{x_i}\theta,\qquad
  (\Delta_{H,h}^{\mathrm{top}}T)\circ\Phi
  =\frac1h\nabla_H\cdot(h\nabla_H\theta).
\]
Accordingly, on $Q$ we use
\begin{equation}
  A_h\theta:=-\frac1h\nabla_H\cdot(h\nabla_H\theta),
  \label{eq:fixed-positive-operator}
\end{equation}
so that $(A_{T,h}T)\circ\Phi=A_h\theta$, with associated form
\begin{align}
  \mathfrak a_h(\theta,\varphi)
  :=\int_Qh\nabla_H\theta\cdot\nabla_H\varphi.
\end{align}

Incompressibility and impermeability become
\begin{equation}
  \nabla_H\cdot(hv)+\partial_\sigma(h\omega)=0,
  \qquad \omega|_{\sigma=-1,0}=0.
  \label{eq:weighted-incompressibility}
\end{equation}
\vspace{-5mm}
\begin{equation}
  \omega(x,y,\sigma,t)
  =-\frac1{h(x,y)}\int_{-1}^{\sigma}
  \nabla_H\cdot(hv)(x,y,\eta,t)\,d\eta.
  \label{eq:omega-representation}
\end{equation}
The transport operator is
\[
  L_v:=v\cdot\nabla_H+\omega\partial_\sigma .
\]
Introduce
\[
  \begin{aligned}
  \mathcal D_{H,h}
  &=\nabla_H-\sigma\frac{\nabla_Hh}{h}\,\partial_\sigma ,\\
  \mathcal L_h f
  &=\bigl[\Delta(f\circ\Phi^{-1})\bigr]\circ\Phi.
  \end{aligned}
\]
These are the pullbacks of the physical horizontal gradient and
Laplacian, respectively. The velocity energy form is
\begin{equation}
  \mathfrak a_{v,h}(f,f)
  =\int_Qh\left(
       |\nabla_Hf-\sigma\frac{\nabla_Hh}{h}\,\partial_\sigma f|^2
       +h^{-2}|\partial_\sigma f|^2\right).
  \label{eq:top-time-velocity-form}
\end{equation}

\paragraph{Averaging, function spaces, and projections.}
Similar as the flat case, we keep the following definitions on $\Omega_h$,
\[
  M_hf(x,y):=\frac1{h(x,y)}\int_{-h(x,y)}^0f(x,y,z)\,dz.
\]
The physical velocity and temperature spaces are
\[
  V_h:=\left\{v\in H^1(\Omega_h)^2:
  v=0\text{ on }\Gamma_b\cup\Gamma_s,\quad
  \nabla_H\cdot\int_{-h(x,y)}^0v(x,y,z)\,dz=0\right\}
\]
and
\[
  V_{T,h}:=\{T\in H^1(\Omega_h):T=0\text{ on }\Gamma_s\}.
\]
Let $P_h$ be the orthogonal projection in $L_h^2(Q)^2$ onto
\[
  \overline{\{v\circ\Phi:v\in V_h\}}^{\,L_h^2(Q)^2},
\]
and let $\mathbb P_h^{\mathrm{phys}}$ be the orthogonal projection in $L^2(\Omega_h)^2$ onto $\overline{V_h}^{\,L^2(\Omega_h)^2}$.
\[
  P_hf=\bigl[\mathbb P_h^{\mathrm{phys}}(f\circ\Phi^{-1})\bigr]\circ\Phi.
\]
For $v=v^{\mathrm{phys}}\circ\Phi$ with $v^{\mathrm{phys}}\in V_h$,
the depth-integrated constraint is $\nabla_H\cdot(h\int_{-1}^0v\,d\sigma)=0$, and $v$ has zero lateral trace. Recall the decomposition for pressure,
\[
  p(x,y,z,t)=p_s(x,y,t)-\int_0^zT(x,y,\xi,t)\,d\xi
\]
and define
\begin{equation}
  \begin{split}
  \mathcal B_hf
  &=\mathcal D_{H,h}\left(h\int_0^\sigma f\,d\xi\right)\\
  &=h\int_0^\sigma\nabla_Hf\,d\xi
    +\nabla_Hh\left(\int_0^\sigma f\,d\xi-\sigma f\right).
  \end{split}
  \label{eq:top-time-buoyancy}
\end{equation}
The calligraphic operator $\mathcal B_h$ is distinct from the
$3\times2$ matrix $B_h$ in \eqref{eq:topographic-diffusion-matrix}.

\paragraph{Positivity and constant-depth consistency.}
Since $a=0$ on $\Gamma_t$ and $a=-\nabla_Hh$ on $\Gamma_b$,
the columns of $B_h$ are tangent to both boundaries. Hence
\[
  B_h^Tn=0,\qquad (K_h\nabla_3T)\cdot n=0
  \quad\text{on }\Gamma_t\cup\Gamma_b.
\]
Together with $T=0$ on $\Gamma_s$, this yields
\begin{equation}
  (A_{T,h}T,T)_{L^2(\Omega_h)}
  =\int_{\Omega_h}|B_h^T\nabla_3T|^2
  =\int_{\Omega_h}
    \left|\nabla_HT+\frac{z}{h}\nabla_Hh\,\partial_zT\right|^2
  \geq0.
  \label{eq:topographic-positivity}
\end{equation}
Changing variables and applying the horizontal Poincar\'e inequality
give
\[
  (A_{T,h}T,T)_{L^2(\Omega_h)}
  =\int_Qh|\nabla_H\theta|^2
  \geq\lambda_1(M)\frac{h_*}{h^*}\|T\|_{L^2(\Omega_h)}^2,
\]
where $\lambda_1(M)$ is the first Dirichlet eigenvalue on $M$.
\begin{remark}
For constant depth $h\equiv h_0$, we have $a=0$ and $K_h=\operatorname{diag}(1,1,0)$, so
\begin{equation}
  \Delta_{H,h}^{\mathrm{top}}=\Delta_H.
  \label{eq:constant-depth-consistency}
\end{equation}
\end{remark}

\paragraph{The coupled equations on the fixed cylinder.}
On $\Omega_h$, the temperature equation is taken to be
\begin{equation}
  \partial_tT+v^{\mathrm{phys}}\cdot\nabla_HT+w\partial_zT
  -\Delta_{H,h}^{\mathrm{top}}T=0.
  \label{eq:temperature-varying-bottom}
\end{equation}
With the notation, we get topographic-coupled system,
\begin{equation}
  \begin{cases}
  v_t-\mathcal L_hv+L_vv+\nabla_Hp_s+f_0k\times v
      =\mathcal B_h\theta,\\
  \theta_t+A_h\theta+L_v\theta=0,\\
  \nabla_H\cdot(hv)+\partial_\sigma(h\omega)=0.
  \end{cases}
  \label{eq:topographic-coupled-system}
\end{equation}
Here $p_s=p_s(x,y,t)$ is independent of $\sigma$, and $\omega$ is
given by \eqref{eq:omega-representation}. The boundary and initial conditions are those of the varying-bottom problem, expressed through $\Phi$. The temperature equation can be written explicitly as
\begin{equation}
  \partial_t\theta+v\cdot\nabla_H\theta
  +\omega\partial_\sigma\theta
  -\frac1h\nabla_H\cdot(h\nabla_H\theta)=0,
  \label{eq:temperature-flattened}
\end{equation}
or, using the positive operator $A_h$, as
\begin{equation}
  \partial_t\theta+v\cdot\nabla_H\theta
  +\omega\partial_\sigma\theta+A_h\theta=0.
  \label{eq:temperature-positive-form}
\end{equation}

\paragraph{Transport cancellation and temperature energy identities.}
Weighted incompressibility and the velocity boundary conditions give
\begin{equation}
  (L_vf,f)_h
  =\frac12\int_{\partial Q}hf^2(v,\omega)\cdot n=0.
  \label{eq:topographic-transport-skew}
\end{equation}
Testing \eqref{eq:temperature-flattened} with
$|\theta|^{q-2}\theta$ in the weighted measure therefore yields, for
$2\leq q<\infty$,
\begin{equation}
  \frac1q\frac{d}{dt}\int_Qh|\theta|^q
  +(q-1)\int_Qh|\theta|^{q-2}|\nabla_H\theta|^2=0.
  \label{eq:weighted-Lq-temperature}
\end{equation}
This gives the basic $L^2$ estimate and, by letting $q\to\infty$ for bounded initial temperature, the maximum principle. The velocity diffusion remains $\mathcal L_h$; its elliptic estimates are those in Lemmas~\ref{lem:top-stokes-H2} and~\ref{lem:top-stokes-higher}, with their respective geometric hypotheses.

\paragraph{Justification of the a priori estimates.}
The computations below are presented as formal a priori estimates for smooth solutions. Their application to strong solutions follows the approximation and limiting arguments of the constant-depth case, adapted to the weighted setting. We outline this adaptation here and give a representative time-difference quotient justification for the coupled estimate of $(v_t,\theta_t)$ in Proposition~\ref{prop:top-time-derivative-L2}. The corresponding approximation and limiting details are omitted for the remaining estimates.

The depth bounds \eqref{eq:h-assumptions} ensure equivalence of the weighted and unweighted Sobolev norms on $Q$. Since $h$ is independent of time, the flattening map commutes with time differentiation. Moreover,
\[
 A_hf=-\Delta_Hf-\nabla_H\log h\cdot\nabla_Hf,
\]
and the additional term in \eqref{eq:omega-representation} is $\nabla_H\log h\cdot v$. These lower-order terms are controlled by the weighted product estimates, while the velocity operator is handled by the weighted Stokes estimates.

Let $\{\varphi_j^h\}_{j\geq1}$ be an orthonormal Dirichlet
eigenbasis of $A_h$ in $L^2(M;h\,dx\,dy)$, and set
\[
 c_0=1,\qquad
 c_k(\sigma)=\sqrt{2}\cos(k\pi(\sigma+1)),\quad k\geq1.
\]
Denote by $P_N^{T,h}$ the $L_h^2(Q)$-orthogonal projection onto
\[
 \mathcal E_N^h
 =\operatorname{span}
 \{\varphi_j^h c_k:1\leq j\leq N,\ 0\leq k\leq N\}.
\]
For $f\in H^1(Q)$ with zero lateral boundary condition, define
\[
 \|f\|_{H_h^1}^2
 :=\int_Q h\bigl(|f|^2+|\nabla_Hf|^2
                         +|\partial_\sigma f|^2\bigr).
\]
with
\begin{equation}
 \|P_N^{T,h}f\|_{H_h^1}\leq\|f\|_{H_h^1},
 \qquad
 P_N^{T,h}f\longrightarrow f
 \quad\text{in }H_h^1.
 \label{eq:weighted-projection-stability}
\end{equation}
 Both $A_h\theta_N$ and $-\partial_{\sigma\sigma}\theta_N$ belong to $\mathcal E_N^h$ and are therefore admissible tests, as in Lemma~\ref{lem:linear-temperature}.

For velocity, we use the spectral projections of the positive self-adjoint operator $\mathcal A_{v,h}$ associated with $\mathfrak a_{v,h}$ on $\operatorname{ran}P_h$, where $\mathcal A_{v,h}=-P_h\mathcal L_h$ on compatible $H^2$ fields. These projections converge in the form norm and, on $D(\mathcal A_{v,h})$, in the graph norm.

Using \eqref{eq:topographic-anisotropic-product}, \eqref{eq:topographic-basic-buoyancy-bound}, and Lemma~\ref{lem:top-stokes-H2}, the local construction of Section~\ref{sec:construction-global} carries over, including the decomposition into a fixed linear velocity and a remainder with zero initial data, and the iteration-difference estimates. The uniform full $H^1(Q)$ bounds and $L^2$ time-derivative bounds give compactness; the estimates pass by weak lower semicontinuity, and uniqueness follows from the weighted difference estimate below. The bounds \eqref{eq:topographic-KZ-velocity-bound} and \eqref{eq:topographic-H1-bound}, together with the time-derivative bounds, give continuation as in Section~\ref{sec:construction-global}.

For higher regularity, the time-derivative estimate is obtained first. The higher velocity bounds then follow from Lemma~\ref{lem:top-stokes-higher}, under \eqref{eq:top-stokes-orthogonal}. Once \eqref{eq:topographic-higher-velocity-assumption} holds, Lemma~\ref{lem:weighted-vertical-temperature} applies with $\rho=h$. Its auxiliary equation uses the existing strong solution as its coefficients and source, and the limit is identified by the vertical-primitive argument of Proposition~\ref{prop:Tzz-estimate}. Endpoint cutoffs are removed using $\omega|_{\sigma=-1,0}=0$ and $\partial_\sigma\omega\in L_t^2L^\infty$.

After recovering $H^2$ regularity, the right-hand side of
\eqref{eq:topographic-q-equation} belongs to $L_t^2L_h^2$.
The weighted version of the Galerkin justification following
Proposition~\ref{prop:vertical-temperature-time} then
justifies testing by $\partial_\sigma\theta_t$.
The initial trace is recovered in the same way, since $\partial_\sigma\theta_0$ belongs to the horizontal form domain by tangential differentiation of the lateral trace. The remaining elliptic estimates apply almost everywhere in time, as in Proposition~\ref{prop:horizontal-T-higher-reg}.

\begin{proposition}[Weighted stability and uniqueness]
\label{prop:topographic-strong-stability}
Let $(v^{(i)},\theta^{(i)})$, $i=1,2$, be solutions of
\eqref{eq:topographic-coupled-system} on $[0,t_0]$, satisfying the
prescribed velocity and lateral temperature boundary conditions
and having the Sobolev regularity listed in
Theorem~\ref{prop:varying-bottom-main}\textup{(i)} on this interval.
For a sufficiently large geometric constant $C_h$, let
\begin{equation}
\begin{split}
 \Lambda_h[v^{(2)},\theta^{(2)}]=C_h\bigl[&
 1+\|v^{(2)}\|_{H^1}^4+\|\theta^{(2)}\|_{H^1}^4\\
 &+(1+\|v^{(2)}\|_{H^1}^2)\|v^{(2)}\|_{H^2}^2\\
 &+(1+\|\theta^{(2)}\|_{H^1}^2)
                         \|\nabla_H\theta^{(2)}\|_{H^1}^2\bigr].
\end{split}
\label{eq:topographic-stability-coefficient}
\end{equation}
Then $\Lambda_h[v^{(2)},\theta^{(2)}]\in L^1(0,t_0)$ and,
for every $0\leq t\leq t_0$,
\begin{equation}
\begin{split}
 &\|v^{(1)}(t)-v^{(2)}(t)\|_{2,h}^2
       +\|\theta^{(1)}(t)-\theta^{(2)}(t)\|_{2,h}^2\\
 &\quad+\int_0^t\mathfrak a_{v,h}
       \bigl(v^{(1)}(s)-v^{(2)}(s),
             v^{(1)}(s)-v^{(2)}(s)\bigr)\,ds\\
 &\quad+\int_0^t
       \|\nabla_H(\theta^{(1)}(s)-\theta^{(2)}(s))\|_{2,h}^2\,ds\\
 &\leq\Bigl(\|v^{(1)}(0)-v^{(2)}(0)\|_{2,h}^2
       +\|\theta^{(1)}(0)-\theta^{(2)}(0)\|_{2,h}^2\Bigr)\\
 &\qquad\times\exp\!\left(\int_0^t
                  \Lambda_h[v^{(2)},\theta^{(2)}](s)\,ds\right).
\end{split}
 \label{eq:topographic-strong-stability}
\end{equation}
In particular, the solution is unique in this class and depends
continuously on its initial data in the weighted $L^2$ energy norm.
\end{proposition}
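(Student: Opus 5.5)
We follow the proof of Proposition~\ref{prop:strong-solution-stability}, carried out in the weighted space $L_h^2(Q)$ on the fixed cylinder. Set $\delta v:=v^{(1)}-v^{(2)}$, $\delta\theta:=\theta^{(1)}-\theta^{(2)}$, and $\delta\omega:=\omega^{(1)}-\omega^{(2)}$, where each $\omega^{(i)}$ is given by \eqref{eq:omega-representation}.

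\textbf{Difference system and energy identity.} Subtract the two copies of \eqref{eq:topographic-coupled-system}, keeping $L_{v^{(1)}}$ as the transport operator. This gives
\[
\partial_t\delta v-\mathcal L_h\delta v+L_{v^{(1)}}\delta v+\nabla_H\delta p_s+f_0k\times\delta v=\mathcal B_h\delta\theta-\delta v\cdot\nabla_Hv^{(2)}-\delta\omega\,\partial_\sigma v^{(2)},
\]
and, analogously, an equation for $\delta\theta$ with the operator $A_h$ and the forcing $-\delta v\cdot\nabla_H\theta^{(2)}-\delta\omega\,\partial_\sigma\theta^{(2)}$.

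Test the velocity equation with $\delta v$ and the temperature equation with $\delta\theta$, both in the inner product $(\cdot,\cdot)_h$. The following terms drop out:
\begin{itemize}
\item the transport terms, by \eqref{eq:topographic-transport-skew};
\item the Coriolis term, pointwise;
\item the surface pressure, because $(\nabla_H\delta p_s,\delta v)_h$ reduces physically to $\int_M\nabla_H\delta p_s\cdot\int\delta v\,dz$, which vanishes by the depth-integrated constraint and the lateral trace;
\item the diffusion terms, which become $\mathfrak a_{v,h}(\delta v,\delta v)$ (this is just $\int_{\Omega_h}|\nabla\delta v^{\rm phys}|^2$ pulled back) and $\|\nabla_H\delta\theta\|_{2,h}^2$.
\end{itemize}
The regularity of Theorem~\ref{prop:varying-bottom-main}(i) places both time derivatives in $L^2_tL^2$, so the chain rule for $\|\cdot\|_{2,h}^2$ is justified exactly as in the flat case. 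The weighted and unweighted norms are equivalent by \eqref{eq:h-assumptions}. Moreover $\mathfrak a_{v,h}(f,f)\simeq\|\nabla f\|_{L^2(Q)}^2$, with constants depending on $h_*,h^*,\|h\|_{C^1}$, by Poincar\'e in the vertical direction from the bottom no-slip condition.

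\textbf{Estimating the right-hand side.}
\begin{itemize}
\item The buoyancy term satisfies $|(\mathcal B_h\delta\theta,\delta v)_h|\le C\|\delta\theta\|_2\|\nabla\delta v\|_2$. To see this, move the $\mathcal D_{H,h}$ derivative onto $\delta v$: physically it is $\nabla_H$ of $\int_0^z\delta T$ paired with $\delta v^{\rm phys}$, and integrating by parts horizontally in $\Omega_h$ produces no bottom trace because $\delta v=0$ on $\Gamma_b\cup\Gamma_s$. This is the bound \eqref{eq:topographic-basic-buoyancy-bound}.
\item The horizontal transport terms $\delta v\cdot\nabla_Hv^{(2)}$ and $\delta v\cdot\nabla_H\theta^{(2)}$ are handled by H\"older's inequality ($L^3\times L^3$, respectively $L^6\times L^3\times L^2$), giving coefficients $\|v^{(2)}\|_{H^2}^2$ and $\|\nabla_H\theta^{(2)}\|_{H^1}^2$.
\item The vertical transport terms involve $\delta\omega=-h^{-1}\int_{-1}^\sigma\nabla_H\cdot(h\delta v)$. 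This is $w$-like, plus the extra term $\nabla_H\log h\cdot\delta v$, which is zero order and easy. Apply the weighted anisotropic product estimate \eqref{eq:topographic-anisotropic-product} as in Lemma~\ref{anisotropic inequality}. This gives
\[
|(\delta\omega\,\partial_\sigma v^{(2)},\delta v)_h|\le C\|v^{(2)}\|_{H^1}^{1/2}\|v^{(2)}\|_{H^2}^{1/2}\|\delta v\|_2^{1/2}\|\nabla\delta v\|_2^{3/2}.
\]
For the temperature,
\[
|(\delta\omega\,\partial_\sigma\theta^{(2)},\delta\theta)_h|\le C\|\nabla\delta v\|_2\|\partial_\sigma\theta^{(2)}\|_2^{1/2}\bigl(\|\partial_\sigma\theta^{(2)}\|_2+\|\nabla_H\partial_\sigma\theta^{(2)}\|_2\bigr)^{1/2}\|\delta\theta\|_2^{1/2}\|\nabla_H\delta\theta\|_2^{1/2}.
\]
\end{itemize}
Young's inequality then produces coefficients bounded by $\Lambda_h$. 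For example, $\|\theta^{(2)}\|_{H^1}^2(\|\theta^{(2)}\|_{H^1}^2+\|\nabla_H\theta^{(2)}\|_{H^1}^2)$ is dominated by the last two groups in \eqref{eq:topographic-stability-coefficient}. The quartic $H^1$ terms absorb the lower-order pieces that arise when $\nabla_H h$ and $\nabla_H^2 h$ (bounded since $h\in C^3$) hit the fields.

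\textbf{Closing the argument.} Absorb the gradient terms into the left-hand side and apply Gronwall's inequality; integrating the differential inequality yields \eqref{eq:topographic-strong-stability}. The coefficient is integrable: $v^{(2)}\in L^\infty_tH^1\cap L^2_tH^2$, $\theta^{(2)}\in L^\infty_tH^1$, and $\nabla_H\theta^{(2)}\in L^2_tH^1$ by Theorem~\ref{prop:varying-bottom-main}(i), so $\Lambda_h\in L^1(0,t_0)$.

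\textbf{Main obstacle.} The hardest step is the pair of vertical transport terms in the weighted setting. One must verify that the extra piece $\nabla_H\log h\cdot\delta v$ in $\delta\omega$, and the $\sigma$-dependent cross terms in $\mathcal D_{H,h}$, still allow the anisotropic estimate. The plan is to write $\delta\omega$ as a flat-type vertical integral of horizontal derivatives with $C^2$ coefficients, and then to apply the flat anisotropic lemma slicewise on $Q$ after norm equivalence.
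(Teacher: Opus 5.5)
Your proposal is correct and follows the paper's proof essentially step for step: weighted transport cancellation with $(v^{(1)},\omega^{(1)})$ as the transporting field, the pointwise bound on $\delta\omega$ including the zero-order term $\nabla_H\log h\cdot\delta v$, the flat anisotropic lemma applied on $Q$, and Gronwall with the coefficient $\Lambda_h$. The only deviation is cosmetic. For the buoyancy term you stop the physical-coordinate integration by parts at $-\int_{\Omega_h}\bigl(\int_0^z\delta T\bigr)\nabla_H\cdot\delta v^{\mathrm{phys}}$, whereas the paper integrates once more in $z$ to pair $\delta\theta$ with the pulled-back physical vertical velocity difference $h\,\delta\omega+\sigma\,\delta v\cdot\nabla_Hh$; also, the bound you derive is not \eqref{eq:topographic-basic-buoyancy-bound}, which involves $\nabla_H\delta\theta$ (that bound would also suffice, since it can be absorbed into the temperature dissipation).
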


\begin{proof}
We follow the proof of Proposition~\ref{prop:strong-solution-stability} and record the modifications caused by the fixed depth $h$. The stated time regularity justifies testing the difference equations
by $v^{(1)}-v^{(2)}$ and $\theta^{(1)}-\theta^{(2)}$.
With $(v^{(1)},\omega^{(1)})$ as the transporting field,
\eqref{eq:topographic-transport-skew} gives transport cancellation
in the weighted inner product. The hydrostatic constraint eliminates
pressure, and the Coriolis term vanishes as before.

By \eqref{eq:omega-representation}, the additional term is $\nabla_H\log h\cdot(v^{(1)}-v^{(2)})$, and
\[
 |\omega^{(1)}-\omega^{(2)}|
 \leq C_h\int_{-1}^0
       \bigl(|\nabla_H(v^{(1)}-v^{(2)})|
                         +|v^{(1)}-v^{(2)}|\bigr)\,d\xi.
\]
Consequently, Lemma~\ref{anisotropic inequality} gives the same bounds for the two vertical transport terms as in Proposition~\ref{prop:strong-solution-stability}, with constants depending on $h$. The horizontal transport estimates are unchanged by weighted norm equivalence.

For the temperature term, integration by parts in physical coordinates gives
\begin{align}
 &\bigl(\mathcal B_h(\theta^{(1)}-\theta^{(2)}),
                 v^{(1)}-v^{(2)}\bigr)_h\\
 &\quad=-\bigl(\theta^{(1)}-\theta^{(2)},
                         h(\omega^{(1)}-\omega^{(2)})\bigr)_h\\
 &\qquad\phantom{=}-\bigl(\theta^{(1)}-\theta^{(2)},
              \sigma(v^{(1)}-v^{(2)})\cdot\nabla_Hh\bigr)_h.
\end{align}
Here $h(\omega^{(1)}-\omega^{(2)}) +\sigma(v^{(1)}-v^{(2)})\cdot\nabla_Hh$ is the pullback of the difference of the physical vertical velocities. The preceding diagnostic bound therefore yields

\begin{align}
 &\bigl|\bigl(\mathcal B_h(\theta^{(1)}-\theta^{(2)}),
                 v^{(1)}-v^{(2)}\bigr)_h\bigr|\\
 &\qquad\leq C_h\|\theta^{(1)}-\theta^{(2)}\|_{2,h}
                        \|v^{(1)}-v^{(2)}\|_{H^1}.
\end{align}

The remaining Sobolev and Young estimates are those of Proposition~\ref{prop:strong-solution-stability}. Using coercivity of $\mathfrak a_{v,h}$ to absorb the dissipation terms, and bounding the resulting coefficients by \eqref{eq:topographic-stability-coefficient}, we obtain
\begin{equation}
\begin{split}
 &\frac{d}{dt}\Bigl(\|v^{(1)}-v^{(2)}\|_{2,h}^2
                  +\|\theta^{(1)}-\theta^{(2)}\|_{2,h}^2\Bigr)\\
 &\quad+\mathfrak a_{v,h}
                  (v^{(1)}-v^{(2)},v^{(1)}-v^{(2)})\\
 &\quad+\|\nabla_H(\theta^{(1)}-\theta^{(2)})\|_{2,h}^2\\
 &\leq\Lambda_h[v^{(2)},\theta^{(2)}]
       \Bigl(\|v^{(1)}-v^{(2)}\|_{2,h}^2
                  +\|\theta^{(1)}-\theta^{(2)}\|_{2,h}^2\Bigr).
\end{split}
\label{eq:topographic-stability-energy}
\end{equation}
The strong-solution bounds imply
$\Lambda_h[v^{(2)},\theta^{(2)}]\in L^1(0,t_0)$.
Multiplication by the integrating factor and integration give
\eqref{eq:topographic-strong-stability}.
Time continuity in $L_h^2$ yields the estimate for every
$t\in[0,t_0]$, and hence uniqueness and continuous dependence.
\end{proof}

\subsection{Basic temperature bounds and the velocity estimates}
\label{subsec:basic-velocity-varying-bottom}
We display the a priori calculations under the justification convention specified in Subsection~\ref{subsec:topographic-notation}. Write $v_0=v_0^{\mathrm{phys}}\circ\Phi$ and $\theta_0=T_0\circ\Phi$.
The depth bounds give
\begin{equation}
 h_*\|f\|_{L^2(Q)}^2\leq\|f\|_{L_h^2}^2
                  \leq h^*\|f\|_{L^2(Q)}^2.
 \label{eq:weighted-norm-equivalence}
\end{equation}
The case $q=2$ of \eqref{eq:weighted-Lq-temperature} yields
\begin{equation}
 \|\theta(t)\|_{L_h^2}^2
 +2\int_0^t\|\nabla_H\theta(s)\|_{L_h^2}^2\,ds
 =\|\theta_0\|_{L_h^2}^2.
 \label{eq:topographic-basic-temperature-energy}
\end{equation}
Then
\begin{equation}
 \theta\in L^\infty(0,t_0;L^2(Q)),\qquad
 \nabla_H\theta\in L^2(0,t_0;L^2(Q)).
 \label{eq:topographic-basic-temperature-regularity}
\end{equation}
Thus the explicit formula \eqref{eq:top-time-buoyancy} gives
\begin{align}
 \|\mathcal B_h\theta\|_{L_h^2}^2
 &\leq C_h\bigl(\|\nabla_H\theta\|_{L_h^2}^2
                         +\|\theta\|_{L_h^2}^2\bigr),
 \label{eq:topographic-basic-buoyancy-bound}\\
 \int_0^{t_0}\|\mathcal B_h\theta\|_{L_h^2}^2\,ds
 &\leq C_h(1+t_0)\|\theta_0\|_{L_h^2}^2.
 \label{eq:topographic-buoyancy-time-bound}
\end{align}
For the velocity, the physical Dirichlet conditions and Poincar\'e
inequality give
$\mathfrak a_{v,h}(v,v)=\|\nabla v^{\mathrm{phys}}\|_2^2
\geq c_h\|v\|_{H^1(Q)}^2$.
The pressure cancels since
\[
 (\nabla_Hp_s,v)_h
 =-\int_Mp_s\nabla_H\cdot\left(h\int_{-1}^0v\,d\sigma\right)=0.
\]
Similarly, weighted incompressibility cancels transport terms, and the Coriolis term is orthogonal to $v$. Hence the energy argument of Proposition~\ref{prop:vn-L2}, with the weighted velocity form, holds,
\begin{align}
 \frac12\frac{d}{dt}\|v\|_{L_h^2}^2+\mathfrak a_{v,h}(v,v)
 =(\mathcal B_h\theta,v)_h\notag
 &\leq\frac12\mathfrak a_{v,h}(v,v)
                +C_h\|\mathcal B_h\theta\|_{L_h^2}^2,\\
 \sup_{0\leq s\leq t_0}\|v(s)\|_{L_h^2}^2
 +\int_0^{t_0}\|v(s)\|_{H^1(Q)}^2\,ds
 &\leq C_h\bigl(\|v_0\|_{L_h^2}^2
                  +(1+t_0)\|\theta_0\|_{L_h^2}^2\bigr).
 \label{eq:topographic-basic-velocity-bound}
\end{align}

In physical coordinates, we treat the momentum equation as a forced primitive equation with  force $F=(\mathcal B_h\theta-f_0k\times v)\circ\Phi^{-1}$,
\begin{equation}
 \int_0^{t_0}\|F(s)\|_{L^2(\Omega_h)}^2\,ds
 \leq C_{h,f_0}\bigl(\|v_0\|_{L_h^2}^2
                  +(1+t_0)\|\theta_0\|_{L_h^2}^2\bigr).
 \label{eq:topographic-KZ-forcing-bound}
\end{equation}
We apply the result of Kukavica and Ziane \cite{Kukavica_2007}, including its varying-bottom case; the
$L_t^2L_x^2$ forcing assumption is also explicit in
\cite[Theorem~2.1]{KukavicaZiane1}.
After composition with $\Phi$, the resulting bound is,
\begin{equation}
 \sup_{0\leq s\leq t_0}\|v(s)\|_{H^1(Q)}^2
 +\int_0^{t_0}\|v(s)\|_{H^2(Q)}^2\,ds
 \leq K_{v,h}(t_0)<\infty.
 \label{eq:topographic-KZ-velocity-bound}
\end{equation}
\subsection{The strong solution with $H^1$ initial temperature}
\label{subsec:H1-varying-bottom}

The diffeomorphism $\Phi$ gives
\begin{equation}
 C_h^{-1}\|T\|_{H^1(\Omega_h)}\leq\|\theta\|_{H^1(Q)}
                         \leq C_h\|T\|_{H^1(\Omega_h)}.
 \label{eq:H1-coordinate-equivalence}
\end{equation}
We adapt the $H^1$ calculation of Lemma~\ref{lem:linear-temperature}
using the positive self-adjoint Dirichlet realization of $A_h$ in
$L_h^2(Q)$. Its horizontal form satisfies
\begin{equation}
 (A_h\theta,\varphi)_h=\mathfrak a_h(\theta,\varphi),
 \label{eq:Ah-form-identity}
\end{equation}
and horizontal Dirichlet elliptic regularity gives
\begin{equation}
 \|\nabla_H^2\theta\|_{2,h}
 \leq C_h\bigl(\|A_h\theta\|_{2,h}+\|\nabla_H\theta\|_{2,h}\bigr).
 \label{eq:weighted-horizontal-elliptic-estimate}
\end{equation}
Multiply and integrate by $A_h\theta$ and differentiating in $\sigma$ therefore gives,
\begin{align}
 \frac12\frac{d}{dt}\|\nabla_H\theta\|_{2,h}^2
       +\|A_h\theta\|_{2,h}^2
 & =-(L_v\theta,A_h\theta)_h,
 \label{eq:horizontal-H1-energy}\\
 \frac12\frac{d}{dt}\|\partial_\sigma\theta\|_{2,h}^2
       +\|\nabla_H\partial_\sigma\theta\|_{2,h}^2
 & =-(\partial_\sigma v\cdot\nabla_H\theta,
                         \partial_\sigma\theta)_h-(\partial_\sigma\omega\,\partial_\sigma\theta,
                         \partial_\sigma\theta)_h.
 \label{eq:vertical-H1-energy}
\end{align}
Here $\partial_\sigma\theta$ has zero lateral trace because $\sigma$ is tangential to the side. 

The anisotropic estimate remains similar to that, which is used in Lemma~\ref{lem:linear-temperature}
and \eqref{eq:omega-representation},
\begin{equation}
 \|\omega f\|_2^2
 \leq C_h\|v\|_{H^1}(\|v\|_{H^1}+\|v\|_{H^2})
            \|f\|_2(\|f\|_2+\|\nabla_Hf\|_2).
 \label{eq:topographic-anisotropic-product}
\end{equation}
The coefficients depends on $h$. Applying the Sobolev and Young inequalities to \eqref{eq:horizontal-H1-energy} and \eqref{eq:vertical-H1-energy}, with the $L^2$ estimate before,
\begin{equation}
 \frac{d}{dt}E_{T,h} +\|A_h\theta\|_{2,h}^2+\|\nabla_H\partial_\sigma\theta\|_{2,h}^2
 \leq\Psi_h E_{T,h},
 \label{eq:topographic-H1-differential-inequality}
\end{equation}
where
\[
 E_{T,h}=\|\theta\|_{2,h}^2+\|\nabla_H\theta\|_{2,h}^2
                       +\|\partial_\sigma\theta\|_{2,h}^2
\]
and
\begin{equation}
 \Psi_h=C_h\bigl[1+\|v\|_{H^1}^4
                       +(1+\|v\|_{H^1}^2)\|v\|_{H^2}^2\bigr].
 \label{eq:Psi-h}
\end{equation}
By \eqref{eq:topographic-KZ-velocity-bound}, $\Psi_h\in L^1(0,t_0)$.
Gronwall's inequality gives
\begin{align}
 \sup_{0\leq s\leq t_0}E_{T,h}(s)
 &+\int_0^{t_0}\bigl(\|A_h\theta\|_{2,h}^2
                  +\|\nabla_H\partial_\sigma\theta\|_{2,h}^2\bigr)\,ds
 \notag\\
 &\leq2E_{T,h}(0)\exp\left(\int_0^{t_0}\Psi_h(s)\,ds\right).
 \label{eq:topographic-H1-bound}
\end{align}
Together with \eqref{eq:weighted-horizontal-elliptic-estimate}, this proves
\begin{equation}
 \theta\in L^\infty(0,t_0;H^1(Q)),\qquad
 \nabla_H\theta\in L^2(0,t_0;H^1(Q)).
 \label{eq:topographic-temperature-spatial-regularity}
\end{equation}

\paragraph{Time regularity}
By \eqref{eq:topographic-H1-bound},
\[
 \frac1h\nabla_H\cdot(h\nabla_H\theta)
 \in L^2(0,t_0;L_h^2(Q)).
\]
The velocity bound \eqref{eq:topographic-KZ-velocity-bound}, and anisotropic estimate
\eqref{eq:topographic-anisotropic-product} give
\[
 v\cdot\nabla_H\theta+\omega\partial_\sigma\theta
 \in L^2(0,t_0;L_h^2(Q)).
\]
Thus the temperature equation implies
$\partial_t\theta\in L^2(0,t_0;L_h^2(Q))$.

For the velocity, the same product estimates yield
\[
 \|v\cdot\nabla_Hv+\omega\partial_\sigma v\|_{2,h}^2
 \leq C_h\|v\|_{H^1}^2
              \bigl(\|v\|_{H^1}+\|v\|_{H^2}\bigr)^2.
\]
Hence the velocity transport term belongs to
$L^2(0,t_0;L_h^2(Q)^2)$.

Since $P_hv=v$ and $P_h$ is independent of time, the projected
momentum equation gives, in time distributions,
\[
 \partial_t v
 =P_h\bigl[\mathcal L_hv-L_vv-f_0k\times v+\mathcal B_h\theta\bigr].
\]
Therefore gives $\partial_t v\in L^2(0,t_0;L_h^2(Q)^2)$.

\paragraph{Continuation.}
The local existence and uniqueness argument of
Section~\ref{sec:construction-global} applies with the weighted forms
and the bounds above. If its maximal time $\mathcal T^*$ were finite,
the basic energies would give
$F\in L^2(0,\mathcal T^*;L^2(\Omega_h)^2)$. Extend this fixed force by
zero beyond $\mathcal T^*$. The forced velocity bound and uniqueness
then give \eqref{eq:topographic-KZ-velocity-bound} up to $\mathcal T^*$.
Thus $\Psi_h\in L^1(0,\mathcal T^*)$, and
\eqref{eq:topographic-H1-bound} controls the temperature $H^1$ norm.
Together with the preceding time-derivative estimates, these are the
bounds needed to restart the local solution at $\mathcal T^*$, a
contradiction.

We collect the global $H^1$ result and its higher-regularity extension
in the following theorem.

\begin{theorem}[Varying bottom topography]
\label{prop:varying-bottom-main}
Assume \eqref{eq:h-assumptions} and $h\in C^3(\overline M)$.
Consider the varying-bottom problem with the thermal diffusion operator
$A_{T,h}$ defined in \eqref{eq:topographic-laplacian} and the original
velocity and lateral temperature boundary conditions.\\
\smallskip
\noindent\textup{(i) Global strong solutions.}
For every $(v_0^{\mathrm{phys}},T_0)\in V_h\times V_{T,h}$,
there exists a unique global strong solution. For every finite $t_0>0$,
its pullbacks $v=v^{\mathrm{phys}}\circ\Phi$ and $\theta=T\circ\Phi$
satisfy
\[
 \begin{gathered}
 v\in L^\infty(0,t_0;H^1(Q)^2)\cap L^2(0,t_0;H^2(Q)^2),
 \qquad v_t\in L^2(0,t_0;L^2(Q)^2),\\
 \theta\in L^\infty(0,t_0;H^1(Q)),\qquad
 \nabla_H\theta\in L^2(0,t_0;H^1(Q)),\\
 \theta_t\in L^2(0,t_0;L^2(Q)).
 \end{gathered}
\]

\smallskip
\noindent\textup{(ii) Persistence of higher regularity.}
Suppose, in addition, that $M$ has $C^3$ boundary and
$\partial_\nu h=0$ on $\partial M$, where $\nu$ is the horizontal
outward unit normal; this is condition \eqref{eq:top-stokes-orthogonal}.
Assume also that
\[
 \begin{gathered}
 v_0^{\mathrm{phys}}\in H^2(\Omega_h)^2\cap V_h,
 \qquad T_0\in H^2(\Omega_h)\cap V_{T,h},\\
 \partial_zv_0^{\mathrm{phys}}=0\quad\text{on }\Gamma_t.
 \end{gathered}
\]
Then the temperature satisfies
\eqref{eq:topographic-physical-H2-conclusion}. For every finite $t_0>0$
and every $0<\varepsilon<1$, the velocity satisfies
\[
 \begin{gathered}
 v^{\mathrm{phys}}\in L^\infty(0,t_0;H^2(\Omega_h)^2)
                 \cap C([0,t_0];H^1(\Omega_h)^2),\\
  v^{\mathrm{phys}}\in L^2(0,t_0;H^{3-\varepsilon}(\Omega_h)^2),
 \qquad v_t^{\mathrm{phys}}\in L^2(0,t_0;H^1(\Omega_h)^2).
 \end{gathered}
\]
No top or bottom temperature boundary condition is imposed.
The constants may depend on the fixed domain, $h_*$, $h^*$,
and $\|h\|_{C^3}$, in addition to the initial norms and the time interval.
\end{theorem}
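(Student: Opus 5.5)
Part~\textup{(i)} has essentially been assembled in the preceding subsections, so the plan there is bookkeeping. I would construct the local strong solution on the flattened cylinder $Q$ by the iteration of Section~\ref{sec:construction-global}. The modifications are the weighted inner product $(\cdot,\cdot)_h$, the weighted Galerkin bases $P_N^{T,h}$ and the spectral projections of $\mathcal A_{v,h}$, the transport cancellation \eqref{eq:topographic-transport-skew}, the buoyancy bound \eqref{eq:topographic-basic-buoyancy-bound}, and Lemma~\ref{lem:top-stokes-H2} in place of the flat $D(A)$ characterization. The decomposition into a fixed linear velocity plus a remainder with zero initial data closes the uniform bounds on a short interval, and the difference estimate with $Y_n,D_n$ gives contraction. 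Uniqueness and continuous dependence then follow from Proposition~\ref{prop:topographic-strong-stability}. For global existence, I would use the a priori chain \eqref{eq:topographic-basic-velocity-bound}, \eqref{eq:topographic-KZ-velocity-bound} (Kukavica--Ziane with $L^2_tL^2_x$ forcing), \eqref{eq:topographic-H1-bound}, and the time-derivative bounds, together with the continuation argument already sketched. The resulting bounds are finite at any putative $\mathcal T^*$ and allow a restart via $L^2(0,\mathcal T^*;D)\cap H^1(0,\mathcal T^*;L^2_h)\hookrightarrow C([0,\mathcal T^*];H^1)$.

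For part~\textup{(ii)} I would repeat Section~\ref{sec:higher-regularity} in the order it is written there. \textbf{Step 1:} prove the analogue of Proposition~\ref{prop:time-derivative-L2} by time difference quotients. Apply Proposition~\ref{prop:topographic-strong-stability} to $(v(t+\eta),\theta(t+\eta))$ and $(v(t),\theta(t))$, and bound the initial quotient using the square-root Gronwall Lemma~\ref{lem:sqrt-gronwall}. The initial residuals $F_0^v,F_0^T$ lie in $L^2$ because $v_0\in H^2$ with $\partial_zv_0=0$ on $\Gamma_t$ (so $v_0\in D(\mathcal A_{v,h})$) and $T_0\in H^2$. This yields $v_t\in L^\infty L^2_h\cap L^2H^1$ and $\theta_t\in L^\infty L^2_h$, $\nabla_H\theta_t\in L^2L^2_h$. \textbf{Step 2:} prove $v\in L^\infty H^2$ via $\frac{d}{dt}\|\mathcal A_{v,h}^{3/4}v\|^2$, followed by Lemma~\ref{lem:top-stokes-H2}. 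Then obtain $v\in L^2H^{3-\varepsilon}$ by the two-stage bootstrap of Proposition~\ref{prop:velocity-higher-spatial}: first force in $H^{2/5}$ and interpolation to reach $H^{7/3}$, then force in $H^1$ and Lemma~\ref{lem:top-stokes-higher}. This is exactly where \eqref{eq:top-stokes-orthogonal} and $\partial M\in C^3$ are needed. The extra forcing terms from $\mathcal B_h\theta$ and $\nabla_H\log h$ are lower order and controlled by $\|\nabla_H\theta\|_{H^1}$ and $\|v\|_{H^2}$.

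\textbf{Step 3}, the temperature, is the step I expect to be the main obstacle. Because $A_h$ has $\sigma$-independent coefficients, $\partial_\sigma$ commutes with it. Differentiating \eqref{eq:temperature-positive-form} in $\sigma$ gives an equation for $q=\partial_\sigma\theta$ with transport $v\cdot\nabla_H q+\partial_\sigma(\omega q)$, where $\partial_\sigma(h\omega)=-\nabla_H\cdot(hv)$, plus a source $-\partial_\sigma v\cdot\nabla_H\theta$. I would invoke Lemma~\ref{lem:weighted-vertical-temperature} with $\rho=h$, which requires \eqref{eq:topographic-higher-velocity-assumption}, i.e.\ $v\in L^2H^{11/4}$ from Step~2 with $\varepsilon=1/4$. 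That lemma gives $\partial_\sigma^2\theta\in L^\infty L^2$ and $\nabla_H\partial_\sigma^2\theta\in L^2L^2$. The delicate points are the identification of the auxiliary Galerkin limit, via the vertical primitive $P$ as in Proposition~\ref{prop:Tzz-estimate} (now with kernel $\mathcal K_v$ including $\nabla_H\log h$ terms), and the endpoint cutoff, which uses $\omega|_{\sigma=-1,0}=0$ and $\partial_\sigma\omega\in L^2_tL^\infty$. After this, the absorption argument of Proposition~\ref{prop:horizontal-temperature-L2} with \eqref{eq:topographic-anisotropic-product} gives $A_h\theta\in L^\infty L^2$. Testing the $q$-equation \eqref{eq:topographic-q-equation} by $\partial_\sigma\theta_t$ gives $\nabla_H\partial_\sigma\theta\in L^\infty L^2$ and $\partial_t\partial_\sigma\theta\in L^2L^2$. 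Horizontal elliptic regularity \eqref{eq:weighted-horizontal-elliptic-estimate}, applied slice-wise as in Proposition~\ref{prop:horizontal-T-higher-reg}, gives $\nabla_H\theta\in L^2H^2$.

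Finally, I would push everything forward by $\Phi^{-1}$. Since $h\in C^3$, composition with $\Phi^{\pm1}$ preserves $H^s$ for $s\le 3$, and $\nabla_H$ on $Q$ corresponds to $X^h$ physically. This gives the physical statements \eqref{eq:topographic-physical-H2-conclusion} for $T$ and the listed velocity regularity. Time continuity in $H^1$ follows from $H^1(0,t_0;H^1)\hookrightarrow C([0,t_0];H^1)$.
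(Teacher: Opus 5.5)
Your proposal is correct and follows the paper's proof essentially step for step. Part~(i) goes through the weighted version of the Section~\ref{sec:construction-global} iteration (linear-plus-remainder decomposition, contraction of the differences), uniqueness from Proposition~\ref{prop:topographic-strong-stability}, and continuation through the Kukavica--Ziane forced bound and the Gronwall estimate \eqref{eq:topographic-H1-bound}. Part~(ii) goes through time difference quotients, then the $\mathcal A_{v,h}^{3/4}$ identity and the fractional Stokes bootstrap to $H^{3-\varepsilon}$ (where $\partial_\nu h=0$ enters), then Lemma~\ref{lem:weighted-vertical-temperature} with $\rho=h$, absorption to recover $\theta\in L^\infty H^2$, the mixed estimate from testing with $\partial_\sigma\theta_t$, and finally transfer back by $\Phi$. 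The only slip is in the last horizontal step: it needs the slice-wise $H^3$ Dirichlet estimate for $\Delta_H\theta$, followed by Lemma~\ref{lem:anisotropic-isotropic} applied to $\nabla_H\theta$, as in Proposition~\ref{prop:horizontal-T-higher-reg}, rather than the $H^2$ bound \eqref{eq:weighted-horizontal-elliptic-estimate} you cite; this does not change the argument.
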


Part~\textup{(i)} follows from the weighted local construction and the
preceding continuation argument. In
Subsection~\ref{subsec:H2-varying-bottom}, the time-difference-quotient
and auxiliary-equation arguments described above transfer the
persistence estimates to that same strong solution, proving
part~\textup{(ii)} under its additional hypotheses.

\subsection{Persistence of $H^2$ regularity}
\label{subsec:H2-varying-bottom}

\subsubsection{Estimates for the time derivatives}
\label{subsubsec:topographic-time-derivatives}

We first establish the varying-bottom counterpart of Proposition~\ref{prop:time-derivative-L2}. We use the pullback notation, operators, and velocity energy form fixed in Subsection~\ref{subsec:topographic-notation}. Unsubscripted Sobolev norms in this subsection are taken on $Q$. For vector fields satisfying the homogeneous velocity boundary conditions, integration by parts and Poincare's inequality give
\begin{equation}
 (-\mathcal L_hf,f)_h=\mathfrak a_{v,h}(f,f),\qquad
 c_h\|f\|_{H^1}^2\leq\mathfrak a_{v,h}(f,f)
 \leq C_h\|f\|_{H^1}^2.
 \label{eq:top-time-coercivity}
\end{equation}

\begin{proposition}
\label{prop:top-time-derivative-L2}
Assume the hypotheses of the $H^1$ theory in Subsection~\ref{subsec:H1-varying-bottom}, and suppose in addition that $v^{\mathrm{phys}}_0\in H^2(\Omega_h)^2\cap V_h$ satisfies the homogeneous top stress condition, and that $T_0\in H^2(\Omega_h)\cap V_{T,h}$. Let $v_0=v^{\mathrm{phys}}_0\circ\Phi$ and $\theta_0=T_0\circ\Phi$. Then there exists a continuous nondecreasing function $M_{4,h}$, finite on every finite time interval, such that
\begin{equation}
 \begin{split}
 &\mathop{\mathrm{ess\,sup}}_{0<s<t}
   \left(\|v_t(s)\|_{2,h}^2+\|\theta_t(s)\|_{2,h}^2\right)\\
 &\quad+\int_0^t
   \left(\mathfrak a_{v,h}(v_t,v_t)
              +\|\nabla_H\theta_t\|_{2,h}^2\right)\,ds
 \leq M_{4,h}(t).
 \end{split}
 \label{eq:top-time-main}
\end{equation}
Equivalently, in physical coordinates,
\begin{equation}
 \begin{split}
 &\mathop{\mathrm{ess\,sup}}_{0<s<t}
   \left(\|\partial_t v^{\mathrm{phys}}(s)\|_{L^2(\Omega_h)}^2
                 +\|T_t(s)\|_{L^2(\Omega_h)}^2\right)\\
 &\quad+\int_0^t
   \left(\|\nabla\partial_t v^{\mathrm{phys}}\|_{L^2(\Omega_h)}^2
                 +\|X^hT_t\|_{L^2(\Omega_h)}^2\right)\,ds
 \leq M_{4,h}(t),
 \end{split}
 \label{eq:top-time-physical}
\end{equation}
where $X^h$ is the topographic horizontal gradient defined in
Subsection~\ref{subsec:topographic-notation}.
\end{proposition}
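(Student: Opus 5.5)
The plan is to transfer the time-difference-quotient argument of Proposition~\ref{prop:time-derivative-L2} to the weighted setting. The weighted stability estimate of Proposition~\ref{prop:topographic-strong-stability} plays the role of Proposition~\ref{prop:strong-solution-stability}. Fix $t_0>0$ and $\eta_0>0$, and work with the global strong solution from Theorem~\ref{prop:varying-bottom-main}(i) on $[0,t_0+\eta_0]$. For $0<\eta<\eta_0$ set $\delta_\eta v(t)=(v(t+\eta)-v(t))/\eta$ and $\delta_\eta\theta(t)=(\theta(t+\eta)-\theta(t))/\eta$. The pair $(v(\cdot+\eta),\theta(\cdot+\eta))$ is again a solution of \eqref{eq:topographic-coupled-system} in the regularity class of part~(i), with initial data $(v(\eta),\theta(\eta))$. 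Applying \eqref{eq:topographic-strong-stability} with $(v^{(2)},\theta^{(2)})=(v,\theta)$ and dividing by $\eta^2$ gives
\[
\sup_{0\le s\le t}\bigl(\|\delta_\eta v\|_{2,h}^2+\|\delta_\eta\theta\|_{2,h}^2\bigr)
+\int_0^t\bigl(\mathfrak a_{v,h}(\delta_\eta v,\delta_\eta v)+\|\nabla_H\delta_\eta\theta\|_{2,h}^2\bigr)
\le 2\,\frac{Y(\eta)}{\eta^2}\exp\Bigl(\int_0^t\Lambda_h[v,\theta]\Bigr).
\]
Here $Y(\eta):=\|v(\eta)-v_0\|_{2,h}^2+\|\theta(\eta)-\theta_0\|_{2,h}^2$. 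The exponent is finite and independent of $\eta$, by \eqref{eq:topographic-KZ-velocity-bound} and \eqref{eq:topographic-H1-bound}.

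The main step is to show $Y(\eta)\le C_0^2\eta^2e^{C_0\eta}$, with $C_0$ depending only on $\|v_0\|_{H^2}$, $\|\theta_0\|_{H^2}$ and $h$. Following the flat case, define $\omega_0$ from $v_0$ by \eqref{eq:omega-representation}. Then set
\[
F_0^v:=-P_h\bigl[\mathcal L_hv_0-L_{v_0}v_0-f_0k\times v_0+\mathcal B_h\theta_0\bigr],\qquad
F_0^\theta:=A_h\theta_0+L_{v_0}\theta_0 .
\]
We must check these lie in $L_h^2(Q)$. For $F_0^v$ this needs $\mathcal L_hv_0\in L^2$, which holds since $v_0\in H^2$; it also needs the projection to act boundedly, which is just $L^2$-contractivity of $P_h$. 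The transport terms are handled by \eqref{eq:topographic-anisotropic-product} with $H^2$ norms, and $\mathcal B_h\theta_0$ by \eqref{eq:topographic-basic-buoyancy-bound}. Pulling back through $\Phi$ uses $h\in C^3$. Next, subtract the stationary identities $0=-F_0^v+\dots$ from the equations satisfied by $v-v_0$ and $\theta-\theta_0$. Test with $v-v_0$ and $\theta-\theta_0$ in the weighted inner product. The skew-symmetry \eqref{eq:topographic-transport-skew} (with $(v,\omega)$ transporting) removes the leading transport terms. The pressure drops out because $v-v_0\in\operatorname{ran}P_h$, and the linear cross terms are estimated exactly as in the proof of Proposition~\ref{prop:topographic-strong-stability}, now with $(v_0,\theta_0)\in H^2$ as the frozen coefficients. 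The result is $Y'\le C_0Y+2C_0Y^{1/2}$ with $Y(0)=0$. Here $Y$ is absolutely continuous because $v,\theta\in H^1(0,t_0+\eta_0;L_h^2)$ by part~(i). Lemma~\ref{lem:sqrt-gronwall} then yields the bound on $Y(\eta)$.

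I expect the main obstacle to be the dissipative cross term. It has the form $(\mathcal L_h v_0,v-v_0)_h$ versus $\mathfrak a_{v,h}(v-v_0,v-v_0)$, and the bound must not produce boundary contributions on $\Gamma_t$. This is where the top stress condition $\partial_zv_0^{\mathrm{phys}}=0$ enters: it makes $\mathcal L_hv_0$ pair with $v-v_0$ through the form without a trace term, and it is the reason this hypothesis is assumed. The analogous issue for the temperature is harmless, since $A_h$ is purely horizontal and $\theta-\theta_0$ has zero lateral trace.

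Finally, $\delta_\eta v$ and $\delta_\eta\theta$ are Steklov averages of $v_t$ and $\theta_t$, so they converge strongly to $(v_t,\theta_t)$ in $L^2(0,t_0;L_h^2)$. The uniform bounds above, combined with the coercivity \eqref{eq:top-time-coercivity}, then give weak and weak-$*$ limits identified with $\nabla v_t$, $\nabla_H\theta_t$ and $(v_t,\theta_t)$. Lower semicontinuity yields \eqref{eq:top-time-main} with $M_{4,h}(t)=2C_0^2\exp(\int_0^t\Lambda_h)$. The physical form \eqref{eq:top-time-physical} follows by changing variables: the identity $(X_i^hT_t)\circ\Phi=\partial_{x_i}\theta_t$ converts the temperature term, and $\mathfrak a_{v,h}(v_t,v_t)=\|\nabla\partial_tv^{\mathrm{phys}}\|_{L^2(\Omega_h)}^2$ converts the velocity term.
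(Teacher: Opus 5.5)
Your proposal is correct and follows the paper's proof essentially step by step. The shared ingredients are the time-difference-quotient argument with Proposition~\ref{prop:topographic-strong-stability} as the weighted stability estimate, control of the initial quotients through the residual pair and Lemma~\ref{lem:sqrt-gronwall}, Steklov-average convergence with lower semicontinuity, and the change of variables to reach the physical form. Your observation that the top stress condition places $v_0$ in $D(\mathcal A_{v,h})$, so that the cross term pairs through $\mathfrak a_{v,h}$ without a trace on $\Gamma_t$, matches the paper's remark that the initial data lie in $D(\mathcal A_{v,h})$ and $D(A_h)$.
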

\begin{proof}
We follow the difference-quotient argument of
Proposition~\ref{prop:time-derivative-L2}, using
Proposition~\ref{prop:topographic-strong-stability}
for the weighted stability estimate.

Fix $t_0,\eta_0>0$ and consider the existing strong solution
on $[0,t_0+\eta_0]$. For $0<\eta<\eta_0$, set
\[
 \delta_\eta v(t):=\frac{v(t+\eta)-v(t)}{\eta},
 \qquad
 \delta_\eta\theta(t):=
             \frac{\theta(t+\eta)-\theta(t)}{\eta}.
\]
The initial regularity and boundary conditions imply
$v_0\in D(\mathcal A_{v,h})$ and $\theta_0\in D(A_h)$.
Define
\[
\begin{aligned}
 R_v&:=P_h\bigl[
       \mathcal L_hv_0-L_{v_0}v_0-f_0k\times v_0
                         +\mathcal B_h\theta_0\bigr],\\
 R_\theta&:=-A_h\theta_0-L_{v_0}\theta_0,
 \qquad
 E_0:=\|R_v\|_{2,h}^2+\|R_\theta\|_{2,h}^2.
\end{aligned}
\]
By \eqref{eq:topographic-anisotropic-product},
\eqref{eq:topographic-basic-buoyancy-bound}, and
$H^2(Q)\hookrightarrow L^\infty(Q)$, both residuals
belong to $L_h^2$, with
\begin{equation}
 E_0\leq C_{h,f_0}
       \bigl(1+\|v_0\|_{H^2}^2\bigr)
       \bigl(\|v_0\|_{H^2}^2+\|\theta_0\|_{H^2}^2\bigr).
 \label{eq:top-time-initial}
\end{equation}

We first estimate the initial difference quotients.
The equations for $v-v_0$ and $\theta-\theta_0$
have the same structure as the difference equations in
Proposition~\ref{prop:topographic-strong-stability},
with reference pair $(v_0,\theta_0)$ and additional
forcing $(R_v,R_\theta)$.
Repeating the energy calculation in its proof and retaining
these forcing terms gives
\[
\begin{aligned}
 &\frac{d}{dt}
       \bigl(\|v-v_0\|_{2,h}^2
                         +\|\theta-\theta_0\|_{2,h}^2\bigr)\\
 &\quad\leq
 C_{0,h}\bigl(\|v-v_0\|_{2,h}^2
                         +\|\theta-\theta_0\|_{2,h}^2\bigr)\\
 &\qquad\quad
 +2\sqrt{E_0}\,
       \bigl(\|v-v_0\|_{2,h}^2
                         +\|\theta-\theta_0\|_{2,h}^2\bigr)^{1/2},
\end{aligned}
\]
where $C_{0,h}$ depends only on the initial $H^2$ norms
and the fixed coefficients.
The known $H^1_tL_h^2$ regularity makes the energy
absolutely continuous, and its initial value is zero.
Lemma~\ref{lem:sqrt-gronwall} therefore yields
\[
 \bigl(\|v(s)-v_0\|_{2,h}^2
                    +\|\theta(s)-\theta_0\|_{2,h}^2\bigr)^{1/2}
 \leq s\sqrt{E_0}\,e^{C_{0,h}s/2}.
\]
Consequently,
\begin{equation}
 \|\delta_\eta v(0)\|_{2,h}^2
       +\|\delta_\eta\theta(0)\|_{2,h}^2
 \leq E_0e^{C_{0,h}\eta}.
 \label{eq:top-time-initial-quotients}
\end{equation}
No initial trace of $(v_t,\theta_t)$ is used.

Since the system is autonomous, both
$(v(\,\cdot+\eta),\theta(\,\cdot+\eta))$ and
$(v,\theta)$ solve the same equations on $[0,t_0]$.
Apply Proposition~\ref{prop:topographic-strong-stability}
with these as the first and second solutions, respectively.
Set
\begin{equation}
 \Lambda_h(s):=\Lambda_h[v,\theta](s),
 \label{eq:top-time-coefficient}
\end{equation}
where the expression on the right is defined in
\eqref{eq:topographic-stability-coefficient}.
This coefficient is independent of $\eta$ and belongs to
$L^1(0,t_0)$ by the previously established $H^1$ bounds.
Dividing \eqref{eq:topographic-strong-stability} by $\eta^2$
and using \eqref{eq:top-time-initial-quotients}, we obtain
\[
\begin{aligned}
 &\sup_{0\leq s\leq t}
       \bigl(\|\delta_\eta v(s)\|_{2,h}^2
                   +\|\delta_\eta\theta(s)\|_{2,h}^2\bigr)\\
 &\quad+\int_0^t
       \Bigl(\mathfrak a_{v,h}(\delta_\eta v,\delta_\eta v)
                   +\|\nabla_H\delta_\eta\theta\|_{2,h}^2\Bigr)\,ds\\
 &\qquad\leq
 2E_0\exp\!\left(C_{0,h}\eta+\int_0^t\Lambda_h(s)\,ds\right),
 \qquad 0\leq t\leq t_0.
\end{aligned}
\]
Here the factor $2$ results from estimating the supremum
and the dissipation integral separately.

Finally, the known $H^1_tL_h^2$ regularity gives
\[
 \delta_\eta v(t)=\frac1\eta\int_t^{t+\eta}v_t(s)\,ds,
 \qquad
 \delta_\eta\theta(t)=
                 \frac1\eta\int_t^{t+\eta}\theta_t(s)\,ds.
\]
As in Proposition~\ref{prop:time-derivative-L2},
the strong convergence of these Steklov averages yields
\[
 (\delta_\eta v,\delta_\eta\theta)
 \longrightarrow(v_t,\theta_t)
 \quad\text{strongly in }L^2(0,t_0;L_h^2)
 \quad\text{as }\eta\downarrow0.
\]
The uniform estimate also gives, along a subsequence,
weak convergence in the velocity and horizontal temperature
form spaces and weak-star convergence in
$L^\infty(0,t_0;L_h^2)$.
The strong convergence identifies these limits with the
time derivatives, and the form-domain constraints pass
to the limit by weak closedness.
Lower semicontinuity therefore gives
\begin{equation}
\begin{split}
 &\mathop{\mathrm{ess\,sup}}_{0<s<t}
       \bigl(\|v_t(s)\|_{2,h}^2+\|\theta_t(s)\|_{2,h}^2\bigr)\\
 &\quad+\int_0^t
       \Bigl(\mathfrak a_{v,h}(v_t,v_t)
                         +\|\nabla_H\theta_t\|_{2,h}^2\Bigr)\,ds\\
 &\qquad\leq
 2E_0\exp\!\left(\int_0^t\Lambda_h(s)\,ds\right).
\end{split}
\label{eq:top-time-gronwall}
\end{equation}
Since $t_0$ is arbitrary, we may take
\[
 M_{4,h}(t):=
 2(1+E_0)\exp\!\left(\int_0^t\Lambda_h(s)\,ds\right),
\]
which is continuous, nondecreasing, and finite on every
finite time interval.
The physical-coordinate estimate follows from
\eqref{eq:top-time-velocity-form} and
$(X^hT_t)\circ\Phi=\nabla_H\theta_t$.
\end{proof}
\paragraph{Remark.}
The temperature dissipation in physical coordinates is $\|X^hT_t\|_2^2$.
Indeed,
\[
 (\nabla_H^{\mathrm{phys}}T_t)\circ\Phi
 =\nabla_H\theta_t-\sigma\frac{\nabla_H h}{h}\,\partial_\sigma\theta_t,
\]
and the present estimate does not control
$\partial_\sigma\theta_t$. When $h$ is constant, $X^h=\nabla_H$,
and the proposition reduces to Proposition~\ref{prop:time-derivative-L2}.
The $H^2$ assumption on the initial data is used to control the
initial residual $E_0$ in \eqref{eq:top-time-initial};
all time-dependent coefficients in the proof are controlled at the
previously established $H^1$ level.

\subsubsection{Persistence of spatial $H^2$ regularity}
\label{subsubsec:topographic-spatial-H2}
We now derive the spatial priori estimates for part~\textup{(ii)} of
Theorem~\ref{prop:varying-bottom-main}, using the time-derivative
bounds from Subsubsection~\ref{subsubsec:topographic-time-derivatives}.
Throughout this subsubsection, $M$ has $C^3$ boundary and
$h\in C^3(\overline M)$ satisfies \eqref{eq:h-assumptions}.
For the higher-order estimates we impose the geometric condition
\begin{equation}
 \partial_\nu h=0\quad\hbox{on }\partial M,
 \label{eq:top-stokes-orthogonal}
\end{equation}
where $\nu$ is the horizontal outward unit normal. Indeed, the outward
normals to the sidewall and bottom are $(\nu,0)$ and
$(-\nabla_Hh,-1)/(1+|\nabla_Hh|^2)^{1/2}$, respectively; thus
\eqref{eq:top-stokes-orthogonal} means that these faces meet orthogonally.

\paragraph{Persistence estimates.}
Fix a finite $t_0>0$, and assume the initial-data hypotheses of
Proposition~\ref{prop:top-time-derivative-L2} and the orthogonality condition
\eqref{eq:top-stokes-orthogonal}. Unless a domain is indicated,
Sobolev norms below are taken on $Q$. We start from
\begin{equation}
 \begin{gathered}
 v\in L^\infty(0,t_0;H^1)\cap L^2(0,t_0;H^2),
 \qquad v_t\in L^\infty(0,t_0;L^2)\cap L^2(0,t_0;H^1),\\
 \theta\in L^\infty(0,t_0;H^1),\qquad
 \nabla_H\theta\in L^2(0,t_0;H^1),\\
 \theta_t\in L^\infty(0,t_0;L^2),\qquad
 \nabla_H\theta_t\in L^2(0,t_0;L^2).
 \end{gathered}
 \label{eq:top-spatial-known-bounds}
\end{equation}
We first estimate the velocity, then the second vertical temperature
derivative. The temperature equation recovers the remaining $H^2$
bounds, followed by the mixed and higher horizontal estimates.

\emph{Step 1: velocity regularity.}
We adapt the proofs of Propositions~\ref{prop:velocity-H2}
and~\ref{prop:velocity-higher-spatial}. Product estimates are taken on $Q$, while the Stokes estimates are applied on $\Omega_h$ and transferred by $\Phi$.
\[
 \omega(\sigma)=-\int_{-1}^{\sigma}
   \bigl(\nabla_H\cdot v+\nabla_H\log h\cdot v\bigr)(s)\,ds.
\]
The additional term has lower order with coefficient $\nabla_H\log h\in C^2(\overline M)$. In particular, whenever the right-hand sides are finite,
\[
 \|\omega\|_{H^1}\leq C_h\|v\|_{H^2},\qquad
 \|\omega\|_\infty+\|\nabla\omega\|_3
       \leq C_h\|v\|_{H^{7/3}}.
\]
\eqref{eq:top-time-buoyancy} gives $\|\mathcal B_h\theta\|_2\leq C_h\|\theta\|_{H^1}$, together with
\[
 \partial_\sigma(\mathcal B_h\theta)
       =h\nabla_H\theta-\sigma\nabla_Hh\,\partial_\sigma\theta,
\]
yields
\begin{equation}
 \|\mathcal B_h\theta\|_{H^1}
 \leq C_h\bigl(\|\nabla_H\theta\|_{H^1}+\|\theta\|_{H^1}\bigr).
 \label{eq:top-spatial-buoyancy-H1}
\end{equation}
Hence $\mathcal B_h\theta\in L^2(0,t_0;H^1)$ by
\eqref{eq:top-spatial-known-bounds}.

The interpolation argument of Proposition~\ref{prop:velocity-H2}, applied to the positive self-adjoint hydrostatic Stokes operator associated with $\mathfrak a_{v,h}$ in the weighted space, gives $v\in L^\infty(0,t_0;H^{3/2})$ by \eqref{eq:top-spatial-known-bounds} and Lemma~\ref{lem:top-stokes-H2}. The corresponding slice product estimate yields
\begin{equation}
 \|L_vv\|_2\leq C_h\|v\|_{H^{3/2}}^2.
 \label{eq:top-spatial-nonlinearity-L2}
\end{equation}
The physical momentum equation is \eqref{eq:top-stokes-system} with
\begin{equation}
 f=\bigl[-v_t-L_vv-f_0k\times v+\mathcal B_h\theta\bigr]
                                      \circ\Phi^{-1}.
 \label{eq:top-spatial-stokes-forcing}
\end{equation}
Thus $f\in L^\infty(0,t_0;L^2(\Omega_h))$, and Lemma~\ref{lem:top-stokes-H2} gives
\begin{equation}
 v\in L^\infty(0,t_0;H^2(Q)).
 \label{eq:top-spatial-velocity-H2}
\end{equation}

Using the interpolations similar as Proposition~\ref{prop:velocity-higher-spatial},
\[
 \begin{aligned}
 f\in L^2(0,t_0;H^{2/5}(\Omega_h))
 &\ \Longrightarrow\ v\in L^2(0,t_0;H^{7/3}(Q))\\
 &\ \Longrightarrow\ f\in L^2(0,t_0;H^1(\Omega_h)).
 \end{aligned}
\]
Here the interpolated Stokes estimate follows from
Lemmas~\ref{lem:top-stokes-H2} and~\ref{lem:top-stokes-higher}
under \eqref{eq:top-stokes-orthogonal}. Applying the latter lemma
once more yields, for every $0<\varepsilon<1$,
\begin{equation}
 \begin{gathered}
 v\in L^\infty(0,t_0;H^2(Q))\cap L^2(0,t_0;H^{3-\varepsilon}(Q)),\\
 \nabla_Hp_s\in L^2(0,t_0;H^{1-\varepsilon}(M)).
 \end{gathered}
 \label{eq:topographic-higher-velocity-assumption}
\end{equation}
\emph{Step 2: the second vertical derivative and $H^2$ recovery.}
Taking $\varepsilon=1/4$ in
\eqref{eq:topographic-higher-velocity-assumption}, we obtain
\[
 v\in L^\infty(0,t_0;H^2(Q))
       \cap L^2(0,t_0;H^{11/4}(Q)).
\]
Together with \eqref{eq:top-spatial-known-bounds} and the initial
and boundary conditions, this verifies the hypotheses of
Lemma~\ref{lem:weighted-vertical-temperature} with $\ell=1$ and
$\rho=h$. Hence
\begin{equation}
 \mathop{\mathrm{ess\,sup}}_{0<t<t_0}
       \|\partial_{\sigma\sigma}\theta(t)\|_{2,h}^2
 +\int_0^{t_0}
       \|\nabla_H\partial_{\sigma\sigma}\theta\|_{2,h}^2\,dt
 \leq C_{h,t_0}.
 \label{eq:topographic-H2-bound}
\end{equation}
Combined with
$\nabla_H\theta\in L^2(0,t_0;H^1)$, this yields
$\theta\in L^2(0,t_0;H^2(Q))$.

Recall,
\begin{equation}
 \partial_\sigma\omega
 =-\nabla_H\cdot v-\nabla_H\log h\cdot v.
 \label{eq:topographic-omega-derivatives}
\end{equation}

Since $\theta$ has zero lateral trace, the $s=2$ case of
Lemma~\ref{lem:anisotropic-isotropic} and horizontal Dirichlet
elliptic regularity give
\begin{equation}
 \|\theta\|_{H^2(Q)}
 \leq C_h\bigl(\|A_h\theta\|_{2,h}
          +\|\partial_{\sigma\sigma}\theta\|_{2,h}+\|\theta\|_{H^1}\bigr).
 \label{eq:topographic-H2-reconstruction}
\end{equation}
Moreover,\eqref{eq:topographic-anisotropic-product}, applied to $\partial_\sigma\theta$, implies
\begin{equation}
 \|\omega\partial_\sigma\theta\|_2
 \leq C_h\|v\|_{H^2}\|\partial_\sigma\theta\|_2^{1/2}
       \bigl(\|\partial_\sigma\theta\|_2
                    +\|\nabla_H\partial_\sigma\theta\|_2\bigr)^{1/2}.
 \label{eq:top-spatial-vertical-product}
\end{equation}
At almost every time, the temperature equation and these estimates
yield
\[
 \begin{aligned}
 \|A_h\theta\|_{2,h}
 &\leq\|\theta_t\|_{2,h}
       +\|v\cdot\nabla_H\theta\|_{2,h}
       +\|\omega\partial_\sigma\theta\|_{2,h}\\
 &\leq C_{h,t_0}\bigl(1+\|\nabla_H\partial_\sigma\theta\|_2^{1/2}\bigr)\\
 &\leq C_{h,t_0}\bigl(1+\|A_h\theta\|_{2,h}^{1/2}
                 +\|\partial_{\sigma\sigma}\theta\|_{2,h}^{1/2}\bigr).
 \end{aligned}
\]
Absorbing the square root of $\|A_h\theta\|_{2,h}$ gives
\begin{equation}
 \begin{aligned}
 \|A_h\theta\|_{2,h}
    &\leq C_{h,t_0}\bigl(1+\|\partial_{\sigma\sigma}\theta\|_{2,h}^{1/2}\bigr),\\
 \|\theta\|_{H^2}^2
    &\leq C_{h,t_0}\bigl(1+\|\partial_{\sigma\sigma}\theta\|_{2,h}^2\bigr).
 \end{aligned}
 \label{eq:top-spatial-temperature-recovery}
\end{equation}
Hence
\begin{equation}
 \theta\in L^\infty(0,t_0;H^2(Q)).
 \label{eq:topographic-H2-persistence}
\end{equation}

\emph{Step 3: mixed derivatives and the temperature time derivative.}
Differentiation in $\sigma$ gives
\begin{equation}
 \begin{split}
 \partial_\sigma\theta_t+A_h\partial_\sigma\theta
 ={}&-v\cdot\nabla_H\partial_\sigma\theta
      -\omega\partial_{\sigma\sigma}\theta\\
    &-\partial_\sigma v\cdot\nabla_H\theta
      -\partial_\sigma\omega\,\partial_\sigma\theta.
 \end{split}
 \label{eq:topographic-q-equation}
\end{equation}
Sobolev embedding and \eqref{eq:topographic-omega-derivatives}
bound the $L_h^2$ norm of the right-hand side by
\[
 C_h\bigl(\|v\|_{H^2}+\|\omega\|_\infty\bigr)\|\theta\|_{H^2}.
\]
This belongs to $L^2(0,t_0)$: $v$ and $\theta$ are bounded in $H^2$, and the estimate from Step~1 gives $\omega\in L^2(0,t_0;L^\infty)$. Testing \eqref{eq:topographic-q-equation} with $\partial_\sigma\theta_t$ and using the preceding bound gives
\[
 \begin{aligned}
 &\frac12\frac{d}{dt}
       \|\nabla_H\partial_\sigma\theta\|_{2,h}^2
       +\frac12\|\partial_\sigma\theta_t\|_{2,h}^2\\
 &\qquad\leq C_h\bigl(\|v\|_{H^2}^2+\|\omega\|_\infty^2\bigr)
                        \|\theta\|_{H^2}^2.
 \end{aligned}
\]
Integrating in time, using
$\|\nabla_H\partial_\sigma\theta_0\|_{2,h}
 \leq C_h\|\theta_0\|_{H^2}$, and then estimating
$A_h\partial_\sigma\theta$ from
\eqref{eq:topographic-q-equation}, we obtain
\begin{equation}
 \begin{split}
 &\mathop{\mathrm{ess\,sup}}_{0<t<t_0}
       \|\nabla_H\partial_\sigma\theta(t)\|_{2,h}^2\\
 &\quad+\int_0^{t_0}\bigl(
       \|\partial_\sigma\theta_t\|_{2,h}^2
             +\|A_h\partial_\sigma\theta\|_{2,h}^2\bigr)\,dt
 \leq C_{h,t_0}.
 \end{split}
 \label{eq:topographic-mixed-H2-energy}
\end{equation}
Combining this with the horizontal time-derivative bound in
\eqref{eq:top-spatial-known-bounds}, we obtain
\begin{equation}
 \theta_t\in L^2(0,t_0;H^1(Q)).
 \label{eq:topographic-temperature-time-H1}
\end{equation}
\emph{Step 4: higher horizontal regularity and physical coordinates.}
We follow the proof of Proposition~\ref{prop:horizontal-T-higher-reg},
using the second inequality in \eqref{eq:2d-product}.
Taking $\varepsilon=1/2$ in
\eqref{eq:topographic-higher-velocity-assumption} gives $v\in L^2(0,t_0;H^{5/2}(Q))$.
\
\[
 \|v\|_{L^\infty_\sigma H^{3/2}_H}
 +\|\omega\|_{L^\infty_\sigma H^{3/2}_H}
 \leq C_h\|v\|_{H^{5/2}},
\]
since the additional term $h^{-1}\nabla_Hh\cdot v$ has lower differential order. Expanding the temperature equation gives
\[
 \Delta_H\theta
 =\partial_t\theta+v\cdot\nabla_H\theta
   +\omega\partial_\sigma\theta
   -\frac{\nabla_Hh}{h}\cdot\nabla_H\theta.
\]
The horizontal product estimate therefore yields
\[
 \|\Delta_H\theta\|_{L^2_\sigma H^1_H}
 \leq C_h\bigl(\|\partial_t\theta\|_{H^1}
                 +(1+\|v\|_{H^{5/2}})\|\theta\|_{H^2}\bigr).
\]
The right-hand side belongs to $L^2(0,t_0)$ by Steps~1--3.
Horizontal Dirichlet regularity gives
$\theta\in L^2(0,t_0;L^2_\sigma H^3_H)$.
Applying Lemma~\ref{lem:anisotropic-isotropic} with $s=2$ to
$\nabla_H\theta$, and using \eqref{eq:topographic-H2-bound} to control
$\nabla_H\partial_{\sigma\sigma}\theta$, we obtain
\begin{equation}
 \nabla_H\theta\in L^2(0,t_0;H^2(Q)).
 \label{eq:topographic-higher-horizontal-regularity}
\end{equation}
Only the lateral condition on $\theta$ is used in the elliptic step;
no boundary condition on $\Delta_H\theta$ is imposed.

The bounds for $\theta$ and $\partial_t\theta$ imply
$\theta\in H^1(0,t_0;H^1(Q))\subset C([0,t_0];H^1(Q))$.
The fixed map $\Phi$ and its inverse are $C^3$, with Jacobian bounded
above and away from zero. Their Sobolev norm equivalences give
\begin{equation}
 C_h^{-1}\|T\|_{H^2(\Omega_h)}
 \leq\|T\circ\Phi\|_{H^2(Q)}
 \leq C_h\|T\|_{H^2(\Omega_h)},
 \label{eq:H2-coordinate-equivalence}
\end{equation}
and the corresponding norm equivalence at order $1$. The chain rule gives
\[
 \left(\nabla_HT+\frac{z}{h}\nabla_Hh\,\partial_zT\right)
       \circ\Phi=\nabla_H\theta,
 \qquad
 (\partial_tT)\circ\Phi=\partial_t\theta.
\]
Consequently,
\begin{equation}
 \begin{gathered}
 T\in L^\infty(0,t_0;H^2(\Omega_h))
                  \cap C([0,t_0];H^1(\Omega_h)),\\
 \nabla_HT+\frac{z}{h}\nabla_Hh\,\partial_zT
   \in L^2(0,t_0;H^2(\Omega_h)),\\
 \partial_tT\in L^2(0,t_0;H^1(\Omega_h)).
 \end{gathered}
 \label{eq:topographic-physical-H2-conclusion}
\end{equation}
For every $0<\varepsilon<1$, the velocity estimates similarly give
\[
 \begin{gathered}
 v^{\mathrm{phys}}\in L^\infty(0,t_0;H^2(\Omega_h))
                         \cap C([0,t_0];H^1(\Omega_h)),\\
  v^{\mathrm{phys}}\in L^2(0,t_0;H^{3-\varepsilon}(\Omega_h)),
 \qquad \partial_t v^{\mathrm{phys}}\in L^2(0,t_0;H^1(\Omega_h)).
 \end{gathered}
\]
This proves part~\textup{(ii)} of Theorem~\ref{prop:varying-bottom-main}, under \eqref{eq:top-stokes-orthogonal}, with the original boundary conditions. For constant $h$, the term containing $\nabla_Hh$ vanishes, and these conclusions reduce to the flat-bottom estimates.
\appendix
\section{Analytic Tools}
\label{app:analytic-tools}
We first record several inequalities and compactness results used below.
\begin{lemma}\label{anisotropic inequality}
(See \cite{https://doi.org/10.1002/cpa.10056} or \cite{CAO20144108}.)
Let $h>0$ be the constant depth. For functions $f,g,\varphi$
on $\Omega=M\times(-h,0)$ such that the right-hand sides below
are finite, the following inequalities hold:
\begin{align*}
&\int_M
 \left(\int_{-h}^0 f(x,y,z)\,dz\right)
 \left(\int_{-h}^0 g(x,y,z)\varphi(x,y,z)\,dz\right)\,dx\,dy\\
&\quad\leq
 C\|f\|_2^{1/2}
 \bigl(\|f\|_2^{1/2}+\|\nabla_Hf\|_2^{1/2}\bigr)
 \|g\|_2\|\varphi\|_2^{1/2}
 \bigl(\|\varphi\|_2^{1/2}
       +\|\nabla_H\varphi\|_2^{1/2}\bigr),
\end{align*}
and
\begin{align*}
&\int_M
 \left(\int_{-h}^0 f(x,y,z)\,dz\right)
 \left(\int_{-h}^0 g(x,y,z)\varphi(x,y,z)\,dz\right)\,dx\,dy\\
&\quad\leq
 C\|f\|_2\|g\|_2^{1/2}
 \bigl(\|g\|_2^{1/2}+\|\nabla_Hg\|_2^{1/2}\bigr)
 \|\varphi\|_2^{1/2}
 \bigl(\|\varphi\|_2^{1/2}
       +\|\nabla_H\varphi\|_2^{1/2}\bigr).
\end{align*}
\end{lemma}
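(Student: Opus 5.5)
The plan is to reduce both inequalities to one-dimensional vertical bounds followed by two-dimensional Ladyzhenskaya and H\"older estimates on $M$. For fixed $(x,y)\in M$, Cauchy--Schwarz in $z$ gives
\[
\left|\int_{-h}^0 f\,dz\right|\le h^{1/2}\Bigl(\int_{-h}^0|f|^2dz\Bigr)^{1/2}=:h^{1/2}F(x,y),
\]
and
\[
\left|\int_{-h}^0 g\varphi\,dz\right|\le G(x,y)\,\Phi(x,y),
\]
where $G,\Phi$ are defined in the same way from $g,\varphi$. Hence the left-hand side is bounded by $h^{1/2}\int_M F\,G\,\Phi\,dx\,dy$. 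Note that $\|F\|_{L^2(M)}=\|f\|_{L^2(\Omega)}$, and similarly for $G$ and $\Phi$.

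For the first inequality I apply H\"older on $M$ with exponents $(4,2,4)$:
\[
\int_M FG\Phi\le\|F\|_{L^4(M)}\|G\|_{L^2(M)}\|\Phi\|_{L^4(M)}.
\]
For the second inequality I use the same exponents, with the $L^2(M)$ norm placed on $F$ and the $L^4(M)$ norms on $G$ and $\Phi$. It therefore suffices to prove, for any $f$ with finite right-hand side,
\[
\|F\|_{L^4(M)}\le C\|f\|_2^{1/2}\bigl(\|f\|_2^{1/2}+\|\nabla_Hf\|_2^{1/2}\bigr).
\]

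To prove this, I apply the two-dimensional Gagliardo--Nirenberg (Ladyzhenskaya) inequality on the bounded smooth domain $M$,
\[
\|F\|_{L^4(M)}\le C\|F\|_{L^2(M)}^{1/2}\|F\|_{H^1(M)}^{1/2},
\]
which holds without any boundary condition thanks to an extension operator. Its inhomogeneous form is what produces the $\|f\|_2^{1/2}+\|\nabla_Hf\|_2^{1/2}$ structure. It remains to check that $F\in H^1(M)$ with $|\nabla_HF|\le(\int_{-h}^0|\nabla_Hf|^2dz)^{1/2}$ a.e. This follows from $\nabla_H F^2=2\int f\nabla_Hf\,dz$ and Cauchy--Schwarz, which gives $|\nabla_H F|\le(\int|\nabla_H f|^2)^{1/2}$ where $F>0$. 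To make this rigorous I would work with $F_\epsilon=(\int|f|^2dz+\epsilon)^{1/2}$ for smooth $f$, let $\epsilon\to0$, and then use density. Consequently $\|\nabla_HF\|_{L^2(M)}\le\|\nabla_Hf\|_{L^2(\Omega)}$. Combined with $\|F\|_{L^2(M)}=\|f\|_2$ and $(a+b)^{1/2}\le a^{1/2}+b^{1/2}$, this yields the claimed $L^4(M)$ bound.

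Applying this bound to $F$ and $\Phi$ in the first case, and to $G$ and $\Phi$ in the second, completes both inequalities, with $C$ depending only on $M$ and $h$. The only delicate point is the weak differentiability of the vertical $L^2$ norm $F$, since it is non-smooth where it vanishes; the $\epsilon$-regularization handles this. The rest is routine.
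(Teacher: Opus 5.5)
Your argument is correct and is essentially the standard proof from the cited work of Cao and Titi, which the paper relies on without giving its own proof: Cauchy--Schwarz in $z$, H\"older on $M$ with exponents $(4,2,4)$, and the two-dimensional Ladyzhenskaya inequality. The only minor difference is in the $L^4(M)$ bound for the vertical norms. The usual presentation moves the $L^4(M)$ norm inside the vertical integral by Minkowski's integral inequality, e.g.\ $\|(\int_{-h}^0|\varphi|^2dz)^{1/2}\|_{L^4(M)}^2\leq\int_{-h}^0\|\varphi(\cdot,z)\|_{L^4(M)}^2dz$, and then applies Ladyzhenskaya slice by slice; this sidesteps the weak differentiability of $F$ that you handle by $\epsilon$-regularization and density.
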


\begin{lemma}\label{lemma:Gagliardo}
\noindent\textup{(i) Gagliardo--Nirenberg interpolation inequality.}
(see \cite{BREZIS20181355})Let $\Omega \subset \mathbb{R}^n$ be either the whole space, a half-space or a bounded Lipschitz domain. Let $1 \leq p, p_1, p_2 \leq +\infty$ be three positive extended real quantities and let $s, s_1, s_2$ be non-negative real numbers. Furthermore, let $\theta \in (0, 1)$ and assume that
\[
s_1 \leq s_2, \qquad s = \theta s_1 + (1 - \theta)s_2, \qquad \frac{1}{p} = \frac{\theta}{p_1} + \frac{1 - \theta}{p_2}
\]
hold. Then,
\[
\|u\|_{W^{s,p}(\Omega)} \leq C \|u\|_{W^{s_1,p_1}(\Omega)}^{\theta} \|u\|_{W^{s_2,p_2}(\Omega)}^{1-\theta}
\]
for any $u \in W^{s_1,p_1}(\Omega) \cap W^{s_2,p_2}(\Omega)$ if and only if
\[
\text{at least one of }
\begin{cases}
s_2 \in \mathbb{N} \text{ and } s_2 \geq 1, \\
p_2 = 1, \\
0 < s_2 - s_1 \leq 1 - \frac{1}{p_1}
\end{cases}
\text{ is false.}
\]
The constant $C > 0$ depends on the parameters $p, p_1, p_2, s, s_1, s_2, \theta$, on the domain $\Omega$, but not on $u$.

\medskip
\noindent\textup{(ii) Complex interpolation on bounded domains.}
Let $D\subset\mathbb R^n$ be a bounded Lipschitz domain. Let
$s_0,s_1\geq0$ with $s_0\neq s_1$, $1<p_0,p_1<\infty$, and
$0<\theta<1$, and set
\[
 s=(1-\theta)s_0+\theta s_1,\qquad
 \frac1p=\frac{1-\theta}{p_0}+\frac{\theta}{p_1}.
\]
Then
\begin{equation}
 [H_{p_0}^{s_0}(D),H_{p_1}^{s_1}(D)]_\theta=H_p^s(D)
 \label{eq:bounded-domain-complex-interpolation}
\end{equation}
with equivalent norms. Here $[\cdot,\cdot]_\theta$ denotes complex
interpolation, and $H_p^s(D)$ is the Bessel potential space defined
by restriction from $H_p^s(\mathbb R^n)$. These are the full spaces,
with no boundary trace conditions imposed.

In particular, if $m_0,m_1\in\mathbb N_0$, $m_0<m_1$, and
$(1-\theta)/p_0+\theta/p_1=1/2$, then
\begin{equation}
 [W^{m_0,p_0}(D),W^{m_1,p_1}(D)]_\theta
 =H^{(1-\theta)m_0+\theta m_1}(D)
 \label{eq:bounded-domain-integer-interpolation}
\end{equation}
with equivalent norms. We use here the integer-order identification
$H_p^m(D)=W^{m,p}(D)$ for $m\in\mathbb N_0$ and $1<p<\infty$,
and the identity $H_2^s(D)=H^s(D)$.
\end{lemma}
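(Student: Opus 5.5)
The statement collects two known facts, and I would treat them separately. Part (i) is the Brezis--Mironescu characterization, which I would quote rather than reprove. Part (ii) I would reduce to the whole-space interpolation identity by a retraction/coretraction argument.

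\emph{Part (i).} This is exactly the main theorem of \cite{BREZIS20181355}, so I would cite it verbatim. For orientation, the sufficiency direction goes roughly as follows.
\begin{compactenumi}
\item On $\mathbb R^n$ one writes the intermediate norm through Littlewood--Paley pieces or Gagliardo-type difference quotients.
\item One splits low and high frequencies at a threshold depending on $u$, bounding the low part by $W^{s_1,p_1}$ and the high part by $W^{s_2,p_2}$.
\item Optimizing the threshold gives the multiplicative inequality.
\end{compactenumi}
Half-spaces and bounded Lipschitz domains then follow via a Stein-type extension operator that is simultaneously bounded on $W^{s_1,p_1}$ and $W^{s_2,p_2}$. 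The necessity of the three exceptional conditions comes from the counterexamples constructed in \cite{BREZIS20181355}, typically functions built from $\log$-type singularities or from characteristic-function-like profiles. I would not reproduce these.

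\emph{Part (ii).} I would fix a universal extension operator $\mathcal E$ for the bounded Lipschitz domain $D$, for instance Rychkov's operator. It maps $H^{s}_p(D)\to H^s_p(\mathbb R^n)$ boundedly for all $s\ge0$ and $1<p<\infty$ simultaneously, with $R\mathcal E=\mathrm{id}$ where $R$ is restriction. Since $R$ and $\mathcal E$ are compatible bounded maps at both endpoints, the retraction theorem (Triebel, \S1.2.4) gives
\[
[H^{s_0}_{p_0}(D),H^{s_1}_{p_1}(D)]_\theta
=R\bigl([H^{s_0}_{p_0}(\mathbb R^n),H^{s_1}_{p_1}(\mathbb R^n)]_\theta\bigr),
\]
with equivalent norms. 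The whole-space identity $[H^{s_0}_{p_0},H^{s_1}_{p_1}]_\theta=H^s_p$ is classical. It follows by conjugating with the Bessel potential $(1-\Delta)^{s/2}$ and using the complex-interpolation version of the identity $[L^{p_0}(\ell^2),L^{p_1}(\ell^2)]$. An alternative is the Stein analytic family $(1-\Delta)^{z}$, which has bounded imaginary powers on $L^p$ by Mikhlin. Restricting back gives \eqref{eq:bounded-domain-complex-interpolation}.

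For the integer case \eqref{eq:bounded-domain-integer-interpolation}, I would combine this with the identity $H^m_p(D)=W^{m,p}(D)$ for $1<p<\infty$. On $\mathbb R^n$ this identity is Calder\'on's theorem via Riesz transforms, and it transfers to $D$ again by the extension operator. Finally, $H^s_2=H^s$ on $\mathbb R^n$ by Plancherel.

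\emph{Expected obstacle.} The only genuinely delicate point is the existence of a \emph{single} extension operator bounded on all the spaces involved for a merely Lipschitz boundary. The classical Stein operator covers integer orders. For fractional orders one needs Rychkov's construction, or else an interpolation of Stein's operator between integer orders. I would cite Rychkov to avoid this. In the paper's applications $D=\Omega$ is a cylinder with edges, so Lipschitz is exactly the regularity available, and this point cannot be bypassed by assuming smoothness.
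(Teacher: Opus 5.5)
Your proposal is correct.

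For part (i) you do what the paper does: you quote Theorem~1 of \cite{BREZIS20181355}. The paper invokes that theorem only for $s_1<s_2$ and handles $s_1=s_2$ separately by H\"older's inequality. It applies H\"older to $D^\alpha u$ and to the Gagliardo difference quotient, using $n/p+s=\theta(n/p_1+s)+(1-\theta)(n/p_2+s)$. Adding that line costs nothing and makes you independent of how the equal case is stated in the reference.

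For part (ii) the routes genuinely differ. The paper takes the Lipschitz-domain identity ready-made from \cite{GKMW17}. You instead rebuild it from the whole-space identity by a retraction argument, using a single extension operator (Rychkov's) that is bounded on every $H^s_p$ at once. Your version shows that the only geometric input is such a universal extension operator. That is exactly why Lipschitz regularity suffices, and hence why the edged cylinders used later are covered. The paper's version is shorter and avoids the retraction theorem. The integer-order step via $H^m_p(D)=W^{m,p}(D)$ and $H^s_2=H^s$ is the same in both.

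One imprecision appears in your aside on the whole-space identity. Conjugating by a single Bessel potential shifts both smoothness indices by the same amount. So, even combined with $[L^{p_0}(\ell^2),L^{p_1}(\ell^2)]_\theta$, it does not handle $s_0\neq s_1$. The Littlewood--Paley retraction must go onto weighted spaces $L^{p_j}(\ell^{s_j}_2)$ with weights $2^{js_j}$, using $[\ell^{s_0}_2,\ell^{s_1}_2]_\theta=\ell^{s}_2$. Your alternative, via the analytic family of Bessel potentials with bounded imaginary powers, is fine as stated. In any case the whole-space fact can simply be cited.
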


\begin{proof}
For $s_1<s_2$, part~\textup{(i)} is
\cite[Theorem~1]{BREZIS20181355}; the case $s_1=s_2$ follows from
H\"older's inequality.
Formula \eqref{eq:bounded-domain-complex-interpolation} is the
Lipschitz-domain version of the Bessel potential interpolation formula
in \cite[Appendix~A, formula~(A11) and the paragraph following~(A12)]{GKMW17}.
The integer-order identifications stated above then give
\eqref{eq:bounded-domain-integer-interpolation}.
\end{proof}

\begin{lemma}[Aubin--Lions lemma]\label{lem:aubin-lions}\cite{SimonCompactness}.
Let $X$, $B$ and $Y$ be three Banach spaces such that
\[
X\hookrightarrow\hookrightarrow B\hookrightarrow Y .
\]
Then the following results hold:
\begin{itemize}
\item[(i)]
If $F$ is bounded in $L^p(0,T;X)$, $1\le p<\infty$, and $\left\{\partial_t f;\ f\in F\right\}$ is bounded in $L^1(0,T;Y)$, then $F$ is relatively compact in
$L^p(0,T;B)$
\item[(ii)]
If $F$ is bounded in $L^\infty(0,T;X)$ and
$\{\partial_t f;\ f\in F\}$ is bounded in $L^r(0,T;Y)$ for some
$r>1$, then $F$ is relatively compact in
$C([0,T];B)$
\end{itemize}
\end{lemma}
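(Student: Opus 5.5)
This is the Aubin--Lions--Simon compactness lemma, cited from \cite{SimonCompactness}. I would prove it in three steps: an Ehrling-type interpolation inequality, a time-translation estimate, and Simon's compactness criterion in $L^p(0,T;B)$.

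The first step is the interpolation inequality. The embedding $X\hookrightarrow B$ is compact and $B\hookrightarrow Y$ is continuous and injective. From this I will show that for every $\eta>0$ there is $C_\eta$ with
\[
\|u\|_B\leq\eta\|u\|_X+C_\eta\|u\|_Y,\qquad u\in X.
\]
The argument is by contradiction. Suppose there are $u_k$ with $\|u_k\|_X=1$ and $\|u_k\|_B>\eta+k\|u_k\|_Y$. By compactness a subsequence converges in $B$ to some $u$ with $\|u\|_B\geq\eta$. On the other hand $\|u_k\|_Y\leq C/k\to0$, so $u=0$ by injectivity, which is a contradiction.

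The second step is the translation estimate. For $f\in F$, writing $f(t+a)-f(t)=\int_t^{t+a}\partial_tf$ gives
\[
\|f(\cdot+a)-f\|_{L^1(0,T-a;Y)}\leq a\|\partial_tf\|_{L^1(0,T;Y)},
\]
uniformly over $F$. In case (ii) the same identity together with H\"older gives the pointwise bound $\|f(t+a)-f(t)\|_Y\leq a^{1-1/r}\|\partial_tf\|_{L^r(Y)}$. This is equicontinuity in $C([0,T];Y)$, and the interpolation inequality upgrades it to $\|f(t+a)-f(t)\|_B\leq 2\eta M+C_\eta a^{1-1/r}K$. So $F$ is equicontinuous in $C([0,T];B)$. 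Boundedness in $L^\infty(X)$, combined with continuity into $Y$, shows that each $f(t)$ lies in a fixed bounded set of $X$, hence in a compact set of $B$. Arzel\`a--Ascoli then gives relative compactness in $C([0,T];B)$.

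The third step, case (i), I expect to be the main obstacle. There the translation estimate holds only in $L^1(Y)$ and not in $L^p(B)$, and I would handle it with Simon's criterion. First, for each $0<t_1<t_2<T$ the averages $\int_{t_1}^{t_2}f\,dt$ are bounded in $X$ (using $p\geq1$), hence relatively compact in $B$. Second, I need $\|f(\cdot+a)-f\|_{L^p(0,T-a;B)}\to0$ uniformly over $F$. By the interpolation inequality this reduces to making $\|f(\cdot+a)-f\|_{L^p(Y)}$ small, and that is the delicate point. I would get it by interpolating the $L^1(Y)$ translation smallness against the $L^p(B)$ boundedness, approximating by time mollifications $f*\rho_\epsilon$. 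The mollified family is relatively compact in $C([\delta,T-\delta];B)$ by the argument of step two. The errors $f-f*\rho_\epsilon$ are small in $L^p(B)$, again uniformly, because of the translation estimate combined with the interpolation inequality, using truncation to pass from $L^1$ to $L^p$. Endpoint layers of length $\delta$ are controlled by the $L^p(X)$ bound, which gives equi-integrability in time. A diagonal, totally-bounded argument then concludes relative compactness in $L^p(0,T;B)$.
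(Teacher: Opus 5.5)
The paper gives no proof of this lemma; it only cites Simon. Your Ehrling inequality, the bound of the averages $\int_{t_1}^{t_2}f\,dt$ in $X$, the reduction of (i) to uniform smallness of $\|f(\cdot+a)-f\|_{L^p(0,T-a;Y)}$, and the Arzel\`a--Ascoli argument for (ii) follow the standard route. Part (i), however, has a genuine gap exactly at the step you call delicate, because both mechanisms you propose fail.

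\begin{itemize}
\item Interpolating the $L^1(Y)$ translation smallness against $L^p$-boundedness cannot give $L^p$-smallness, because $L^p$ is the endpoint of that interpolation.
\item A bounded set in $L^p(0,T;X)$ need not be $p$-equi-integrable in time, so the endpoint layers are not controlled by the $L^p(X)$ bound.
\end{itemize}
For instance, take $p>1$, $0\ne\phi\in C_c^\infty(0,1)$ and $u_0\in X\setminus\{0\}$, and consider the spikes $f_n(t)=n^{1/p}\phi(nt)u_0$. They are bounded in $L^p(0,T;X)$ and satisfy $\|f_n(\cdot+a)-f_n\|_{L^1(0,T-a;Y)}\le C\min\{an^{1/p},n^{1/p-1}\}\le Ca^{1-1/p}$ uniformly in $n$. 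Yet $\int_0^\delta\|f_n\|_B^p\,dt=\|\phi\|_{L^p}^p\|u_0\|_B^p$ for $n\ge 1/\delta$. Moreover, no subsequence converges in $L^p(0,T;B)$, since $f_n\to0$ in $L^1(0,T;B)$ while $\|f_n\|_{L^p(0,T;B)}$ is constant. This does not contradict the lemma, because $\|\partial_tf_n\|_{L^1(Y)}\sim n^{1/p}$ is unbounded. What it shows is that the derivative bound must be used more strongly than your sketch uses it.

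Mollifying does not help either. The error satisfies $\|f-f*\rho_\epsilon\|_{L^p(\delta,T-\delta;B)}\le\sup_{|s|\le\epsilon}\|f(\cdot+s)-f\|_{L^p(\delta,T-\delta;B)}$, which is the same translation quantity, so you are back at the step that needs the missing ingredient.

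The missing idea is a uniform bound in $L^\infty(0,T;Y)$. Since $F$ is bounded in $L^p(0,T;X)\hookrightarrow L^1(0,T;Y)$ and $\{\partial_tf\}$ is bounded in $L^1(0,T;Y)$, every $f\in F$ lies in $W^{1,1}(0,T;Y)\subset C([0,T];Y)$. Averaging $f(t)=f(s)+\int_s^t\partial_tf$ over $s\in(0,T)$ gives
\[
\sup_{0\le t\le T}\|f(t)\|_Y\le T^{-1}\|f\|_{L^1(0,T;Y)}+\|\partial_tf\|_{L^1(0,T;Y)}\le K
\]
uniformly on $F$. With $K_1:=\sup_F\|\partial_tf\|_{L^1(0,T;Y)}$ this yields
\[
\|f(\cdot+a)-f\|_{L^p(0,T-a;Y)}\le\|f(\cdot+a)-f\|_{L^\infty(Y)}^{1-1/p}\,\|f(\cdot+a)-f\|_{L^1(Y)}^{1/p}\le(2K)^{1-1/p}(aK_1)^{1/p}.
\]
Your Ehrling inequality converts this into uniform smallness in $L^p(0,T-a;B)$. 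Together with the bounded averages, Simon's criterion then gives (i) immediately, and the mollification, truncation and diagonal steps become superfluous. If you keep them, the endpoint layers are controlled by $\|f\|_{L^p(0,\delta;B)}\le\eta\|f\|_{L^p(0,T;X)}+C_\eta\delta^{1/p}K$, which again rests on the $L^\infty(Y)$ bound. For $p=1$ your argument is already complete as written.

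A smaller point concerns (ii). Boundedness in $L^\infty(0,T;X)$ and continuity into $Y$ do not put every $f(t)$ in a bounded subset of $X$ unless, say, $X$ is reflexive. For example, with $X=C^1[0,1]$ and $B=Y=C[0,1]$, a uniform limit of $C^1$-bounded functions need only be Lipschitz. What you do get is that $f(t)$ lies in the $B$-closure $K_M$ of the $X$-ball of radius $M$, which is compact in $B$. The bound $\|u-v\|_B\le 2\eta M+C_\eta\|u-v\|_Y$ on that ball extends to $K_M$ by continuity, so your equicontinuity estimate and the Arzel\`a--Ascoli conclusion go through after this adjustment.
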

In the following two lemmas, let $M\subset\mathbb R^2$ be a bounded
connected $C^3$ domain and let $h\in C^3(\overline M)$ satisfy
$\min_{\overline M}h>0$. We set
\[
 \Omega_h=\{(x,y,z):(x,y)\in M,\ -h(x,y)<z<0\}.
\]
When $h\equiv h_0>0$, we write $\Omega=M\times(-h_0,0)$.
All derivatives in these lemmas are taken in physical coordinates,
and $\Delta$ denotes the full three-dimensional Laplacian.
\begin{equation}
 \begin{cases}
 -\Delta v+\nabla_Hp_s=f&\text{in }\Omega_h,\\
 \displaystyle\nabla_H\cdot\int_{-h(x,y)}^0
          v(x,y,z)\,dz=0&\text{in }M,\\
 v=0&\text{on }\Gamma_b\cup\Gamma_s,\\
 \partial_zv=0&\text{on }\Gamma_t,
 \end{cases}
 \qquad \int_Mp_s=0.
 \label{eq:top-stokes-system}
\end{equation}
\begin{lemma}[Variable-depth $H^2$ Stokes estimate]
 \label{lem:top-stokes-H2}
For $f\in L^2(\Omega_h)^2$,
\begin{equation}
 \|v\|_{H^2(\Omega_h)}
 +\|p_s\|_{H^1(M)}\leq C_h\|f\|_{L^2(\Omega_h)}.
 \label{eq:top-stokes-H2}
\end{equation}
Here $C_h$ depends only on $M$ and $h$.
This conclusion does not require \eqref{eq:top-stokes-orthogonal}.
In particular, it also applies when $h(x,y)\equiv h_0>0$ is constant,
in which case $\Omega_h=M\times(-h_0,0)$.
\end{lemma}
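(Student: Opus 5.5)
The plan is the classical route for the hydrostatic Stokes problem: reduce to a two-dimensional Stokes-type problem for the barotropic mode and a family of one-dimensional-in-$z$ elliptic problems for the baroclinic part. This is the approach of \cite{Ziane01111995} and, for variable depth, of \cite{HuTemamZiane2002,Kukavica_2007}. I would first flatten the domain with $\Phi$. Since $h\in C^3(\overline M)$, the pullback operator $\mathcal L_h$ is a uniformly elliptic second-order operator on $Q=M\times(-1,0)$ with $C^1$ coefficients. Its principal part is $\Delta_H+h^{-2}(1+\sigma^2|\nabla_Hh|^2)\partial_\sigma^2$ plus mixed terms $-2\sigma h^{-1}\nabla_Hh\cdot\nabla_H\partial_\sigma$. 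The boundary conditions become $v=0$ at $\sigma=-1$ and on $\partial M\times(-1,0)$, and $\partial_\sigma v=0$ at $\sigma=0$.

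The first step is the weak theory. By Lax--Milgram on $V_h$ with the coercive form $\mathfrak a_{v,h}$ from \eqref{eq:top-time-coercivity}, we obtain a unique $v\in V_h$ with $\|v\|_{H^1}\le C\|f\|_2$. The surface pressure $p_s\in L^2_0(M)$ is then recovered by de Rham's theorem applied to the depth-integrated functional; this works because the constraint is $\nabla_H\cdot(h\bar v)=0$, and $\nabla_H$ has closed range from $L^2_0(M)$ into $H^{-1}(M)$. This gives $\|p_s\|_{L^2(M)}\le C\|f\|_2$.

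Next come the horizontal tangential regularity estimates. In the interior of $M$ and near $\partial M$ I would use horizontal difference quotients along $\partial M$ after a local $C^3$ straightening of $\partial M$; tangential directions preserve all boundary conditions. This gives $\nabla_H v$, tangential part, in $H^1$ and $\nabla_H p_s$, tangential part, in $L^2$, with the coefficient derivatives handled as lower-order forcing. The vertical direction is also tangential to $\partial M\times(-1,0)$, but not to the top and bottom. I would instead solve for $\partial_\sigma^2 v$ directly from the equation,
\[
h^{-2}(1+\sigma^2|\nabla_Hh|^2)\partial_\sigma^2 v=f-\nabla_Hp_s+(\text{terms with at most one }\partial_\sigma\text{ and tangential derivatives}),
\]
which places $\partial_\sigma^2v$ in $L^2$ once $\nabla_Hp_s\in L^2$ is known.

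The main obstacle is the estimate of $\nabla_H p_s$ in $L^2(M)$, equivalently $p_s\in H^1(M)$, together with the normal horizontal derivative at the lateral boundary. For this I would vertically integrate the equation against the weight $h$. This yields a two-dimensional Stokes-like system for $h\bar v$ in which $\nabla_H p_s$ is forced by $\bar f$, by the bottom shear trace $\partial_zv|_{\Gamma_b}$, and by commutator terms involving $\nabla_Hh$. The bottom shear is the delicate term, as flagged in the introduction. I would control it in $H^{-1/2}$-type norms by a trace estimate interpolated between the $H^1$ bound and the tangential $H^2$ bound, with a small constant so that it can be absorbed. Two-dimensional Stokes regularity on the $C^3$ domain $M$ (Cattabriga) then gives $p_s\in H^1(M)$ modulo these terms.

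Finally, near the lateral edges I would obtain the normal second derivatives from the equation itself, via the standard argument for the Laplacian with mixed Dirichlet/Neumann data. Because the top–side edge has a Neumann–Dirichlet dihedral angle of $\pi/2$ and the bottom–side edge has a Dirichlet–Dirichlet angle, full $H^2$ holds without corner obstruction; this requires no orthogonality condition at the $H^2$ level, since the singular exponents exceed $1$. Here I would invoke \cite{Kukavica_2007} and \cite{HuTemamZiane2002} for the edge analysis. Pulling back by $\Phi^{-1}$ then gives \eqref{eq:top-stokes-H2}.
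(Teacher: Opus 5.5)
\textbf{There is a genuine gap in your central step.} The paper does not argue this lemma directly; it cites \cite{Ziane01111995} and \cite[Theorem~2.1]{GGRBRM04}. Your two ends are sound: the weak solution and pressure from Lax--Milgram and de Rham, and the final scalar mixed-boundary regularity on $\Omega_h$, which needs no orthogonality because the bottom--side angle is always less than $\pi$. But the middle step, which you rightly call the main obstacle, does not work as described.

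Cattabriga's estimate for the depth-integrated system gives $\nabla_Hp_s\in L^2(M)$ only if the bottom shear $(\partial_zv)|_{\Gamma_b}$ lies in $L^2(M)$. An $H^{-1/2}$ bound on it gives $p_s\in H^{1/2}(M)$ and nothing more. An $L^2$ trace of $\partial_zv$ on $\Gamma_b$ needs roughly $1/2+\delta$ derivatives of $\partial_zv$ transversal to $\Gamma_b$. Neither the $H^1$ bound nor the horizontal difference-quotient bound provides these near the bottom--side edge, where only the edge direction is tangential to both faces.

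If you interpolate against the full $H^2$ norm instead, you get $\|(\partial_zv)|_{\Gamma_b}\|_{L^2(M)}\leq\varepsilon\|v\|_{H^2}+C_\varepsilon\|v\|_{H^1}$. Absorbing this is legitimate only for a solution already known to be in $H^2$, which is the claim. As written, this is an a priori estimate, not a regularity proof. You give no approximation or continuity scheme that preserves the nonlocal constraint $\nabla_H\cdot(hM_hv)=0$ and the mixed boundary conditions while having known $H^2$ regularity.

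A smaller inaccuracy: your formula for $\partial_\sigma^2v$ contains $\Delta_Hv$, hence $\partial_n^2v$ near $\partial M$. So it does not follow from tangential regularity, and it is only repaired by the scalar edge theory in your last step.

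\textbf{The missing ingredient is a bootstrap in place of the absorption.} This is the same half-step iteration the paper uses one level higher, in the proof of Lemma~\ref{lem:top-stokes-higher}.
\begin{itemize}
\item Because $p_s$ is independent of $z$, the equation expresses $\partial_\sigma^2v$ through $f-\nabla_Hp_s$, $\Delta_Hv$, $\nabla_H\partial_\sigma v$ and lower-order terms.
\item Hence, if $v\in H^{1+s}$ and $p_s\in H^s(M)$, then $\partial_\sigma v\in L^2_\sigma H^s_H\cap H^1_\sigma H^{s-1}_H$, and its bottom trace lies in $H^{s-1/2}(M)$. For $s=0$ this is what gives the bottom shear a meaning for the weak solution in the first place.
\item Fractional planar Stokes regularity for the averaged system, with velocity $hM_hv$ and pressure $hp_s$, then gives $p_s\in H^{s+1/2}(M)$.
\item Fractional regularity of the scalar mixed problem, interpolated between the Lax--Milgram level and the $H^2$ level, gives $v\in H^{s+3/2}$.
\end{itemize}
Three such half-steps, with an arbitrarily small loss to avoid the borderline exponents, put the bottom shear in $H^{1/2-\delta}(M)\subset L^2(M)$. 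This yields $p_s\in H^1(M)$ and then $v\in H^2(\Omega_h)$ with the stated bound. With this iteration, or with the citation the paper uses, your plan closes.
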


\begin{proof}
Apply \cite{Ziane01111995} or \cite[Theorem~2.1]{GGRBRM04} with zero prescribed
depth-integrated divergence and both viscosity coefficients equal to
one.
\end{proof}

\begin{lemma}[Higher hydrostatic Stokes estimate at orthogonal junctions]
\label{lem:regularity}
\label{lem:top-stokes-higher}
Under the geometric assumptions of Lemma~\ref{lem:top-stokes-H2}, assume
in addition that
\[
 \partial_\nu h=0\qquad\text{on }\partial M,
\]
where $\nu$ is the horizontal outward unit normal. This is condition
\eqref{eq:top-stokes-orthogonal}: the bottom and sidewall meet at right
angles along their common edge. The top--sidewall junction is
automatically orthogonal.
For $f\in H^1(\Omega_h)^2$ and every $0<\varepsilon<1$, 
\begin{equation}
 \begin{split}
 &\|v\|_{H^{3-\varepsilon}(\Omega_h)}
 +\|\partial_zv\|_{H^{2-\varepsilon}(\Omega_h)}\\
 &\hspace{25mm}
 +\|\nabla_Hp_s\|_{H^{1-\varepsilon}(M)}
 \leq C_{h,\varepsilon}\|f\|_{H^1(\Omega_h)}.
 \end{split}
 \label{eq:top-stokes-higher}
\end{equation}
Only the boundary conditions in \eqref{eq:top-stokes-system} are imposed
on the solution. In particular, the result includes the constant-depth
case $h\equiv h_0>0$, for which the right-angle condition is automatic.
\end{lemma}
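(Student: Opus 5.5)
The plan is to reduce the estimate to elliptic regularity for the Laplacian with mixed boundary conditions on a domain with orthogonal edges, treat the surface pressure as a horizontal forcing, and gain regularity through a tangential-plus-normal bootstrap.

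\emph{Step 1 (straighten the geometry).} First flatten the bottom with $\Phi$. Near the edge $\partial M\times\{-h\}$, the condition $\partial_\nu h=0$ makes the pulled-back metric block-diagonal at the edge to first order. Near each edge, then use local coordinates (tangential arclength $s$, normal distance $r$, vertical $z$) in which the domain is locally a quarter-space $\{r>0,\ z>0\}$ and the boundary faces meet at a right angle.

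\emph{Step 2 (pressure first).} By Lemma~\ref{lem:top-stokes-H2} we already have $v\in H^2$ and $p_s\in H^1(M)$. Integrate the momentum equation vertically and use $\partial_zv=0$ on $\Gamma_t$. This gives a two-dimensional Stokes-type problem for $\overline v$ in which the bottom shear $\partial_zv|_{\Gamma_b}$ and the averaged forcing appear. The trace $\partial_zv|_{\Gamma_b}$ lies in $H^{1/2}$, and by an interpolation argument it lies in $H^{1-\varepsilon}$ once $v\in H^{5/2-\varepsilon}$ is known. Two-dimensional Stokes regularity on the $C^3$ domain $M$ then gives $\nabla_Hp_s\in H^{1-\varepsilon}(M)$, with a bound in terms of $\|f\|_{H^1}$ plus lower-order norms of $v$.

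\emph{Step 3 (vector Laplacian with mixed conditions).} Given Step~2, $v$ solves componentwise $-\Delta v=g:=f-\nabla_Hp_s\in H^{1-\varepsilon}(\Omega_h)$, with Dirichlet conditions on the bottom and sides and Neumann on the top. Since the faces meet at right angles, reflect evenly across the top and oddly across the side near the top edge. Along the bottom–side edge both conditions are Dirichlet and the opening angle is $\pi/2$, so odd reflection works after flattening. In each local model the Dirichlet/Neumann Laplacian is $H^{3-\varepsilon}$-regular for $H^{1-\varepsilon}$ data. For Dirichlet–Dirichlet at angle $\pi/2$ the singular exponents are $2k$, so no singular functions occur below order $3$. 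For Dirichlet–Neumann, the top/side edge is also at $\pi/2$, with exponents $1+2k$, giving $r^3$, so again none occur below order $3$. The variable coefficients from $\Phi$ are $C^2$ and are lower order, handled by freezing coefficients and absorption. This gives $\|v\|_{H^{3-\varepsilon}}\le C(\|g\|_{H^{1-\varepsilon}}+\|v\|_{H^2})$.

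\emph{Step 4 (close and get the vertical derivative bound).} The estimate for $\partial_zv$ in $H^{2-\varepsilon}$ follows from $v\in H^{3-\varepsilon}$. The coupling between Steps~2 and~3 closes by iterating in half-steps of regularity from $H^2$, or by an interpolation/absorption argument. Step~2 needs $\partial_zv|_{\Gamma_b}$ in $H^{1-\varepsilon}$, which costs $v\in H^{5/2-\varepsilon'}$ and is strictly less than Step~3 produces, so $\varepsilon$-room makes the bootstrap terminate.

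The main obstacle is the corner analysis in Step~3 under variable coefficients. One must check that the flattened operator still has an orthogonal, non-resonant edge structure, which is exactly where $\partial_\nu h=0$ enters. One must also check that the $\varepsilon$-loss (used to avoid the borderline $H^{1}$ and $H^{1/2}$ trace endpoints and logarithmic terms) suffices. For this step I would cite Maz'ya--Rossmann edge regularity for mixed problems rather than reproving it.
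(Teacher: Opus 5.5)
Your architecture is the paper's: the $H^2$ bound of Lemma~\ref{lem:top-stokes-H2} as the base, vertical integration with the bottom shear retained to get a planar Dirichlet Stokes problem for the pressure, a scalar shift estimate $\|g\|_{H^{2+a}}\le C(\|\Delta g\|_{H^a}+\|g\|_{H^1})$, $0<a<1$, for the mixed Laplacian, and the half-step bootstrap $H^2\to H^{5/2}$ (up to $\varepsilon$) $\to H^{3-\varepsilon}$. The proposal is correct at the paper's level of rigor; the real difference is in how you obtain the scalar shift.

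\emph{The paper's route.} It never flattens. It reflects evenly across the flat top $z=0$ once, globally. On $\{-h<z<h\}$ this gives a pure Dirichlet problem for the constant-coefficient Laplacian. Its only edges join the sidewall to the bottom and to the reflected bottom, and both meet at Euclidean right angles because $\partial_\nu h=0$. The paper then interpolates Dauge's bounds $L^{p_0}\to W^{2,p_0}$ and $W^{1,p_1}\to W^{3,p_1}$, with $p_0=4/(2-a)$ and $p_1=4/(3-a)<2$ chosen so that $(1-a)/p_0+a/p_1=1/2$. This yields $H^a\to H^{2+a}$ directly. It avoids the borderline $r^2\log r$ behaviour at $H^3$, and it needs no variable-coefficient or weighted edge theory.

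\emph{Your route.} Flattening with $\Phi$ makes your Step~3 harder than necessary. The pulled-back operator has mixed coefficients $-\sigma\partial_ih/h$, which do not vanish on the bottom face away from the edge. So odd reflection across $\sigma=-1$ produces coefficients that jump, and "odd reflection after flattening" is not literally available. What actually carries Step~3 is the frozen-coefficient fact that the conormal cross term $g^{r\sigma}$ vanishes on the edge exactly when $\partial_\nu h=0$. That fact, together with a Maz'ya--Rossmann citation for variable-coefficient edge problems, is valid but heavier; working in physical coordinates is cleaner.

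Two small corrections.

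\emph{The Dirichlet--Neumann exponent $1$.} At angle $\pi/2$ the Dirichlet--Neumann exponents include $1$, which does lie below your threshold. It is harmless only because the associated singular function $r\sin\phi$ is linear, and your even reflection across the top removes that edge anyway.

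\emph{The averaged problem at variable depth.} There it must be posed for $hM_hv$, since only $\nabla_H\cdot(hM_hv)=0$ holds, not $\nabla_H\cdot\overline v=0$. Tangential differentiation of the bottom no-slip condition gives
\[
\int_{-h}^0\Delta v\,dz=\Delta_H(hM_hv)-(1+|\nabla_Hh|^2)(\partial_zv)_b .
\]
The pressure enters as $\nabla_H(hp_s)-p_s\nabla_Hh$, where $p_s\nabla_Hh\in H^1(M)$ is lower order.
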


\begin{proof}
We first recall the scalar estimate
\begin{equation}
 \|g\|_{H^{2+a}(\Omega_h)}
 \leq C_{h,a}\bigl(\|-\Delta g\|_{H^a(\Omega_h)}
 +\|g\|_{H^1(\Omega_h)}\bigr),
 \qquad 0<a<1,
 \label{eq:top-stokes-scalar-shift}
\end{equation}
for the homogeneous scalar boundary conditions in
\eqref{eq:top-stokes-system}. Even reflection across $z=0$, using the
top Neumann condition, gives a homogeneous Dirichlet problem on
\[
 \widetilde\Omega_h
 =\{(x,y,z):(x,y)\in M,\ -h(x,y)<z<h(x,y)\}.
\]
By \eqref{eq:top-stokes-orthogonal}, its upper and lower faces meet the sidewall at right angles. The boundary has $C^3$ faces and edges, with no vertices, and the Dirichlet edge exponent is $2$.For background on the dependence of scalar elliptic corner regularity on the opening angles and boundary conditions, see \cite[Chapters~4--5]{Grisvard2011}.
Let $S$ be the bounded linear Dirichlet solution operator. 
\[
 \begin{cases}
 -\Delta(S\phi)=\phi&\text{in }\widetilde\Omega_h,\\
 S\phi=0&\text{on }\partial\widetilde\Omega_h.
 \end{cases}
\]
For $p_0=4/(2-a)$ and $p_1=4/(3-a)$,
\cite[Theorem~3.2 and Corollary~3.7]{Dauge92} therefore gives
\[
 S:L^{p_0}\longrightarrow W^{2,p_0},\qquad
 S:W^{1,p_1}\longrightarrow W^{3,p_1}.
\]
Indeed, $2-2/p_0<2$ and $3-2/p_1<2$, and $C^3$ faces suffice.
Since $(1-a)/p_0+a/p_1=1/2$, complex interpolation, as in
Lemma~\ref{lemma:Gagliardo}\textup{(ii)}, yields
$S:H^a\to H^{2+a}$. Even reflection of the forcing is bounded in
$H^a$ for $0<a<1$; restricting the solution to $\Omega_h$ proves
\eqref{eq:top-stokes-scalar-shift}. This argument also covers constant
depth, when $\widetilde\Omega_h=M\times(-h_0,h_0)$.

\emph{Constant depth.}
Write $\Omega=M\times(-h,0)$, where $h>0$ is constant.
Lemma~\ref{lem:top-stokes-H2} first gives
\[
 \|v\|_{H^2(\Omega)}+\|p_s\|_{H^1(M)}
 \leq C_h\|f\|_{L^2(\Omega)}.
\]
In particular, the bottom trace of $\partial_zv$ is well defined and
\[
 \|\partial_zv|_{z=-h}\|_{H^{1/2}(M)}
 \leq C_h\|f\|_{L^2(\Omega)}.
\]
Using the normalized averages
$\bar v=h^{-1}\int_{-h}^0v\,dz$ and
$\bar f=h^{-1}\int_{-h}^0f\,dz$, integration of
\eqref{eq:top-stokes-system} in $z$ yields
\begin{align}
 -\Delta_H\bar v+\nabla_Hp_s
 &=\bar f-h^{-1}\partial_zv|_{z=-h}
 &&\text{in }M,\label{eq:barv}\\
 \nabla_H\cdot\bar v&=0
 &&\text{in }M,\label{eq:barv-div}\\
 \bar v&=0
 &&\text{on }\partial M.\nonumber
\end{align}
Here the top derivative vanishes by the prescribed Neumann condition.
This averaging only uses the first-derivative traces already supplied
by the $H^2$ estimate.

The Dirichlet Stokes estimates on the smooth planar domain $M$,
together with fractional interpolation, imply
\[
 \begin{aligned}
 \|\nabla_Hp_s\|_{H^{1/2}(M)}
 &\leq C_h\left(
       \|\bar f\|_{H^{1/2}(M)}
       +\|\partial_zv|_{z=-h}\|_{H^{1/2}(M)}\right)\\
 &\leq C_h\|f\|_{H^1(\Omega)};
 \end{aligned}
\]
see \cite[Chapter~IV]{Galdi11} and \cite{BerghLofstrom76}.
Since $p_s$ is independent of $z$, the scalar equation
$-\Delta v=f-\nabla_Hp_s$ and
\eqref{eq:top-stokes-scalar-shift} with $a=1/2$ give
\[
 \|v\|_{H^{5/2}(\Omega)}
 \leq C_h\left(
       \|f\|_{H^{1/2}(\Omega)}
       +\|\nabla_Hp_s\|_{H^{1/2}(M)}
       +\|v\|_{H^1(\Omega)}\right)
 \leq C_h\|f\|_{H^1(\Omega)}.
\]
The trace theorem now yields
\[
 \|\partial_zv|_{z=-h}\|_{H^1(M)}
 \leq C_h\|f\|_{H^1(\Omega)}.
\]
Applying the planar Stokes estimate once more to
\eqref{eq:barv}--\eqref{eq:barv-div}, with the same homogeneous
Dirichlet condition, gives
\[
 \|\nabla_Hp_s\|_{H^1(M)}
 \leq C_h\left(
       \|\bar f\|_{H^1(M)}
       +\|\partial_zv|_{z=-h}\|_{H^1(M)}\right)
 \leq C_h\|f\|_{H^1(\Omega)}.
\]
Finally, for every $0<\varepsilon<1$, apply
\eqref{eq:top-stokes-scalar-shift} with $a=1-\varepsilon$ to
$-\Delta v=f-\nabla_Hp_s$. Together with
$\|\partial_zv\|_{H^{2-\varepsilon}(\Omega)}
\leq C\|v\|_{H^{3-\varepsilon}(\Omega)}$, this proves
\[
 \|v\|_{H^{3-\varepsilon}(\Omega)}
 +\|\partial_zv\|_{H^{2-\varepsilon}(\Omega)}
 +\|\nabla_Hp_s\|_{H^{1-\varepsilon}(M)}
 \leq C_{h,\varepsilon}\|f\|_{H^1(\Omega)}.
\]

\emph{Variable depth.}
Fix $0<\varepsilon<1/2$ and use the normalized average $M_h$
defined in Subsection~\ref{subsec:topographic-notation}.
Tangential differentiation of the bottom condition gives
\[
 (\partial_i v)_b=(\partial_i h)(\partial_zv)_b,
 \qquad i=1,2,
\]
where the subscript $b$ denotes restriction to $z=-h(x,y)$.
Consequently, Leibniz' rule and the top condition $\partial_zv=0$
yield
\[
 \int_{-h(x,y)}^0\Delta v\,dz
 =\Delta_H(hM_hv)-(1+|\nabla_Hh|^2)(\partial_zv)_b.
\]
Integration of \eqref{eq:top-stokes-system} therefore gives
\begin{equation}
 \begin{split}
 -\Delta_H(hM_hv)+\nabla_H(hp_s)
 ={}&hM_hf-(1+|\nabla_Hh|^2)(\partial_zv)_b
       +p_s\nabla_Hh,\\
 \nabla_H\cdot(hM_hv)={}&0,
 \qquad (hM_hv)|_{\partial M}=0.
 \end{split}
 \label{eq:top-stokes-higher-average}
\end{equation}
The bottom derivative is retained; no homogeneous condition on
$(\partial_zv)_b$ is used.
Lemma~\ref{lem:top-stokes-H2}, the trace theorem, and the boundedness
of vertical averaging give
\[
 \|(\partial_zv)_b\|_{H^{1/2}(M)}
 +\|hM_hf\|_{H^1(M)}+\|p_s\nabla_Hh\|_{H^1(M)}
 \leq C_h\|f\|_{H^1(\Omega_h)}.
\]
Apply the planar Dirichlet Stokes estimate to
\eqref{eq:top-stokes-higher-average}, with velocity $hM_hv$ and
pressure $hp_s$. Using
$\nabla_Hp_s=h^{-1}[\nabla_H(hp_s)-p_s\nabla_Hh]$, we obtain
\[
 \|\nabla_Hp_s\|_{H^{1/2-\varepsilon}(M)}
 \leq C_{h,\varepsilon}\|f\|_{H^1(\Omega_h)};
\]
see \cite[Chapter~IV]{Galdi11} and \cite{BerghLofstrom76}.
Apply \eqref{eq:top-stokes-scalar-shift} with
$a=1/2-\varepsilon$ to $-\Delta v=f-\nabla_Hp_s$ and then take
the bottom trace. This gives
\[
 \|v\|_{H^{5/2-\varepsilon}(\Omega_h)}
 +\|(\partial_zv)_b\|_{H^{1-\varepsilon}(M)}
 \leq C_{h,\varepsilon}\|f\|_{H^1(\Omega_h)}.
\]
A second application of the planar Stokes estimate to
\eqref{eq:top-stokes-higher-average} gives the same bound for
$\|\nabla_Hp_s\|_{H^{1-\varepsilon}(M)}$. Finally,
\eqref{eq:top-stokes-scalar-shift} with $a=1-\varepsilon$ yields
\eqref{eq:top-stokes-higher}. The cases $1/2\leq\varepsilon<1$
follow by Sobolev embedding from any smaller loss, for example
$\varepsilon=1/4$.
\end{proof}
\begin{lemma}[Square-root Gronwall inequality]
\label{lem:sqrt-gronwall}
Let $\tau>0$, let $Y:[0,\tau]\to[0,\infty)$ be absolutely continuous, and let $a,b\in L^1(0,\tau)$ be nonnegative. Suppose that
\[
Y'(t)\leq a(t)Y(t)+2b(t)\sqrt{Y(t)}
\qquad\text{for a.e. }t\in(0,\tau).
\]
Then, for every $t\in[0,\tau]$,
\[
\sqrt{Y(t)}
\leq
\sqrt{Y(0)}
\exp\left(\frac12\int_0^t a(r)\,dr\right)
+
\int_0^t b(s)
\exp\left(\frac12\int_s^t a(r)\,dr\right)\,ds.
\]
In particular, if $Y(0)=0$, $a(t)\equiv a_0\geq0$,
and $b(t)\equiv b_0\geq0$, then
\[
\sqrt{Y(t)}\leq b_0t\,e^{a_0t/2},
\qquad
Y(t)\leq b_0^2t^2e^{a_0t},
\qquad 0\leq t\leq\tau.
\]
\end{lemma}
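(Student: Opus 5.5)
This is the standard ODE comparison argument. Set $A(t):=\int_0^t a(r)\,dr$ and introduce the regularized quantity
\[
 Z_\delta(t):=\sqrt{Y(t)+\delta}\,e^{-A(t)/2},\qquad \delta>0.
\]
The regularization is needed because $\sqrt{Y}$ need not be absolutely continuous where $Y$ vanishes. Since $s\mapsto\sqrt{s+\delta}$ is Lipschitz on $[0,\infty)$ and $Y$ is absolutely continuous, $\sqrt{Y+\delta}$ is absolutely continuous. The factor $e^{-A/2}$ is absolutely continuous and bounded. Hence $Z_\delta$ is absolutely continuous, and the chain rule holds almost everywhere.

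At almost every $t$, the hypothesis gives
\[
 \frac{d}{dt}\sqrt{Y+\delta}
 =\frac{Y'}{2\sqrt{Y+\delta}}
 \leq\frac{aY}{2\sqrt{Y+\delta}}+\frac{b\sqrt Y}{\sqrt{Y+\delta}}
 \leq\frac a2\sqrt{Y+\delta}+b .
\]
Here we use $Y\leq\sqrt{Y}\sqrt{Y+\delta}$ and $\sqrt{Y}\leq\sqrt{Y+\delta}$, together with $a,b\geq0$. It follows that $Z_\delta'\leq b(t)e^{-A(t)/2}$ almost everywhere.

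Integrating from $0$ to $t$, which is legitimate by absolute continuity, gives
\[
 \sqrt{Y(t)+\delta}
 \leq\sqrt{Y(0)+\delta}\,e^{A(t)/2}
 +\int_0^t b(s)e^{(A(t)-A(s))/2}\,ds .
\]
Letting $\delta\downarrow0$ yields the general bound of Lemma~\ref{lem:sqrt-gronwall}. For the special case, take $Y(0)=0$, $a\equiv a_0$ and $b\equiv b_0$. Then
\[
 \sqrt{Y(t)}\leq b_0\int_0^t e^{a_0(t-s)/2}\,ds\leq b_0t\,e^{a_0t/2},
\]
and squaring gives $Y(t)\leq b_0^2t^2e^{a_0t}$.

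The only delicate step is the differentiation of $\sqrt{Y}$ near zeros of $Y$. The $\delta$-regularization handles this, so no real obstacle is expected.
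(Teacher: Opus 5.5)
Your proof is correct and follows essentially the same route as the paper: regularize with $\sqrt{Y+\delta}$, bound its derivative by $\frac a2\sqrt{Y+\delta}+b$, integrate against the factor $e^{-A/2}$, and let $\delta\downarrow0$. The only cosmetic difference is that you build the integrating factor into $Z_\delta$ from the start, and you justify the absolute continuity of the product explicitly.
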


\begin{proof}
For $\varepsilon>0$, set
\[
Z_\varepsilon(t):=\sqrt{Y(t)+\varepsilon}.
\]
Then $Z_\varepsilon$ is absolutely continuous and strictly
positive. The chain rule gives, for almost every $t$,
\[
\begin{aligned}
Z_\varepsilon'(t)
&=\frac{Y'(t)}{2\sqrt{Y(t)+\varepsilon}}\\
&\leq
\frac{a(t)}{2}
\frac{Y(t)}{\sqrt{Y(t)+\varepsilon}}
+
b(t)\frac{\sqrt{Y(t)}}{\sqrt{Y(t)+\varepsilon}}\\
&\leq \frac{a(t)}{2}Z_\varepsilon(t)+b(t).
\end{aligned}
\]
Multiplying by the integrating factor
$\exp(-\frac12\int_0^t a(r)\,dr)$ yields
\[
\frac{d}{dt}
\left[
\exp\left(-\frac12\int_0^t a(r)\,dr\right)
Z_\varepsilon(t)
\right]
\leq
b(t)\exp\left(-\frac12\int_0^t a(r)\,dr\right).
\]
Integrating from $0$ to $t$, and using
$Z_\varepsilon(0)=\sqrt{Y(0)+\varepsilon}$, we obtain
\[
Z_\varepsilon(t)
\leq
\sqrt{Y(0)+\varepsilon}
\exp\left(\frac12\int_0^t a(r)\,dr\right)
+
\int_0^t b(s)
\exp\left(\frac12\int_s^t a(r)\,dr\right)\,ds.
\]
Letting $\varepsilon\downarrow0$ proves the first assertion.
For constant coefficients and $Y(0)=0$, it follows that
\[
\sqrt{Y(t)}
\leq b_0\int_0^t e^{a_0(t-s)/2}\,ds
\leq b_0t\,e^{a_0t/2}.
\]
Squaring gives the remaining estimate.
\end{proof}
\begin{lemma}[Anisotropic-to-isotropic Sobolev estimate]
\label{lem:anisotropic-isotropic}
Let $\Omega=M\times(-h,0)$, where $M\subset\mathbb{R}^2$ is a bounded
$C^3$ domain. Let $0<s\leq2$. Suppose that
\[
    f\in L^2((-h,0);H^s(M))
\]
and
\[
    f,\ \partial_z f,\ \partial_z^2f\in L^2(\Omega).
\]
Then $f\in H^s(\Omega)$ and
\begin{equation}
\label{eq:anisotropic-isotropic}
    \|f\|_{H^s(\Omega)}
    \le C
    \left(
        \|f\|_{L^2_zH^s_H}
        +
        \|f\|_{H^2_zL^2_H}
    \right),
\end{equation}
where
\[
    \|f\|_{H^2_zL^2_H}^2
    :=
    \|f\|_{L^2(\Omega)}^2
    +
    \|\partial_zf\|_{L^2(\Omega)}^2
    +
    \|\partial_z^2f\|_{L^2(\Omega)}^2.
\]
\end{lemma}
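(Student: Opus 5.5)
The plan is to prove Lemma~\ref{lem:anisotropic-isotropic} by extending $f$ in the vertical variable, building a tensor-product regularity statement on $M\times\mathbb R$, and then restricting back to $\Omega$.

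First, apply a vertical extension operator of Stein or Hestenes reflection type, $E_z$, that acts only in $z$. A higher-order reflection across $z=-h$ and $z=0$, combined with a cutoff, extends functions on $(-h,0)$ to compactly supported functions on $\mathbb R$. Such an operator is bounded simultaneously
\[
L^2((-h,0);X)\to L^2(\mathbb R;X),\qquad H^2((-h,0);X)\to H^2(\mathbb R;X)
\]
for any Hilbert space $X$. Take $X=H^s(M)$ for the first bound and $X=L^2(M)$ for the second. This gives $F=E_zf$ on $M\times\mathbb R$, with
\[
\|F\|_{L^2_zH^s_H}+\|F\|_{H^2_zL^2_H}\le C\bigl(\|f\|_{L^2_zH^s_H}+\|f\|_{H^2_zL^2_H}\bigr).
\]

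Second, extend horizontally by a bounded extension operator $E_H:H^r(M)\to H^r(\mathbb R^2)$ that is universal for $0\le r\le 2$; Stein's operator works on the $C^3$ (hence Lipschitz) domain $M$. Since $E_H$ acts only in $(x,y)$, it commutes with $E_z$ and with $\partial_z$. Hence $G=E_HF$ lies in $L^2(\mathbb R;H^s(\mathbb R^2))\cap H^2(\mathbb R;L^2(\mathbb R^2))$ with comparable norms.

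Third, on $\mathbb R^3$ use the Fourier transform with $\xi\in\mathbb R^2$ and $\zeta\in\mathbb R$:
\[
(1+|\xi|^2+\zeta^2)^{s/2}\le C\bigl[(1+|\xi|^2)^{s/2}+(1+\zeta^2)^{s/2}\bigr]\le C\bigl[(1+|\xi|^2)^{s/2}+(1+\zeta^2)\bigr],
\]
where the last step uses $s\le2$. Plancherel then gives
\[
\|G\|_{H^s(\mathbb R^3)}\le C\bigl(\|G\|_{L^2_zH^s}+\|G\|_{H^2_zL^2}\bigr).
\]
Finally, restrict to $\Omega$. The restriction is bounded in $H^s$ by the definition of the restriction-type fractional spaces, and these agree with the intrinsic ones on Lipschitz domains, as in Lemma~\ref{lemma:Gagliardo}(ii). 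This yields \eqref{eq:anisotropic-isotropic}.

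The main obstacle I expect is making the extension steps genuinely simultaneous across the scales. The vertical extension must be bounded for both vector-valued norms at once; this holds because reflection formulas are linear in $z$ with scalar coefficients, so they act pointwise in the $X$-values. The horizontal extension must be one fixed operator valid for fractional $s$; I would take Stein's universal extension, which interpolates. If fractional-order care is needed, a fallback is to prove the two integer endpoints $s=1,2$ this way and interpolate. I prefer the direct Fourier argument, since the symbol inequality handles every $0<s\le2$ uniformly.
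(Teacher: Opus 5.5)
Your proposal is correct and follows essentially the same route as the paper: extend $f$ to $\mathbb{R}^3$ in the vertical and horizontal variables with simultaneous bounds in $L^2_zH^s_H$ and $H^2_zL^2_H$, apply Plancherel with the symbol inequality $(1+|\xi_H|^2+\xi_3^2)^s\le C[(1+|\xi_H|^2)^s+(1+\xi_3^2)^2]$ (your square-root form is equivalent after squaring), and restrict back to $\Omega$. Your explicit construction of the extension, a scalar-coefficient vertical reflection composed with Stein's universal horizontal extension, fills in details that the paper summarizes as ``the standard Sobolev extension theorem, applied in the horizontal and vertical variables.''
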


\begin{proof}
Since $M$ is smooth and bounded and $(-h,0)$ is a bounded interval,
the standard Sobolev extension theorem, applied in the horizontal
and vertical variables, gives an extension $\widetilde f$ of $f$ to
$\mathbb{R}^3$ such that
\[
    \|\widetilde f\|_{L^2(\mathbb{R};H^s(\mathbb{R}^2))}
    \le
    C\|f\|_{L^2((-h,0);H^s(M))}
\]
and
\[
    \|\widetilde f\|_{H^2(\mathbb{R};L^2(\mathbb{R}^2))}
    \le
    C\|f\|_{H^2_zL^2_H}.
\]
We may therefore apply the Fourier transform to $\widetilde f$.
Writing $\xi=(\xi_H,\xi_3)$, where
$\xi_H=(\xi_1,\xi_2)$, and using $0<s\leq2$, we have
\[
    (1+|\xi_H|^2+|\xi_3|^2)^s
    \le
    C_s\left[
        (1+|\xi_H|^2)^s
        +(1+|\xi_3|^2)^2
    \right].
\]
Hence, by Plancherel's theorem,
\begin{align*}
    \|\widetilde f\|_{H^s(\mathbb{R}^3)}^2
    &\le
    C_s
    \int_{\mathbb{R}^3}
    (1+|\xi_H|^2)^s
    |\widehat{\widetilde f}(\xi)|^2\,d\xi
    \\
    &\quad
    +C_s
    \int_{\mathbb{R}^3}
    (1+|\xi_3|^2)^2
    |\widehat{\widetilde f}(\xi)|^2\,d\xi
    \\
    &\le
    C_s
    \left(
        \|\widetilde f\|_{L^2_zH^s_H}^2
        +
        \|\widetilde f\|_{H^2_zL^2_H}^2
    \right).
\end{align*}
Restricting $\widetilde f$ back to $\Omega$ gives
\eqref{eq:anisotropic-isotropic}.
\end{proof}

\begin{lemma}[Vertical temperature regularity with a fixed horizontal weight]
\label{lem:weighted-vertical-temperature}
Let $Q_\ell=M\times(-\ell,0)$, where $\ell>0$ and $M$ is a bounded
$C^3$ domain, and let $\rho\in C^3(\overline M)$ be positive. Write
\[
 \|g\|_{2,\rho}^2=\int_{Q_\ell}\rho|g|^2,
 \qquad A_\rho=-\rho^{-1}\nabla_H\cdot(\rho\nabla_H),
\]
with horizontal Dirichlet form domain
$L^2((-\ell,0);H_0^1(M))$. Suppose that
\[
 v\in L^\infty(0,t_0;H^2(Q_\ell)^2)
       \cap L^2(0,t_0;H^{11/4}(Q_\ell)^2),
 \qquad v|_{\partial M\times(-\ell,0)}=0,
\]
and that
\[
 \omega(\sigma)=-\rho^{-1}\int_{-\ell}^{\sigma}
                       \nabla_H\cdot(\rho v)(s)\,ds,
 \qquad \omega|_{\sigma=0}=0.
\]
Let $\theta$ solve
\[
 \theta_t+A_\rho\theta+v\cdot\nabla_H\theta
                   +\omega\partial_\sigma\theta=0
\]
in distributions, with zero lateral trace, initial value
$\theta_0\in H^2(Q_\ell)$ with zero lateral trace, and
\[
 \theta\in L^\infty(0,t_0;H^1),\qquad
 \nabla_H\theta\in L^2(0,t_0;H^1),\qquad
 \theta_t\in L^2(0,t_0;L^2).
\]
No temperature condition is imposed at $\sigma=-\ell,0$.
Then
\[
 \partial_{\sigma\sigma}\theta\in L^\infty(0,t_0;L^2),
 \qquad
 \nabla_H\partial_{\sigma\sigma}\theta\in L^2(0,t_0;L^2).
\]
More precisely, for $0\leq t\leq t_0$,
\begin{equation}
 \begin{split}
 &\sup_{0\leq s\leq t}
       \|\partial_{\sigma\sigma}\theta(s)\|_{2,\rho}^2
       +\int_0^t\|\nabla_H\partial_{\sigma\sigma}\theta\|_{2,\rho}^2\,ds\\
 &\qquad\leq C\bigl(1+\|\partial_{\sigma\sigma}\theta_0\|_{2,\rho}^2\bigr)
        \exp\!\Bigg(C\int_0^t\|v(s)\|_{H^{11/4}}\\
 &\hspace{16mm}\cdot\bigl(1+\|\theta\|_{L^\infty(0,t_0;H^1)}
             +\|\nabla_H\theta(s)\|_{H^1}\bigr)\,ds\Bigg),
 \end{split}
 \label{eq:weighted-vertical-temperature-bound}
\end{equation}
where $C$ depends only on $M,\ell,\rho$. The supremum uses the
weakly $L^2$-continuous representative of the second derivative.
\end{lemma}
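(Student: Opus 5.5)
The plan is to transcribe the proof of Proposition~\ref{prop:Tzz-estimate} to the weighted cylinder $Q_\ell$ with weight $\rho$. We build an auxiliary Galerkin problem for $q:=\partial_\sigma\theta$, prove uniform bounds on $q_N$ and $\partial_\sigma q_N$, identify the limit with $\partial_\sigma\theta$ through the vertical primitive, and then read off \eqref{eq:weighted-vertical-temperature-bound}.

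\emph{Step 1: the differentiated equation.} Because $\rho$ does not depend on $\sigma$, the operator $A_\rho$ commutes with $\partial_\sigma$. Differentiating the equation formally gives
\[
 q_t+A_\rho q+v\cdot\nabla_Hq+\partial_\sigma(\omega q)=-\partial_\sigma v\cdot\nabla_H\theta,
\]
where $\partial_\sigma\omega=-\rho^{-1}\nabla_H\cdot(\rho v)$ and $\partial_{\sigma\sigma}\omega=-\rho^{-1}\nabla_H\cdot(\rho\partial_\sigma v)$. The definition of $\omega$ together with $\omega|_{\sigma=0}=0$ gives $\omega=0$ at both $\sigma=-\ell$ and $\sigma=0$. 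Since $\rho\in C^3$, these weighted formulas differ from the flat ones only by lower-order terms with coefficients $\nabla_H\log\rho$. The analogue of \eqref{eq:flat-vertical-coefficients} therefore holds with constants depending on $\rho$: $v,\partial_\sigma v,\omega,\partial_\sigma\omega\in L^2_tL^\infty$, $\partial_{\sigma\sigma}v\in L^2_tL^3$, and $\partial_{\sigma\sigma}\omega\in L^2_tL^\infty_\sigma L^{8/3}_H$, each bounded by $C\|v\|_{H^{11/4}}$.

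\emph{Step 2: Galerkin problem and uniform bounds.} We take the $L^2_\rho$-orthonormal Dirichlet eigenbasis of $A_\rho$ on $M$, multiply it by the cosine basis in $\sigma$, and project with the weighted projection, which satisfies the analogue of \eqref{eq:weighted-projection-stability}. We solve the linear auxiliary system for $q_N$ with initial value $q_N(0)=P_N(\partial_\sigma\theta_0)$. This is admissible because $\theta_0\in H^2$, and $\partial_\sigma\theta_0$ has zero lateral trace since $\sigma$ is tangential to the sidewall. We test in the weighted inner product with $q_N$ and with $-\partial_{\sigma\sigma}q_N$; both lie in the Galerkin space, and the Neumann property $\partial_\sigma q_N=0$ at $\sigma=-\ell,0$ removes the vertical boundary terms. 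The transport terms cancel in $(\cdot,\cdot)_\rho$ by the weighted incompressibility $\nabla_H\cdot(\rho v)+\partial_\sigma(\rho\omega)=0$, up to the commutator terms $\partial_\sigma\omega|\partial_\sigma q_N|^2$, $\partial_\sigma v\cdot\nabla_Hq_N\,\partial_\sigma q_N$ and $\partial_{\sigma\sigma}\omega\,q_N\partial_\sigma q_N$. These are estimated exactly as in Proposition~\ref{prop:Tzz-estimate}, using $H^1(M)\hookrightarrow L^8(M)$, and Gronwall yields bounds uniform in $N$.

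\emph{Step 3: identifying the limit.} Aubin--Lions gives a limit $q_*$. Set $d=q_*-\partial_\sigma\theta$ and $P=\int_{-\ell}^\sigma d$. Using the endpoint cutoff argument, which relies on $|\omega|\le\operatorname{dist}(\sigma,\{-\ell,0\})\|\partial_\sigma\omega\|_\infty$, $P$ satisfies
\[
 P_t+A_\rho P+v\cdot\nabla_HP+\omega\partial_\sigma P=\int_{-\ell}^\sigma\partial_\sigma v\cdot\nabla_HP
\]
with $P(0)=0$. A weighted $L^2$ energy estimate combined with $\partial_\sigma v\in L^2_tL^\infty$ then gives $P=0$, so $q_*=\partial_\sigma\theta$.

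\emph{Step 4: the bound.} We upgrade to strong convergence of $\nabla_Hq_N$ by comparing energy identities. Then we rerun the second energy inequality without absorbing $\|\nabla_Hq_N\|$, so that its coefficient is $C\|v\|_{H^{11/4}}(1+\|\theta\|_{L^\infty H^1}+\|\nabla_H\theta\|_{H^1})$. Gronwall and lower semicontinuity give \eqref{eq:weighted-vertical-temperature-bound}, together with the weakly continuous representative.

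The main obstacle is Step 3. The difficulty is justifying that no boundary terms appear at $\sigma=-\ell,0$, even though the temperature has no vertical boundary condition there. The point is that this comes only from the vanishing of $\omega$ at the endpoints, which holds here because $\omega|_{\sigma=-\ell}=0$ is built into its definition and $\omega|_{\sigma=0}=0$ is assumed; the weight $\rho$ adds only harmless lower-order terms.
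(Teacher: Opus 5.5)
Your proposal is correct and follows the paper's proof. The paper likewise reruns the argument of Proposition~\ref{prop:Tzz-estimate} in $L^2_\rho$, relying on $[\partial_\sigma,A_\rho]=0$, on the weighted incompressibility $\nabla_H\cdot(\rho v)+\partial_\sigma(\rho\omega)=0$ with $\omega=0$ at $\sigma=-\ell,0$ for the transport cancellation, and on the extra $\nabla_H\log\rho$ terms in $\partial_\sigma\omega$ and $\partial_{\sigma\sigma}\omega$ being lower order; your version simply spells out the Galerkin, identification, and strong-convergence steps that the paper leaves implicit.
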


\begin{proof}
This is the fixed-weight version of Proposition~\ref{prop:Tzz-estimate};
the varying-bottom case corresponds to $\ell=1$ and $\rho=h$.
We record only the changes in its proof. With
$(f,g)_\rho=\int_{Q_\ell}\rho fg$, positivity of $\rho$ gives equivalence
of weighted and unweighted norms, while
\[
 (A_\rho f,g)_\rho=(\nabla_Hf,\nabla_Hg)_\rho,
 \qquad [\partial_\sigma,A_\rho]=0.
\]
Moreover,
\[
 \nabla_H\cdot(\rho v)+\partial_\sigma(\rho\omega)=0,
 \qquad \omega|_{\sigma=-\ell,0}=0,
\]
so the principal transport cancels in the weighted energy identity.
\[
 \begin{aligned}
 \partial_\sigma\omega
   &=-\nabla_H\cdot v-\nabla_H\log\rho\cdot v,\\
 \partial_{\sigma\sigma}\omega
   &=-\nabla_H\cdot\partial_\sigma v
     -\nabla_H\log\rho\cdot\partial_\sigma v.
 \end{aligned}
\]
The additional terms contain the fixed coefficient
$\nabla_H\log\rho\in C^2(\overline M)$ and lower-order derivatives of $v$. Hence the coefficient bound \eqref{eq:flat-vertical-coefficients} and the product estimates in Proposition~\ref{prop:Tzz-estimate} remain valid with $(z,w,T)$ replaced by $(\sigma,\omega,\theta)$ and constants depending
on $M,\ell,\rho$.

Repeating that proof in $L^2_\rho$, with $r=\partial_{\sigma\sigma}\theta$,
gives the same energy inequality
\begin{equation}
 \frac{d}{dt}\bigl(1+\|r\|_{2,\rho}^2\bigr)
       +2\|\nabla_Hr\|_{2,\rho}^2
 \leq a_\rho(t)\bigl(1+\|r\|_{2,\rho}^2\bigr),
 \label{eq:weighted-vertical-energy}
\end{equation}
where
\[
 a_\rho(t)=C\|v(t)\|_{H^{11/4}}
 \bigl(1+\|\theta\|_{L^\infty(0,t_0;H^1)}
           +\|\nabla_H\theta(t)\|_{H^1}\bigr)
 \in L^1(0,t_0).
\]
Gronwall's inequality yields \eqref{eq:weighted-vertical-temperature-bound}.
As in the cited proposition, the supremum is taken over the weakly
$L^2_\rho$-continuous representative; no vertical endpoint condition
on $\theta$ is used.
\end{proof}

\section{Auxiliary A Priori Estimates}
\label{app:auxiliary-estimates}
In this section $\Omega=M\times(-h,0)$ has constant depth $h>0$, and $(v,T)$ is the flat-bottom strong solution on its maximal
interval $[0,\mathcal T^*)$, with the original boundary conditions.
We use the spaces $H,V,V_T$ and the hydrostatic Stokes operator $A$ and write $u=\partial_zv$.

\begin{proposition}
\label{prop:appendix-basic-energy}
For every $0<t<\mathcal T^*$,
\[
 \sup_{0\leq s\leq t}\|v(s)\|_2^2
 +\int_0^t\|\nabla v(s)\|_2^2\,ds
 \leq C\left(\|v_0\|_2^2+\int_0^t\|T(s)\|_2^2\,ds\right)
 \leq C\left(\|v_0\|_2^2+t\|T_0\|_2^2\right),
\]
where $C$ depends only on $M$ and $h$. Moreover, whenever
$T_0\in L^q(\Omega)$, $2\leq q\leq\infty$,
\[
 \sup_{0\leq s\leq t}\|T(s)\|_q\leq\|T_0\|_q.
\]
\end{proposition}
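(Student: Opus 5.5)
The plan is to reproduce, for the strong solution itself, the two energy computations already carried out for the iterates in Proposition~\ref{prop:vn-L2} and in the Remark on $L^p$ estimates, and then integrate in time. The regularity in Definition~\ref{def:strong-solution} justifies every pairing used below. Since $v\in L^2(0,t;D(A))$ and $\partial_tv\in L^2(0,t;H)$, the map $s\mapsto\|v(s)\|_2^2$ is absolutely continuous with derivative $2(v_t,v)$. Likewise $T\in H^1(0,t;L^2)$ with $\nabla_HT\in L^2(0,t;H^1)$, so the temperature equation holds in $L^2_tL^2$ and may be tested against $T$.

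\emph{Temperature bounds first.} Test the temperature equation with $|T|^{q-2}T$ for $2\le q<\infty$. The transport term is handled through $\int (v\cdot\nabla_H T+w\partial_zT)|T|^{q-2}T=\frac1q\int (v,w)\cdot\nabla|T|^q$. This vanishes after integration by parts, using $\nabla_H\cdot v+\partial_zw=0$, $v=0$ on $\Gamma_s$, and $w=0$ on $\Gamma_t\cup\Gamma_b$. No top or bottom condition on $T$ is needed. The diffusion term gives $(q-1)\int|T|^{q-2}|\nabla_HT|^2\ge0$, with no boundary term because $T=0$ on $\Gamma_s$. Hence $\|T(s)\|_q\le\|T_0\|_q$, and letting $q\to\infty$ covers $T_0\in L^\infty$.

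The main obstacle is justifying the test function $|T|^{q-2}T$ for large $q$, since $T$ is only known in $L^\infty_tH^1$. I would use a truncation $T_k=\max(-k,\min(T,k))$ and test with $|T_k|^{q-2}T_k$, which lies in $L^2_tV_T$. The transport identity then becomes $\int (v,w)\cdot\nabla G_k(T)=0$ with $G_k$ Lipschitz, and the resulting bound is uniform in $k$. Alternatively, the bound can be inherited from the iterates via \eqref{eq:T-Lq}, lower semicontinuity, and the strong convergence $T^n\to T$ in $C([0,t^*];L^2)$ on local intervals, then propagated by uniqueness and restarting. For $q=2$ the computation is direct and gives $\|T(s)\|_2\le\|T_0\|_2$.

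\emph{Velocity bound.} Pair the projected momentum equation with $v$. The transport term vanishes by the same incompressibility argument, using $v=0$ on $\Gamma_b\cup\Gamma_s$ and $w=0$ on $\Gamma_t\cup\Gamma_b$. The surface pressure drops out because $\nabla_H\cdot\avg v=0$ and $\avg v\cdot n_H=0$. The Coriolis term vanishes since $(k\times v)\cdot v=0$. The top term $\int_{\Gamma_t}\partial_zv\cdot v$ vanishes by the stress-free condition. What remains is
\[
\tfrac12\tfrac{d}{dt}\|v\|_2^2+\|\nabla v\|_2^2=-\int_\Omega Q_T\,\nabla_H\cdot v\le C_h\|T\|_2\|\nabla v\|_2\le\tfrac12\|\nabla v\|_2^2+C\|T\|_2^2.
\]
Integrating in time gives the first inequality with $C$ depending only on $M$ and $h$. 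As in Proposition~\ref{prop:vn-L2}, the supremum and the dissipation are bounded separately and then added. Inserting $\|T(s)\|_2\le\|T_0\|_2$ yields the second inequality.
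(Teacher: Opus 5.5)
Your proposal is correct and follows the paper's proof: the same $L^2$ velocity energy estimate, with the buoyancy term written as $-\int_\Omega Q_T\,\nabla_H\cdot v$ and bounded by $C_h\|T\|_2\|\nabla v\|_2$, and the same $|T|^{q-2}T$ test for the temperature with $q\to\infty$ for bounded data. Your truncation argument with $T_k$ makes explicit the justification that the paper only attributes to ``the energy approximations used in the construction.''
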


\begin{proof}
Set $Q_T(x,y,z)=\int_{-h}^zT(x,y,\xi)\,d\xi$.
Multiply the momentum equation with $v$, 
\[
 \int_\Omega\nabla_Hp_s\cdot v
 =-\int_Mp_s\nabla_H\cdot\left(\int_{-h}^0v\,dz\right)=0.
\]
Integration by parts, yields
\[
 \begin{aligned}
 \frac12\frac{d}{dt}\|v\|_2^2+\|\nabla v\|_2^2
 &=\int_\Omega\nabla_H Q_T\cdot v
 =-\int_\Omega Q_T\,\nabla_H\cdot v\\
 &\leq C_h\|T\|_2\|\nabla_Hv\|_2
 \leq\frac12\|\nabla_Hv\|_2^2+C_h\|T\|_2^2.
 \end{aligned}
\]
Consequently,
\begin{equation}
 \frac{d}{dt}\|v\|_2^2+\|\nabla_Hv\|_2^2
       +2\|\partial_zv\|_2^2\leq C_h\|T\|_2^2.
\end{equation}
For $2\leq q<\infty$, testing the temperature equation with
$|T|^{q-2}T$ gives
\begin{equation}
 \frac1q\frac{d}{dt}\|T\|_q^q
 +(q-1)\int_\Omega |T|^{q-2}|\nabla_HT|^2=0.
\end{equation}
Thus the $L^q$ norm is nonincreasing. For bounded $T_0$, letting
$q\to\infty$ on the bounded domain gives
\begin{equation}
 \sup_{0\leq s\leq t}\|T(s)\|_\infty\leq\|T_0\|_\infty.
\end{equation}
In particular,
\[
 \|T(t)\|_2^2+2\int_0^t\|\nabla_HT(s)\|_2^2\,ds
 \leq\|T_0\|_2^2.
\]
Integrating the velocity inequality and taking the time supremum
proves the first assertion. The identities are justified by the
energy approximations used in the construction.
\end{proof}

\begin{proposition}
\label{prop:appendix-velocity-L6-vertical}
Assume $v_0\in V$ and $T_0\in V_T$. There exists a continuous
nondecreasing function $M_1$ on $[0,\mathcal T^*)$ such that
\[
 \sup_{0\leq s\leq t}
       \bigl(\|v(s)\|_6^2+\|u(s)\|_2^2\bigr)
 +\int_0^t\|\nabla u(s)\|_2^2\,ds\leq M_1(t).
\]
If $\mathcal T^*<\infty$, this bound remains finite as
$t\uparrow\mathcal T^*$.
\end{proposition}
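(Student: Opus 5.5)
The plan is to follow the mixed-boundary argument of \cite{Kukavica_2007} (itself modeled on Cao--Titi), treating the temperature only through the $L^2$ and $L^6$ bounds of Proposition~\ref{prop:appendix-basic-energy}. Since $T_0\in V_T\subset L^6$, we have $\sup_t\|T\|_6\le\|T_0\|_6$. The buoyancy therefore enters as a forcing $\nabla_HQ_T$ that is bounded in $L^\infty_tH^{-1}$-type norms and, after integration by parts, is controlled by $\|T\|_q$ alone. This is what makes the lack of vertical thermal diffusion harmless at this level. The estimate will be carried out formally on the strong solution; it is justified by the regularity in Definition~\ref{def:strong-solution} together with the approximations used in the construction. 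The bound is then continued up to $\mathcal T^*$, since every coefficient will be expressed through quantities already bounded on $[0,\mathcal T^*)$.

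\emph{Step 1: barotropic/baroclinic splitting and the pressure.} Write $v=\bar v+\tilde v$. Averaging the momentum equation in $z$ gives a two-dimensional Stokes-type system on $M$ for $(\bar v,p_s)$. Its forcing consists of the averaged nonlinearity $\overline{\tilde v\otimes\tilde v}$-type terms, the averaged buoyancy $\overline{\nabla_HQ_T}$, and the bottom shear term $-h^{-1}\partial_zv|_{z=-h}$ displayed in the introduction. Using the planar Dirichlet Stokes $L^p$ estimates (\cite[Chapter~IV]{Galdi11}), I will bound $\|\nabla_Hp_s\|_{L^{3/2}(M)}$, or $\|p_s\|_{L^{3}(M)}$ after duality. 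The bound will be in terms of $\||\tilde v|\,|\nabla\tilde v|\|_{L^2}$-type quantities, $\|T\|_2$, $\|\nabla v\|_2$, and the trace $\|u|_{z=-h}\|$. The last is handled by interpolation, $\|u|_{z=-h}\|_{L^2(M)}\le C\|u\|_2^{1/2}\|u\|_{H^1}^{1/2}$, so it can later be absorbed into the $u$-dissipation.

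\emph{Step 2: $L^6$ estimate.} Test the $\tilde v$ equation with $|\tilde v|^4\tilde v$. The baroclinic transport cancels as in Cao--Titi. The remaining terms are the pressure term $\int\nabla_Hp_s\cdot|\tilde v|^4\tilde v$, the $\bar v$--$\tilde v$ interaction terms, and the buoyancy term $\int\nabla_HQ_T\cdot|\tilde v|^4\tilde v=-\int Q_T\nabla_H\cdot(|\tilde v|^4\tilde v)$. The buoyancy term is bounded by $C\|T\|_6\|\tilde v\|_6^2\,\||\tilde v|^2\nabla\tilde v\|_2$. The other terms are estimated with Lemma~\ref{anisotropic inequality} and the Step~1 pressure bound. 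This yields
\[
\frac{d}{dt}\|\tilde v\|_6^6+\||\tilde v|^2\nabla\tilde v\|_2^2\le C(1+\|\nabla v\|_2^2)\|\tilde v\|_6^6+\text{(terms involving }\|u\|_{H^1}),
\]
and the terms involving $\|u\|_{H^1}$ are closed jointly with Step~3. The bound for $\bar v$ in $L^6$ follows from $\bar v\in L^\infty_tH^1(M)$ through the 2D Stokes estimate, combined with the energy bound in $L^2_tH^1$.

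\emph{Step 3: $u=\partial_zv$ in $L^2$.} Differentiate the momentum equation in $z$. The buoyancy becomes $-\nabla_HT$, and testing with $u$ gives $|\int T\,\nabla_H\cdot u|\le\|T\|_2\|\nabla u\|_2$. The transport terms reduce to $\int(u\cdot\nabla_H v)\cdot u-\int(\nabla_H\cdot v)|u|^2$. These are estimated, as in Cao--Titi, by $\|v\|_6\|u\|_3\|\nabla u\|_2$-type bounds and then by $C\|v\|_6^4\|u\|_2^2+\tfrac14\|\nabla u\|_2^2$. The Step~2 $L^6$ bound enters at exactly this point.

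\emph{Main obstacle: the bottom boundary term.} Integrating $-\Delta u\cdot u$ by parts leaves no top contribution, since $u=0$ on $\Gamma_t$, but it does leave a bottom contribution $\int_M\partial_zu\cdot u|_{z=-h}$. Evaluating the equation at $z=-h$, where $v=w=0$ and $Q_T=0$, gives $\partial_z^2v|_{z=-h}=\nabla_Hp_s$. The boundary term is therefore $\int_M\nabla_Hp_s\cdot u|_{z=-h}$. I plan to integrate by parts horizontally to obtain $-\int_Mp_s\,\nabla_H\cdot u|_{z=-h}$. I will then use the $L^3$ or $H^{1/2}$-dual pressure bound from Step~1 together with the trace inequality $\|u|_{z=-h}\|_{H^{1/2}}\le C\|u\|_{H^1}$. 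Since the pressure bound itself contains $\|u\|_2^{1/2}\|\nabla u\|_2^{1/2}$, Young's inequality is needed to absorb $\|\nabla u\|_2^2$. Making the powers work is the delicate point. If necessary, I will weaken the pressure norm to $\|p_s\|_{L^2(M)}$ plus an interpolated trace term, as in \cite{Kukavica_2007}.

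\emph{Closing.} Add the Step~2 and Step~3 inequalities. The resulting coefficients are $1+\|\nabla v\|_2^2$ and $\|T_0\|_6^2$, both integrable on $[0,\mathcal T^*)$ by Proposition~\ref{prop:appendix-basic-energy}. Gronwall's inequality then gives $M_1(t)$, which is continuous, nondecreasing, and finite as $t\uparrow\mathcal T^*$.
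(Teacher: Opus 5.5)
Your proposal has a genuine gap, and it sits exactly where the no-slip bottom matters.

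\textbf{The splitting does not fit these boundary conditions.} The Cao--Titi splitting $v=\bar v+\tilde v$ is built for Neumann conditions at both horizontal boundaries. Here $\tilde v|_{z=-h}=-\bar v\neq0$ and $\partial_z\tilde v|_{z=-h}=u|_{z=-h}\neq0$. This has two consequences.
\begin{compactitem}
\item The $\tilde v$ equation contains no pressure at all, contrary to Step~2. Since $p_s$ is independent of $z$, $\nabla_Hp_s$ cancels when the average is subtracted.
\item Instead, the $\tilde v$ equation carries the bottom-shear forcing $h^{-1}u|_{z=-h}$. Integrating $-\Delta\tilde v\cdot|\tilde v|^4\tilde v$ by parts also leaves the boundary integral $\int_M u|_{z=-h}\cdot|\bar v|^4\bar v$.
\end{compactitem}
These would have to be hidden in your unspecified ``terms involving $\|u\|_{H^1}$''. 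But they carry high powers of $\bar v$, for example $\|u|_{z=-h}\|_{L^2(M)}\|\bar v\|_{L^{10}(M)}^5$. So the $\tilde v$ estimate is coupled both to $u$ and to higher norms of $\bar v$.

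\textbf{The bound for $\bar v$ is circular.} Your $L^\infty_tL^6$ bound for $\bar v$ is derived from $\bar v\in L^\infty_tH^1(M)$, which is never established. The energy estimate gives only $\bar v\in L^\infty_tL^2\cap L^2_tH^1$. In two dimensions this yields $\bar v\in L^3_tL^6$, not $\int_0^t\|\bar v\|_6^4\,ds<\infty$. An $H^1$ estimate for the barotropic system would require its forcing $h^{-1}u|_{z=-h}$ in $L^2_tL^2(M)$, which is the conclusion of Step~3. Step~3 in turn needs $\int_0^t\|v\|_6^4\,ds<\infty$. It also contains the bottom term $\int_M\nabla_Hp_s\cdot u|_{z=-h}$, whose power counting you explicitly leave open. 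Steps~1--3 therefore form a coupled system whose closure is never shown, and the Gronwall coefficients in your closing step omit all of these contributions.

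\textbf{How the paper proceeds.} The paper does not redo any of these estimates. Horizontal thermal diffusion already gives $\int_0^t\|\nabla_HT\|_2^2\,ds\leq\frac12\|T_0\|_2^2$ by Proposition~\ref{prop:appendix-basic-energy}. Hence the force $f=\nabla_H\int_{-h}^zT\,d\xi-f_0k\times v$ lies in $L^2(0,t;L^2(\Omega))$, with a bound depending only on $\|v_0\|_2$, $\|T_0\|_2$ and $t$. The statement is then exactly the forced velocity estimate of \cite{Kukavica_2007} (see also \cite[Theorem~2.1]{KukavicaZiane1}) for $v_0\in V$ and $L^2_tL^2_x$ forcing. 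The case $\mathcal T^*<\infty$ follows by extending this $f$ by zero beyond $\mathcal T^*$ and invoking uniqueness. Your $L^6$ treatment of the buoyancy is correct but unnecessary here.

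\textbf{If you want a self-contained argument.} The splitting is the wrong tool for this boundary configuration. Testing the full equation with $|v|^4v$ gives no boundary terms from $-\Delta v$, since $v=0$ on $\Gamma_b\cup\Gamma_s$ and $\partial_zv=0$ on $\Gamma_t$. It leaves the pressure term your Step~1 is designed for. You would still have to carry out the joint closure with the $u$-estimate and its bottom term, which is the substance of the cited result.
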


\begin{proof}
On the existing interval of the solution, write
\[
 v_t+(v\cdot\nabla_H)v+w\partial_zv-\Delta v+\nabla_Hp_s=f,
 \qquad
 f:=\nabla_H\int_{-h}^zT\,d\xi-f_0k\times v.
\]
Proposition~\ref{prop:appendix-basic-energy} and Poincar\'e's inequality imply
\[
 \begin{aligned}
 \int_0^t\|f(s)\|_2^2\,ds
 &\leq C\int_0^t
              \bigl(\|\nabla_HT(s)\|_2^2+\|v(s)\|_2^2\bigr)\,ds\\
 &\leq C\left(\|v_0\|_2^2+(1+t)\|T_0\|_2^2\right),
 \end{aligned}
\]
where $C$ also depends on the fixed Coriolis parameter.
The forced velocity estimates of \cite{Kukavica_2007}, with
$v_0\in V$ and $f\in L^2_tL^2_x$, therefore give the assertion;
see also \cite[Theorem~2.1]{KukavicaZiane1}.
If $\mathcal T^*<\infty$, the displayed estimate first gives
$f\in L^2(0,\mathcal T^*;L^2)$. Extend this actual force by zero to
$(0,\mathcal T^*+1)$ and apply the forced velocity theorem there.
Uniqueness identifies its solution with $v$ before $\mathcal T^*$,
so the resulting bounds are uniform up to that time.
\end{proof}

Recall the velocity form norm
\begin{equation}
 L(t)=\|A^{1/2}v(t)\|_H,
 \qquad L(t)^2=(Av(t),v(t))_H=\|\nabla v(t)\|_2^2
\end{equation}
for almost every time.

\begin{proposition}
\label{prop:appendix-velocity-H1}
Under the assumptions of Proposition~\ref{prop:appendix-velocity-L6-vertical}, there exists
a continuous nondecreasing function $M_2$ on $[0,\mathcal T^*)$
such that
\[
 \sup_{0\leq s\leq t}\|A^{1/2}v(s)\|_H^2
 +\int_0^t\|Av(s)\|_H^2\,ds\leq M_2(t).
\]
If $\mathcal T^*<\infty$, this bound remains finite as
$t\uparrow\mathcal T^*$.
\end{proposition}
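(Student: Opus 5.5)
We prove the bound by the standard $H^1$ energy estimate for the hydrostatic Stokes operator, testing the projected momentum equation with $Av$. The coupling to the temperature enters only through a buoyancy forcing that is already controlled in $L^2_tL^2_x$. Write the projected equation as
\[
 \partial_tv+Av=-\mathbb P\bigl[(v\cdot\nabla_H)v+w\partial_zv+f_0k\times v\bigr]+\mathbb P\Bigl(\nabla_H\int_{-h}^zT\,d\xi\Bigr).
\]
Since $v\in L^2(0,t;D(A))$ and $v_t\in L^2(0,t;H)$, the identity $\frac12\frac{d}{dt}\|A^{1/2}v\|_H^2=(v_t,Av)$ holds for a.e.\ time. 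This gives
\[
 \frac12\frac{d}{dt}\|A^{1/2}v\|_H^2+\|Av\|_H^2
 =-\bigl((v\cdot\nabla_H)v+w\partial_zv,Av\bigr)-(f_0k\times v,Av)+\Bigl(\nabla_H\int_{-h}^zT,Av\Bigr).
\]

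The linear terms are routine. By Young's inequality they are bounded by $\frac18\|Av\|_H^2+C(\|v\|_2^2+\|\nabla_HT\|_2^2)$. The integral of the right side is controlled by Proposition~\ref{prop:appendix-basic-energy} together with the temperature energy identity $\int_0^t\|\nabla_HT\|_2^2\le\frac12\|T_0\|_2^2$.

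For the horizontal transport term we use H\"older and Gagliardo--Nirenberg with the $L^6$ bound of Proposition~\ref{prop:appendix-velocity-L6-vertical}:
\[
 |((v\cdot\nabla_H)v,Av)|\le\|v\|_6\|\nabla v\|_3\|Av\|_2\le C\|v\|_6\|\nabla v\|_2^{1/2}\|Av\|_2^{3/2}.
\]
Here we use the norm equivalence $\|v\|_{H^2}\le C\|Av\|_H$ from Lemma~\ref{lem:top-stokes-H2}. Young's inequality then bounds this by $\frac18\|Av\|_2^2+C\|v\|_6^4\|\nabla v\|_2^2$.

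The main obstacle is the vertical transport term $(w\partial_zv,Av)$, since $w$ is one derivative rougher than $v$. We apply the first inequality of Lemma~\ref{anisotropic inequality} with $f=\nabla_H v$ (which dominates $w$ pointwise after vertical integration), $g=Av$ and $\varphi=u=\partial_zv$. This yields
\[
 |(w\partial_zv,Av)|\le C\|\nabla v\|_2^{1/2}\|Av\|_2^{3/2}\|u\|_2^{1/2}\bigl(\|u\|_2+\|\nabla u\|_2\bigr)^{1/2}.
\]
Young's inequality bounds this by $\frac18\|Av\|_2^2+C\|u\|_2^2(\|u\|_2^2+\|\nabla u\|_2^2)\|\nabla v\|_2^2$. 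The bounds for $\sup\|u\|_2$ and $\int\|\nabla u\|_2^2$ in Proposition~\ref{prop:appendix-velocity-L6-vertical} are exactly what make this coefficient integrable in time; this is why the vertical-derivative estimate must precede the present one.

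Collecting terms and recalling $L(t)^2=\|\nabla v\|_2^2=\|A^{1/2}v\|_H^2$, we obtain
\[
 \frac{d}{dt}L^2+\|Av\|_H^2\le C\bigl(\|v\|_6^4+\|u\|_2^2(\|u\|_2^2+\|\nabla u\|_2^2)\bigr)L^2+C(\|v\|_2^2+\|\nabla_HT\|_2^2).
\]
The coefficient of $L^2$ has time integral bounded by $C(tM_1^2+M_1^2)$, using $M_1$ from Proposition~\ref{prop:appendix-velocity-L6-vertical}. Gronwall's inequality then gives the claim with
\[
 M_2(t)=C\bigl(\|A^{1/2}v_0\|_H^2+\|v_0\|_2^2+(1+t)\|T_0\|_2^2\bigr)\exp\bigl(C(1+t)M_1(t)^2\bigr).
\]
This function is continuous and nondecreasing, and it stays finite as $t\uparrow\mathcal T^*<\infty$ because $M_1$ does. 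Should the integrated-by-parts identity need justification at the strong-solution level, it follows from the regularity in Definition~\ref{def:strong-solution} and the Lions--Magenes chain rule.
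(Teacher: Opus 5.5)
Your proposal is correct and follows essentially the same route as the paper's proof. You test the projected momentum equation with $Av$, control horizontal transport through the $L^6$ bound and vertical transport through the anisotropic inequality with $\sup\|u\|_2$ and $\int\|\nabla u\|_2^2$ from Proposition~\ref{prop:appendix-velocity-L6-vertical}, and close with Gronwall; your intermediate Young-inequality bounds are written more explicitly than the paper's, and keeping the extra $\|u\|_2$ term instead of using Poincar\'e for $u$ is harmless.
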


\begin{proof}
Testing the projected momentum equation with $Av$ and applying
the Sobolev and anisotropic product estimates gives
\[
 \begin{aligned}
 \frac12\frac{d}{dt}L^2+\|Av\|_H^2
 &\leq\frac14\|Av\|_H^2
 +C\bigl(\|v\|_6^2L\|Av\|_H
          +L\|Av\|_H\|u\|_2\|\nabla u\|_2\\
 &\hspace{42mm}+\|v\|_2^2+\|\nabla_HT\|_2^2\bigr).
 \end{aligned}
\]
Here the $H^2$ Stokes estimate controls the second derivatives of
$v$, and the Poincar\'e estimate for $u$ uses its zero top trace.
Young's inequality therefore yields
\begin{equation}
 \begin{split}
 \frac{d}{dt}\|A^{1/2}v\|_H^2+\|Av\|_H^2
 &\leq C\bigl(\|v\|_6^4+\|u\|_2^2\|\nabla u\|_2^2\bigr)
                         \|A^{1/2}v\|_H^2\\
 &\quad+C\bigl(\|v\|_2^2+\|\nabla_HT\|_2^2\bigr).
 \end{split}
\end{equation}
The coefficient and the last two terms are integrable by
Propositions~\ref{prop:appendix-basic-energy}
and~\ref{prop:appendix-velocity-L6-vertical}. Gronwall's inequality proves the result.
\end{proof}

\begin{proposition}
\label{prop:appendix-temperature-H1}
Assume $v_0\in V$ and $T_0\in V_T$. There exists a continuous
nondecreasing function $M_3$ on $[0,\mathcal T^*)$ such that
\[
 \sup_{0\leq s\leq t}\|\nabla T(s)\|_2^2
 +\int_0^t\|\nabla_H\nabla T(s)\|_2^2\,ds\leq M_3(t).
\]
If $\mathcal T^*<\infty$, this bound remains finite as
$t\uparrow\mathcal T^*$.
\end{proposition}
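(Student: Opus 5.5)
The plan is to rerun, for the actual solution $(v,T)$, the $H^1$ temperature calculation from Lemma~\ref{lem:linear-temperature}. There the transporting velocity $v^{n-1}$ is replaced by $v$ itself, and the velocity coefficient is already controlled by Propositions~\ref{prop:appendix-velocity-L6-vertical} and~\ref{prop:appendix-velocity-H1}.

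The first step is to test the temperature equation with $-\Delta_HT$ and with $-\partial_{zz}T$. As in the lemma, this is done at the level of the Galerkin approximations from the construction; the cosine vertical basis makes $-\partial_{zz}$ an admissible test function, and the bounds pass to $T$ by lower semicontinuity. This gives
\[
 \tfrac12\tfrac{d}{dt}\|\nabla_HT\|_2^2+\|\Delta_HT\|_2^2=(v\cdot\nabla_HT+w\partial_zT,\Delta_HT),
\]
and
\[
 \tfrac12\tfrac{d}{dt}\|\partial_zT\|_2^2+\|\nabla_H\partial_zT\|_2^2
 =-(\partial_zv\cdot\nabla_HT,\partial_zT)+\int_\Omega(\nabla_H\cdot v)|\partial_zT|^2 .
\]
The vertical transport term cancels because $w=0$ at $z=-h,0$ and $\partial_zw=-\nabla_H\cdot v$. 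The horizontal transport term on $\partial_zT$ is skew by incompressibility.

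The second step estimates every right-hand term exactly as in Lemma~\ref{lem:linear-temperature}.

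\begin{compactitem}
\item The term $w\partial_zT$ is handled with the anisotropic inequality of Lemma~\ref{anisotropic inequality}. Writing $|w|\le\int_{-h}^0|\nabla_Hv|\,dz$, this term is bounded by $C\|\nabla v\|_2^{1/2}\|v\|_{H^2}^{1/2}\|\partial_zT\|_2^{1/2}(\|\partial_zT\|_2+\|\nabla_H\partial_zT\|_2)^{1/2}\|\Delta_HT\|_2$.
\item The remaining terms are handled with H\"older, Ladyzhenskaya-type slice estimates and horizontal elliptic regularity $\|\nabla_H^2T\|_2\le C\|\Delta_HT\|_2$, which uses $T=0$ on $\Gamma_s$.
\end{compactitem}

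After Young's inequality, these give
\[
 \frac{d}{dt}\|\nabla T\|_2^2+\|\Delta_HT\|_2^2+\|\nabla_H\partial_zT\|_2^2\le\Phi(t)\|\nabla T\|_2^2,
\]
with $\Phi=C[1+\|v\|_{H^1}^4+(1+\|v\|_{H^1}^2)\|v\|_{H^2}^2]$, exactly as in \eqref{eq:Phi-n}.

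The third step checks that $\Phi\in L^1(0,t)$ with a bound continuous in $t$. This follows from Proposition~\ref{prop:appendix-velocity-H1}, since $\|v\|_{H^1}\sim\|A^{1/2}v\|$ and $\|v\|_{H^2}\le C\|Av\|$ by Lemma~\ref{lem:top-stokes-H2}. Gronwall's inequality then gives $\sup\|\nabla T\|_2^2+\int(\|\Delta_HT\|_2^2+\|\nabla_H\partial_zT\|_2^2)\le 2\|\nabla T_0\|_2^2\exp\int_0^t\Phi$. Horizontal elliptic regularity converts $\|\Delta_HT\|_2$ into control of $\nabla_H\nabla_HT$, which together with $\nabla_H\partial_zT$ yields the full $\nabla_H\nabla T$ bound. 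This defines $M_3$. If $\mathcal T^*<\infty$, $M_2$ stays finite up to $\mathcal T^*$, so $\int_0^{\mathcal T^*}\Phi<\infty$ and $M_3$ does as well.

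The main obstacle is the mixed term $(\partial_zv\cdot\nabla_HT,\partial_zT)$, since there is no vertical diffusion of $T$ to absorb anything. I would bound it by $\|\partial_zv\|_{L^2_zL^4_H}\|\nabla_HT\|_{L^\infty_zL^2_H}\|\partial_zT\|_{L^2_zL^4_H}$. The one-dimensional trace bound $\|\nabla_HT\|_{L^\infty_zL^2_H}\le C(\|\nabla_HT\|_2+\|\nabla_H\partial_zT\|_2)$ controls the middle factor, and 2D Ladyzhenskaya controls the $L^4_H$ factor of $\partial_zT$, giving a product of the form $\|v\|_{H^2}\,\|\nabla T\|_2\,(\|\nabla T\|_2+\|\nabla_H\partial_zT\|_2)$ up to half-powers. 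Young's inequality then leaves only $\|v\|_{H^2}^2\|\nabla T\|_2^2$, which is integrable. If this were to fail, the fallback would be to use the already-proved uniform bound \eqref{eq:linear-temperature-H1} with $v^{n-1}$ replaced by $v$, since that estimate's constant depends only on $\|v\|_{L^\infty H^1}+\|v\|_{L^2H^2}$.
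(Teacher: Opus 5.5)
Your proposal is correct and is essentially the paper's argument: you rerun the $H^1$ temperature calculation of Lemma~\ref{lem:linear-temperature} (as in Proposition~\ref{thm:H1 T}) with transporting velocity $v$, bound the coefficient $\Phi$ in $L^1$ via Proposition~\ref{prop:appendix-velocity-H1} and Lemma~\ref{lem:top-stokes-H2}, and close by Gronwall. One caution on the mixed term: if you combine your trace bound exactly as stated with $\|\partial_zv\|_{L^2_zL^4_H}\le C\|v\|_{H^2}$, then $\|\nabla_H\partial_zT\|_2$ enters to the power $3/2$ and Young's inequality leaves the non-integrable coefficient $\|v\|_{H^2}^4$, so you should use the interpolated form $\|\nabla_HT\|_{L^\infty_zL^2_H}^2\le C\|\nabla_HT\|_2(\|\nabla_HT\|_2+\|\nabla_H\partial_zT\|_2)$, which yields the product you claim (alternatively, apply Ladyzhenskaya to $\partial_zv$ as well).
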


\begin{proof}
Apply the temperature $H^1$ calculation in
Proposition~\ref{thm:H1 T} with the transporting velocity $v$.
Its coefficient is bounded by
\[
 C\bigl[1+\|v\|_{H^1}^4
        +(1+\|v\|_{H^1}^2)\|v\|_{H^2}^2\bigr],
\]
which belongs to $L^1(0,t)$ by Proposition~\ref{prop:appendix-velocity-H1} and the
$H^2$ Stokes estimate. Since $T_0\in V_T$, Gronwall's inequality
gives the assertion.
\end{proof}
\section*{Acknowledgments}

The authors acknowledge the use of Codex (OpenAI) for assistance with language polishing, LaTeX preparation, and mathematical proofreading. All AI-assisted material has been reviewed and edited by the authors. The authors have independently checked the mathematical results and proofs and assume full responsibility for the final content.

\bibliographystyle{alpha}
\bibliography{template}

\end{document}